\documentclass[12pt]{amsart}
\usepackage {amssymb}
\usepackage {amsmath}
\usepackage{amsthm}
\usepackage{graphicx}
\usepackage {amscd}
\usepackage {epic}
\usepackage[alphabetic]{amsrefs}
\usepackage{xypic}
\usepackage[colorlinks, citecolor=blue]{hyperref}
\usepackage[left=1.5in,right=1.5in,top=1.5in,bottom=1.5in]{geometry}
\usepackage{soul}
\usepackage{mathrsfs}
\usepackage{comment}
\usepackage{enumitem}

\makeatletter
\@namedef{subjclassname@2020}{%
  \textup{2020} Mathematics Subject Classification}
\makeatother


\theoremstyle{plain}
\newtheorem{theorem}{Theorem}[section]
\newtheorem{corollary}[theorem]{Corollary}
\newtheorem{lemma}[theorem]{Lemma}

\newtheorem{proposition}[theorem]{Proposition}
\newtheorem{definition-lemma}[theorem]{Definition-Lemma}
\theoremstyle{remark}
\newtheorem{remark}[theorem]{Remark}
\newtheorem{conjecture}[theorem]{Conjecture}



\newcommand{\Pic}[0]{\operatorname{Pic}}


\def\ve{\varepsilon}

\def\NE{\overline{\operatorname{NE}}}
\def\NS{\operatorname{NS}}
\def\NF{\operatorname{\overline{NF}}}
\def\NM{\operatorname{\overline{NM}}}

\def\NA{\overline{\operatorname{NA}}}
\def\Null{\operatorname{Null}}
\newcommand{\<}{\leq}
\def\>{\geq}
\newcommand{\mbQ}{\mathbb{Q}}
\newcommand{\mbR}{\mathbb{R}}

\def\mcO{\mathcal{O}}
\newcommand{\num}{\equiv}

\newcommand{\OO}{{\mathcal{O}}}
\newcommand{\Q}{{\mathbb{Q}}}

\newcommand{\mult}{{\rm mult}}
\newcommand{\Supp}{{\rm Supp}}

\newcommand{\sm}{{\rm sm}}

\newcommand{\mbC}{\mathbb{C}}
\newcommand{\mbZ}{\mathbb{Z}}

\newcommand{\bir}{\dashrightarrow}

\newcommand{\del}{\partial}
\def\lru{\lceil}
\def\rru{\rceil}
\def\lrd{\lfloor}
\def\rrd{\rfloor}

\def\mbN{\mathbb{N}}
\def\mbP{\mathbb{P}}
\def\>{\geq}
\def\ve{\varepsilon}
\def\mcA{\mathcal{A}}
\def\mcO{\mathcal{O}}
\def\mcB{\mathcal{B}}
\def\mcC{\mathcal{C}}
\def\mcD{\mathcal{D}}

\def\mcF{\mathcal{F}}
\def\mcG{\mathcal{G}}

\def\mcK{\mathcal{K}}
\def\mcL{\mathcal{L}}
\def\mcM{\mathcal{M}}


\def\pt{\operatorname{pt}}
\def\Ex{\operatorname{Ex}}
\def\dim{\operatorname{dim}}
\def\codim{\operatorname{codim}}
\def\sing{\operatorname{\textsubscript{sing}}}
\def\sm{\operatorname{\textsubscript{sm}}}
\def\NS{\operatorname{NS}}
\def\NA{\operatorname{\overline{NA}}}
\def\NE{\operatorname{\overline{NE}}}
\def\Nef{\operatorname{Nef}}
\def\Im{\operatorname{Im}}
\def\Chow{\operatorname{Chow}}
\def\EnK{\operatorname{EnK}}

\theoremstyle{definition}
\newtheorem{definition}[theorem]{Definition}
\theoremstyle{definition}
\newtheorem{notation}[theorem]{Notation}
\numberwithin{equation}{section}

\theoremstyle{remark}
\newtheorem{claim}[theorem]{Claim}



\author{Omprokash Das}
\address{School of Mathematics\\
Tata Institute of Fundamental Research\\
Homi Bhabha Road, Navy Nagar\\
Colaba, Mumbai 400005}
\email{omdas@math.tifr.res.in}
\email{omprokash@gmail.com}
\thanks{O. Das was supported by the Start--Up Research Grant(SRG), Grant No. \# SRG/2020/000348 of the Science and Engineering Board Research Board (SERB), Govt. Of India. }

\author{Christopher Hacon}
\address{Department of Mathematics\\
University of Utah\\
155 S 1400 E\\
Salt Lake City, Utah 84112}
\email{hacon@math.utah.edu}
\thanks{C. Hacon was partially supported by NSF research grants no: DMS-1952522, DMS-1801851, DMS-2301374, DMS-58503413 and by a grant
from the Simons Foundation; Award Number: 256202.}

\title{The log minimal model program for K\"ahler $3$-folds}
\subjclass[2020]{14E30, 32J27, 32J17, 14J30}

\begin{document}
\maketitle
\begin{abstract}
	In this article we show that the log minimal model program for $\mbQ$-factorial dlt pairs $(X, B)$ on a compact K\"ahler $3$-fold holds. More specifically, we show that after finitely many divisorial contractions and flips we obtain either a (log) minimal model or a Mori fiber space. We also prove a base point free theorem for K\"ahler $3$-folds.
\end{abstract}

\tableofcontents

\date{\today}
\section{Introduction}
The minimal model program or MMP is one of the most important tools in the birational classification of complex projective varieties.
It was fully established in dimension 3 in the 80's and 90's and recently extended to many cases in arbitrary dimension including the case of varieties of log general type \cite{BCHM10}.

There are many technical difficulties in adapting the minimal model program to compact K\"ahler manifolds. Some of the standard techniques used in the MMP for projective varieties fail for compact K\"ahler manifolds, for example, Mori's Bend and Break technique for producing rational curves, the base point free theorem and the contraction of negative extremal rays fail on K\"ahler manifolds. Campana and Peternell investigated the existence of Mori contractions in \cite{CP97, Pet98, Pet01}, using the deformation theory of rational curves on smooth $3$-folds developed in \cite{Kol91, Kol96}. In \cite{HP16}, Peternell and H\"oring successfully established the minimal model program for compact K\"ahler $3$-folds $X$ with terminal singularities and $K_X$ pseudo-effective. In a subsequent paper \cite{HP15} they also proved the existence of Mori fiber spaces when $X$ has terminal singularities and $K_X$ is not pseudo-effective. 

In \cite{HP16} the authors introduced many new tools which enabled them to use several techniques from the projective MMP. Building on the work of \cite{HP16, HP15, CHP16}, in this article we show that the minimal model program on compact K\"ahler $3$-folds works in much greater generality. More precisely, we show that this program holds for $\mbQ$-factorial dlt pairs $(X, B)$. The main results of this article are the following. 

\begin{theorem}\label{t-main} Let $(X,B)$ be a dlt pair where $X$ is a $\mbQ$-factorial compact K\"ahler $3$-fold. If $K_X+B$ is pseudo-effective, then there exists a finite sequence of flips and divisorial contractions 
\[ \phi: X\dasharrow X_1 \dasharrow \ldots \dasharrow X_n \]
such that $K_{X_n}+\phi _* B$ is nef.
\end{theorem}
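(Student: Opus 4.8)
The plan is to run a $(K_X+B)$-minimal model program and to prove that it terminates. Set $(X_0,B_0)=(X,B)$, and suppose inductively that we have produced a $\Q$-factorial dlt pair $(X_i,B_i)$ with $X_i$ a compact K\"ahler $3$-fold and $K_{X_i}+B_i$ pseudo-effective. If $K_{X_i}+B_i$ is nef we stop, having reached the desired minimal model. Otherwise, the K\"ahler Cone Theorem furnishes a $(K_{X_i}+B_i)$-negative extremal ray $R_i\subset\NE(X_i)$, and the associated Contraction Theorem (whose proof in the K\"ahler category must circumvent the failure of the base point free theorem, and rests on the techniques of \cite{HP16,HP15}) produces a contraction $c_i\colon X_i\to Y_i$ onto a normal compact K\"ahler space, contracting exactly the curves whose class lies in $R_i$.

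Next I would classify $c_i$ according to the dimension of its exceptional locus. Because $K_{X_i}+B_i$ is pseudo-effective, $c_i$ cannot be of fiber type: a Mori fiber space structure would give a covering family of curves along which $-(K_{X_i}+B_i)$ is relatively ample, hence along which $K_{X_i}+B_i$ is negative, which is incompatible with pseudo-effectivity. Therefore $c_i$ is birational, and is either \emph{divisorial}, when its exceptional locus is a prime divisor, or \emph{small} (flipping), when the exceptional locus has codimension at least two.

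In the divisorial case I set $(X_{i+1},B_{i+1})=(Y_i,(c_i)_*B_i)$; one checks that $Y_i$ is again $\Q$-factorial and compact K\"ahler, that the pair is dlt, and that $K_{Y_i}+(c_i)_*B_i$ is pseudo-effective, while the Picard number $\rho(X_i)$ drops by one. In the small case $K_{Y_i}+(c_i)_*B_i$ fails to be $\Q$-Cartier, so instead I would construct the flip $c_i^+\colon X_i^+\to Y_i$ using the existence of flips for K\"ahler $3$-folds, and set $(X_{i+1},B_{i+1})=(X_i^+,(c_i)_*B_i)$; this is once more a $\Q$-factorial dlt compact K\"ahler pair with the same (pseudo-effective) class $K+B$. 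A key point throughout is that every operation preserves $\Q$-factoriality, the dlt condition, and---crucially in this setting---the property of being compact K\"ahler, so that the machinery applies again at the next step.

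It remains to prove termination, which is where the main difficulty lies. Since $\rho$ strictly decreases under a divisorial contraction, only finitely many of the $c_i$ are divisorial, so after finitely many steps every subsequent $c_i$ is a flip. To exclude an infinite sequence of flips I would transport the classical three-dimensional termination arguments to the K\"ahler setting: flips strictly increase the discrepancies of the finitely many relevant valuations, so a monotonicity invariant such as the difficulty $d(X_i,B_i)$ strictly decreases, and combining this with special termination along the strata of the dlt locus forces the sequence to stop. Assembling these pieces, the program terminates after finitely many flips and divisorial contractions at a pair $(X_n,\phi_*B)$ with $K_{X_n}+\phi_*B$ nef. The principal obstacle is ensuring that the contraction and flip existence statements, together with the preservation of the compact K\"ahler property, are genuinely available in the full $\Q$-factorial dlt generality; once these are in hand, the termination and the overall induction follow the familiar pattern of the projective MMP.
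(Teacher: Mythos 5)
Your proposal follows the same top-level skeleton as the paper: cone theorem, contraction of a negative extremal ray, divisorial contraction or flip, termination. The pieces you treat as standard are indeed available in this generality (in the paper: the cone theorem, Theorem \ref{t-cone}, from \cite{DO22}; existence of flips, Theorem \ref{t-flip}, from \cite{CHP16}; termination, Theorem \ref{t-term}, from \cite{DO22}), and your observations that fiber-type contractions are excluded by pseudo-effectivity and that divisorial contractions drop the Picard number are correct.

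The genuine gap is in the step you dispose of in one sentence: ``the associated Contraction Theorem \dots produces a contraction $c_i\colon X_i\to Y_i$.'' In the K\"ahler category there is no such black box. The references you lean on, \cite{HP16} and \cite{HP15}, prove existence of contractions only when $X$ has terminal singularities and $B=0$. For $\mbQ$-factorial dlt pairs, what was known before this paper (Theorem \ref{t-pmmp}, from \cite{DO22}) covers: small rays; divisorial rays whose supporting nef and big class $\alpha$ has nef dimension $n(\alpha)=0$ (the surface is contracted to a point); and divisorial rays with $n(\alpha)=1$ only under extra hypotheses (terminal singularities with $K_X\cdot C<0$, or the contracted surface $S$ having slc singularities). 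The remaining case --- a divisorial ray whose surface is contracted to a curve, for a general $\mbQ$-factorial dlt pair --- is exactly the main new result of the paper (Theorem \ref{t-div1}), and its proof is a substantial argument: one passes to a log resolution, runs an auxiliary $\mu^*\alpha$-trivial MMP, shows by a multiplicity and negativity analysis that the pushforward of $S'+\sum E_j$ cannot dominate the base $T$ of the nef reduction, concludes that it is contracted entirely, and finally proves that the induced bimeromorphic map $X\dasharrow X_n'$ is a morphism, which is the desired contraction. Your closing sentence correctly identifies this as ``the principal obstacle,'' but flagging the obstacle is not the same as overcoming it; as written, your argument reduces the theorem to precisely the statement whose proof constitutes the paper's main contribution.
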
 
This result is proved in \cite{HP16} when $X$ has terminal singularities and $B=0$.\\

\begin{theorem}\label{t-main2}
Let $(X,B)$ be a dlt pair where $X$ is a $\mbQ$-factorial compact K\"ahler $3$-fold. If $K_X+B$ is not pseudo-effective, then there exists a finite sequence of flips and divisorial contractions 
\[ \phi: X\dasharrow X_1 \dasharrow \ldots \dasharrow X_n \]
and a Mori fiber space $\varphi:X_n\to S$, i.e. a morphism such that $-(K_{X_n}+\phi _* B)$ is $\varphi$-ample and $\rho (X_n/S)=1$.
\end{theorem}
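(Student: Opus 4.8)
The plan is to run the minimal model program for the non-pseudo-effective case, mirroring the structure of the projective MMP but using the K\"ahler-specific extremal contraction and termination inputs established in \cite{HP16, HP15}. Since $K_X+B$ is not pseudo-effective, there must be a $(K_X+B)$-negative extremal ray, and the goal is to keep contracting such rays until we either terminate at a Mori fiber space or reach a point where $K_X+B$ becomes pseudo-effective (which we will argue cannot happen if we always contract in the correct direction).

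First I would establish the existence of a $(K_X+B)$-negative extremal ray $R$ on the $\Q$-factorial klt K\"ahler $3$-fold $(X,B)$, using the cone theorem for K\"ahler $3$-folds. The crucial point is that since $K_X+B$ is not pseudo-effective, the class $K_X+B$ lies outside the pseudo-effective (equivalently the closure of the movable) cone, so by a separation/duality argument there is a curve class on which $K_X+B$ is strictly negative, and one then produces a genuine extremal ray spanned by a rational curve (this is where Mori's bend-and-break analogue and the deformation theory of rational curves from \cite{Kol91, Kol96, CP97} enter, as adapted to the K\"ahler setting in \cite{HP16}). Next I would run the contraction of $R$: depending on the type of the extremal ray, this is either a divisorial contraction, a flipping contraction, or a Mori fiber space contraction. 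If it is a Mori fiber space $\varphi: X \to S$ with $\rho(X/S)=1$ and $-(K_X+B)$ relatively ample, we are done. If it is divisorial, we replace $X$ by its contraction, which remains $\Q$-factorial klt K\"ahler of Picard number dropped by one. If it is a flipping contraction, we perform the flip (whose existence for K\"ahler dlt/klt pairs I would invoke from the earlier results of the paper), again staying in the category of $\Q$-factorial klt K\"ahler $3$-folds.

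The main obstacle, and the step I would treat most carefully, is \textbf{termination}: I must show that this process of flips and divisorial contractions stops after finitely many steps. Divisorial contractions strictly decrease the Picard number, so there can be only finitely many of them; the real difficulty is the termination of flips. For this I would use the special-termination plus difficulty-function techniques available in dimension three, noting that $K_X+B$ is not pseudo-effective \emph{throughout} the process (pseudo-effectivity is preserved under the MMP only in the negative direction, and since we always contract negative rays the non-pseudo-effectivity persists), so the MMP cannot terminate with a minimal model and must instead terminate with a Mori fiber space. Concretely I would appeal to the termination of the $(K_X+B)$-MMP for K\"ahler $3$-folds as developed in \cite{HP16, HP15}, combined with the fact that the process is eventually forced to produce a fiber-type contraction because $K_X+B$ can never become nef. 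Throughout, one must check that every intermediate variety stays $\Q$-factorial, klt, and K\"ahler — preservation of the K\"ahler condition under divisorial contractions and flips being the feature that distinguishes this argument from the projective case and where the inputs of \cite{HP16} are indispensable.

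Finally, I would assemble these pieces: run the MMP, observe that each step either reduces Picard number (divisorial) or is a flip, use termination to conclude the sequence is finite, and since $K_X+B$ remains non-pseudo-effective at every stage, the terminal object cannot be a minimal model (which would force $K_{X_n}+\phi_*B$ nef hence pseudo-effective) and must therefore be a Mori fiber space $\varphi: X_n \to S$ with the stated properties $-(K_{X_n}+\phi_*B)$ being $\varphi$-ample and $\rho(X_n/S)=1$.
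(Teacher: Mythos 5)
There is a genuine gap, and it sits exactly at the step you treat most lightly: the existence of contractions. Your plan is the projective-style loop ``find a $(K_X+B)$-negative extremal ray, contract it, and if the contraction is of fiber type, stop.'' In the K\"ahler category neither ingredient of that loop is available for $K_X+B$ itself when $K_X+B$ is not pseudo-effective. The cone theorem of the paper (Theorem \ref{t-cone}) \emph{requires} $K_X+B$ pseudo-effective; for the non-pseudo-effective case the paper only proves a cone theorem for the adjoint classes $K_X+B+\omega$, where $\omega$ is a $(K_X+B)$-normalized modified K\"ahler class, precisely because Lemma \ref{lem:normalized-pair-is-psef} makes $K_X+B+\omega$ pseudo-effective (Theorem \ref{t-cone-mrc}). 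Likewise the contraction theorem (Theorem \ref{c-cont}) only handles divisorial and small rays that are negative for such a pseudo-effective adjoint class. Most seriously, there is no base-point free theorem in the K\"ahler setting, so a fiber-type extremal ray cannot simply be ``contracted''; the references \cite{HP16, HP15} you invoke establish these contractions only for terminal $X$ with $B=0$, and extending them to klt pairs is the content of the present paper, not an input to it. Your termination discussion also misplaces the difficulty: termination of flips is already known for dlt K\"ahler $3$-fold pairs (Theorem \ref{t-term}), so it is not the bottleneck.

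The paper's actual route is structured to avoid ever contracting a fiber-type ray on a non-projective K\"ahler $3$-fold. First, if the base of the MRC fibration $X\bir Z$ has dimension $\leq 1$, then $X$ is projective (Lemma \ref{l-proj}) and the projective MMP applies; so one may assume $\dim Z=2$, and then $(K_X+B)\cdot F<0$ for a general fiber $F$ (Lemma \ref{lem:non-vanishing}). Second, one runs an MMP for the adjoint class $K_X+B+\omega$ with $\omega$ a normalized K\"ahler class — each step is still $(K_X+B)$-negative — until $K_{X'}+B'+\omega'$ is nef (Theorem \ref{thm:partial-mfs}). Third, the Mori fiber structure is produced not by a ray contraction but by a Chow-space/nef-reduction argument: Corollary \ref{cor:partial-mfs} yields a holomorphic fibration $\psi:X'\to S'$ onto a normal compact K\"ahler surface with $K_{X'}+B'+\omega'$ being $\psi$-trivial. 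Since $-(K_{X'}+B')$ is then $\psi$-ample, $\psi$ is a projective morphism, and the theorem follows by running the usual \emph{relative} projective MMP over $S'$ (Proposition \ref{pro:relative-projective-mmp}), which terminates with the desired Mori fiber space. Without this detour through normalized adjoint classes and the fibration to a surface, your argument cannot get off the ground at the cone-theorem stage, and cannot conclude at the fiber-type stage.
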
 
This result is proved in \cite{HP15} when $X$ has terminal singularities and $B=0$.\\

We note that one of the main difficulties in proving the above theorems  is proving the existence of divisorial contractions. In the pseudo-effective case, the existence of flips and divisorial contractions to a point has already been established by work of \cite{CHP16}, \cite{HP16} and \cite{DO23} (see Theorem \ref{t-pmmp}), and so it remains to prove the existence of divisorial contractions to a curve. This is one of the key results of this article.\\

\begin{definition}\cite[Definition 4.3 and 7.1]{HP16}\cite[Notation 4.1]{CHP16}\label{def:divisorial-extremal-ray}
	Let $X$ be a normal $\mbQ$-factorial compact K\"ahler $3$-fold with rational singularities. 
	\begin{itemize}
		\item We say that a curve $C\subset X$ is very rigid if $\dim_{mC}\Chow(X)=0$ for all $m\>1$.
		\item Let $(X, B)$ be a log canonical pair. A $(K_X+B)$-negative extremal ray $R$ of $\NA(X)$ is called \emph{small} if every curve $C\subset X$ with $[C]\in R$ is very rigid. 
		\item An extremal ray $R$ as above is called \emph{divisorial type} if it is not small.\\
	\end{itemize}
\end{definition}



\begin{notation}\label{not:nef-reduction}
	Let $(X, B)$ be a dlt pair, where $X$ is a $\mbQ$-factorial compact K\"ahler $3$-fold. Let $R$ be a $(K_X+B)$-negative extremal ray of $\NA(X)$ of divisorial type which is defined by a nef class $\alpha$ so that $R=\NA(X)\cap \alpha ^\perp$. Let $S$ be the surface which is covered by and contains all the curves $C\subset X$ such that $[C]\in R$ (cf. \cite[Lemma 7.5]{HP16}). Let $\nu:\tilde{S}\to S$ be the normalization morphism. Consider the nef reduction $f:\tilde{S}\bir T$ of the  nef $(1, 1)$-class $\nu^*(\alpha|_S)$ (see \cite[Theorem 3.19]{HP15}). Note that since $S$ is covered by a family of $\alpha$-trivial curves, the lifts of these curves on $\tilde{S}$ give a family of $\nu^*(\alpha|_S)$-trivial curves. Thus from the definition of nef dimension it follows that $\dim T\<1$, and in particular, $f:\tilde S\to T$ is a morphism (cf. \cite[2.4.4]{BCE02}).  
  We define the notation $n(\alpha)$ as the nef dimension of $\nu^*(\alpha|_S)$,
 i.e.,
	\[
		n(\alpha):=\nu _{\rm nef}(\nu^*(\alpha|_S))=\dim T\in \{0, 1\}
	\] where $\nu _{\rm nef}(\dots )$ denotes the nef dimension.
\end{notation}

\begin{theorem}\label{t-div1} 
	Let $(X,B)$ be a $\mbQ$-factorial dlt pair where $X$ is a compact K\"ahler $3$-fold. Let $R$ be a $(K_X+B)$-negative extremal ray of divisorial type supported by a  nef and big $(1, 1)$-class $\alpha$ such that $\alpha-(K_X+B)$ is K\"ahler and the nef dimension is $n(\alpha)=1$. Then the contraction $c_R:X\to Y$ of $R$ exists.
\end{theorem}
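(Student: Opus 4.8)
The plan is to realize $c_R$ as the analytic contraction of the covering family of $\alpha$-trivial curves on $S$ onto the base $T$ of the nef reduction. Since $n(\alpha)=1$, the nef reduction $f\colon\tilde S\to T$ is a morphism onto a normal curve $T$, and its general fibre $F_t$ is a smooth rational curve whose image $C_t\subset S$ satisfies $[C_t]\in R$; these curves sweep out $S$. The morphism we seek should be an isomorphism on $X\setminus S$ and should contract each $C_t$ to a point, carrying $S$ onto a curve isomorphic to $T$, so that $\rho(X)-\rho(Y)=1$.

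First I would determine the normal bundle of a general fibre. As $F_t$ is a moving fibre of a fibration over a curve, $N_{F_t/\tilde S}\cong\mcO_{F_t}$, so the general $C_t$ has zero self-intersection inside $S$. Because $R$ is of divisorial type, no curve of $R$ can leave $S$, so $C_t$ admits no first-order deformation transverse to $S$; this forces $\deg\bigl(N_{S/X}|_{C_t}\bigr)<0$, i.e.\ $S\cdot C_t<0$. From the exact sequence
\[
0\longrightarrow N_{C_t/S}\longrightarrow N_{C_t/X}\longrightarrow N_{S/X}|_{C_t}\longrightarrow 0
\]
one reads off that $C_t$ deforms only within $S$, in a one-parameter family over $T$, while its transverse normal direction is negative, which is precisely the local model of a divisorial contraction of $S$ onto the curve $T$.

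Next I would construct $Y$ and $c_R$ by a relative contraction criterion. Using the vanishing $H^0\bigl(N_{S/X}|_{C_t}\bigr)=0$ and the negativity of the transverse direction, a family version of Grauert's contractibility theorem---implemented through the analytic contraction techniques of \cite{HP16, CHP16}---produces a proper bimeromorphic morphism $c_R\colon X\to Y$ onto a normal complex space that contracts each $C_t$ to a point, maps $S$ onto a curve, and is an isomorphism over $X\setminus S$. One then checks that a curve $D\subset X$ is $c_R$-exceptional if and only if $\alpha\cdot D=0$, i.e.\ $[D]\in R$, so that $c_R$ is genuinely the contraction of the ray $R$.

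The main obstacle is verifying that the complex space $Y$ so produced is again a $\mathbb{Q}$-factorial compact K\"ahler $3$-fold with dlt singularities, so that the minimal model program can be continued. The K\"ahler property is the delicate point, since the projective semi-ampleness arguments are unavailable: I would descend a K\"ahler class by combining a K\"ahler class on $X$ with the $c_R$-trivial nef class $\alpha$ and invoking the methods for producing K\"ahler classes on contractions developed in \cite{HP16}. Preservation of $\mathbb{Q}$-factoriality---equivalently, that the Picard number drops by exactly one---and of the dlt condition then follows from the local structure of $c_R$ as a divisorial contraction to a curve together with the standard behaviour of discrepancies.
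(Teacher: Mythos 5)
There is a genuine gap, and it sits exactly at the step you dispose of in one sentence: the appeal to ``a family version of Grauert's contractibility theorem---implemented through the analytic contraction techniques of \cite{HP16, CHP16}.'' Those techniques are not available in the generality of Theorem \ref{t-div1}. The known contraction results for a divisorial ray with $n(\alpha)=1$ (Theorem \ref{t-pmmp}(3), i.e. \cite[Proposition 10.17]{DO22}, building on \cite[\S 7]{HP16} and \cite[Proposition 4.5]{CHP16}) require either that $X$ be terminal with $K_X\cdot C<0$, or that the surface $S$ swept out by $R$ have semi-log canonical singularities; removing these hypotheses is precisely the content of Theorem \ref{t-div1}, so citing them begs the question. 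The obstruction is concrete: the fibration structure exists only on the normalization, i.e. the nef reduction is a morphism $f:\tilde S\to T$, while $S$ itself may be non-normal and need not admit any morphism to $T$; Grauert--Fujiki-type relative contraction criteria require relative negativity of the conormal sheaf on $S$ itself, fibered over $T$, together with control of the singularities of $S$, and this is exactly what fails for a general dlt pair. Separately, your deduction that the absence of transverse deformations ``forces'' $\deg\bigl(N_{S/X}|_{C_t}\bigr)<0$ is not a valid implication (obstructed deformations are compatible with non-negative normal degree), although the conclusion $S\cdot R<0$ is true by \cite[Lemma 7.5]{HP16}.

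The paper avoids any direct contraction criterion. It passes to a log resolution $\mu:X'\to X$, sets $\Delta'=\mu^{-1}_*(B+(1-b)S)+\Ex(\mu)$, and runs a $\mu^*\alpha$-trivial $(K_{X'}+\Delta')$-MMP. The point of this choice is that every ray contracted by this MMP is negative on $\phi_{i,*}(\sum d_jE_j+dS')$, hence its locus lies in the boundary $\lfloor\Delta'_i\rfloor$, whose components are slc by dlt adjunction; therefore each step exists by the already established cases (Theorem \ref{t-pmmp} and Theorem \ref{t-flip}). The technical heart (Claim \ref{clm:coefficients-of-F} and the two claims following it) is then to show that this MMP contracts the strict transform $S'$ and all $\mu$-exceptional divisors, i.e. $\phi_{n,*}(\sum E_j+S')=0$, and finally that the induced bimeromorphic map $X\dasharrow X'_n$ is a morphism; that morphism is $c_R$, and $\mbQ$-factoriality and the K\"ahler property of the target come from the MMP steps, not from a local model. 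To salvage your approach you would need a contraction criterion valid for a possibly non-normal, non-slc surface $S$ with no morphism to $T$, which is not available.
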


 One of the main difficulties in proving a contraction theorem in the K\"ahler category is the lack of a base-point free theorem analogous to that of \cite[Theorem 3.3]{KM98} in the projective case. Note that, an exact analogue of \cite[Theorem 3.3]{KM98} is impossible on a compact K\"ahler variety which is not projective, since the existence of a big divisor on a compact K\"ahler variety with rational singularities implies that it is projective by \cite[Theorem 1.6]{Nam02} (see Theorem \ref{t-nam}). However, there is a base-point free conjecture in the K\"ahler category involving nef and big cohomology classes which can be thought of as an analogue of \cite[Theorem 3.3]{KM98}. This conjecture is stated in \cite[Conjecture 1.1]{H18} for manifolds.
\begin{conjecture}[Base point freeness]\label{con:bpf}
	Let $(X, B)$ be a klt pair, where $X$ is a normal $\mbQ$-factorial compact K\"ahler variety, and $\alpha\in H^{1, 1}_{\rm BC}(X)$ a nef class on $X$. If $\alpha-(K_X+B)$ is nef and big, then there exists a proper surjective morphism with connected fibers $f:X\to Y$ to a normal compact K\"ahler variety $Y$ with rational singularities and a K\"ahler class $\alpha_Y\in H^{1, 1}_{\rm BC}(Y)$ such that $\alpha=f^*\alpha_Y$. 
\end{conjecture}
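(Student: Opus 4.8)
The plan is to realize the desired morphism as the contraction associated to the nef reduction of $\alpha$, using the extra positivity $L:=\alpha-(K_X+B)$ (nef and big) to drive vanishing and to force $(K_X+B)$-negativity. First, if $\alpha$ is already a K\"ahler class we take $f=\mathrm{id}_X$ and $\alpha_Y=\alpha$, so by the Demailly--P\u{a}un numerical characterization of the K\"ahler cone we may assume that $\alpha$ is nef but not K\"ahler; equivalently, the null locus
\[
\Null(\alpha)=\bigcup\Big\{\,V\subseteq X \ \text{irreducible},\ \dim V\>1 \ :\ \textstyle\int_V\alpha^{\dim V}=0\,\Big\}
\]
is nonempty (it is all of $X$ exactly when $\alpha$ fails to be big). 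The morphism $f$ we seek must contract precisely $\Null(\alpha)$, so the entire problem is to produce a proper contraction of this locus to which $\alpha$ descends as a K\"ahler class.

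Second, I would take the nef reduction $g:X\dashrightarrow Z$ of $\alpha$ (via \cite{BCE02}), an almost holomorphic fibration whose positive-dimensional fibers are swept out by $\alpha$-trivial curves $C$. For any such curve, nefness of $L$ gives $(K_X+B)\cdot C=\alpha\cdot C-L\cdot C=-L\cdot C\<0$, so the fibers of $g$ are covered by $(K_X+B)$-nonpositive curves; bigness of $L$ yields $L\cdot C>0$, and hence strict $(K_X+B)$-negativity, for the curves sweeping out a component of $\Null(\alpha)$ on which $L$ restricts big (the remaining curves lie in the lower-dimensional locus $\Null(L)$ and must be treated separately). The $(K_X+B)$-negative extremal rays contained in these families are then contracted by the contraction theorems already available for K\"ahler $3$-folds: the divisorial contractions of Theorem~\ref{t-div1} and its non-pseudo-effective analogue Theorem~\ref{c-cont}, together with the flips and point contractions of Theorem~\ref{t-pmmp}. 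Running this relative MMP over $Z$ and taking a Stein factorization should upgrade the almost holomorphic $g$ to an honest proper surjective morphism $f:X\to Y$ with connected fibers, contracting exactly the $\alpha$-trivial curves.

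Third, once $f:X\to Y$ is a genuine morphism, I would establish rational singularities of $Y$ and the descent of $\alpha$. Because $L$ is nef and big, Takegoshi's analytic Kawamata--Viehweg vanishing and the associated Koll\'ar-type torsion-freeness give $R^if_*\mcO_X=0$ for $i>0$ and $f_*\mcO_X=\mcO_Y$, so $Y$ is a normal compact K\"ahler variety with rational singularities. Since $\alpha$ is numerically trivial on every fiber of $f$, a descent argument for Bott--Chern $(1,1)$-classes along a fibration with connected fibers and rational singularities produces $\alpha_Y\in H^{1,1}_{\rm BC}(Y)$ with $\alpha=f^*\alpha_Y$. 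Finally $\alpha_Y$ is nef, and since $f$ contracts exactly $\Null(\alpha)$ one checks via the projection formula that $\int_W\alpha_Y^{\dim W}>0$ for every positive-dimensional irreducible $W\subseteq Y$; hence $\alpha_Y$ is K\"ahler by Demailly--P\u{a}un, completing the construction.

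The main obstacle is the passage from the almost holomorphic nef reduction to a bona fide morphism, together with the descent $\alpha=f^*\alpha_Y$. In the projective setting both are immediate from $Y=\operatorname{Proj}\bigoplus_m H^0(X,mD)$, but with no ample line bundle at hand one must build $Y$ and the class $\alpha_Y$ directly in the analytic category: controlling the indeterminacy of $g$, proving that the quotient is K\"ahler with rational singularities, and descending a Bott--Chern class (rather than sections of a line bundle) all require the full strength of the K\"ahler vanishing package and of the already-established extremal contractions. It is precisely this gap that keeps the statement at the level of a conjecture in arbitrary dimension; the argument above is expected to go through unconditionally only once all the requisite contractions are in place, which in the present paper is the case in dimension $3$.
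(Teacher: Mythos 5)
Your proposal correctly identifies the high-level shape of the problem (contract the $\alpha$-trivial locus, descend the class, verify K\"ahlerness numerically), but the central constructions are either missing or rest on a tool that cannot do the work. The most serious issue is your use of the nef reduction of $\alpha$ as the organizing device. When $\alpha$ is nef and big, its nef dimension equals $\dim X$, so the nef reduction $g$ is generically finite: it has no positive-dimensional fibers, provides no fibration structure, and there is no ``relative MMP over $Z$'' to run (indeed an MMP relative to an almost holomorphic map is not even defined, and Stein factorization presupposes the morphism you are trying to build). What the paper actually does in this case (Theorem \ref{thm:birational-contraction}) is run an \emph{absolute} $\alpha$-trivial $(K_X+B)$-MMP --- which requires the cone and contraction theorems both when $K_{X_i}+B_i$ is pseudo-effective (Theorems \ref{t-cone}, \ref{t-pmmp}, \ref{t-div1}) and when it is not (Theorem \ref{t-cone-mrc}, Corollary \ref{cor:cone-finite}, Theorem \ref{c-cont}) --- then show that the residual locus $\Null(\alpha_n)$ is a finite union of curves, contract it by the analytic result Proposition \ref{pro:null-locus-contraction}, and finally prove by a rigidity-lemma induction that the composite bimeromorphic map is holomorphic. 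None of this is supplied by your second step. Relatedly, the curves you set aside as lying in $\Null(L)$ (``treated separately'') are exactly where the paper needs the modified K\"ahler class machinery: $\omega$ is replaced by a modified K\"ahler class (Lemma \ref{lem:nef-and-big-to-modified-kahler}), whose restriction to \emph{every} surface is big (Corollary \ref{cor:modified-kahler}) and which remains modified K\"ahler along the MMP (Lemma \ref{l-mod}); this is what excludes surfaces from $\Null(\alpha_n)$ and makes the final contraction possible. Omitting it leaves the argument unable to even start the null-locus contraction.

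The non-big case has a parallel gap. There the fibration is not obtained from the nef reduction of $\alpha$ either, but from the MRC fibration and its Chow family (Theorems \ref{thm:log-contraction-terminal-pair} and \ref{thm:fano-contraction}), and the descent $\alpha = f^*\alpha_Y$ is not a formal consequence of ``connected fibers plus rational singularities'': it comes from the Leray spectral sequence argument ($R^1q_*\mathbb{R}=0$ for the family of rational curves, Step 1 of the proof of Theorem \ref{thm:log-contraction-terminal-pair}), followed by descent along the bimeromorphic surface map via Lemma \ref{l-HP} and injectivity of $p^*$. Your appeal to a general descent lemma for Bott--Chern classes along fibrations is unsupported, and the same is true of the final K\"ahlerness criterion on the singular base, which in the paper is Lemma \ref{lem:nef-and-big-kahler} (proved via Siu's theorem on Lelong sublevel sets and \cite{DP04}), not a direct application of Demailly--P\u{a}un. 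In short, you have reproduced the statement's expected architecture, and you candidly flag the passage from almost holomorphic map to morphism as the main obstacle --- but that passage, together with the modified K\"ahler class technology and the class-descent arguments, \emph{is} the proof; deferring it to ``the requisite contractions being in place'' leaves the proposal as a plan rather than an argument.
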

As an application of Theorems \ref{t-main} and \ref{t-main2} and other related results we prove this conjecture when $X$ has dimension 3. 
\begin{theorem}\label{thm:bpf}
	Let $(X, B)$ be a log pair, where $X$ is a normal $\mbQ$-factorial compact K\"ahler $3$-fold, and $\alpha\in H^{1,1}_{\rm BC}(X)$ a nef class. Assume that one of the following conditions is satisfied:
	\begin{enumerate}[label=(\roman*)]
		\item $(X, B)$ is klt and $\alpha-(K_X+B)$ is nef and big, or
		\item $(X, B)$ is dlt and $\alpha-(K_X+B)$ is a K\"ahler class.
 	\end{enumerate} 
Then there exists a proper surjective morphism with connected fibers $\psi:X\to Z$ to a normal 
K\"ahler variety $Z$ with rational singularities and a K\"ahler class $\alpha_Z\in H^{1, 1}_{\rm BC}(Z)$ on $Z$ such that $\alpha=\psi^*\alpha_Z$. In particular, in case (ii), $\psi$ is a projective morphism.
\end{theorem}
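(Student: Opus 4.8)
The plan is to deduce the base point free statement from the minimal model program established in Theorems \ref{t-main} and \ref{t-main2}, by realizing $\psi$ as the contraction of the extremal face cut out by $\alpha$. First I would reduce case (i) to the Kähler setting of case (ii). Since $\omega := \alpha-(K_X+B)$ is nef and big, a Kodaira-type lemma on the $\mbQ$-factorial Kähler $3$-fold $X$ lets me write $\omega \equiv \theta + E$ with $\theta$ Kähler and $E\geq 0$ an effective $\mbR$-divisor of arbitrarily small coefficients; replacing $B$ by $B'=B+E$ keeps $(X,B')$ klt and makes $\alpha-(K_X+B')\equiv \theta$ Kähler. Thus in both cases I may assume $\omega=\alpha-(K_X+B)$ is Kähler. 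The key observation is then that every curve $C$ with $\alpha\cdot C=0$ satisfies $(K_X+B)\cdot C=-\omega\cdot C<0$, because a Kähler class is strictly positive on every curve. Hence the face $F:=\alpha^{\perp}\cap\NA(X)$ lies entirely in the $(K_X+B)$-negative part of the cone, and intersecting with the compact slice $\{\omega\cdot z=1\}$, on which $K_X+B$ takes the constant value $-1$, the cone theorem for Kähler $3$-folds shows only finitely many extremal rays meet $F$; so $F$ is a rational polyhedral $(K_X+B)$-negative extremal face.

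Next I would construct $\psi$ by contracting the rays of $F$ one at a time, inducting on $\rho(X)$. Choose an extremal ray $R\subset F$; it is $(K_X+B)$-negative, so by the contraction theorems (Theorem \ref{t-div1} together with Theorem \ref{c-cont} and the contractions to a point from \cite{HP16,HP15,CHP16,DO22}) its contraction $c_R:X\to X'$ exists. Because $\alpha\cdot R=0$ we have $\alpha=c_R^*\alpha'$ for a nef class $\alpha'$ on $X'$, so $\alpha$ descends at each step. If $c_R$ is divisorial, then $(X',B')$ is again $\mbQ$-factorial dlt, $\rho$ drops by one, and writing $K_X+B=c_R^*(K_{X'}+B')+aE$ with $a>0$ one finds $\alpha'-(K_{X'}+B')$ is nef and positive on every curve of $X'$; verifying that it remains a \emph{Kähler} class is one of the points to check, after which the inductive hypothesis applies to $(X',B')$ with the face $F'=(\alpha')^{\perp}$. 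If $c_R$ is of fiber type I take $Z=X'$ as the terminal case (using that $\alpha$ is $\psi$-trivial along the fibration). Composing these elementary contractions yields $\psi:X\to Z$ with $\alpha=\psi^*\alpha_Z$ for a nef class $\alpha_Z$ on a normal Kähler variety $Z$.

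It then remains to verify the finer properties of $\psi$ and $Z$. By construction $\psi$ contracts precisely the curves on which $\alpha$ vanishes, so $\alpha_Z$ is positive on every curve of $Z$; I would upgrade this to a genuine Kähler class using the nef reduction machinery of Notation \ref{not:nef-reduction} together with \cite[Theorem 3.19]{HP15} and \cite[2.4.4]{BCE02}. That $Z$ has rational singularities follows since it is obtained from $X$ by dlt/klt extremal contractions, which preserve rational singularities. Finally, for the projectivity asserted in case (ii): since $\alpha=\psi^*\alpha_Z$ is $\psi$-trivial while $\omega$ is Kähler, the class $-(K_X+B)=\omega-\alpha$ is $\psi$-ample, and hence $\psi$ is a projective morphism.

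\textbf{Main obstacle.} The delicate point is the treatment of small (flipping-type) rays $R\subset F$. For such a ray the contraction $c_R:X\to\bar X$ is small, $\bar X$ is no longer $\mbQ$-factorial, and one cannot flip: after a flip $\alpha$ would cease to descend and $\omega$ would cease to be nef (on the flipped ray $R^{+}$ one has $(K_{X^{+}}+B^{+})\cdot R^{+}>0$, forcing $\omega^{+}\cdot R^{+}<0$). Thus the induction cannot proceed naively through small rays, and the real work is to contract all of $F$ — including any simultaneous small rays — in a single morphism while keeping $Z$ normal Kähler with rational singularities and $\alpha_Z$ Kähler. I expect this step, namely constructing and controlling the simultaneous contraction of a face containing small rays and establishing the analytic positivity of $\alpha_Z$ and the rationality of the singularities of $Z$ in the non-projective Kähler category — where the algebraic vanishing techniques behind \cite[Theorem 3.3]{KM98} are unavailable — to be the principal difficulty.
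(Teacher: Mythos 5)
Your proposal has the right flavor for the big case (an $\alpha$-trivial MMP followed by a final contraction), but it contains two genuine gaps, one of which you flag yourself but misdiagnose. First, the opening reduction of case (i) to a K\"ahler class $\omega$ is false: in the K\"ahler category a nef and big class need not decompose as (K\"ahler class)$\,+\,$(effective divisor); Kodaira's lemma is a projective phenomenon. The paper's Lemma \ref{lem:nef-and-big-to-modified-kahler} produces only a \emph{modified} K\"ahler class plus an effective $\mbQ$-divisor, and this weaker positivity is what the whole argument must be built around. The same problem recurs inside your induction: even if $\omega$ were K\"ahler on $X$, after one divisorial contraction or flip its pushforward is only modified K\"ahler (Lemma \ref{l-mod}), never K\"ahler, so the hypothesis of your case (ii) cannot be restored on $X'$; ``verifying that it remains a K\"ahler class'' is not a point to check but is generally false. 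Relatedly, quoting ``the cone theorem for K\"ahler $3$-folds'' to get finitely many rays on the face $F$ hides a real issue: when $K_X+B$ is not pseudo-effective this cone theorem (Theorem \ref{t-cone-mrc}, Corollary \ref{cor:cone-finite}) is one of the main results of the paper, requires the MRC fibration to have a $2$-dimensional base and $\omega$ modified K\"ahler, and is not available off the shelf.

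Second, your ``main obstacle'' is misdiagnosed, and the paper's resolution is different from what you anticipate. An $\alpha$-trivial flip does \emph{not} destroy the descent of $\alpha$: since the flipping contraction $g:X_i\to Z$ contracts only $\alpha_i$-trivial curves, one has $\alpha_i=g^*\alpha_Z$ by \cite[Proposition 3.1, eqn. (5)]{CHP16}, and then $\alpha_{i+1}:=(g^+)^*\alpha_Z$ is again nef and big on the flip. What fails is only the nefness of $\omega$, and the proof of Theorem \ref{thm:birational-contraction} works around exactly this point (nefness of $\omega_i$ enters Theorem \ref{c-cont} only to give $(K_{X_i}+B_i)\cdot\Gamma_{j_0}<0$, which here follows from $\alpha_i\cdot\Gamma_{j_0}=0$ and $\omega_i\cdot\Gamma_{j_0}>0$). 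So the paper runs the $\alpha$-trivial MMP \emph{including flips}; when it terminates, the surviving $\alpha_n$-trivial curves are $(K_{X_n}+B_n)$-non-negative, are shown via Lemma \ref{lem:null-locus} and \cite{CT15} to form $\Null(\alpha_n)$, a finite union of curves containing no surface, and are then contracted simultaneously by Proposition \ref{pro:null-locus-contraction}; K\"ahlerness of $\alpha_Z$ comes from the Demailly--P\u{a}un type criterion of Lemma \ref{lem:nef-and-big-kahler}, not from nef reduction. Finally, your treatment of the non-big case (``if $c_R$ is of fiber type I take $Z=X'$'') is inadequate: when $\alpha$ is not big the paper does not contract a single fiber-type ray, but constructs the fibration to a surface through the MRC fibration, normalized classes, Corollary \ref{cor:partial-mfs} and Theorem \ref{thm:fano-contraction}, and then proves separately that $\alpha$ descends to a K\"ahler class on that surface.
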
 
 
  When $X$ has terminal singularities, $B=0$ and $\alpha-K_X$ is a K\"ahler class, this theorem is proved in \cite[Theorem 1.3]{H18} (when $\alpha$ is both nef and big) and in \cite[Theorem 2.7]{TZ18} (when $\alpha$ is nef but not big). In fact our proof is a direct generalization of the techniques in \cite{H18} and \cite{TZ18} using more general MMP results such as Theorems \ref{t-main} and \ref{t-main2}.\\

This article is organized in the following manner. In Section 2 we collect some important technical results which are used throughout the article. In Section 3 we prove Theorems \ref{t-div1} and \ref{t-main} under the additional hypothesis that $X$ has \emph{strongly $\mbQ$-factorial} singularities. Section 4 is dedicated to developing the cone and contraction theorems for non-pseudo-effective pairs; the contraction theorem is again proved under the additional hypothesis that $X$ has strongly $\mbQ$-factorial singularities. These results are then used in Section 5 to prove some important technical results related to the existence of Mori fiber spaces. The main result of this section is Theorem \ref{thm:fano-contraction}, which is a special case of Theorem \ref{thm:bpf}, namely the case when $K_X+B$ is not pseudo-effective. In Section 6 we prove all the main theorems in full generality, namely Theorems \ref{t-main}, \ref{t-main2}, \ref{t-div1} and \ref{thm:bpf}.\\


\noindent
\textbf{Acknowledgment.} The authors would like to thank S\'ebastien Boucksom, Ved Datar, Andreas H\"oring, Sabyasachi Mukherjee, Mihai P\u{a}un, Valentino Tosatti, and Mingchen Xia for answering their questions and the anonymous referees for their many corrections and suggestions to improve this paper.  


\section{Preliminaries} 
An \emph{analytic variety} or simply a \emph{variety} is an irreducible and reduced complex space. 
A \emph{pair} $(X,B)$ consists of a normal analytic variety $X$ and an effective $\mbQ$-divisor $B\geq 0$ such that $K_X+B$ is $\mbQ$-Cartier. We define the singularities of the pair $(X, B)$ as in \cite{KM98}.  If $B$ is not assumed to be effective, then we will call $(X,B)$ a sub-pair and the corresponding singularities of $(X, B)$  sub-klt, sub-dlt, etc.

\begin{definition}\cite[Definition 2.2]{HP16}\label{def:kahler-space}
	 An analytic variety $X$ is  \emph{K\"ahler} or a \emph{K\"ahler space} if there exists a positive closed real $(1, 1)$-form $\omega\in\mcA_{\mbR}^{1, 1}(X)$ such that the following holds: for every point $x\in X$ there exists an open neighborhood $x\in U$ and a closed embedding $\iota_U:U\to V$ into an open set $V\subset\mbC^N$, and a strictly plurisubharmonic $\mathcal C^\infty$-function $f:V\to\mbR$ such that $\omega|_{U\cap X_{\sm}}=(i\del\bar{\del}f)|_{U\cap X_{\sm}}$. Here $X_\textsubscript{sm}$ is the smooth locus of $X$.
	
\end{definition}

In the following we collect some important definitions. For a more detailed discussion, we encourage the reader to consult \cite{HP16, HP15, CHP16} and the references therein.
\begin{definition}\label{def:lots-of-definitions}
	
\begin{enumerate}[label=(\roman*)]
	\item\label{item:c-manifolds} A compact analytic variety $X$ is said to belong to \emph{Fujiki's class} $\mcC$ if one of the following equivalent conditions are satisfied:
	\begin{enumerate}
		\item $X$ is a meromorphic image of a compact K\"ahler variety $Y$, i.e., there exists a dominant meromorphic map $f:Y\bir X$ from a compact K\"ahler variety $Y$  (see \cite[4.3, page 34]{Fuj78}).
		\item $X$ is a holomorphic image of a compact K\"ahler manifold, i.e., there is a surjective morphism $f:Y\to X$ from a compact K\"ahler manifold $Y$ (see \cite[Lemma 4.6]{Fuj78}).
		\item $X$ is bimeromorphic to a compact K\"ahler manifold (see \cite[Theorem 3.2, page 51]{Var89}).\\
	\end{enumerate}
	
	\item On a normal compact analytic variety $X$ we replace the use of N\'eron-Severi group $\NS(X)_{\mbR}$ by $H^{1, 1}_{\rm BC}(X)$, the Bott-Chern cohomology of real closed $(1, 1)$-forms with local potentials or equivalently, the closed bi-degree $(1, 1)$-currents with local potentials. See \cite[Definition 3.1 and 3.6]{HP16} for more details. More specifically, we define
\[ N^1(X):=H^{1,1}_{\rm BC}(X).\]

	\item If $X$ is in Fujiki's class $\mcC$ and has \emph{rational singularities}, then from \cite[Eqn. (3)]{HP16} we know that $N^1(X)=H^{1, 1}_{\rm BC}(X)\subset H^2(X,\mathbb R)$. In particular, the intersection product can be defined in $N^1(X)$ via the cup product of $H^2(X, \mbR)$.
	
	\item Let $X$ be a normal compact analytic variety contained in Fujiki's class $\mcC$. We define $N_1(X)$ to be the vector space of real closed currents of bi-dimension $(1, 1)$ modulo the following equivalence relation: $T_1\num T_2$ if and only if 
	\[
		T_1(\eta)=T_2(\eta)
	\]
for all real closed $(1, 1)$-forms $\eta$ with local potentials.

	\item We define $\NA(X)\subset N_1(X)$ to be the closed cone generated by the classes of positive closed currents $\Theta \geq 0$ (see \cite[\S 1.C]{Dem12}). The Mori cone $\NE(X)\subset\NA(X)$ is defined as the closure of  the cone of currents of integration $T_C$, where $C\subset X$ is an irreducible curve. 
	
	\item Let $X$ be a normal compact analytic variety and $u\in H^{1,1}_{\rm BC}(X)$. Then $u$ is called \emph{pseudo-effective} if it can be represented by a bi-degree $(1, 1)$-current $T\in \mathcal D ^{1,1}(X)$ which is locally of the form $\partial \bar \partial f$ for some psh function $f$. $u$ is called \emph{nef} if it can be represented by a form $\alpha$ with local potentials such that for some positive $(1,1)$-form $\omega $ on $X$ and for every $\epsilon >0$, there exists a $\mathcal C ^\infty$-function $f_\epsilon\in\mcA^0(X)$ such that $\alpha + i\partial \bar \partial f_\epsilon \geq -\epsilon \omega$. See \cite{Dem85} for more details. 
	
	\item The {\it nef cone} ${\rm Nef}(X)\subset N^1(X)$ is the cone generated by nef cohomology classes. Let $\mcK$ be the open cone in $N^1(X)$ generated by the classes of K\"ahler forms. Note that the nef cone $\Nef(X)$ is the closure of $\mcK$, i.e. $\Nef(X)=\overline{\mcK}$. 
	
\item We say that a variety $X$  is $\mbQ${\it -factorial} if for every Weil divisor $D\subset X$, there is a positive integer $k>0$ such that $kD$ is a Cartier divisor, and for the canonical sheaf $\omega_X$, there is a positive integer $m>0$ such that $(\omega _X^{\otimes m})^{**}$ is a line bundle. It is well known that if $X$ is a $\mbQ$-factorial 3-fold and $X\dasharrow X'$ is a flip or a divisorial contraction, then $X'$ is also $\mbQ$-factorial.

\item\label{d-sqf} If $X$ is a normal variety then we say that a coherent sheaf $\mathcal L$ is {\it divisorial} if it is reflexive of rank 1.
If $U\subset X$ is Stein, then $\mathcal L |_U\cong \OO _U(D)$ for some Weil divisor $D$ on $U$.
We say that a divisorial sheaf $\mathcal L$ is $\Q$-{\it Cartier} (or a $\Q$-{\it line bundle}) if $(\mathcal L^{\otimes m})^{**}$ is a line bundle for some $m\in\mbN$.
We will say that a variety $X$  is {\it strongly} $\mbQ${\it -factorial} if every divisorial sheaf $\mathcal L$ is a $\mbQ$-line bundle. Note that a complex manifold is an example of a strongly $\mbQ$-factorial variety (see Lemma \ref{lem:manifolds-are-strongly-Q-factorial}).

\end{enumerate}

\end{definition}

\begin{lemma}\label{lem:manifolds-are-strongly-Q-factorial}
  If $X$ is a complex manifold, then every reflexive rank $1$ sheaf on $X$ is a line bundle. In particular, $X$ is strongly $\mbQ$-factorial.   
\end{lemma}

\begin{proof}
    Let $\mcL$ be a reflexive rank $1$ sheaf on $X$. Since $X$ is a manifold, from \cite[Corollary V.5.20, page 160]{Kob87} it follows that there is an analytic subset $Z\subset X$ such that $\mcL|_{X\setminus Z}$ is a line bundle and $\codim_X(Z)\>3$. 
    Then from \cite[Theorem 4]{Har74} it follows that $\Pic(X)\cong \Pic(X\setminus Z)$. Thus $\mcL|_{X\setminus Z}$ extends to an unique line bundle on $X$, say $\mcM$. But since $\mcL$ is a reflexive sheaf, $\mcL|_{X\setminus Z}\cong\mcM|_{X\setminus Z}$ and $\codim_X(Z)\>2$, it follows that $\mcL\cong \mcM$ is a line bundle. 
\end{proof}

\begin{lemma}\label{c-small}
    Let $f :Y\to X$ be a small bimeromorphic projective morphism of analytic varieties such that  $X$ is strongly $\Q$-factorial. Then $f$ is an isomorphism.
\end{lemma}
\begin{proof}
Let $\mathcal L$ be a relatively ample line bundle. Since $X$ is strongly $\Q$-factorial, $(f_*\mathcal L)^{**}$ is $\Q$-Cartier and so $\mathcal M:=((f_*\mathcal L)^{\otimes m})^{**}$ is a line bundle for some integer $m>0$.
Since $f$ is small, there is an open subset $U\subset Y$ such that $\codim_Y(Y\setminus U)\>2$ and $f|_U$ is an isomorphism. Then $(f^*\mathcal M)|_U\cong \mathcal L^{\otimes m}|_U$, and so $f^* \mathcal M\cong \mathcal L^{\otimes m}$ is a relatively ample line bundle. Thus $f$ is an isomorphism.
\end{proof}

\begin{lemma}\label{l-sqfmmp}
    Let $X$ be a strongly $\mbQ$-factorial compact analytic variety, $(X,B)$ a dlt pair, and $f:X\dasharrow X'$ a $(K_X+B)$-flip or divisorial contraction. Then $X'$ is also strongly $\mbQ$-factorial.
\end{lemma}
\begin{proof}
    This follows easily from the base point free theorem \cite[Theorem 4.8]{Nak87}. We may assume that $(X,B)$ is klt. If $f:X\to X'$ is a $(K_X+B)$-negative divisorial contraction with exceptional divisor $E$, then for any divisorial sheaf $\mathcal L$ on $X'$, let $\mathcal M=(f^*\mathcal L)^{**}$. Then $\mcM$ is a divisorial sheaf on $X$. Recall that by assumption $X$ is strongly $\Q$-factorial, $\rho(X/X')=1$ and $E$ generates $N^1(X/X')$. Thus $\mathcal M$ is $\mbQ$-Cartier and  there are integers $m>0$ and $n\in\mbZ$  such that 
$(\mathcal M^{\otimes m})^{**}\otimes \OO _X(nE)$ is a $f$-numerically trivial line bundle. Working locally over $X'$, it follows from the base point free theorem \cite[Theorem 4.8]{Nak87} that $(\mathcal M^{\otimes m})^{**}\otimes \OO _X(nE)\cong f^*\mathcal M'$ for some line bundle $\mathcal M'$ on $X'$.
Let $U:=X'\setminus (f(E)\cup X'_{\sing})$ and $j:U\to X'$ be the inclusion, then $U\subset X'$ is a big open subset, i.e. the complement of a finite union of analytic subvarieties of codimension at least 2. Then 
$(\mathcal L^{\otimes m})^{**}\cong j_* ( (\mathcal L|_U)^{\otimes m})\cong j_* (\mathcal M'|_U)\cong \mathcal M'$ and so $\mathcal L$ is $\Q$-{\it Cartier} as required. 

A similar argument shows that $X'$ is strongly $\mbQ$-factorial when $f:X\bir X'$ is a flip. 
\end{proof}
\begin{remark}\label{rmk:nefness}
	Note that if $D$ is a $\mbQ$-Cartier divisor on a variety $X$, then in algebraic geometry we say that $D$ is nef if $D\cdot C\>0$ for every irreducible curve $C\subset X$. However the cohomology class of the current of integration of $D$ is not necessarily nef in the sense defined in Definition \ref{def:lots-of-definitions} (vi). To avoid this sort of confusion, temporarily we will call a $\mbQ$-Cartier divisor $D$ \emph{algebraically nef} if $D\cdot C\>0$ for every irreducible curve $C\subset X$, and $D$ is \emph{analytically nef}  if the cohomology class of the current of integration associated to $D$ is nef in the sense of Definition \ref{def:lots-of-definitions} (vi).  We remark that if $D=\sum d_iD_i, d_i\in\mbQ$, is a linear combination of effective Cartier divisors $D_i$, then the corresponding current of integration is given by $\int _D\ldots =\sum d_i\int _{D_i}\ldots$
 
 Note that if $X$ is a compact K\"ahler variety, then analytically nef implies algebraically nef but the converse is not true in general, see \cite[page 385]{HP17} for a counterexample. However, the following two lemmas show that these two versions of nefness are equivalent in important cases: (i) $X$ is a Moishezon space (see Lemma \ref{lem:nef-moishezon}), and (ii) $X$ is a K\"ahler $3$-fold and $D$ is an adjoint divisor (see Lemma \ref{lem:adjoint-nef}).
\end{remark}

\begin{lemma}\label{lem:nef-moishezon}\cite[Corollary 1, page 418]{Pau98}
	Let $X$ be a normal compact Moishezon variety, i.e. $X$ is bimeromorphic to a projective variety. Let $D$ be a $\mbQ$-Cartier divisor on $X$. Then $D$ is analytically nef if and only if it is algebraically nef.
\end{lemma}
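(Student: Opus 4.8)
The plan is to prove the two implications separately, observing that only one direction uses the Moishezon hypothesis. Throughout we may replace $D$ by a Cartier multiple $mD$, since both notions of nefness are invariant under scaling by positive rationals; thus we treat $D$ as a line bundle with Bott--Chern class $\alpha:=c_1(\mathcal{O}_X(D))\in H^{1,1}_{\rm BC}(X)$, and we fix a positive $(1,1)$-form $\omega$ on $X$.

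First I would dispose of the implication ``analytically nef $\Rightarrow$ algebraically nef,'' which holds on an arbitrary compact analytic variety and needs no Moishezon assumption. Let $C\subset X$ be an irreducible curve with normalization $\nu:\tilde{C}\to C\hookrightarrow X$. By analytic nefness, for each $\epsilon>0$ there is a smooth $f_\epsilon$ with $\alpha+i\partial\bar\partial f_\epsilon\geq -\epsilon\omega$. Pulling this inequality back by $\nu$ and integrating over the compact Riemann surface $\tilde{C}$, the term $\int_{\tilde C}\nu^*(i\partial\bar\partial f_\epsilon)=\int_{\tilde C}i\partial\bar\partial(\nu^* f_\epsilon)$ vanishes by Stokes's theorem, leaving $D\cdot C=\int_{\tilde C}\nu^*\alpha\geq -\epsilon\int_{\tilde C}\nu^*\omega$. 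Letting $\epsilon\to 0$ gives $D\cdot C\geq 0$.

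The substantive direction is ``algebraically nef $\Rightarrow$ analytically nef,'' for which I would pass to a resolution. Since $X$ is Moishezon it is bimeromorphic to a projective variety $W$; resolving the indeterminacy of $X\bir W$ and the singularities simultaneously, I obtain a smooth projective variety $X'$ together with a bimeromorphic morphism $\pi:X'\to X$ (projectivity of $X'$ follows since it admits a projective birational morphism to the projective $W$). The argument then runs through three steps. (1) The class $\pi^*\alpha$ is algebraically nef on $X'$: for any irreducible curve $C'\subset X'$ the projection formula gives $\pi^* D\cdot C'=D\cdot\pi_* C'$, which is $0$ if $C'$ is $\pi$-contracted and a nonnegative multiple of some $D\cdot C$ otherwise. (2) On the smooth projective $X'$ the two notions of nefness coincide: by Kleiman's criterion $\pi^*\alpha$ lies in the closure of the ample cone, ample classes are represented by K\"ahler (Hodge) forms via Kodaira, and on a compact K\"ahler manifold the analytic nef cone is exactly $\overline{\mcK}$ as recalled in Definition \ref{def:lots-of-definitions}; hence $\pi^*\alpha$ is analytically nef on $X'$. (3) Finally one descends analytic nefness from $X'$ back to $X$.

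The main obstacle is precisely step (3), the descent of analytic nefness along the bimeromorphic morphism $\pi$; this is the heart of Paun's theorem. I would argue at the level of currents: fix a K\"ahler form $\omega'$ on $X'$. Analytic nefness of $\pi^*\alpha$ supplies, for each $\epsilon>0$, a closed $(1,1)$-current $T_\epsilon\in\pi^*\alpha$ with $T_\epsilon\geq -\epsilon\omega'$. The pushforward $\pi_* T_\epsilon$ is a closed $(1,1)$-current on $X$ whose class is $\pi_*\pi^*\alpha=\alpha$ (using that $\pi$ is bimeromorphic with $X$ normal, so $\pi_*\pi^*=\mathrm{id}$ on Bott--Chern classes), and pushforward preserves positivity. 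The delicate points---where Paun's analysis is genuinely needed---are to bound $\pi_*\omega'$ by a fixed multiple of $\omega$ up to terms absorbable as $\epsilon\to 0$, and to replace the resulting currents by the genuine smooth local potentials demanded by Definition \ref{def:lots-of-definitions}, controlling the contribution of the exceptional locus. Once step (3) is secured, combining it with (1) and (2) shows that $\alpha$ is analytically nef on $X$, which closes the equivalence.
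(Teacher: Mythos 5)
Your proposal is, in substance, the same justification the paper itself intends: the paper does not prove this lemma at all, but quotes it verbatim from \cite[Corollary 1, Page 418]{Pau98}, and Remark \ref{rmk:R-nef-moishezon} indicates that the (more general, $\mbR$-Cartier) statement is obtained exactly along your route --- pass to a smooth projective model dominating $X$, settle the projective case there, and descend analytic nefness along the resolution using \cite[Theorem 1, Page 412]{Pau98} (with \cite[Corollary 0.2]{DP04} playing the role of your Kleiman--Kodaira step). Your easy direction (Stokes on the normalization of a curve) and your steps (1)--(2) are correct, and your step (3) is deferred to Paun, which is the same external input the paper relies on; to that extent there is no gap beyond what the paper itself leaves to the literature.

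One substantive warning about the heuristic you offer for step (3): it would fail if taken literally. Pushing forward $T_\epsilon \geq -\epsilon\omega'$ gives $\pi_*T_\epsilon \geq -\epsilon\,\pi_*\omega'$, but $\pi_*\omega'$ is \emph{not} dominated by any fixed multiple of $\omega$: it is a closed positive current with strictly positive Lelong numbers along $\pi(\Ex(\pi))$ (the fiber integrals of $\omega'$ over contracted subvarieties concentrate mass there), whereas any current bounded above by $C\omega$ has vanishing Lelong numbers. Consequently the pushforward argument only shows that $\alpha+\epsilon\,\pi_*[\omega']$ is pseudo-effective for every $\epsilon>0$, hence that $\alpha$ is pseudo-effective --- strictly weaker than nef. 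The genuine content of Paun's theorem is precisely that nefness, unlike what naive pushforward yields, does descend along a bimeromorphic morphism; his proof goes through the restriction of the class to the fibers of $\pi$ and Demailly-type regularization with control of Lelong numbers, not through $\pi_*$. Since you explicitly cite Paun for this step, your proof is sound as a proof-modulo-citation, matching the paper; just do not mistake the pushforward sketch for the actual argument.
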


\begin{remark}\label{rmk:R-nef-moishezon}
	Note that one can prove the above lemma more generally for $\mbR$-Cartier divisors by passing to a resolution of singularities $\pi:\widetilde{X}\to X$ such that $\widetilde{X}$ is a projective manifold and then use \cite[Corollary 0.2]{DP04}, \cite[Theorem 1, page 412]{Pau98} and \cite[Lemma 2.38]{DHP22}.
\end{remark}
The proof of the next lemma relies on the MRC fibration introduced in \cite{KMM92} and \cite{Cam92}; see also \cite[Remark 6.10]{CH20} for remarks on the analytic version of the MRC fibration. 

\begin{lemma}\label{lem:adjoint-nef}
	Let $(X, B)$ be a dlt pair, where $X$ is a $\mbQ$-factorial compact K\"ahler $3$-fold. Assume that one of the following conditions is satisfied:
	\begin{enumerate}[label=(\roman*)]
		\item $K_X+B$ is pseudo-effective, or
		\item $K_X+B$ is not pseudo-effective and there exists a K\"ahler form $\omega$ on $X$ such that $K_X+B+\omega$ is pseudo-effective.
	\end{enumerate}
	Then $K_X+B$ (resp. $K_X+B+\omega$) is analytically nef if and only if it is algebraically nef. 
\end{lemma}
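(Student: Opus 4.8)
The plan is to prove the nontrivial implication only, since one direction is completely general: on a compact K\"ahler variety an analytically nef class is automatically algebraically nef, as recorded in Remark \ref{rmk:nefness} (pairing a nef class against the current of integration of a curve is nonnegative). So in both cases I would reduce to showing that algebraic nefness of the adjoint class forces analytic nefness. The two cones at play are $\NE(X)\subset\NA(X)$, where $\NA(X)$ is the closed cone of positive closed $(1,1)$-currents while $\NE(X)$ is generated by curves; and, by the duality between $\Nef(X)=\overline{\mcK}$ and $\NA(X)$ (see \cite{HP16}, resting on \cite{Dem92}), a class is analytically nef exactly when it pairs nonnegatively against every class of $\NA(X)$. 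The whole content is therefore to upgrade nonnegativity against $\NE(X)$ (algebraic nefness) to nonnegativity against all of $\NA(X)$.

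First I would treat case (i). As $K_X+B$ is pseudo-effective, the Cone Theorem for $\mbQ$-factorial dlt K\"ahler $3$-folds (Theorem \ref{t-pmmp}, and \cite{HP16, CHP16, DO22}) gives a decomposition
\[ \NA(X)=\NA(X)_{(K_X+B)\geq 0}+\sum_{i}\mbR_{\geq 0}[\Gamma_i], \]
where the $\Gamma_i$ are rational curves spanning the $(K_X+B)$-negative extremal rays of $\NA(X)$. The decisive point is that algebraic nefness of $K_X+B$ forces $(K_X+B)\cdot\Gamma_i\geq 0$, which contradicts $\Gamma_i$ lying on a $(K_X+B)$-negative ray; hence no such ray exists and $\NA(X)=\NA(X)_{(K_X+B)\geq 0}$. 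Consequently $(K_X+B)\cdot T\geq 0$ for every $T\in\NA(X)$, so by the duality above $K_X+B$ is analytically nef.

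For case (ii) I would run the same argument, testing the pseudo-effective class $\beta:=K_X+B+\omega$. Invoking the Cone Theorem in the non--pseudo-effective setting (\cite{HP15}, together with the cone theorem developed later in the article), I would write $\NA(X)=\overline{\NA(X)_{(K_X+B)\geq 0}+\sum_i\mbR_{\geq 0}[\Gamma_i]}$ with the $\Gamma_i$ rational curves on the $(K_X+B)$-negative rays. On a class $T$ in the nonnegative part one has $\beta\cdot T=(K_X+B)\cdot T+\omega\cdot T\geq 0$, the first summand being nonnegative by definition of the part and the second because $\omega$ is K\"ahler and $T$ is a positive current; and on each $\Gamma_i$ algebraic nefness of $\beta$ gives $\beta\cdot\Gamma_i\geq 0$. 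By continuity of the intersection pairing together with the closure in the decomposition, $\beta\cdot T\geq 0$ for all $T\in\NA(X)$, so $\beta$ is analytically nef.

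The hard part is entirely the cone-theoretic input: that every $(K_X+B)$-negative extremal ray of $\NA(X)$ is spanned by a (rational) curve class, in the generality of $\mbQ$-factorial dlt pairs on a K\"ahler $3$-fold and in both the pseudo-effective and the non--pseudo-effective regimes. This is exactly where the gap between $\NE(X)$ and $\NA(X)$ must be controlled, and one must also ensure that the version of the cone theorem invoked here is not itself deduced from this lemma, to avoid circularity. As a sanity check and a self-contained alternative in one subcase, I note that if the adjoint $\mbQ$-divisor $K_X+B$ is moreover big then $X$ carries a big divisor and is hence projective by \cite{Nam02}, so the equivalence is immediate from Lemma \ref{lem:nef-moishezon}; the genuine content of the lemma therefore lies in the non-big, transcendental situation, which is precisely what the cone theorem resolves.
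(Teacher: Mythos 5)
Your case (i) is correct and is, in substance, the argument underlying the paper's citation of \cite[Corollary 10.5]{DO22}: the pseudo-effective cone theorem (Theorem \ref{t-cone} -- note this is the result you need, not Theorem \ref{t-pmmp}, which is the contraction theorem) forces the set of extremal curves $\{\Gamma_i\}$ to be empty when $K_X+B$ is algebraically nef, so $\NA(X)=\NA(X)_{(K_X+B)\geq 0}$, and the duality of \cite[Proposition 3.15]{HP16} then gives analytic nefness. Within this paper there is no circularity, since Theorem \ref{t-cone} is imported as a black box.

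Case (ii), however, has a genuine gap. You invoke a decomposition $\NA(X)=\overline{\NA(X)_{(K_X+B)\geq 0}+\sum_i\mbR_{\geq 0}[\Gamma_i]}$ relative to the \emph{non-pseudo-effective} class $K_X+B$. No such cone theorem exists in the K\"ahler category, neither in this paper nor in \cite{HP15} or \cite{CHP16}; the absence of a cone theorem in this generality is precisely one of the main obstacles of the non-projective MMP. The cone theorems that are available in the non-pseudo-effective regime (Theorem \ref{t-cone-mrc}, Corollary \ref{cor:cone-finite}, and their terminal antecedents in \cite{HP15}) are decompositions relative to the \emph{adjoint} class $K_X+B+\omega$, and they carry hypotheses your argument never verifies: $X$ must be uniruled with MRC base of dimension exactly $2$, $\omega$ must be modified K\"ahler, and one must split according to whether $(K_X+B+\omega)\cdot X_z=0$ or $(K_X+B+\omega)\cdot X_z>0$ on the general MRC fiber. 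In particular, when the MRC base has dimension $\leq 1$ there is no K\"ahler cone theorem to quote at all, so your method cannot reach that case. Note also that Theorem \ref{t-cone} cannot be applied to $K_X+B+\omega$ even though that class is pseudo-effective, because it contains a transcendental part and is not of the form $K_X+(\text{effective }\mbQ\text{-divisor})$.

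The paper's proof of (ii) is organized around exactly the dichotomy you are missing. First one replaces $B$ by $(1-\ve)B$ and $\omega$ by $\omega+\ve B$ to reduce to the klt case (this reduction is also absent from your proposal, and it is needed to invoke the klt-based results below). If the base of the MRC fibration has dimension $\leq 1$, then Lemma \ref{l-proj} and its proof show that $X$ is projective with $H^2(X,\mcO_X)=0$, hence $\NS(X)_\mbR=H^{1,1}_{\rm BC}(X)$; consequently the a priori transcendental class $K_X+B+\omega$ is an $\mbR$-divisor class, and the equivalence follows from Paun's theorem for Moishezon spaces (Lemma \ref{lem:nef-moishezon} and Remark \ref{rmk:R-nef-moishezon}). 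If the base has dimension $2$, one runs the argument of \cite[Corollary 3.5]{HP15}, with \cite[Lemma 3.4]{HP15} replaced by Claim \ref{c-uni} of this paper. A corrected version of your approach for the dimension-$2$ case could instead use Theorem \ref{t-cone-mrc} together with Corollary \ref{cor:cone-finite} applied to $K_X+B+\omega$ (this is legitimate, since their proofs do not use the present lemma), but the projectivity-plus-Paun step for base dimension $\leq 1$ cannot be avoided by any cone-theoretic argument.
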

\begin{proof}
	If $K_X+B$ is pseudo-effective, then the result follows from a similar proof as in \cite[Corollary 4.2]{HP16} and \cite[Corollary 4.1]{CHP16}. For a complete proof in this case, see \cite[Corollary 2.19]{DO23}. In the second case, first replacing $B$ by $(1-\ve)B$ and $\omega$ by $\omega+\ve B$ we may assume that $(X, B)$ is klt. Now if the base of the MRC fibration of $X$ has dimension $\<1$, then from Lemma \ref{l-proj} and its proof it follows that $X$ is projective and additionally $\NS(X)_\mbR=H^{1, 1}_{\rm BC}(X)$. Then the result follows from Lemma \ref{lem:nef-moishezon}. If the base of the MRC fibration of $X$ has dimension $2$, then the result follows by a similar proof as in \cite[Corollary 3.5]{HP15}. Note that this proof uses \cite[Lemma 3.4]{HP15}, which is replaced here by Claim \ref{c-uni}.\\
\end{proof}

\begin{remark}
	If $f:X\to Y$ is a proper surjective morphism between normal analytic varieties and $\dim Y>0$, then we say that a $\mbQ$-Cartier $\mbQ$-divisor $D$ on $X$ is \emph{nef over $Y$}  or \emph{$f$-nef} if $D\cdot C\>0$ for every irreducible (compact) curve $C\subset X$ such that $f(C)=\pt$. Note that to be more precise we should call $D$, \emph{algebraically nef over $Y$} or \emph{algebraically $f$-nef}, however, by Lemma \ref{lem:nef-moishezon} it is equivalent to \emph{analytically nef} (over $Y$) if $f$ is a Moishezon morphism (see \cite[Definition VIII.3.5 (2), page 334]{GPR94}). Since all the morphisms considered in this article are Moishezon, e.g. projective morphisms, proper bimeromorphic morphisms, etc., this will not create any confusion.
 \end{remark}

Let $X$ be a normal compact variety and $\omega$ a real closed $(1, 1)$-form on $X$ with local potentials. Then we can define
\[ \lambda _\omega \in N_1(X)^*,\qquad{\rm via}\qquad \lambda _\omega ([T])=T(\omega).\] 
This gives a well defined canonical map
\[ \Phi:N^1(X)\to N_1(X)^*,\qquad [\omega]\mapsto \lambda_\omega .\]
If in addition $X$ belongs to Fujiki's class $\mcC$ and has rational singularities, then $\Phi$ is an isomorphism by \cite[Proposition 3.9]{HP16}. Moreover, if $\dim X=3$, then $\Phi ({\rm Nef}(X))=\overline {\rm NA}(X)^*$ by \cite[Proposition 3.15]{HP16}.

We recall the following useful result from \cite{HP16} for future reference.
\begin{lemma}\label{l-HP}\cite[Lemma 3.3]{HP16}
	 Let $f:X\to Y$ be a proper bimeromorphic morphism between normal compact complex spaces in Fujiki's class $\mathcal C$ with at most rational singularities. Then we have an injection \[f^*:H^{1,1}_{\rm BC}(Y)=H^1(Y,\mathcal H _Y)\hookrightarrow H^1(X,\mathcal H _X)=H^{1,1}_{\rm BC}(X)\]
whose image is given by \[\Im(f^*)=\{ \alpha \in H^1(X,\mathcal H _X)\ |\ \alpha \cdot C=0\ \forall\; C\subset X\ {\rm curve\ s.t.\ }f_*C=0\}.\]
Furthermore, let $\alpha \in H^1(X,\mathcal H _X)\subset H^2(X,\mathbb R )$ be a class such that $\alpha =f^*\beta$ with $\beta \in H^2(Y,\mathbb R)$. Then there exists a smooth real closed $(1,1)$-form with local potentials $\omega _Y$ on $Y$ such that $\alpha =f^*[\omega _Y]$. 
\end{lemma}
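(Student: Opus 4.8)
The plan is to prove all three assertions sheaf-theoretically, using the identification $H^{1,1}_{\rm BC}(X)=H^1(X,\mathcal H_X)$ (where $\mathcal H_X$ is the sheaf of local potentials, i.e. real pluriharmonic functions) and the short exact sequence $0\to\mathbb R_X\to\mathcal O_X\to\mathcal H_X\to 0$ together with its analogue on $Y$. The first step is to compute the higher direct images of this sequence under $f$. Because $Y$ has rational singularities and $f$ is proper bimeromorphic, $f_*\mathcal O_X=\mathcal O_Y$ and $R^1f_*\mathcal O_X=0$; because $Y$ is normal the fibres of $f$ are connected, so $f_*\mathbb R_X=\mathbb R_Y$; and rationality of the singularities forces the fibres to have vanishing first Betti number, so that $R^1f_*\mathbb R_X=0$ (its stalks are $H^1(f^{-1}(y),\mathbb R)$ by proper base change). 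Substituting these into the long exact sequence of higher direct images produces a canonical isomorphism $\mathcal H_Y\xrightarrow{\sim}f_*\mathcal H_X$.

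Next I would feed this into the low-degree (edge) exact sequence of the Leray spectral sequence $H^p(Y,R^qf_*\mathcal H_X)\Rightarrow H^{p+q}(X,\mathcal H_X)$, namely
\[ 0\to H^1(Y,f_*\mathcal H_X)\xrightarrow{\,e\,}H^1(X,\mathcal H_X)\xrightarrow{\,r\,}H^0(Y,R^1f_*\mathcal H_X).\]
Using $\mathcal H_Y\cong f_*\mathcal H_X$, the map $f^*$ is exactly the injective edge map $e$ precomposed with this isomorphism; this gives the injectivity of $f^*$ and shows $\Im(f^*)=\ker(r)$. To match $\ker(r)$ with the stated description I would compute $R^1f_*\mathcal H_X$, whose stalk at $y$ is the $(1,1)$ Bott--Chern cohomology of the fibre $f^{-1}(y)$; thus $r(\alpha)=0$ means $\alpha$ restricts to zero on every fibre, a condition detected by the vanishing of the intersection numbers $\alpha\cdot C$ for all curves $C$ with $f_*C=0$. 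This yields $\Im(f^*)=\{\alpha:\alpha\cdot C=0\ \forall\,C\ \text{with}\ f_*C=0\}$.

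The final assertion then follows formally. If $\alpha=f^*\beta$ with $\beta\in H^2(Y,\mathbb R)$ a topological class, then for every curve $C$ contracted by $f$ the projection formula gives $\alpha\cdot C=\beta\cdot f_*C=0$; by the image description just obtained, $\alpha$ lies in $\Im(f^*)$, so $\alpha=f^*\alpha'$ for some $\alpha'\in H^{1,1}_{\rm BC}(Y)=H^1(Y,\mathcal H_Y)$. Since every Bott--Chern class on $Y$ is represented by a smooth real closed $(1,1)$-form with local potentials $\omega_Y$, we conclude $\alpha=f^*[\omega_Y]$, as desired.

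I expect the main obstacle to lie in the precise computation of the higher direct image sheaves, specifically in establishing $R^1f_*\mathbb R_X=0$ from the rationality of the singularities and in identifying $\ker(r)$ with the numerical condition on contracted curves. Both points require a careful treatment of the topology and Bott--Chern cohomology of the (possibly singular and non-reduced) fibres of $f$; once these sheaves are understood, the remaining arguments are purely formal consequences of the Leray spectral sequence and the projection formula.
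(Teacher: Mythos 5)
The paper itself contains no proof of this statement: it is imported verbatim from \cite[Lemma 3.3]{HP16} (``We recall the following useful result from \cite{HP16}\dots''), so the only possible comparison is with the proof in that reference. Your sheaf-theoretic skeleton --- push forward $0\to\mathbb{R}_X\to\mathcal{O}_X\to\mathcal{H}_X\to 0$, use rational singularities to compute the direct images, and apply the Leray five-term sequence --- is indeed the natural (and standard) strategy, and your reduction of the ``Furthermore'' assertion to the image description via the projection formula is correct as a formal deduction.

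However, the proposal has genuine gaps exactly where the content of the lemma lies. First, the identification of $(R^1f_*\mathcal{H}_X)_y$ with the $(1,1)$ Bott--Chern cohomology of the fibre is not correct: topological proper base change computes this stalk as $H^1(f^{-1}(y),\mathcal{H}_X|_{f^{-1}(y)})$, where $\mathcal{H}_X|_{f^{-1}(y)}$ is the \emph{topological} restriction of the sheaf of ambient pluriharmonic germs, which is not the intrinsic sheaf $\mathcal{H}_{f^{-1}(y)}$ of the fibre (for the constant sheaf $\mathbb{R}_X$ the two notions agree, which is why base change is useful there, but not for $\mathcal{O}_X$ or $\mathcal{H}_X$). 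The standard repair is to use $R^1f_*\mathcal{O}_X=0$ in the long exact sequence of direct images to obtain an injection $R^1f_*\mathcal{H}_X\hookrightarrow R^2f_*\mathbb{R}_X$, whose stalks \emph{are} $H^2(f^{-1}(y),\mathbb{R})$ by proper base change; then $\ker(r)$ is the set of classes restricting to zero in $H^2(f^{-1}(y),\mathbb{R})$ for every $y$. Second, the two steps you defer to the end are not bookkeeping but the actual theorems being proved: (a) $R^1f_*\mathbb{R}_X=0$, i.e.\ $b_1(f^{-1}(y))=0$, is a nontrivial consequence of rational singularities, requiring the theorem on formal functions together with mixed Hodge theory on the fibres (or an explicit citation, e.g.\ to \cite{Nam02}); and (b) the passage from ``$\alpha\cdot C=0$ for all contracted curves $C$'' to ``$\alpha|_{f^{-1}(y)}=0$ in $H^2(f^{-1}(y),\mathbb{R})$'' needs geometric input on the fibres of such contractions --- that they are Moishezon, covered by curves, and that for them a class of type $(1,1)$ with local potentials vanishes in $H^2$ as soon as it is numerically trivial on curves. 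A minor further point: $R^1f_*\mathcal{O}_X=0$ uses the rationality of the singularities of \emph{both} $X$ and $Y$ (resolve $X$ and compare the two resulting resolutions of $Y$), not only of $Y$ as you assert. As written, then, your text is a correct outline of the known approach with the decisive steps asserted rather than proved.
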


 In general the push-forward of a cohomology class $[T]$ of a bi-degree $(1, 1)$-current $T$ with local potentials may not be a cohomology class in $H^{1, 1}_{\rm BC}(Y)$, since the current $f_*T$ may not have local potentials on $Y$. See \cite[Lemma 3.4]{HP16} for a sufficient condition when this does hold.\\
 
 

\begin{remark}\label{rmk:kahler-property} 
	Note that if $X$ is a compact K\"ahler space and $\pi:X'\to X$ a projective morphism, then $X'$ is again K\"ahler (see \cite[Prop. 1.3.1.(vi), page 24]{Var89}). In particular, $X$ has a K\"ahler desingularisation. A subvariety of a K\"ahler space is also K\"ahler, see \cite[Prop. 1.3.1.(i), page 24]{Var89}. 
	
\end{remark}

\subsection{Projectivity criteria}
Recall the following generalization of \cite{Moi66} due to Namikawa.
\begin{theorem}\label{t-nam}\cite[Theorem 1.6]{Nam02} Let $X$ be a compact Moishezon variety with 1-rational singularities. If $X$ is K\"ahler, then $X$ is projective.
    \end{theorem}
    \begin{remark}
        Recall that $X$ has 1-rational singularities if it admits a resolution $\nu :X'\to X$ such that $R^1\nu _*\OO _{X'}=0$.
        In particular, by Lemma \ref{lem:dlt-rational}, any klt variety has rational and hence 1-rational singularities. 
    \end{remark}
    
    Next we recall the following of Kodaira's projectivity criterion from \cite{Kod54}.
    \begin{theorem}\label{t-kod}
        Let $X$ be a compact K\"ahler variety with rational singularities such that $H^2(X, \OO _X)=0$, then $X$ is projective.
    \end{theorem}
    \begin{proof}
        Let $\nu :X'\to X$ be a resolution of singularities, then $X'$ is a K\"ahler manifold and $R^i\nu_*\OO _{X'}=0$ for all $i>0$. Thus $H^2(X', \OO _{X'})\cong H^2(X, \OO _{X})=0$.
        By \cite{Kod54}, $X'$ is projective and hence $X$ is Moishezon. By Theorem \ref{t-nam}, $X$ is projective.
    \end{proof}
\subsection{Resolution of singularities and Kawamata-Viehweg vanishing theorem} The existence of resolutions of singularities for analytic varieties and embedded resolutions are proved in \cite{AHV77} and \cite{BM97}. Unlike the case of algebraic varieties (of finite type over a field), for analytic varieties the resolution of singularities is not obtained via global blow ups of smooth centers, unless the variety is (relatively) compact. The following version of log resolution will be useful for us.\\

Let $X$ be a normal analytic variety and $D=\sum_{i=1}^nD_i$ a reduced Weil divisor on $X$. Then there exists a unique largest Zariski open subset $U$ of $X$ contained in its smooth locus such that $D|_U$ is a simple normal crossing divisor and $\codim_X (X\setminus U)\>2$. The open subset $U$ is called the \emph{simple normal crossing locus} of the pair $(X, D)$ and we denote it by ${SNC}(X, D)$. Also, recall that a locally compact topological space $X$ is called \emph{countable at infinity} or \emph{$\sigma$-compact} if it can be written as a countable union of compact subsets. Clearly, any compact space $X$ is $\sigma$-compact. Moreover, locally compact and second countable Hausdorff spaces are $\sigma$-compact. 

\begin{theorem}[Log Resolution]\cite[Theorems 13.2, 1.10 and 1.6]{BM97}\label{thm:log-resolution}
	Let $X\subset W$ be a relatively compact open subset of an analytic variety $W$ and $D\subset X$ a pure codimension $1$ reduced analytic subset of $X$. Then there exists a projective bimeromorphic morphism $f:Y\to X$ from a smooth variety $Y$ satisfying the following properties:
	\begin{enumerate}
		\item $f$ is a successive blow up of smooth centers contained in $X\setminus SNC(X, D)$,
		\item $f^{-1}(SNC(X, D))\cong SNC(X, D)$, and
		\item $\Ex(f)$ is a pure codimension $1$ subset of $Y$ such that $\Ex(f)\cup(f^{-1}_*D)$ has SNC support.
	\end{enumerate}
	
\end{theorem}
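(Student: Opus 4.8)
The plan is to deduce the statement from the canonical (functorial) embedded resolution of singularities for analytic spaces of Bierstone--Milman, rather than to build a resolution by hand. First I would fix the non-SNC locus $Z:=X\setminus SNC(X,D)$, which by its definition is a closed analytic subset of codimension $\geq 2$, namely the union of $X\sing$ with the locus where the reduced divisor $D$ fails to be normal crossing. The goal is then to produce a sequence of blow-ups along smooth centers lying inside $Z$ (and its successive strict transforms) that simultaneously resolves the singularities of $X$ and arranges that $D$, together with the exceptional divisor, has simple normal crossings. The reason to expect this to work is that Hironaka-type resolution is available in the analytic category, and Bierstone--Milman refine it to a \emph{canonical} algorithm whose centers are intrinsically determined.

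The engine is the desingularization invariant: to the pair $(X,D)$ one attaches a local invariant that is upper semicontinuous, whose maximal locus is a \emph{smooth} analytic subset, and whose value drops strictly, in a well-ordered target, after blowing up that locus. The decisive feature is that this construction is canonical, i.e. it commutes with local analytic isomorphisms (equivalently, with smooth morphisms). Granting this, the three assertions follow almost formally:
\begin{enumerate}
\item each center is the maximal locus of the invariant, which is contained in the non-SNC locus, so the centers sit inside $X\setminus SNC(X,D)$ and its strict transforms, giving (1);
\item over the open set $SNC(X,D)$ the pair is already SNC, hence the invariant is minimal and locally constant there, so by canonicity no center meets this locus and $f$ is an isomorphism over it, giving (2);
\item termination of the algorithm means the final total transform $\Ex(f)\cup f^{-1}_*D$ is a simple normal crossing divisor, which is (3).
\end{enumerate}

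The step I expect to be the main obstacle is the passage from the algebraic to the analytic category, and in particular guaranteeing that the resolution is achieved by \emph{finitely many} global blow-ups of smooth centers and that $f$ is projective. In the algebraic setting termination is automatic from noetherianity, whereas for a general analytic variety the strictly decreasing invariant could in principle require infinitely many blow-ups globally. This is exactly where the hypothesis that $X\subset W$ is relatively compact enters: on a relatively compact set the invariant can drop only finitely often, so the algorithm stabilizes after finitely many steps, and the centers are genuine global smooth analytic subsets rather than merely local ones. Since the blow-up of a smooth analytic center is projective and a finite composition of projective bimeromorphic morphisms is again projective bimeromorphic, the resulting $f:Y\to X$ has the required properties. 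The remaining checks, that the strict transform and exceptional divisor meet transversally throughout the process, are encoded in the embedded version of the invariant and are routine once canonicity and termination are in hand.
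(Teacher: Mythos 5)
Your high-level strategy agrees with the paper's (both reduce to Bierstone--Milman's canonical desingularization in the analytic category, both use relative compactness of $X$ in $W$ to force termination after finitely many blow-ups, and both get projectivity of $f$ by composing blow-ups of smooth centers), but there is a genuine gap at the crucial step, namely your justification of properties (1) and (2). You assert that ``over $SNC(X,D)$ the pair is already SNC, hence the invariant is minimal and locally constant there, so by canonicity no center meets this locus.'' For the standard Bierstone--Milman invariant this is false: the resolved state of their algorithm is ``strict transform smooth and transverse to the exceptional divisor,'' not ``total transform snc.'' Take $D=\{xy=0\}$ in a smooth surface: the origin is an snc point of the pair, yet the invariant there (which begins with the order of the defining ideal) equals $2$ and is maximal, so the canonical algorithm blows up the origin and separates the two branches. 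Canonicity cannot rescue this: functoriality under local isomorphisms only forces all crossing points to be treated alike, and it is perfectly consistent with functoriality that \emph{all} of them get blown up. Producing a log resolution that is an isomorphism precisely over the snc locus is a genuine refinement of resolution (in the algebraic category this is Szab\'o's theorem); it is exactly the content that must be extracted from \cite{BM97} and does not follow formally from ``canonical invariant plus termination.'' Note also that you misidentify the main obstacle as finiteness/projectivity; that part is comparatively routine and is handled in the paper by the same idea you use (enlarging $X$ to an open $X_1$ with $\overline{X}\subset X_1$ so that $\sigma$-compactness holds where the algorithm runs).

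The paper avoids the trap by a two-step reduction rather than a one-step algorithm applied to the pair $(X,D)$: first \cite[Theorem 13.3]{BM97} resolves the space (finitely many blow-ups with centers in the singular locus, hence an isomorphism over the smooth locus, which contains $SNC(X,D)$); then, on the resulting manifold $X_2$, \cite[Theorem 13.2]{BM97} is applied in such a way that the second batch of centers lies over the non-snc locus $Z\cap X_2$ of the transformed divisor, which maps into $X\setminus SNC(X,D)$. This two-step structure is also what allows a singular $X$ to enter the machinery at all: the invariant-theoretic embedded algorithm you invoke requires a smooth ambient space (or a marked-ideal formalism on one), so one must resolve $X$ before any divisorial algorithm for $D$ can be run. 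To repair your proposal you would either have to cite an algorithm whose resolved state is snc-ness of the total transform, verifying that it exists in the analytic category with the functoriality you use, or restructure the argument along the paper's two-step lines.
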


\begin{proof}
Since $W$ is a locally compact Hausdorff space and $X\subset W$ is a relatively compact open subset, there exists another relatively compact open subset $X_1\subset W$ containing $X$ such that $\overline{X}\subset X_1$. Since every point of an analytic variety has a second countable open neighborhood, it follows that $X_1$ is $\sigma$-compact. Thus by \cite[Theorem 13.3]{BM97}, there is a projective bimeromorphic morphism $f_1:Y_1\to X_1$ from a smooth variety $Y_1$ obtained by finitely many blow ups of smooth centers contained in the singular locus of $X_1$. Note that $X_2:=f_1^{-1}(X)\subset Y_1$ is relatively compact. Now consider the non-SNC locus $Z:=Y_1\setminus SNC(Y_1, (f^{-1}_{1,*}D\cup \Ex(f_1)))$; this is a closed analytic subset of $Y_1$. Note that $Z\cap X_2$ is a closed analytic subset of the manifold $X_2$ and also an open subset of $Z$. It then follows that $Z\cap X_2$ is a relatively compact open subset of $Z$. Therefore by \cite[Theorem 13.2]{BM97} applied to $Z\cap X_2 \subset X_2$ we obtain a projective bimeromorphic morphism $g:Y\to X_2$ (which is a composite of blow ups of smooth centers) from a smooth variety $Y$ such that $(Y, (f_1|_{X_2}\circ g)^{-1}_*D+\Ex(f_1|_{X_2}\circ g))$ is a log smooth pair, $f_1|_{X_2}\circ g$ is an isomorphism over the SNC locus of $(X, D)$ and $\Ex(f_1|_{X_2}\circ g)$ is a pure codimension $1$ subset of $Y$. Then we conclude by setting $f:=f_1|_{X_2}\circ g$.
	
	
\end{proof}~\\

Next we will state Chow's lemma for analytic varieties due to Hironaka \cite{Hir75}. Note that unlike Chow's lemma for algebraic varieties (of finite type over a field), in the analytic category it does not hold for arbitrary proper morphism between analytic varieties; it only holds for proper \emph{bimeromorphic} morphisms. 
\begin{theorem}[Chow's Lemma]\cite[Corollary 2]{Hir75}\label{thm:chow}
	Let $f:X\to Y$ be a proper bimeromorphic morphism between two complex spaces such that $Y$ is reduced and $\sigma$-compact. Then there exists a projective bimeromorphic morphism $\nu :X'\to X$ from a complex space $X'$ such that the composition $f'=f\circ \nu: X'\to Y$ is projective.
\end{theorem}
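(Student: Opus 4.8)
The statement is the analytic Chow's lemma for proper bimeromorphic morphisms, and the natural route is to deduce it from Hironaka's flattening theorem (the main theorem of the same source). The plan is as follows. First I would record the locus where $f$ is well behaved: since $f\colon X\to Y$ is proper and bimeromorphic, there is a nowhere dense analytic subset $Z\subset Y$ (containing the image of the exceptional locus) such that $f$ restricts to an isomorphism over the dense open $U_0:=Y\setminus Z$. The goal is then to manufacture a \emph{projective} modification of $Y$ that dominates $X$, and take that as $X'$.

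The main input I would invoke is the flattening theorem applied to the proper morphism $f\colon X\to Y$ over the reduced, $\sigma$-compact space $Y$: it produces a projective bimeromorphic morphism $\tau\colon Y'\to Y$, concretely a blow-up along a coherent ideal sheaf, such that the strict transform of $f$ becomes flat over $Y'$. Precisely, let $X'\subseteq X\times_Y Y'$ be the strict transform, i.e. the closure of the preimage of $U_0$, and let $\nu=p\colon X'\to X$ and $q\colon X'\to Y'$ be the two projections; the flattening theorem guarantees that $q$ is flat. Note that since $X'\subseteq X\times_Y Y'$ one has $f\circ p=\tau\circ q$ automatically.

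Next I would verify the two properties of $q$ that force it to be an isomorphism. By construction $q$ is proper (it is a strict transform of the proper $f$, hence closed in the proper $X\times_Y Y'\to Y'$) and bimeromorphic (it is an isomorphism over the dense open $\tau^{-1}(U_0)$), and $Y'$ is reduced since it is a blow-up of the reduced space $Y$. Now $q$ flat, proper and bimeromorphic implies every fibre is $0$-dimensional, so $q$ is quasi-finite and therefore finite; flatness then makes $q_*\mathcal{O}_{X'}$ a locally free $\mathcal{O}_{Y'}$-algebra of generic rank $1$, whence $q_*\mathcal{O}_{X'}=\mathcal{O}_{Y'}$ and $q$ is an isomorphism. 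Identifying $X'$ with $Y'$ via $q$, the composite $f'=f\circ\nu=\tau\circ q$ is projective, because $q$ is an isomorphism and $\tau$ is projective, while $\nu=p$ is bimeromorphic; this is exactly the desired factorization.

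The main obstacle is not the formal deduction above but the globalization concealed in the second step: Hironaka's flattening theorem is proved locally, over relatively compact opens of $Y$, and a priori yields only a locally finite family of blow-ups, which need not be projective over a non-compact base. Upgrading this to a single projective blow-up of a coherent ideal sheaf on all of $Y$ is where the $\sigma$-compactness hypothesis is essential and where the real work lies; I would handle it by exhausting $Y$ by relatively compact opens, flattening over each, and patching the local centers into one global coherent ideal. The ancillary lemma that a flat, proper, bimeromorphic morphism onto a reduced space is an isomorphism (finite flat of generic rank one) is standard and I would relegate it to a short separate verification.
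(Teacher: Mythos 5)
The paper does not actually prove this statement: it is imported verbatim from Hironaka's flattening paper as \cite[Corollary 2]{Hir75}, so there is no internal argument to compare yours against. What you have written is, in substance, Hironaka's own deduction of Corollary 2 from the main theorem of that same paper: flatten $f$ by a modification $\tau\colon Y'\to Y$ whose centers lie over the non-flat locus of $f$, note that the strict transform $q\colon X'\to Y'$ is proper, bimeromorphic and flat onto a reduced space, conclude that $q$ is an isomorphism, and hence that $f\circ\nu=\tau\circ q$ is projective while $\nu=p$ is bimeromorphic. These reductions are all correct. (One small caution in the finite-flat lemma: ``locally free of generic rank one'' alone does not finish the argument, since a map of rank-one locally free sheaves that is generically an isomorphism need not be an isomorphism. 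The right argument is that the rank is one everywhere, so the unit map $\mathcal{O}_{Y'}\to q_*\mathcal{O}_{X'}$ is fiberwise surjective, hence surjective by Nakayama, and it is injective because $q$ is surjective and $Y'$ is reduced.)

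The one genuinely weak point is your treatment of globalization. You present the flattening theorem as an essentially local statement and propose to globalize it yourself by exhausting $Y$ by relatively compact opens and ``patching the local centers into one global coherent ideal.'' As stated, this would not work: local flattening centers are far from unique, so they do not glue naively, and making them compatible (via the universal flattener, its behavior under base change, and local-finiteness arguments) is precisely the technical core of \cite{Hir75}, not a routine patching exercise. Fortunately it is also unnecessary: the flattening theorem of \cite{Hir75} is already stated and proved globally for a proper morphism onto a reduced base countable at infinity --- exactly the $\sigma$-compactness hypothesis in the statement --- with the modification a locally finite succession of blow-ups with nowhere dense centers. Citing that global form closes your argument: over relatively compact opens of $Y$ only finitely many of these blow-ups are non-trivial, so $\tau$ is projective in the sense (local over $Y$) used in this paper, and since the centers lie over the non-isomorphism locus $Z$, the map $\tau$ is an isomorphism over $U_0$, which is exactly what makes $q$ and $\nu$ bimeromorphic.
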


As an application of Chow's lemma we prove the following useful result.

\begin{lemma}[Reducing Proper Morphism to Projective Morphisms]\label{lem:proper-to-projective}	Let $f:X\to S$ be a proper surjective morphism of analytic varieties, and let $L$ be a $f$-big line bundle on $X$ and $D$ a $\mbQ$-divisor. Then over any relatively compact open subset $V\subset S$, there exists a proper bimeromorphic morphism $\alpha:W\to f^{-1}V$ from a smooth analytic variety $W$ such that $\beta=f|_{f^{-1}V}\circ \alpha: W\to V$ is a projective morphism and $(W, \alpha^{-1}_*(D|_{f^{-1}V})+\Ex(\alpha))$ is a log smooth pair.
\end{lemma}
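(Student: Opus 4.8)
The plan is to reduce to the bimeromorphic situation, where Chow's Lemma (Theorem \ref{thm:chow}) applies, by first exploiting the $f$-bigness of $L$ to build a projective-over-$V$ variety bimeromorphic to $f^{-1}V$, and then to clean up by a log resolution (Theorem \ref{thm:log-resolution}). First I would restrict everything to $V$: set $X_V:=f^{-1}V$ and $g:=f|_{X_V}:X_V\to V$. Since $f$ is proper and $\overline V$ is compact, $\overline{X_V}\subset f^{-1}(\overline V)$ is compact, so $X_V$ is relatively compact, hence $\sigma$-compact; this ensures that both Theorem \ref{thm:chow} and Theorem \ref{thm:log-resolution} are applicable to $X_V$ and to its modifications. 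Because $L$ is $f$-big, Grauert's direct image theorem gives that $\mcE_m:=g_*L^{\otimes m}$ is a coherent sheaf on $V$ for every $m$, and the relative evaluation map $g^*\mcE_m\to L^{\otimes m}$ defines a meromorphic map over $V$
\[ \phi_m:X_V\bir \mbP(\mcE_m), \qquad \mbP(\mcE_m)\to V \text{ projective}. \]
Relative bigness means exactly that the relative Iitaka dimension of $L$ equals the relative dimension $\dim X_V-\dim V$, so by the relative form of Iitaka's theory, for $m$ sufficiently large and divisible $\phi_m$ is bimeromorphic onto its image $Z:=\overline{\phi_m(X_V)}$, a closed analytic subvariety of $\mbP(\mcE_m)$ that is projective over $V$.

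Next I would resolve $\phi_m$ by passing to its graph. Let $\Gamma\subset X_V\times_V Z$ be the closure of the graph of $\phi_m$. The first projection $p:\Gamma\to X_V$ is proper and bimeromorphic, and the second projection $q:\Gamma\to Z$ is proper and, since $\phi_m$ is bimeromorphic onto $Z$, also bimeromorphic. As $Z$ is reduced and $\sigma$-compact (being projective over the relatively compact $V$), Chow's Lemma (Theorem \ref{thm:chow}) applies to $q$, yielding a bimeromorphic morphism $\nu:\Gamma'\to\Gamma$ with $q\circ\nu:\Gamma'\to Z$ projective. Composing with the structure morphism $Z\to V$ shows $\Gamma'\to V$ is projective, while $p\circ\nu:\Gamma'\to X_V$ is proper bimeromorphic; note the two structure maps agree over $V$, i.e. $g\circ p=(Z\to V)\circ q$, because $\Gamma$ lives in the fiber product over $V$.

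Finally I would apply the Log Resolution Theorem \ref{thm:log-resolution} to $\Gamma'$ together with the reduced divisor $\Delta$ consisting of the divisorial part of $\Ex(p\circ\nu)$ and the strict transform of $\Supp(D|_{X_V})$. Since $\Gamma'\to V$ is projective and $V$ is relatively compact, $\Gamma'$ embeds in some $\mbP^N\times V$ and thus sits as a relatively compact open subset of the compact analytic variety $\overline{\Gamma'}\subset\mbP^N\times S$, so the theorem produces a projective bimeromorphic $h:W\to\Gamma'$ from a smooth $W$ with $\Ex(h)\cup h^{-1}_*\Delta$ of simple normal crossing support. Put $\alpha:=p\circ\nu\circ h:W\to X_V$; this is proper bimeromorphic with $W$ smooth. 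A routine divisor computation shows every $\alpha$-exceptional prime divisor is either $h$-exceptional or the strict transform of a divisorial component of $\Ex(p\circ\nu)$, so
\[ \Ex(\alpha)\cup\alpha^{-1}_*(D|_{X_V})\subseteq \Ex(h)\cup h^{-1}_*\Delta, \]
whence $(W,\alpha^{-1}_*(D|_{X_V})+\Ex(\alpha))$ is log smooth. Moreover $\beta=g\circ\alpha$ factors as $W\xrightarrow{h}\Gamma'\xrightarrow{q\circ\nu}Z\to V$, a composition of projective morphisms, hence is projective, as required.

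The hard part will be the construction in the first paragraph: producing a genuinely bimeromorphic relative Iitaka map out of the $f$-big bundle $L$ in the analytic category over the relatively compact base $V$. This rests on the coherence of the relative section sheaves $\mcE_m$ and on the relative Iitaka/Moishezon statement that $f$-bigness upgrades generic finiteness onto the image to degree one for $m\gg 0$; this is precisely what guarantees that $q$ (and not merely $p$) is bimeromorphic, so that Chow's Lemma can be applied. Once a projective-over-$V$ model bimeromorphic to $X_V$ is secured, the remaining steps are formal applications of Theorem \ref{thm:chow} and Theorem \ref{thm:log-resolution}, together with the relative-compactness bookkeeping needed to invoke them.
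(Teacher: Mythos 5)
Your overall strategy is the same as the paper's: use the $f$-bigness of $L$ to produce a relative Iitaka model that is projective over the base and bimeromorphic to $X$, pass to the graph, apply Chow's Lemma (Theorem \ref{thm:chow}) to the projection onto that model, and finish with the Log Resolution Theorem \ref{thm:log-resolution}. The difference is one of ordering, and it is exactly this ordering that creates a genuine gap. You restrict to $V$ at the very start, so every object you build ($Z$, $\Gamma$, $\Gamma'$) lives only over $V$; but Theorem \ref{thm:log-resolution} requires the space to be resolved to be presented as a \emph{relatively compact open subset of an ambient analytic variety}, and your $\Gamma'$ is not. Your proposed fix --- embed $\Gamma'$ into $\mbP^N\times V$ and take its closure in $\mbP^N\times S$ --- does not work: the topological closure of a closed analytic subset of $\mbP^N\times V$ inside $\mbP^N\times S$ need not be an analytic set, since analyticity can fail at points lying over $\partial V$ (already for $S=\mbC$, $V$ the unit disk, the graph in $\mbP^1\times V$ of a holomorphic function with an essential boundary singularity has non-analytic closure, its cluster set over the bad boundary point being all of $\mbP^1$). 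A secondary issue is that, in the analytic category, projectivity of $\Gamma'\to V$ over the non-compact base $V$ gives an embedding into $\mbP(\mcE)$ for some coherent sheaf $\mcE$ on $V$, not automatically into a trivial bundle $\mbP^N\times V$, so even the first step of your fix needs justification.

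The paper's proof is structured precisely to avoid this: it fixes a second relatively compact open set $U$ with $\overline{V}\subset U\subset S$, performs the Iitaka-graph-Chow construction over $U$ (producing a variety $Z$ with $Z\to g^{-1}U$ projective bimeromorphic), and only then restricts to $V$. The point is that $Z_V:=(g\circ q\circ h)^{-1}V$ is then an honestly relatively compact open subset of the analytic variety $Z$ (because $\overline{V}$ is a compact subset of $U$ and the structure map of $Z$ to $U$ is proper), so Theorem \ref{thm:log-resolution} applies to $(Z_V,\cdot)$ with no further argument. Your proof becomes correct if you adopt this two-open-set device: run your paragraphs one and two over such a $U$ instead of over $V$, and apply the log resolution only after restricting the resulting Chow model back over $V$. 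The remaining ingredients of your argument (coherence of $g_*L^{\otimes m}$, bimeromorphy of the relative Iitaka map for $m\gg 0$, properness and bimeromorphy of both graph projections, $\sigma$-compactness needed for Theorem \ref{thm:chow}, and the bookkeeping of exceptional divisors at the end) are in line with what the paper itself uses.
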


\begin{proof}
	Let $\phi:X\bir Y$ be the relative Iitaka fibration of $L$ over $S$ and $g:Y\to S$ the induced projective morphism. Since $L$ is $f$-big, $\phi:X\bir  Y$ is bimeromorphic.
	Let $p:\Gamma\to X$ and $q:\Gamma\to Y$ be the resolution of indeterminacy of $\phi$ so that $p$ is proper (see \cite[Theorem VII.1.9]{GPR94}). 
\[
\xymatrixcolsep{3pc}\xymatrix{
& \Gamma\ar[dr]^q\ar[dl]_p & \\
X\ar@{-->}[rr]^\phi\ar[dr]_f && Y\ar[dl]^g \\
& S &
}	
\]	
	Now fix a relatively compact open subset $V\subset S$. Choose another relatively compact open set $U\subset S$ containing $V$ such that $\overline{V}\subset U$. Note that $U$ is $\sigma$-compact, since it is relatively compact. Since $f$ and $g$ are both proper morphisms, it follows that $X_U:=f^{-1}U$ and $Y_U:=g^{-1}U$ are both $\sigma$-compact. Let $\Gamma_U:=q^{-1}(g^{-1}U)=p^{-1}(f^{-1}U)$. Then from the commutative diagram above it follows that $q|_{\Gamma_U}:\Gamma_U\to g^{-1}U$ is a proper morphism. In particular, $\Gamma_U$ is $\sigma$-compact. Note that $q|_{\Gamma_U}$ is bimeromorphic. Therefore by Theorem \ref{thm:chow} there is a projective bimeromorphic morphism $h:Z\to \Gamma_U$ from an analytic variety $Z$ such that $q|_{\Gamma_U}\circ h:Z\to Y_U$ is a projective bimeromorphic morphism. Since $g$ is projective, so is $Z\to U$.
 
	 Now we replace $U$ by our previously fixed open set $V$. Then $Z_V:=(g\circ q\circ h)^{-1}V$ is a relatively compact open subset of $Z$. Let $r:W\to Z_V$ be the log resolution of $(Z_V, (p\circ h)^{-1}_*(D|_{f^{-1}V}))$ as in Theorem \ref{thm:log-resolution}. Let $\alpha:=p|_{\Gamma_V}\circ h|_{h^{-1}\Gamma_V}\circ r$ and $\beta:=g|_{g^{-1}V}\circ q|_{\Gamma_V}\circ h|_{h^{-1}\Gamma_V}\circ r$, where $\Gamma_V:=p^{-1}(f^{-1}V)=q^{-1}(g^{-1}V)$. Note that $\beta$ is a projective morphism, since it is a componsition of projective morphisms  over relatively compact bases. Then $\alpha:W\to f^{-1}V$ is a proper bimeromorpic morphism and $\beta:W\to V$ is a projective morphism such that $\beta=f|_{f^{-1}V}\circ \alpha$ and $(W, \alpha^{-1}_*(D|_{f^{-1}V})+\Ex(\alpha))$ is a log smooth pair.	 
\end{proof}~\\

\begin{definition}\label{def:f-nef-big}
	Let $f:X\to Y$ be a proper surjective morphism of analytic varieties and $L$ a line bundle on $X$. Then $L$ is called \emph{$f$-nef-big}, if $c_1(L)\cdot C\>0$ for every irreducible curve  $C\subset X$ such that $f(C)=\pt$, and $\kappa(X/Y, L)=\dim X-\dim Y$ (see \cite[(B), page 554]{Nak87}). A $\Q$-Cartier divisor $D$ on $X$ is called $f$-nef-big if and only if so is $\mcO _X(mD)$ for some $m>0$ sufficiently divisible.
\end{definition}

The following is a version of the (relative) Kawamata-Viehweg vanishing theorem for proper morphisms between analytic varieties.
\begin{theorem}\cite[Theorem 3.7]{Nak87}\cite[Corollary 1.4]{Fuj13}\label{thm:kvv-original}
	Let $\pi:X\to S$ be a proper surjective morphism from a complex manifold $X$ onto an analytic variety $S$. Let $H$ be a $\mbQ$-Cartier $\mbQ$-divisor on $X$ such that it is $\pi$-nef-big and $\{H\}$ has SNC support. Then $R^i\pi_*(\omega_X\otimes\mcO_X(\lru H\rru))=0$ for all $i>0$.  
\end{theorem}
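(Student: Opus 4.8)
The plan is to reconstruct the analytic vanishing by first stripping away everything formal (making the base small and the morphism projective, clearing denominators in $H$) until only a single Hodge-theoretic input remains. First, observe that the conclusion is local on the base: the coherent sheaf $R^i\pi_*(\omega_X\otimes\mcO_X(\lru H\rru))$ vanishes iff it vanishes after restricting $\pi$ over every relatively compact open subset $V\subset S$. I would therefore fix such a $V$ and argue over it throughout.

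Next I would reduce to the case where the morphism is \emph{projective}, which is where the relative Hodge theory is cleanest. Since $H$ is $\pi$-big, Lemma~\ref{lem:proper-to-projective} (together with Chow's Lemma, Theorem~\ref{thm:chow}, and the log resolution of Theorem~\ref{thm:log-resolution}) furnishes a proper bimeromorphic $\alpha\colon W\to\pi^{-1}V$ from a smooth $W$ with $\beta:=\pi|_{\pi^{-1}V}\circ\alpha\colon W\to V$ projective and $(W,\alpha^{-1}_*(H|_{\pi^{-1}V})+\Ex(\alpha))$ log smooth. Setting $H_W:=\alpha^*H$, the class $H_W$ is $\alpha$-numerically trivial (hence $\alpha$-nef and $\alpha$-big) and remains $\beta$-nef-big, since for a curve $C$ in a $\beta$-fibre one has $H_W\cdot C=H\cdot\alpha_*C\ge 0$. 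A standard computation with $K_W=\alpha^*K_X+E$, $E$ effective and $\alpha$-exceptional, together with a relative Grauert--Riemenschneider-type vanishing, should identify $\alpha_*\bigl(\omega_W\otimes\mcO_W(\lru H_W\rru)\bigr)$ with $\omega_X\otimes\mcO_X(\lru H\rru)$ and show $R^j\alpha_*\bigl(\omega_W\otimes\mcO_W(\lru H_W\rru)\bigr)=0$ for $j>0$. Feeding this into the Leray spectral sequence for $\beta=\pi\circ\alpha$ collapses it to $R^i\pi_*\bigl(\omega_X\otimes\mcO_X(\lru H\rru)\bigr)\cong R^i\beta_*\bigl(\omega_W\otimes\mcO_W(\lru H_W\rru)\bigr)$, so it suffices to treat the projective $\beta$.

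For $\beta\colon W\to V$ I would clear denominators by the cyclic covering trick: choosing $m$ with $mH_W$ integral and $\{H_W\}$ supported on the SNC divisor, build a finite cover $\tau\colon\widetilde W\to W$, resolved to be smooth, on which the pullback of $\lru H_W\rru$ becomes a genuine $(\beta\circ\tau)$-nef-big \emph{integral} line bundle $L$, and then descend the desired vanishing through the eigensheaf decomposition of $\tau_*\mcO_{\widetilde W}$. This reduces everything to the integral statement $R^i(\beta\circ\tau)_*(\omega_{\widetilde W}\otimes L)=0$ for $i>0$, which is precisely the Hodge-theoretic vanishing of \cite{Nak87} and \cite{Fuj13}.

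The hard part is this last step, not the reductions: because the base $V$ is only analytic, there is no algebraic variety on which to invoke the classical Kawamata--Viehweg theorem, so the integral nef-big relative vanishing for a projective (indeed Kähler) morphism must be proved by analytic means --- relative Hodge theory and Takegoshi's $L^2$ estimates for Kähler morphisms. The only other delicate point is the bookkeeping with $\lru\,\cdot\,\rru$ and $\alpha$-exceptional divisors in the reduction step: it is routine but must be carried out carefully so that the covering trick really lands on a nef-big \emph{integral} class, and it is exactly here that the SNC hypothesis on $\{H\}$ is used.
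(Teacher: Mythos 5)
The paper does not prove this statement at all: it is imported verbatim from the literature, and the citations to \cite[Theorem 3.7]{Nak87} and \cite[Corollary 1.4]{Fuj13} constitute the entire ``proof.'' Measured against that, your proposal is not an independent proof either: your last step defers ``the integral nef-big relative vanishing for a projective morphism'' to exactly those same references, so at best you have reduced the theorem to a special case of itself. The reduction moreover buys nothing, because Nakayama's and Fujino's results are stated and proved for proper (not merely projective) morphisms and for $\mbQ$-divisors whose fractional part has SNC support --- that is, for precisely the statement being quoted; the fractional case is handled there directly by the analytic (Hodge-theoretic/$L^2$) methods, not by descending from the integral case over an algebraic base.

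Beyond this structural issue, two of your intermediate steps have concrete gaps. First, the claimed vanishing $R^j\alpha_*\bigl(\omega_W\otimes\mcO_W(\lru H_W\rru)\bigr)=0$ for $j>0$ is not a ``relative Grauert--Riemenschneider-type vanishing'': Grauert--Riemenschneider carries no fractional twist, and once you add $\lru H_W\rru=\alpha^*H+\{-\alpha^*H\}$ this is itself an instance of the theorem you are trying to prove, applied to the proper bimeromorphic morphism $\alpha$ whose target is only an analytic space --- so the step is circular unless you again quote the cited theorem. (By contrast, the identification $\alpha_*\bigl(\omega_W\otimes\mcO_W(\lru H_W\rru)\bigr)\cong\omega_X\otimes\mcO_X(\lru H\rru)$ is fine; it amounts to the triviality of the multiplier ideal of the klt pair $(X,\{-H\})$.) Second, the Kawamata covering trick you invoke to make $H$ integral is established for (quasi-)projective varieties via Bloch--Gieseker-type constructions that use ampleness on an algebraic base; producing such covers for a projective morphism onto a relatively compact Stein analytic base requires an argument you do not supply. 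It is worth noting that the reduction machinery you assembled --- localizing on $S$, Lemma \ref{lem:proper-to-projective}, and the Leray collapse --- is exactly how the paper deduces its own variant, Theorem \ref{thm:relative-kvv}, \emph{from} the present statement; it is not, and cannot be, how the present statement itself is established.
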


We prove the following variant which is more convenient for us.
\begin{theorem}\label{thm:relative-kvv}
Let $\pi:X\to S$ be a proper surjective morphism of analytic varieties. Let $\Delta\>0$ be a $\mbQ$-divisor on $X$ such that $(X, \Delta)$ is klt, and $D$ is a $\mbQ$-Cartier integral Weil divisor on $X$ such that $D-(K_X+\Delta)$ is $\pi$-nef-big. Then 
        \[
                R^i\pi_*\mcO_X(D)=0\qquad \text{ for all }\ i>0.
        \]
\end{theorem}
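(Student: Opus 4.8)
The plan is to reduce this klt-pair statement to the smooth case already recorded as Theorem \ref{thm:kvv-original} by passing to a log resolution and performing the standard discrepancy bookkeeping. Since the vanishing of $R^i\pi_*\mcO_X(D)$ is local on $S$, I would first replace $S$ by a relatively compact open subset $V$ and $X$ by $\pi^{-1}(V)$; as $\pi$ is proper, $\pi^{-1}(V)$ is a relatively compact open subset of $X$, so the analytic log resolution of Theorem \ref{thm:log-resolution} is available. Setting $L:=D-(K_X+\Delta)$, I would take $f\colon Y\to X$ to be a log resolution of the pair $(X,\Delta+\Supp L+\Supp D)$; then $Y$ is a manifold, $\pi\circ f\colon Y\to V$ is proper and surjective, and all relevant divisors on $Y$ have SNC support.

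Write $K_Y=f^*(K_X+\Delta)+\sum_i a_iE_i$ with $a_i>-1$ (this is exactly the klt hypothesis), so that $f^*D+\sum_i a_iE_i=K_Y+f^*L$, and define the integral divisor
\[ D_Y:=K_Y+\lceil f^*L\rceil=\big\lceil f^*D+\textstyle\sum_i a_iE_i\big\rceil. \]
Because $f$ is bimeromorphic and $L$ is $\pi$-nef-big, the class $f^*L$ is nef and big relative both to $f$ (over $X$) and to $\pi\circ f$ (over $V$): nefness holds since $f^*L\cdot C=L\cdot f_*C\geq 0$ for every contracted curve $C$, and relative bigness survives because the relative Iitaka dimension and the relative dimension are bimeromorphic invariants. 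Its fractional part has SNC support by the choice of $f$.

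I would then verify three facts. First, the local vanishing $R^jf_*\mcO_Y(D_Y)=0$ for $j>0$, by applying Theorem \ref{thm:kvv-original} to $f\colon Y\to X$ with $H=f^*L$ (which is $f$-nef-big with SNC fractional part). Second, the pushforward identity $f_*\mcO_Y(D_Y)=\mcO_X(D)$: by the rounding, $G:=D_Y-f^*D$ vanishes on every non-exceptional prime divisor and has coefficients $>-1$ on the exceptional ones, the latter precisely because $a_i>-1$; since for any $\phi\in K(X)$ the divisor $\operatorname{div}_Y(\phi)+D_Y=f^*(\operatorname{div}_X(\phi)+D)+G$ is integral, any exceptional coefficient that is $>-1$ is automatically $\geq 0$, and this forces $f_*\mcO_Y(D_Y)$ and $\mcO_X(D)$ to coincide. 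Third, the global vanishing $R^i(\pi\circ f)_*\mcO_Y(D_Y)=0$ for $i>0$, by applying Theorem \ref{thm:kvv-original} once more, now to $\pi\circ f\colon Y\to V$ with the same $H=f^*L$.

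Finally, I would combine these through the Leray spectral sequence for $\pi\circ f$. Since $R^jf_*\mcO_Y(D_Y)=0$ for $j>0$, it degenerates and yields $R^i\pi_*\mcO_X(D)=R^i\pi_*f_*\mcO_Y(D_Y)=R^i(\pi\circ f)_*\mcO_Y(D_Y)=0$ for all $i>0$ over $V$; letting $V$ range over a cover of $S$ gives the theorem. I expect the main obstacle to be the careful rounding that makes the single divisor $D_Y$ serve all three purposes simultaneously — in particular, using the klt inequality $a_i>-1$ together with the integrality of $\operatorname{div}_Y(\phi)+D_Y$ to pin down $f_*\mcO_Y(D_Y)=\mcO_X(D)$ — and the mildly delicate point that relative bigness of $L$ is preserved under pullback by the bimeromorphic $f$. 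The purely analytic subtleties (existence of the resolution as a manifold, properness of the maps) are dealt with cleanly by localizing to relatively compact opens of $S$ at the outset.
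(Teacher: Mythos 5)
Your outline coincides with the paper's own proof in all of its main moves: localize over the base, pass to a log resolution, apply Theorem \ref{thm:kvv-original} once to $f$ and once to $\pi\circ f$ with $H=f^*(D-(K_X+\Delta))$, identify the pushforward of $\omega_Y\otimes\mcO_Y(\lceil H\rceil)$ with $\mcO_X(D)$, and finish with the Leray spectral sequence. The rounding arithmetic you propose is correct, and your integrality trick for the pushforward identity (an exceptional coefficient which is an integer and $>-1$ must be $\geq 0$) is in fact spelled out more carefully than the corresponding assertion in the paper.

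There is, however, a genuine gap at the foundation of your argument. You write $K_Y=f^*(K_X+\Delta)+\sum_i a_iE_i$, set $L=D-(K_X+\Delta)$, $D_Y=K_Y+\lceil f^*L\rceil$, and argue with $\operatorname{div}_Y(\phi)+D_Y$; all of this treats $K_X$ and $K_Y$ as honest divisors, i.e.\ presumes that $\omega_X$ and $\omega_Y$ admit nonzero meromorphic sections. On complex analytic varieties, unlike algebraic ones, a line bundle --- in particular the canonical bundle --- need not have any meromorphic section, so a canonical divisor need not exist; restricting to a relatively compact open $\pi^{-1}(V)$ and taking the log resolution of Theorem \ref{thm:log-resolution} does nothing to remedy this, since $\pi\circ f$ remains merely proper. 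This is exactly the issue the paper's proof is organized around: it first invokes Lemma \ref{lem:proper-to-projective} (which exploits the relative bigness of $D-(K_X+\Delta)$ through the relative Iitaka fibration) to arrange that $\pi\circ f:Y\to U$ is \emph{projective} over a relatively compact \emph{Stein} open $U\subset S$, and then shows, by twisting with a relatively ample bundle and using that nonzero coherent sheaves on Stein spaces have nonzero sections, that every line bundle on $Y$ is represented by a Cartier divisor --- in particular $\omega_Y$ is given by an actual divisor $K_Y$, which is what legitimizes the discrepancy calculus. Without this reduction (or, alternatively, a full sheaf-theoretic reformulation of your bookkeeping via the canonical isomorphism $\omega_Y^{\otimes m}\cong f^*\mcO_X(m(K_X+\Delta))\otimes\mcO_Y\left(m\sum_i a_iE_i\right)$ carried consistently through every step), the displayed divisor identities in your proof do not literally make sense. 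So the missing idea is not resolution bookkeeping but the projective-over-Stein reduction that makes canonical divisors exist; note that this is also a second, essential place where the bigness hypothesis enters, beyond its role in the vanishing theorem itself.
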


\begin{proof}
First note that the question is local on the base. Then by Lemma \ref{lem:proper-to-projective}, over a relatively compact Stein open subset $U\subset S$, there exists a proper bimeromorphic morphism $f:Y\to \pi^{-1}U$ from a smooth variety $Y$ such that $\pi|_{\pi^{-1}U}\circ f:Y\to U$ is a projective morphism and $(Y, f^{-1}_*(\Supp(D)+\Delta)|_{\pi^{-1}U}+\Ex(f))$ is a log smooth pair. Replacing $S, X$ and $\pi$ by $U, \pi^{-1}U$ and $\pi|_{\pi^{-1}U}$, respectively, we may assume that $S$ is a Stein space and $f:Y\to X$ is log resolution of $(X, \Delta+\Supp(D))$ such that $\pi\circ f$ is projective.

Next observe that, since $S$ is (relatively compact and) Stein and $\pi\circ f$ is projective, every line bundle on $Y$ corresponds to a (non-unique) Cartier divisor. Indeed, if $\mathscr M$ is a line bundle on $Y$ and $H$ is a $(\pi\circ f)$-ample Cartier divisor on $Y$, then $\mathscr M\otimes\mcO_Y(mH)$ is relatively globally generated over $S$ for all $m\gg 0$. In particular, $f_*(\mathscr M\otimes\mcO_Y(mH))\neq 0$ for all $m\gg 0$. Since $S$ is Stein, this implies that $H^0(\mathscr M\otimes\mcO_Y(mH))\neq 0$ for all $m\gg 0$. Let $\Theta$ be an effective Cartier divisor defined by a non-zero element of $H^0(\mathscr M\otimes\mcO_Y(mH))$. Then $\mathscr M\cong\mcO_Y(\Theta-mH)$. In particular, the canonical line bundle $\omega_Y$ is given by a Cartier divisor, which we will denote by the usual notation $K_Y$.\\ 

Now write 
\[ K_Y+\Gamma=f^*(K_X+\Delta )+E\]
such that $\Gamma\>0$ and $E\>0$ do not share any common component and $f_*\Gamma=\Delta $ and $f_*E=0$.

Let $A$ be a $\mbQ$-Cartier $(\pi\circ f)$-nef-big divisor on $Y$ such that $f^*D=f^*(K_X+\Delta )+A=K_Y+\Gamma-E+A$. 
Since $\{A\}$ has SNC support, by Theorem \ref{thm:kvv-original} we have 
\begin{equation}\label{eqn:kvv}
R^if_*\mcO_Y(K_Y+\lru A\rru)=0 \text{ and } R^i(\pi\circ f)_*\mcO_X(K_Y+\lru A\rru)=0 \text{ for all } i>0.	
\end{equation}
 Now since $(X, \Delta )$ is a klt pair, the coefficients of $\Gamma$ are contained in the interval $(0, 1)$, and thus 
 \[K_Y+\lru A\rru=\lru f^*D +E-\Gamma \rru\geq \lrd  f^*D  \rrd\]
 so that $f_*\mcO_Y(K_Y+\lru A\rru)= \mcO_X(D)$.
 Combining \eqref{eqn:kvv} with a standard Leray spectral sequence argument, it follows that $R^i\pi_*\mcO_X(D)=0$ for all $i>0$.

%
%

\end{proof}~\\

\subsection{MMP for K\"ahler $3$-folds}
The following results are improvements of the important results from \cite{CHP16} and \cite{HP16}.

\begin{theorem}\cite[Theorem 2.26]{DO23}\label{t-cone} Let $(X,B)$ be a dlt pair, where $X$ is a $\mbQ$-factorial compact K\"ahler  $3$-fold.  If $K_X+B$ is pseudo-effective, then there is a rational number $d>0$ and an at 
most countable set of curves $\{\Gamma _i\}_{i\in I}$ such that
\[ 0<-(K_X+B)\cdot \Gamma _i \leq d\]
and that
\[ \overline{\rm NA}(X)=\overline{\rm NA}(X)_{(K_X+B)\geq 0}+\sum _{i\in I}\mathbb R ^+[\Gamma _i].\]
\end{theorem}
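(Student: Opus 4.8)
The plan is to run the Kawamata--Shokurov proof of the cone theorem, transported into the K\"ahler category. The first step is to pass to the dual picture: by \cite[Proposition 3.15]{HP16} the map $\Phi$ identifies $\Nef(X)$ with $\NA(X)$, while Lemma \ref{lem:adjoint-nef} guarantees that analytic and algebraic nefness agree for the adjoint class $K_X+B$. Together these let me pair elements of $\NA(X)$ against $K_X+B$ by honest intersection numbers $(K_X+B)\cdot C$ with curves $C$, so that the argument can be run in terms of curves and their degrees exactly as in the projective setting. I would then fix a K\"ahler class $\omega$ as a polarization and set $\mathcal{F}:=\NA(X)_{K_X+B\ge 0}+\sum_i\mathbb{R}^+[\Gamma_i]$, where $\{\Gamma_i\}$ ranges over all rational curves satisfying $0<-(K_X+B)\cdot\Gamma_i\le d$ for a uniform bound $d$ produced below; the goal is to prove $\mathcal{F}=\NA(X)$.

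The inclusion $\mathcal{F}\subseteq\NA(X)$ is immediate, so the content lies in the reverse inclusion, which I would obtain by a Hahn--Banach separation argument. If $\mathcal{F}$ were a proper closed subcone, there would be a nef class $\alpha$ nonnegative on $\mathcal{F}$ but negative on some class of $\NA(X)$ lying in the region where $K_X+B<0$. After a small perturbation by $\omega$ and a rationality statement for the relevant nef threshold---of the type established using vanishing theorems such as Theorem \ref{thm:relative-kvv}---I may arrange that $\alpha$ is a rational class supporting a $(K_X+B)$-negative extremal face. To reach a contradiction I must then exhibit a rational curve $\Gamma$ in that face with $-(K_X+B)\cdot\Gamma\le d$, contradicting the definition of $\mathcal{F}$.

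The heart of the matter, and the \emph{main obstacle}, is exactly this production of rational curves of bounded length on a $(K_X+B)$-negative face, since Mori's bend-and-break via reduction to characteristic $p$ is unavailable on a non-projective K\"ahler manifold. Here I would exploit the dichotomy of Definition \ref{def:divisorial-extremal-ray}: the face is either of divisorial type, so that its curves sweep out a surface $S\subset X$, or small, so that every curve in it is very rigid. In the divisorial case the covering family on the normalization $\nu:\tilde{S}\to S$, together with the nef reduction of Notation \ref{not:nef-reduction}, supplies enough moving curves to run a deformation-theoretic bend-and-break on the surface via the theory of rational curves of \cite{Kol96}; this is the point at which the K\"ahler hypothesis is digested, since a covering family of $(K_X+B)$-negative curves forces rational curves to appear. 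In the small case I would instead localize analytically around the rigid curve, where $X$ becomes Moishezon and Lemma \ref{lem:nef-moishezon} allows me to import the projective bend-and-break directly. Throughout I expect to split into cases according to the MRC fibration of $X$, exactly as in the proof of Lemma \ref{lem:adjoint-nef}, since when the base of that fibration has dimension $\le 1$ the threefold $X$ is already projective and the classical cone theorem applies verbatim.

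Once every $(K_X+B)$-negative extremal ray is known to be generated by such a rational curve, the uniform bound $d$ emerges from the standard bend-and-break length estimate (of the shape $d=2\dim X$, adjusted for the singularities of $X$ and the coefficients of $B$), and the remaining assertions become formal. Fixing $\omega$ as polarization, the length bound forces the rays $\mathbb{R}^+[\Gamma_i]$ to accumulate only along the hyperplane $(K_X+B)^\perp$: in any closed subcone of $\NA(X)$ on which $K_X+B$ is strictly negative there are only finitely many of them, so up to scaling the classes $[\Gamma_i]$ fall into countably many rays. This simultaneously yields the countability asserted in the statement and the closedness of $\mathcal{F}$, and closes the separation argument to give $\mathcal{F}=\NA(X)$.
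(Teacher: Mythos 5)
You should first know that the paper does not prove Theorem \ref{t-cone} at all: it is quoted verbatim from \cite[Theorem 10.12]{DO22} (building on \cite[Theorem 4.2]{CHP16} and \cite{HP16}), and the only place where the method of proof is visible inside this paper is the non-pseudo-effective analogue, Theorem \ref{t-cone-mrc}. That proof has a completely different architecture from yours: one writes the pseudo-effective class $K_X+B$ via Boucksom's divisorial Zariski decomposition $K_X+B\equiv\sum\lambda_jS_j+N$, shows that any curve $C$ with $-(K_X+B)\cdot C$ larger than an explicit bound $d$ (a maximum of $4$ and intersection numbers with finitely many explicitly constructed curves) must lie on one of the finitely many surfaces $S_j$ or components of $B$, proves that the relevant minimal resolutions of those surfaces are \emph{projective} (their canonical classes fail to be pseudo-effective, so Lemma \ref{lem:projectivity-of-kahler-surface} applies), and then runs K\"ahler-free deformation theory and bend-and-break \cite[Theorem II.1.15, Lemma 5.5]{Kol96} on those projective surfaces. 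The cone statement is then deduced formally from this unconditional bend-and-break property. Curve production comes \emph{first}, for every curve of large degree, and only afterwards does one organize the rays.

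Your proposal inverts this order, and that is where it breaks. First, the Kawamata--Shokurov route is unavailable in the K\"ahler category: the rationality theorem concerns nef thresholds of \emph{rational} ample divisor classes and is proved by counting sections of linear systems $|m(H+t(K_X+B))|$; a transcendental K\"ahler class $\omega$ carries no linear system, its nef threshold against $K_X+B$ is in general irrational, and Theorem \ref{thm:relative-kvv} (a relative vanishing statement for proper morphisms) cannot substitute for this. Worse, a base-point-free-type statement is a \emph{consequence} of the MMP in this paper (Theorem \ref{thm:bpf}), so invoking that machinery to prove the cone theorem would be circular. Second, your curve production step is circular in a more basic way: after separation, the $(K_X+B)$-negative extremal face is a face of $\NA(X)$, a cone of \emph{currents}, and the whole content of the cone theorem is that such faces contain curve classes at all. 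The divisorial/small dichotomy of Definition \ref{def:divisorial-extremal-ray} quantifies over curves already known to lie in the ray, so applying it presupposes exactly what must be proved; in particular a "small" face could a priori contain no curve whatsoever, and then your plan to "localize analytically around the rigid curve, where $X$ becomes Moishezon" has nothing to localize around --- and in any case Moishezon-ness is a global property of a compact space, not something acquired on a neighborhood of a rigid curve, so bend-and-break cannot be imported there. Finally, the bound $d$ is not the classical $2\dim X$-type length estimate; in the actual proofs it is assembled from intersection numbers with finitely many exceptional curve families, which is another sign that the argument must be run on the surfaces appearing in the Zariski decomposition rather than on abstract extremal faces.
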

Note that if $\omega$ is modified K\"ahler, then there are only finitely many $i\in I$ such that 
$(K_X+B+\omega )\cdot \Gamma _i <0$, see {\color{blue} Remark \ref{rmk:finite-perturbation}} for a detailed discussion.

\begin{theorem} \label{t-pmmp} Let $(X,B)$ be a dlt pair, where $X$ is a $\mbQ$-factorial compact K\"ahler  $3$-fold, and $K_X+B$ is pseudo-effective. Let $R$ be a $(K_X+B)$-negative extremal ray supported by a nef class $\alpha$. Then
\begin{enumerate}
\item If $R$ is small, then the contraction $c_R:X\to Y$ of $R$ exists and $Y$ is a compact K\"ahler space.
\item If $R$ is divisorial and $n(\alpha )=0$, then  the contraction  $c_R:X\to Y$ of $R$ exists and $Y$ is a compact K\"ahler space with $\Q$-factorial singularities.
\item  Assume that $R$ is divisorial, $n(\alpha ) = 1$, and one
of the following conditions is satisfied: 
\begin{enumerate}
	\item $X$ has terminal singularities and $K_X\cdot C<0$, where $R=\mbR^+\cdot[C]$ for some curve $C\subset X$, or
	\item $S$ has semi-log canonical singularities, where $S$ is the unique surface covered by curves in the class $R$.
\end{enumerate}
Then the contraction $c_R:X\to Y$ of $R$ exists and $Y$ is a compact K\"ahler space with $\mbQ$-factorial singularities.
 \end{enumerate}
\end{theorem}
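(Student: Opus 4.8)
The plan is to construct the contraction $c_R$ directly as a proper bimeromorphic analytic map that collapses exactly the curves whose class lies in $R$, and then to identify its target as a normal compact K\"ahler space by feeding the morphism into the relative Kawamata--Viehweg vanishing of Theorem \ref{thm:relative-kvv} and the pullback description of Bott--Chern classes in Lemma \ref{l-HP}. Fix the supporting nef and big class $\alpha$, which supports $R$ in the sense that $\alpha^{\perp}\cap\NA(X)=R$; then a curve is $\alpha$-trivial exactly when its class lies in $R$. By the Demailly--Paun/Collins--Tosatti description of the null locus of a nef and big class (applied on a K\"ahler resolution of $X$ if necessary), $\Null(\alpha)$ is swept out by these $\alpha$-trivial curves, and by \cite[Lemma 7.5]{HP16} it is either a finite union of very rigid curves, so that $R$ is small, or a single irreducible surface $S$, so that $R$ is of divisorial type. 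This dichotomy separates item (1) from items (2)--(3). Once $c_R\colon X\to Y$ has been produced, the remaining conclusions are uniform across all cases: applying Theorem \ref{thm:relative-kvv} with $D=0$ and a klt perturbation $\Delta$ of $B$---legitimate because $-(K_X+\Delta)$ is $c_R$-nef-big, as $R$ is $(K_X+B)$-negative and the $R$-curves are the only contracted ones---yields $R^i(c_R)_*\mathcal{O}_X=0$ for $i>0$, so $Y$ has rational singularities; Lemma \ref{l-HP} then gives $\alpha=c_R^*\alpha_Y$ for a class $\alpha_Y\in H^{1,1}_{\rm BC}(Y)$, and the Demailly--Paun numerical criterion shows $\alpha_Y$ is K\"ahler, whence $Y$ is compact K\"ahler.

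For the small case (1), $\Null(\alpha)$ is a finite union of very rigid curves $C_1,\dots,C_k$. Since $\alpha$ is nef and big and vanishes exactly along them, these curves carry the negativity needed for a strongly pseudoconvex neighborhood, so Grauert's contractibility criterion produces the desired proper bimeromorphic $c_R\colon X\to Y$ contracting $\bigcup_i C_i$ to points and biholomorphic elsewhere; the uniform argument above then makes $Y$ compact K\"ahler. Here $Y$ is not expected to be $\mathbb{Q}$-factorial, consistent with $c_R$ being a small (flipping) contraction.

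For the divisorial cases (2)--(3) the contracted locus is the surface $S$, and the fiber structure is read off from the nef reduction $f\colon\tilde S\to T$ of $\nu^*(\alpha|_S)$ in Notation \ref{not:nef-reduction}, where $\dim T=n(\alpha)$. If $n(\alpha)=0$ (case 2) then $\alpha|_S\equiv 0$, every curve of $S$ is $\alpha$-trivial, and the negativity of $S$ coming from $\alpha$ being nef and big allows $S$ to be blown down to a single point by a Grauert-type criterion; $Y$ is compact K\"ahler by the uniform argument, and since $S$ is the only contracted divisor we have $\rho(X/Y)=1$ and $Y$ stays $\mathbb{Q}$-factorial by the preservation statement in Definition \ref{def:lots-of-definitions}. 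If $n(\alpha)=1$ (case 3) then $f$ is a genuine morphism onto a curve $T$ whose fibers are precisely the $\alpha$-trivial curves of $R$, so $c_R$ must blow $S$ down onto a curve, collapsing each fiber; this is a relative contraction over $T$ rather than a collapse to a point, and hypotheses (a) (terminal with $K_X\cdot C<0$) and (b) ($S$ semi-log canonical) are exactly what pin down the fiber geometry, forcing the fibers to be trees of rational curves that can be contracted fiber-by-fiber and bounding the singularities created along the image curve, so that the uniform argument again produces a $\mathbb{Q}$-factorial compact K\"ahler $Y$.

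The main obstacle is case (3). Contractions to a point fall to Grauert's criterion as soon as the exceptional set has a strongly pseudoconvex neighborhood, but blowing $S$ down onto a curve requires collapsing a one-parameter family of fibers at once while keeping the total space normal, and then checking that the image is $\mathbb{Q}$-factorial and K\"ahler. Controlling the local analytic type of this relative contraction along the image curve---precisely where the absence of a base-point-free theorem in the K\"ahler setting is felt most acutely---is what forces the structural hypotheses (a) and (b) on the singularities of $X$ and $S$, and is where the bulk of the work lies.
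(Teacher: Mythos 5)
Your overall architecture --- control $\Null(\alpha)$, contract it by a Grauert-type criterion, push $\alpha$ down through Lemma \ref{l-HP}, get rational singularities from Theorem \ref{thm:relative-kvv}, and deduce that $Y$ is K\"ahler from a Demailly--Paun-type criterion --- does shadow how this theorem is actually proved; note, though, that the paper itself does not reprove it but simply cites \cite[Theorems 10.14, 10.16 and Proposition 10.17]{DO22}, so the standard of comparison is the work done there and in \cite{HP16, CHP16}. Measured against it, your proposal has a genuine gap already in case (1): you assert that $\Null(\alpha)$ is ``swept out by $\alpha$-trivial curves'', hence is a finite union of very rigid curves when $R$ is small. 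This is precisely the hard technical point, and it does not follow from Collins--Tosatti or from \cite[Lemma 7.5]{HP16}: a priori $\Null(\alpha)$ could contain an irreducible surface $S$ with $\alpha^2\cdot S=0$ that is \emph{not} covered by curves of $R$. Excluding this requires first scaling $\alpha$ and using the cone theorem (Theorem \ref{t-cone}, together with \cite[Corollary 3.16]{HP16}) to write $\alpha=K_X+B+\omega$ with $\omega$ K\"ahler, and then the divisorial Zariski decomposition/adjunction/Hodge-index argument of Lemma \ref{lem:null-locus} (cf.\ \cite[Proposition 4.4]{CHP16}), which shows that any surface inside $\Null(\alpha)$ \emph{is} covered by $\alpha$-trivial curves, contradicting smallness. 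Without this step you can neither invoke the Grauert-type contraction --- Proposition \ref{pro:null-locus-contraction} requires as hypothesis that $\Null(\alpha)$ be a finite union of curves --- nor run your ``uniform'' K\"ahlerness argument for $Y$, since positivity of $\alpha_Y$ on a surface $V\subset Y$ is exactly the statement that its strict transform does not lie in $\Null(\alpha)$.

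The second, and larger, gap is case (3): you do not prove it. Saying that hypotheses (a) and (b) ``pin down the fiber geometry, forcing the fibers to be trees of rational curves that can be contracted fiber-by-fiber'' names the desired conclusion rather than deriving it; no mechanism is supplied for collapsing the one-parameter family of fibers of $\tilde S\to T$ into a normal, K\"ahler, $\mbQ$-factorial target, which you yourself identify as the crux. In the literature this is the content of \cite[\S 7]{HP16} for (a) (deformation theory of rational curves on terminal $3$-folds and the geometry of the ruled surface $S$) and of \cite[Proposition 4.5]{CHP16} for (b) (adjunction and the structure theory of slc surfaces), and it is exactly the part of Theorem \ref{t-pmmp} that goes beyond formal manipulation. (Case (2) is the least problematic part of your sketch: there, as in \cite[Corollary 7.7]{HP16}, one shows $(-mS)|_S$ is ample because $-mS\cdot R>0$ and applies Grauert's criterion, roughly as you suggest.) As written, then, the proposal is an accurate road map of where the difficulties lie, but the two essential steps --- the absence of surfaces in $\Null(\alpha)$ in the small case, and the construction of the divisorial contraction onto a curve when $n(\alpha)=1$ --- are asserted rather than proved.
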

\begin{proof}
(1) follows from \cite[Theorem 2.28]{DO23}.
(2) follows from \cite[Theorem 2.30]{DO23}. 
(3) follows from \cite[Proposition 2.31]{DO23}. 
These results also follow from \cite[Theorem 1.2]{DH23}. 
\end{proof}

\begin{theorem}\cite[Theorem 4.3]{CHP16}\label{t-flip}
	  Let $(X,B)$ be a dlt pair, where $X$ is a $\mbQ$-factorial compact K\"ahler $3$-fold. Let $X\to Y$ be a $(K_X+B)$-flipping contraction, then the flip $X^+\to Y$ exists so that $X^+$ is a $\mbQ$-factorial compact K\"ahler $3$-fold  and $(X^+,B^+)$ is dlt, where $B^+:=\phi_*B,$ and $ \phi:X\bir X^+$ is the induced bimeromorphic map.
\end{theorem}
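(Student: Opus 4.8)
\emph{Proof proposal.} The plan is to construct $X^+$ as a relative $\operatorname{Proj}$ over $Y$ and then check the listed properties are formal consequences. Since $R$ is small, Theorem \ref{t-pmmp}(1) already provides the flipping contraction $f=c_R:X\to Y$ together with the fact that $Y$ is a compact K\"ahler space, and by construction $-(K_X+B)$ is $f$-ample, so $f$ is a \emph{projective} morphism. I would set
\[
	\mcR \;=\; \bigoplus_{m\geq 0} f_*\mcO_X\big(\lfloor m(K_X+B)\rfloor\big),
\]
a sheaf of graded $\mcO_Y$-algebras, and aim to prove that $\mcR$ is a locally finitely generated $\mcO_Y$-algebra. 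Granting this, define $X^+:=\operatorname{Proj}_Y\mcR$ with structure morphism $f^+:X^+\to Y$; then $f^+$ is projective, $K_{X^+}+B^+$ is $f^+$-ample with $B^+=\phi_*B$ for the induced map $\phi:X\bir X^+$, and $\phi$ is an isomorphism in codimension $1$ because both $f$ and $f^+$ are small. This is the candidate flip.

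The crux is the finite generation of $\mcR$, and the two reductions that make it tractable are that the statement is local over $Y$ and that $f$ is projective. Finite generation may be checked over a relatively compact Stein neighborhood $U$ of the finite set $f(\Ex(f))$ of flipping points, since away from these points $f$ is an isomorphism and $\mcR$ is generated in low degree. Over $U$ the induced map $f_U:f^{-1}U\to U$ is a projective small contraction of a dlt $3$-fold pair with $-(K+B)$ relatively ample, so the question becomes exactly the classical problem of constructing $3$-fold flips, now over a Stein base. I would reduce this to the algebraic theory: the germ of $(X,B)\to Y$ along a flipping point is governed by the analytic singularity type along the exceptional curves, and by an Artin-type algebraization one produces an algebraic flipping contraction whose completion matches the analytic germ. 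The existence of $3$-fold flips in the algebraic category then yields finite generation of the completed relative canonical algebra, and by faithful flatness of completion this descends to the local finite generation of $\mcR$ over $U$. (Alternatively, one may invoke a relative analytic MMP for projective morphisms over a Stein base.)

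Once $X^+$ is in hand the remaining claims are formal. Since $f^+:X^+\to Y$ is projective and $Y$ is compact K\"ahler, Remark \ref{rmk:kahler-property} shows $X^+$ is a compact K\"ahler $3$-fold. For $\mbQ$-factoriality, given a Weil divisor $D^+$ on $X^+$ let $D$ be its strict transform on the $\mbQ$-factorial $X$; as $\rho(X/Y)=1$ there is $a\in\mbQ$ with $D-a(K_X+B)$ being $f$-numerically trivial and $\mbQ$-Cartier, hence equal to $f^*D_Y$ for a $\mbQ$-Cartier $D_Y$ on $Y$, and pushing forward via $\phi$ gives $D^+-a(K_{X^+}+B^+)=(f^+)^*D_Y$, which is $\mbQ$-Cartier; since $K_{X^+}+B^+$ is $\mbQ$-Cartier this forces $D^+$ to be $\mbQ$-Cartier. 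Finally $(X^+,B^+)$ is dlt by the usual discrepancy comparison: on a common resolution $p:W\to X$, $q:W\to X^+$, the negativity lemma applied to the $f$-anti-ample $K_X+B$ and the $f^+$-ample $K_{X^+}+B^+$ shows that all discrepancies strictly increase under $\phi$, so the dlt property of $(X,B)$ is inherited by $(X^+,B^+)$.

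The main obstacle is the finite generation step over the Stein base $U$: in the analytic category one cannot directly quote BCHM or Mori's theorem, which are stated for varieties of finite type, so the genuine work is to justify the passage from the analytic germ of the flipping contraction to an algebraic model (or to set up the relevant relative analytic MMP), after which the classical dimension-$3$ flip results apply. Everything else --- the $\operatorname{Proj}$ construction, the K\"ahler property, $\mbQ$-factoriality, and the dlt property --- is formal once finite generation is known.
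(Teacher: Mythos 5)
The first thing to note is that the paper contains no proof of this statement at all: Theorem \ref{t-flip} is imported verbatim from \cite[Theorem 4.3]{CHP16}, so your proposal is really competing with the proof in that reference rather than with anything internal to this paper. Your outer shell does match how the literature proceeds: the flip as $\operatorname{Proj}_Y$ of the relative log-canonical algebra $\mcR$, the observation that finite generation is a local-analytic question over $Y$ concentrated at the finitely many flipping points, and the formal endgame (K\"ahlerness of $X^+$ from projectivity of $f^+$ over the compact K\"ahler $Y$ via Remark \ref{rmk:kahler-property}; $\mbQ$-factoriality from $\rho(X/Y)=1$ plus relative base-point freeness \cite{Nak87}; dlt-ness from the negativity lemma) is standard and fine in outline.

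The genuine gap is exactly the crux you flag and then leave unresolved, and both of your proposed bridges fail as stated. First, the Artin-algebraization route: since $X$ is only dlt and $\mbQ$-factorial, $\mathrm{Sing}(X)$ may contain curves meeting the flipping curves, so the germ $(Y,y)$ need not be an isolated singularity, and algebraization of analytic germs is not available outside the isolated case; worse, what actually has to be algebraized is not the point germ but the germ of $X$ along the compact flipping curve, together with the boundary $B$ and the contraction morphism, in a way preserving dlt-ness and relative anti-ampleness, after which one still must identify the completion of the analytic algebra $\mcR$ at $y$ with the algebraic one (formal functions plus algebraization of the total space, which is Artin's ``existence of modifications'' territory). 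None of this is carried out, and it is precisely where the content of the theorem lies. Second, your fallback of ``invoking a relative analytic MMP over a Stein base'' is circular: in this paper such an MMP is Proposition \ref{pro:relative-projective-mmp}, whose proof explicitly uses Theorem \ref{t-flip}, and in general \cite{Nak87} supplies relative cone, contraction and base-point-free theorems but not flips, so any relative analytic MMP presupposes the very statement being proved. The missing idea that makes the known argument non-circular is the reduction to terminal flips: pass to a terminalization of $(X,B)$ (as in Lemma \ref{l-ter}), run a relative MMP whose small steps are $K$-negative terminal flips, and use that Mori's and Koll\'ar--Mori's construction of terminal $3$-fold flips is carried out directly on analytic germs of extremal neighborhoods, so no algebraization is needed. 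Packaging this reduction in the K\"ahler setting is, in substance, the proof of \cite[Theorem 4.3]{CHP16} that the paper is quoting.
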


\begin{theorem}\cite[Theorem 1.12]{DO23}\label{t-term} 
	 Let $(X,B)$ be a dlt pair, where $X$ is a $\mbQ$-factorial compact K\"ahler $3$-fold. Then any sequence of $(K_X+B)$-flips is finite.
\end{theorem}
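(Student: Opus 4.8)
The plan is to transplant the classical proof of termination of log flips for projective $3$-folds (Shokurov, Kawamata--Matsuki--Mori) into the analytic setting. That proof needs only two inputs, both available here: log resolutions exist by Theorem~\ref{thm:log-resolution}, so the discrepancies $a(E,X,B)$ of divisorial valuations over $X$ are defined and computed exactly as in the algebraic case; and each flipping contraction $c\colon X\to Z$ together with its flip $c^+\colon X^+\to Z$ (which exists and keeps the pair dlt by Theorem~\ref{t-flip}) is a \emph{projective} morphism, being the contraction of a $(K_X+B)$-negative extremal ray. Projectivity over $Z$ is all that the negativity lemma requires, so the local intersection-theoretic arguments transfer verbatim.

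First I would record the monotonicity of discrepancies under a single flip $\phi\colon X\bir X^+$. Choose a common log resolution $p\colon W\to X$, $q\colon W\to X^+$ that is projective over $Z$, write $f:=c\circ p=c^+\circ q\colon W\to Z$, and set $D:=p^*(K_X+B)-q^*(K_{X^+}+\phi_*B)$. Comparing the two expressions for $K_W$ gives $D=\sum_E\big(a(E,X,B)-a(E,X^+,\phi_*B)\big)E$, and $D$ is $p$- and $q$-exceptional with $p_*D=0=q_*D$. For any curve $\gamma\subset W$ contracted to a point of $Z$ one has $-D\cdot\gamma=q^*(K_{X^+}+\phi_*B)\cdot\gamma-p^*(K_X+B)\cdot\gamma\ge 0$, since $K_{X^+}+\phi_*B$ is ample over $Z$ while $-(K_X+B)$ is ample over $Z$; thus $-D$ is nef over $Z$. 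As $f$ is proper birational and $f_*D=0$, the negativity lemma forces $D\le 0$, i.e.
\[
 a(E,X,B)\le a(E,X^+,\phi_*B)\qquad\text{for every divisorial valuation }E,
\]
with strict inequality precisely for those $E$ whose center on $X$ lies in the flipping locus.

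Next I would invoke special termination to reduce to the klt case. Since normal surfaces admit no nontrivial small contractions, flips do not occur in dimension $\le 2$ and termination there is vacuous; special termination then shows that along any sequence of dlt $3$-fold flips the flipping and flipped loci eventually become disjoint from $\lfloor B\rfloor$. Once this happens, each remaining flip avoids $\lfloor B\rfloor$ and is simultaneously a flip for the klt pair $(X,B-\lfloor B\rfloor)$: the flipping curves meet neither $\lfloor B\rfloor$, so the relevant extremal ray is still negative, and lowering the coefficient-$1$ components to $0$ turns the dlt pair into a klt one. We are thus reduced to termination of flips for a $\mbQ$-factorial klt $3$-fold pair $(X,B')$. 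Here I would run Shokurov's difficulty function: one attaches to $(X,B')$ a non-negative integer $d(X,B')$ built from the finitely many divisorial valuations of discrepancy $<1$ (weighted using the coefficients of $B'$), designed so that the discrepancy monotonicity above makes $d$ non-increasing along flips and strictly decreasing at each genuine flip. A strictly decreasing sequence of non-negative integers then terminates.

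The main obstacle is twofold, and both parts concern importing local algebraic input into the analytic category. First, the finiteness of the set of divisorial valuations of discrepancy $<1$ over a klt $3$-fold, together with the strict drop of the difficulty --- the combinatorial heart of the Shokurov--Kawamata--Matsuki--Mori argument, special to surface and $3$-fold singularities --- are statements about the analytic-local structure of klt $3$-fold singularities; since that structure coincides with the algebraic-local one, the needed facts are inherited from the projective theory. Second, and more delicate, is special termination itself: its standard proof proceeds by induction on the dimension of the strata of $\lfloor B\rfloor$, using dlt adjunction to restrict to these strata and the discrepancy monotonicity of the first step, and one must check that each ingredient uses only the projectivity of the individual flipping contractions (which holds) rather than a global projective structure on $X$. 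Reconciling special termination with the Kähler hypothesis is where the essential work lies.
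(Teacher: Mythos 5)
The first thing to note is that the paper does not prove Theorem \ref{t-term} at all: the statement is imported verbatim from \cite[Theorem 3.3]{DO22}, so there is no in-paper argument to compare yours against. Your overall plan --- monotonicity of discrepancies via the negativity lemma (using that a flipping contraction and its flip are projective morphisms, since $-(K_X+B)$, resp.\ $K_{X^+}+\phi_*B$, is relatively ample), special termination to move the flipping loci off $\lfloor B\rfloor$, and then klt termination in dimension $3$ via a difficulty function --- is the standard Shokurov--Kawamata argument from the projective case, and it is, in outline, how the cited reference proceeds in the K\"ahler category; your reason why it transfers (every ingredient is local or relative-projective, and never uses a global projective structure on $X$) is the correct one.

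As a proof, however, the proposal has concrete problems beyond the ones you flag. First, the finiteness claim on which your difficulty function rests is false as stated: for a klt pair $(X,B')$ with $B'\neq 0$ there are \emph{infinitely} many divisorial valuations of discrepancy $<1$, since blowing up any of the infinitely many curves lying on a component $S\subset\Supp B'$ of coefficient $b$ yields $a(E,X,B')=1-b<1$. Every working version of the difficulty (Shokurov's, Kawamata's, the one in Koll\'ar--Mori) therefore counts only valuations whose center is \emph{not} contained in $\Supp B'$, and/or uses thresholds shifted by sums of the coefficients of $B'$; no ``weighting'' of the set you describe can make it finite. Second, ``strictly decreasing at each genuine flip'' is not how these arguments actually run: the difficulty is in general only non-increasing, and the substance of Kawamata's termination of log $3$-fold flips is the analysis of what happens after it stabilizes, with a case division according to whether the flipping/flipped curves lie in $\Supp B'$. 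Third, special termination is not ``vacuous'' in dimension $\le 2$: its proof restricts the sequence to the strata of $\lfloor B\rfloor$ by adjunction, and the induced bimeromorphic maps of surfaces are not flips, so one needs a separate argument (monotonicity plus a DCC/difficulty count on the surfaces) that they are eventually isomorphisms. All three points are repairable, and they do carry over to the K\"ahler setting for exactly the reason you give --- but since you treat precisely these steps as black boxes, what you have is a correct plan identifying the right ingredients rather than a proof; verifying those ingredients in the analytic category is exactly the content of the result quoted from \cite{DO22}.
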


\begin{proposition}\label{pro:relative-projective-mmp}
	Let $(X, B)$ be a $\mbQ$-factorial dlt pair and $f:X\to Y$ a projective surjective morphism between two normal compact analytic varieties. If $\dim X\<3$, then we can run a relative $(K_X+B)$-MMP over $Y$ which terminates either with a minimal model or a Mori fiber space, according to whether $K_X+B$ is pseudo-effective over $Y$ or not.
Moreover, if $Y$ is a K\"ahler variety and $X=X_0\bir X_1\bir\cdots \bir X_n\cdots$ are the steps of a $(K_X+B)$-MMP over $Y$, then every $X_i$ is a $\mbQ$-factorial K\"ahler variety for $i\>0$; additionally, if $\psi:X_n\to Y'$ is a Mori fiber space over $Y$, then $Y'$ is also K\"ahler.
\end{proposition}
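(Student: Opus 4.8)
The plan is to reduce the existence statement to the classical theory of the projective minimal model program, exploiting that $f$ is projective so that every step can be carried out relatively over $Y$, and then to deduce the K\"ahler statement from the single structural fact that \emph{every} space occurring in the program is projective over $Y$. After replacing $X$ by a small $\Q$-factorialization, which is projective over $X$ and hence over $Y$, I may assume $X$ is $\Q$-factorial without affecting the conclusions.

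For the existence part, since $f\colon X\to Y$ is projective, the relative vector spaces $N^1(X/Y)$ and $N_1(X/Y)$ are finite dimensional and the relative cone $\NE(X/Y)$ satisfies the usual cone theorem: its $(K_X+B)$-negative part is generated by countably many extremal rays that are locally discrete away from the hyperplane $(K_X+B)=0$. Given such a relative ray $R$, choose an $f$-nef supporting class $\alpha$ with $\alpha-(K_X+B)$ being $f$-nef and $f$-big; applying the relative Kawamata--Viehweg vanishing theorem (Theorem \ref{thm:relative-kvv}) together with the usual Kawamata--Shokurov base-point-free argument, which is local on $Y$, one finds that $\alpha$ is $f$-semiample and hence defines a contraction $c_R\colon X\to Z$ over $Y$ contracting exactly the curves in $R$. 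According to the type of $R$ this is a divisorial contraction, a Mori fiber space, or a flipping contraction; in the last case the flip exists because, locally on $Y$ and in dimension $\le 3$, this is the classical existence of three-fold flips. Termination in dimension $\le 3$ follows from the termination of three-fold flips (Theorem \ref{t-term} in the relevant cases, or the classical statement), so after finitely many steps we reach a relative minimal model or a Mori fiber space, according to whether $K_X+B$ is pseudo-effective over $Y$ or not.

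The key to the moreover part is the following observation, proved by induction on the steps of the program: every $X_i$ and every intermediate contraction target $Z_i$ is projective over $Y$. Indeed $X_0=X\to Y$ is projective by hypothesis; and if $X_i\to Y$ is projective, then $c_R\colon X_i\to Z_i$ is the morphism over $Y$ associated to an $f$-semiample supporting class, so $Z_i$ carries a relatively ample class over $Y$ and $Z_i\to Y$ is projective, while the next model is either $X_{i+1}=Z_i$ (divisorial case) or the flip $X_{i+1}\to Z_i$ (which is projective), so in all cases $X_{i+1}\to Y$ is again projective. Likewise, if $\psi\colon X_n\to Y'$ is a Mori fiber space over $Y$, then $Y'\to Y$ is projective. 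Now assume $Y$ is K\"ahler. By Remark \ref{rmk:kahler-property}, any space projective over a compact K\"ahler space is again K\"ahler; hence every $X_i$ is K\"ahler, and in the Mori fiber case $Y'$, being projective over $Y$, is K\"ahler as well.

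I expect the main obstacle to be the relative contraction theorem in the analytic setting: one must check that the base-point-free argument, powered by Theorem \ref{thm:relative-kvv}, genuinely produces a morphism over the (possibly non-algebraic) compact base $Y$ and that the resulting target is projective over $Y$, since this projectivity is exactly what propagates the K\"ahler property through the program. This reduces, after restricting to relatively compact Stein open subsets of $Y$ and invoking Lemma \ref{lem:proper-to-projective}, to the familiar projective statements, which can then be glued using that the relevant extremal rays live in the finite-dimensional global space $N_1(X/Y)$.
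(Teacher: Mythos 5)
Your proposal is correct and follows essentially the same route as the paper: the paper also runs the relative MMP by appealing to the cone, contraction and base-point-free theorems for projective morphisms of analytic varieties---citing \cite[Theorems 4.12 and 4.10]{Nak87} where you sketch a re-derivation via Theorem \ref{thm:relative-kvv}---uses Theorem \ref{t-flip} for flips and Theorem \ref{t-term} for termination, and propagates the K\"ahler property exactly as you do, namely by observing that each contraction target $Z_i$ (and each flip $X_{i+1}\to Z_i$, and the Mori fiber base $Y'$) is projective over $Y$ because the contraction is defined by a relatively semiample supporting class, and then invoking \cite[Proposition 1.3.1]{Var89} (Remark \ref{rmk:kahler-property}: projective over K\"ahler is K\"ahler).

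One caveat: your opening reduction to $\mbQ$-factorial $X$ via a small $\mbQ$-factorialization is circular as written, because in this analytic setting the existence of the small $\mbQ$-factorialization is precisely Lemma \ref{l-ter}(1), whose proof runs a relative projective MMP, i.e.\ invokes the very proposition you are proving. Fortunately the step is also unnecessary: Nakayama's relative cone, contraction and base-point-free theorems, as well as your own argument via Theorem \ref{thm:relative-kvv}, do not require $\mbQ$-factoriality, so you should simply delete that reduction (or, if you insist on it, note that the $\mbQ$-factorialization only needs the proposition for a smooth---hence $\mbQ$-factorial---total space, namely a log resolution over $X$, so that a careful ordering of the arguments avoids the circle).
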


\begin{proof}
Since $(X, B)$ is a $\mbQ$-factorial dlt pair, $(X, (1-\ve)B)$ is klt for any $0<\ve\leq1$. Recall that the (relative) Mori cone $\NE(X/Y)$ is a strongly convex closed cone, and hence it is the convex hull of its extremal rays. In particular, if $K_X+B$ is not nef over $Y$, then there is a $(K_X+B)$-negative extremal ray $R$ of $\NE(X/Y)$; note that at this stage we do not know whether $R$ is generated by an irreducible curve or not. Then there is a $0<\delta\ll 1$ such that $(K_X+(1-\delta)B)\cdot R<0$. Since $(X, (1-\delta)B)$ is klt, and the (relative) cone and contraction theorems (for projective morphisms) are known for klt pairs due to \cite[Theorem 4.12]{Nak87}, it follows that $R$ can be contracted by a projective morphism over $Y$.
 Since $\dim X\<3$, the existence of flips (over $Y$) follows from Theorem \ref{t-flip}. The termination of flips (over $Y$) follows from Theorem \ref{t-term}. The proof of the fact that a $(K_X+B)$-MMP over $Y$ terminates with either  a minimal model or a Mori fiber space according to whether $K_X+B$ is pseudo-effective over $Y$ or not, works exactly as in the algebraic case, since $f:X\to Y$ is a projective morphism. The $\mbQ$-factoriality condition is preserved at each step as a formal consequence of the contraction theorem as in the algebraic case. 
	Now if $Y$ is K\"ahler, then by \cite[Proposition 1.3.1, page 24]{Var89}, $X=X_0$ is K\"ahler. If $g_i:X_i\to Z_i$ is a contraction of a $(K_{X_i}+B_i)$-negative extremal ray of $\NE(X_i/Y)$, then by the relative base-point free theorem  \cite[Theorem 4.10]{Nak87}, it follows that the induced morphism $h_i:Z_i\to Y$ is projective (note that arguing as above we may replace $(X,B)$ by $(X,(1-\delta)B)$ and hence we may assume that $(X,B)$ is klt so that \cite[Theorem 4.10]{Nak87} applies). Then again from \cite[Proposition 1.3.1, page 24]{Var89} it follows that $Z_i$ is K\"ahler. If $g_i$ is a flipping contraction and $g_i^+:X_{i+1}\to Z_i$ is the flip, then again $X_{i+1}$ is K\"ahler by the same argument.
	In the Mori fiber space case again by a similar argument it follows that $Y'$ is K\"ahler.
	
	\end{proof}

\begin{lemma}\label{l-ter} Let $(X,B)$ be a klt pair, where $X$ is a compact K\"ahler $3$-fold. Then the following morphisms exist:
	 \begin{enumerate}
\item a projective small bimeromorphic morphism $\nu :X'\to X$ such that 
$X'$ is strongly $\mbQ$-factorial, and
\item\label{item:terminalization} a projective bimeromorphic morphism $\nu :X'\to X$ such that 
$X'$ is strongly $\mbQ$-factorial and $(X',B_{X'})$ is a terminal pair such that $K_{X'}+B_{X'}=\nu ^*(K_X+B)$.
\end{enumerate}
\end{lemma}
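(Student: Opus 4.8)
The plan is to transport the standard projective constructions of $\mbQ$-factorialisations and terminalisations into the K\"ahler setting, using Proposition \ref{pro:relative-projective-mmp} as the engine driving the relative minimal model program. This is legitimate because a log resolution is a \emph{projective} bimeromorphic morphism (Theorem \ref{thm:log-resolution}) and because, the base $X$ being K\"ahler, every model produced by a relative MMP over $X$ is again K\"ahler by Proposition \ref{pro:relative-projective-mmp}. Throughout, $a(E;X,B)$ denotes the discrepancy of a divisor $E$ over $X$.

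For (1), I would take a log resolution $f:Y\to X$ of $(X,B)$ with $f$-exceptional prime divisors $E_1,\dots,E_k$ and discrepancies $a_i=a(E_i;X,B)>-1$. Choosing rational numbers $c_i$ with $\max(0,-a_i)<c_i<1$ (possible since $a_i>-1$) and setting $\Delta_Y=f_*^{-1}B+\sum_i c_iE_i$, the pair $(Y,\Delta_Y)$ is klt, its support is SNC, and $K_Y+\Delta_Y\sim_{\mbQ}f^*(K_X+B)+G$ with $G:=\sum_i(a_i+c_i)E_i\ge 0$ being $f$-exceptional and satisfying $\operatorname{Supp}G=\bigcup_iE_i$. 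As $G$ is effective, $K_Y+\Delta_Y$ is pseudo-effective over $X$, so the relative $(K_Y+\Delta_Y)$-MMP over $X$ supplied by Proposition \ref{pro:relative-projective-mmp} terminates with a relative minimal model $\nu:X'\to X$ on which $K_{X'}+\Delta_{X'}$ is nef over $X$. Writing $K_{X'}+\Delta_{X'}=\nu^*(K_X+B)+G'$ with $G'=\nu_*G$ effective and $\nu$-exceptional, the negativity lemma forces $G'=0$; hence every $E_i$ is contracted, $\nu$ is an isomorphism in codimension one and therefore small, and the $\mbQ$-factorial K\"ahler variety $X'$ is the desired model.

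For (2), I would choose the log resolution $f:Y\to X$ so that every divisor $E$ over $X$ with $a(E;X,B)\le 0$ appears among the $E_i$; this is possible because a klt pair has only finitely many such divisors. Setting $B_Y=f_*^{-1}B+\sum_{a_i\le 0}(-a_i)E_i$, the pair $(Y,B_Y)$ is klt and $K_Y+B_Y\sim_{\mbQ}f^*(K_X+B)+F$ with $F:=\sum_{a_i>0}a_iE_i\ge 0$ being $f$-exceptional. Running the relative $(K_Y+B_Y)$-MMP over $X$ as in (1) produces $\nu:X'\to X$ with $K_{X'}+B_{X'}$ nef over $X$; the negativity lemma now gives $K_{X'}+B_{X'}=\nu^*(K_X+B)$, so $\nu$ is crepant, the divisors with $a_i>0$ are exactly the contracted ones, and those with $a_i\le 0$ survive on $X'$. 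Finally $(X',B_{X'})$ is terminal: any divisor $E$ exceptional over $X'$ has $a(E;X',B_{X'})=a(E;X,B)$ by crepancy, and this is $>0$, for otherwise $E$ would be one of the retained $E_i$ with $a_i\le 0$, hence a divisor on $X'$ rather than a divisor exceptional over $X'$.

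Given Proposition \ref{pro:relative-projective-mmp}, both arguments are the familiar algebraic ones, so the real work is checking the analytic hypotheses rather than the combinatorics. The points I would be most careful about are that the log resolution is projective with $Y$ compact, so that the relative MMP over the compact K\"ahler base $X$ genuinely applies and keeps us in the K\"ahler category, and, in (2), the two pieces of bookkeeping that drive terminality: the finiteness of divisors of discrepancy $\le 0$ over a klt pair, and the fact that a relative MMP for an effective $f$-exceptional class contracts precisely the divisors in its support. The applications of the negativity lemma and the verification that the auxiliary boundaries $\Delta_Y$, $B_Y$ are klt are then routine.
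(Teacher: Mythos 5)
Your proposal is correct and follows essentially the same route as the paper: a projective log resolution, a relative $(K+\Delta)$-MMP over $X$ via Proposition \ref{pro:relative-projective-mmp} with an auxiliary boundary whose coefficients make the relative class an effective exceptional divisor, and the negativity lemma to force contraction of exactly the intended divisors (all of $\Ex$ in (1), the positive-discrepancy ones in (2)). In fact you spell out two points the paper leaves implicit --- that only components of the effective exceptional class can be contracted, so the discrepancy $\le 0$ divisors survive, and the resulting terminality check --- so the write-up is a faithful, slightly more detailed version of the paper's argument.
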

\begin{proof}
	  (1) Let $\nu :X'\to X$ be a log resolution of the pair $(X, B)$. We may assume that $\nu$ is a projective morphism. Write $\nu ^*(K_X+B)=K_{X'}+B'-E'$, where $B',E'\geq 0$, $\nu_*B'=B$ and $B'$ and $E'$ have no common components. Choose $\ve\in\mbQ^{+}$ sufficiently small so that $({X'},B'+\ve{\rm Ex}(\nu))$ is klt. Next we run a $(K_{X'}+B'+\ve {\rm Ex}(\nu))$-MMP over $X$ using Proposition \ref{pro:relative-projective-mmp}. Replacing $X'$ by the output of this MMP, we may assume that $E'+\ve {\rm Ex}(\nu)$ is nef over $X$ and $X'$ is strongly $\Q$-factorial (see Lemmas \ref{lem:manifolds-are-strongly-Q-factorial} and \ref{l-sqfmmp}). Then by the negativity lemma (see \cite[Lemma 1.3]{Wang21}), we have $E'+\ve {\rm Ex}(\nu)=0$ and hence $\nu$ is small. 

For a proof of (2), first replace $\nu$ by a higher log resolution if necessary so that $X'$ contains all exceptional divisors $E$ over $X$ such that the discrepancy $a(E, X, B)\<0$. Then we run a $(K_{X'}+B')$-MMP over $X$. Replacing $X'$ by the output of this MMP and applying the negativity lemma we obtain the required result.\\
\end{proof}

\begin{lemma}[DLT Modification]
\label{lem:terminal-dlt-model}
Let $X$ be 
compact K\"ahler $3$-fold and $(X,B)$ a log canonical pair.  
Then there exists a projective bimeromorphic morphism  $f : (X', B') \to (X,B)$ such that 
\begin{enumerate}[label=(\roman*)]
\item  $X'$ has strongly $\mathbb{Q}$-factorial terminal singularities,
\item $(X',B')$ is a dlt pair, and
\item $K_{X'}+B'=f^*(K_X+B)$.
\end{enumerate}
\end{lemma}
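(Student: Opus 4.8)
The plan is to realize $(X',B')$ as the output of a relative MMP run on a log resolution of $(X,B)$, arranged so that precisely the exceptional divisors of \emph{positive} discrepancy get contracted while every divisor of discrepancy $\le 0$ is retained. First I would take a projective log resolution $g\colon W\to X$ of $(X,B)$ (which exists by Theorem \ref{thm:log-resolution}, using that $X$ is compact), chosen high enough that $W$ contains all of the finitely many divisors $E$ over $X$ with $a(E,X,B)\le 0$ whose center lies in the klt locus of $(X,B)$; there are only finitely many such divisors by a standard consequence of klt-ness. On $W$ set
\[ \Gamma \;=\; g_*^{-1}B \;+\; \sum_{E\,:\,a(E,X,B)\le 0}\bigl(-a(E,X,B)\bigr)\,E, \]
the sum over $g$-exceptional prime divisors of discrepancy $\le 0$, the remaining exceptional divisors receiving coefficient $0$. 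Since $(X,B)$ is log canonical, all coefficients of $\Gamma$ lie in $[0,1]$ and $\Gamma$ has simple normal crossing support, so $(W,\Gamma)$ is $\mathbb{Q}$-factorial and dlt (indeed log smooth), and $W$, being smooth, is terminal. A direct discrepancy computation gives
\[ K_W+\Gamma \;=\; g^*(K_X+B) + F, \qquad F:=\sum_{E\,:\,a(E,X,B)>0} a(E,X,B)\,E \;\ge\; 0, \]
with $F$ effective and $g$-exceptional.

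Next I would run a $(K_W+\Gamma)$-MMP over $X$. As $g$ is projective, this proceeds exactly as in Proposition \ref{pro:relative-projective-mmp}: the relative cone and contraction theorems for projective morphisms apply, existence of flips is Theorem \ref{t-flip}, and termination is Theorem \ref{t-term}. Over $X$ we have $K_W+\Gamma\equiv_X F$, so any extremal ray contracted over $X$ meets $F$ negatively and hence its curves lie in $\mathrm{Supp}(F)$; thus the program contracts precisely the exceptional divisors of positive discrepancy and terminates with a projective bimeromorphic $f\colon X'\to X$. Writing $B':=f_*\Gamma$, the class $K_{X'}+B'-f^*(K_X+B)$ is $f$-nef, $f$-exceptional, and equals the strict transform of $F$; by the negativity lemma it vanishes, giving (iii) $K_{X'}+B'=f^*(K_X+B)$, while (ii) holds because the MMP preserves dlt-ness and $\mathbb{Q}$-factoriality.

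It remains to verify (i), that the variety $X'$ is terminal. For any divisor $E$ exceptional over $X'$, crepancy gives $a(E,X',0)=a(E,X,B)+\mathrm{ord}_E(B')$. If the center of $E$ on $X'$ lies in the klt locus of $(X',B')$, then $a(E,X,B)>0$ — every discrepancy-$\le 0$ divisor over the klt locus was placed on $X'$ rather than left exceptional — so $a(E,X',0)>0$. If the center lies in the non-klt locus, which for the dlt pair $(X',B')$ is exactly $\mathrm{Supp}\lfloor B'\rfloor$ and where the pair is log smooth, then: a non-lc place satisfies $a(E,X,B)>-1$ and $\mathrm{ord}_E(B')\ge 1$, whereas an lc place is necessarily centered on an intersection of at least two components of $\lfloor B'\rfloor$, forcing $\mathrm{ord}_E(B')\ge 2$; in either case $a(E,X',0)>0$. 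Hence $X'$ is terminal.

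The main obstacle I anticipate is the MMP step: Proposition \ref{pro:relative-projective-mmp} is stated only for \emph{klt} pairs, whereas $(W,\Gamma)$ is merely dlt, and its coefficient-one components record exactly the lc places we must retain, so they cannot be perturbed away without destroying crepancy. The technical heart is therefore to justify the relative \emph{dlt} MMP over $X$ — either by observing that the relative cone and contraction theorems underlying Proposition \ref{pro:relative-projective-mmp} hold for $\mathbb{Q}$-factorial dlt pairs over a projective base, or by a careful perturbation reducing to the klt case — together with the bookkeeping guaranteeing that the program contracts exactly $\mathrm{Supp}(F)$ and no divisor of discrepancy $\le 0$.
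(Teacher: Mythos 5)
Your route (a single crepant-boundary MMP on a log resolution) is genuinely different from the paper's, but it has two gaps, only one of which you flag. The one you do flag is, within this paper's toolkit, fatal as written: Proposition \ref{pro:relative-projective-mmp} is stated only for klt pairs, and $(W,\Gamma)$ is unavoidably non-klt, since the strict transforms of the coefficient-one components of $B$ and the lc places of $(X,B)$ lying on $W$ must appear in $\Gamma$ with coefficient exactly $1$ for the identity $K_W+\Gamma=g^*(K_X+B)+F$ to hold. Perturbation genuinely fails, as you suspect: replacing $\Gamma$ by $\Gamma-\epsilon\lfloor\Gamma\rfloor$ changes the relative numerical class of the log canonical divisor to $F-\epsilon\lfloor\Gamma\rfloor$, which is no longer effective, so negativity no longer forces the program to contract exactly $\mathrm{Supp}(F)$, and it may contract components of $\lfloor\Gamma\rfloor$ that crepancy requires you to keep. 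In effect your first step re-proves the existence of crepant dlt models for lc pairs; the paper does not re-prove this but quotes it (\cite[Theorem 2.4]{CHP16}) to reduce at once to $(X,B)$ dlt, and from then on runs only klt MMPs: since $X$ is $\mathbb{Q}$-factorial and $(X,B)$ is dlt, $(X,0)$ is klt, so Lemma \ref{l-ter}(2) gives a terminalization $f\colon (X',\Theta')\to (X,0)$ with $K_{X'}+\Theta'=f^*K_X$; setting $B'=\Theta'+f^*B$ yields (iii) immediately, (i) because the \emph{pair} $(X',\Theta')$ is terminal, and (ii) because $f$ is an isomorphism over the SNC locus $U$ of $(X,B)$, while any divisor with center in $X'\setminus f^{-1}(U)$ has center in $X\setminus U$ and hence discrepancy $>-1$ by dlt-ness of $(X,B)$.

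The second gap is in your verification of (i), independent of the MMP issue. In Case A you need: if $E$ is exceptional over $X'$ with center in the klt locus of $(X',B')$, then $a(E,X,B)>0$. Your justification is that all discrepancy-$\le 0$ divisors ``over the klt locus'' were placed on $W$; but $W$ was chosen using the klt locus of $(X,B)$ \emph{downstairs}, whereas Case A concerns the klt locus of $(X',B')$ \emph{upstairs}, and a crepant morphism can carry klt points to non-klt points. Already for the blow-up of a point $p$ on a coefficient-one component $L$: the exceptional divisor has discrepancy $0$ over $(X,L)$, receives coefficient $0$ in the crepant boundary, and its generic point is a klt point of the new pair lying over the non-klt point $p$. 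So there can be divisors $E$ with $-1<a(E,X,B)\le 0$ whose center on $X$ lies in the non-klt locus of $(X,B)$ --- hence which were never required to lie on $W$ --- but whose center on $X'$ avoids $\lfloor B'\rfloor$, or even all of $\mathrm{Supp}(B')$; for such $E$ the identity $a(E,X',0)=a(E,X,B)+\mathrm{ord}_E(B')$ gives nothing. Closing this would require showing that every divisor with $a(E,X,B)\le 0$ not lying on $X'$ has center on $X'$ inside $\mathrm{Supp}(B')$ with multiplicity exceeding $-a(E,X,B)$, a statement that has to be tracked through every flip and divisorial contraction of your MMP rather than read off from the choice of $W$. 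The paper's construction sidesteps this entirely, since terminality of $X'$ is inherited from the terminal pair produced by Lemma \ref{l-ter}(2).
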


\begin{proof} This follows from \cite[Corollary 1.30]{DO23} and Lemma \ref{l-sqfmmp}.
\end{proof}
\begin{proposition}\label{p-rccres} Let $X$ be a 
compact K\"ahler $3$-fold with klt singularities and $\mu:X'\to X$ a proper bimeromorphic morphism. Then every fiber of $\mu$ is rationally chain connected. 
\end{proposition}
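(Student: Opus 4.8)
The plan is to reduce to a relatively projective situation and then realize $\mu$ as the inverse of a relative minimal model program each of whose steps sweeps out rational curves lying in the fibres. Throughout I will use the elementary fact that the image of a rationally chain connected set under a proper surjective morphism is again rationally chain connected, since a rational curve maps to a rational curve or a point. Fix $x\in X$ and set $F=\mu^{-1}(x)$; the goal is that $F$ is rationally chain connected. By Chow's Lemma (Theorem~\ref{thm:chow}, applicable since $X$ is compact hence $\sigma$-compact) there is a bimeromorphic $\nu:X''\to X'$ with $\mu\circ\nu$ projective, and since $\nu$ is a proper surjection it suffices to prove the claim for $\mu\circ\nu$. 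Replacing $X''$ by a log resolution (Theorem~\ref{thm:log-resolution}) I may assume that $X'$ is smooth and that $\mu$ is projective and bimeromorphic; as $\mu$ is projective and $X$ is K\"ahler, $X'$ is again K\"ahler (Remark~\ref{rmk:kahler-property}), so the relative MMP of Proposition~\ref{pro:relative-projective-mmp} is available over the base $X$.

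\emph{A relative Fano-type structure.} Write $K_{X'}=\mu^*K_X+\sum_i a_iE_i$ with all $a_i>-1$, since $X$ is klt. The key step is to produce an effective $\mbQ$-divisor $\Delta'=\sum_i b_iE_i$ supported on the $\mu$-exceptional locus, with $(X',\Delta')$ klt (all $b_i\in[0,1)$), such that $-(K_{X'}+\Delta')$ is $\mu$-ample; equivalently one must choose $b_i\in[0,1)$ so that $-\sum_i(a_i+b_i)E_i$ is $\mu$-ample. The existence of such a choice is exactly the assertion that the $\mbQ$-factorial model $\mu:X'\to X$ is of Fano type over the klt base $X$, and this is where the inequalities $a_i>-1$ are used essentially, together with the projectivity and $\mbQ$-factoriality of $\mu$ (which guarantee that the relative ample cone is a nonempty open cone spanned by the exceptional classes). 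Once this is arranged, $K_{X'}+\Delta'$ is relatively anti-ample, so every relative curve lies on a $(K_{X'}+\Delta')$-negative ray; running a relative $(K_{X'}+\Delta')$-MMP over $X$ via Proposition~\ref{pro:relative-projective-mmp}, no fibre-type contraction can occur (the total map to $X$ is birational), so every step is a divisorial contraction or a flip, the relative Picard number drops, and the program terminates at the model of relative Picard number zero, namely $X$ itself.

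\emph{Producing and connecting rational curves.} Each step of this relative MMP is a $(K_{X'}+\Delta')$-negative extremal contraction over the projective base $X$, and by the bend-and-break and cone-theorem machinery for projective morphisms \cite{Kol96} its contracted locus (divisorial, flipping, or flipped) is covered by rational curves contained in fibres of $\mu$. Running the program from $X'$ down to $X$ and repeatedly using that images of rationally chain connected sets are rationally chain connected, one assembles $F$ out of chains of these rational curves and concludes that $F$ is rationally chain connected. Equivalently, once $(X',\Delta')$ is of Fano type over $X$ one may invoke the relative rational chain connectedness of fibres of klt weak Fano fibrations, in the spirit of Hacon--McKernan.

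\emph{Main obstacle.} The principal difficulty is the fibrewise nature of the statement: bend-and-break readily covers the \emph{generic} exceptional fibre by rational curves, but showing that \emph{every} fibre $F$, which may be non-normal, reducible, and non-reduced, is rationally chain connected requires the full deformation theory of rational curves (the maximal rationally connected fibration being a morphism, and the gluing of chains across the components of $F$), carried out relatively over $X$. A secondary, more technical point is the construction of the Fano-type boundary $\Delta'$ in the second step: reconciling the constraint $b_i\in[0,1)$ with exceptional divisors of large positive discrepancy is precisely where the klt inequality $a_i>-1$ and the relative projectivity are needed, and in the K\"ahler category one must also verify at each stage that the intermediate spaces remain compact K\"ahler, which is supplied by Remark~\ref{rmk:kahler-property} and Proposition~\ref{pro:relative-projective-mmp}.
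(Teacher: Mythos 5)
Your global strategy --- reduce to a projective bimeromorphic $\mu$ with $X'$ smooth, then run a relative MMP over $X$ and analyze its steps --- is the same as the paper's, but the step you build everything on is a genuine gap. You assert the existence of an effective exceptional $\mbQ$-divisor $\Delta'=\sum_i b_iE_i$ with all $b_i\in[0,1)$ such that $-(K_{X'}+\Delta')$ is $\mu$-ample, and you justify it only by the openness and nonemptiness of the relative ample cone in $N^1(X'/X)$. That justification does not suffice: writing $K_{X'}\equiv_\mu\sum_i a_iE_i$, your claim is that the relative ample cone meets the specific unit box $\prod_i[a_i,a_i+1)$ of coefficients anchored at the discrepancies, and when some $a_i$ are large (towers of point blow-ups have $a_i=2i$) this is a nontrivial assertion about the \emph{position} of the ample cone, not about its being open and nonempty. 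It amounts to the statement that every projective bimeromorphic model of a klt threefold is of Fano type over it; this is not a standard citable fact, you give no argument for it, and the tools one would reach for to prove it in the algebraic category (effective divisors on $X$ through the centers of the $E_i$, Bertini, relative Kodaira's lemma) are exactly the ones that are unavailable or delicate on a compact non-projective K\"ahler $X$. The paper sidesteps the issue entirely: it takes the boundary to be the reduced exceptional divisor $\Ex(\mu)$, so that $(X',\Ex(\mu))$ is dlt rather than klt, and it needs no relative positivity at all --- since $K_{X'}+\Ex(\mu)\equiv_\mu\sum_i(a_i+1)E_i$ has all coefficients positive (this is where klt-ness of $X$ enters), the negativity lemma forces the relative $(K_{X'}+\Ex(\mu))$-MMP of Proposition \ref{pro:relative-projective-mmp} to contract every exceptional divisor and hence to terminate at $X$ itself.

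The second gap is in how you conclude rational chain connectedness from the MMP. Your fallback, to ``invoke the relative rational chain connectedness of fibres of klt weak Fano fibrations, in the spirit of Hacon--McKernan,'' is essentially circular here: \cite{HM07} is a theorem about algebraic varieties, and Proposition \ref{p-rccres} is precisely the analytic substitute for it that the paper must prove; it cannot be quoted wholesale for a projective morphism of compact K\"ahler threefolds. Your primary route, ``bend-and-break and cone-theorem machinery,'' is also not carried out: the real work is to show that the curves appearing at each step are rational, and in particular your sketch never addresses the \emph{flipped} curves (the new curves created by a flip), which do not arise as the contracted locus of any $(K_{X'}+\Delta')$-negative contraction. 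The paper does all of this by adjunction to surfaces inside the exceptional locus: for a divisorial contraction it restricts to the contracted divisor $E$, where $(E,B_E)$ is a klt surface with $-(K_E+B_E)$ relatively ample, and only in that surface setting invokes \cite[Corollary 1.3]{HM07}; for a flip it checks $(K_E+C)\cdot C<0$ and $C^2<0$ to see that flipping curves are rational, and uses dlt adjunction on the target of the flipping contraction to see that the flipped curves are rational. (A smaller slip: the relative Picard number does not drop under a flip, so your termination count is wrong; termination is instead supplied by Theorem \ref{t-term}.)
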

\begin{proof} 
	Let $\nu:\tilde X\to X$ be a resolution of singularities dominating $X'$. Then it suffices to show that  every fiber of $\nu$ is rationally chain connected. Thus replacing $X'$ by $\tilde{X}$ and $\mu$ by $\nu$ we may assume that $X'$ is smooth and $\mu$ is projective.
 The relative minimal model program holds in this context by Proposition \ref{pro:relative-projective-mmp}, and thus 
 the result now follows from \cite[Theorem 1.2]{HM07} (with $S=X$).
\end{proof}

\subsection{Technical lemmas}
In this subsection we will prove some technical results which will be used in the rest of the article. Note that some of the results here are obvious for projective varieties but not necessarily so for analytic varieties. We will use these results throughout the article without further reference.

\begin{remark} Recall that if $f:X\to Y$ is a projective morphism between two analytic varieties and $C\subset Y$ is a (compact) irreducible curve, then there is a compact irreducible curve $\Gamma\subset X$ such that $f(\Gamma)=C$. Moreover, if  $f:X\to Y$ is a proper bimeromorphic morphism between analytic varieties, then from Chow's lemma (Theorem \ref{thm:chow}) it follows that for every $y\in f(\Ex(f))$, $f^{-1}(y)$ is covered by (compact) curves.\\
\end{remark}

\begin{lemma}\label{lem:dlt-rational}
	 If $(X,B)$ is dlt, then $X$ has rational singularities.
\end{lemma}

\begin{proof} See \cite[Theorem 3.12]{Fuj22}.

\end{proof}

\begin{lemma}\label{l-relpic}
	 Let $f:X'\to X$ be a proper bimeromorphic morphism between strongly $\mbQ$-factorial normal compact K\"ahler $3$-folds with klt  singularities. Then the $f$-exceptional divisors give a basis for $N^1(X'/X):=N^1(X')/f^*N^1(X)$. 
	 \end{lemma}

\begin{proof} 
Let $\nu:X''\to X'$ be a resolution of singularities of $X'$ such that $f\circ \nu$ is a projective morphism (see Theorem \ref{thm:chow}). We will show that $N^1(X''/X):=N^1(X'')/(f\circ \nu)^*N^1(X)$ is generated by the $(f\circ \nu)$-exceptional divisors. To this end let $f'=f\circ \nu$, then by Lemma \ref{c-small}, there is a reduced $f'$-exceptional divisor $E$ so that $\Supp E=\Ex(f')$. We run the $(K_{X''}+E)$-MMP over $X$ using Proposition \ref{pro:relative-projective-mmp}. Note that since $X$ has klt singularities, this MMP contracts all $f'$-exceptional divisors. Let $X''=X_0\dasharrow X_1\dasharrow \ldots \dasharrow X_n$ be the corresponding MMP over $X$.  Since $X$ is strongly $\mbQ$-factorial, so is $X_n$ by Lemma \ref{l-sqfmmp}, and then it follows from Lemma \ref{c-small} that $X_n=X$. By \cite[Proposition 3.1]{CHP16} it follows that if $X_i\dasharrow X_{i+1}$ is a flip, then $N^1(X_i)\cong N^1(X_{i+1})$ and if $X_i\dasharrow X_{i+1}$ is a divisorial contraction, then $N^1(X_i)/N^1(X_{i+1})$ is one dimensional and generated by the corresponding exceptional divisor.
Thus $N^1(X''/X)$ has a basis given by the $f'$-exceptional divisors.
Similarly one sees that $N^1(X''/X')$ has a basis given by the $\nu$-exceptional divisors.
The lemma now follows easily.

\end{proof}

\begin{lemma}\label{lem:movable-curves-on-surfaces}
	Let $S$ be a smooth projective surface such that $H^2(S, \mcO_S)=0$. Let $\alpha$ be a nef $(1, 1)$-class such that $\alpha^2=0$ and $K_S\cdot \alpha<0$. Then $S$ is covered by $\alpha$-trivial curves.
\end{lemma}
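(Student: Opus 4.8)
The plan is to translate the statement into classical Mori theory on the projective surface $S$ and then argue by induction on the Picard number $\rho(S)$. The role of the hypothesis $H^2(S,\mcO_S)=0$ is to guarantee that $H^{1,1}(S)=H^2(S,\mbR)$, so that $N^1(S)=\NS(S)_{\mbR}$ and the $(1,1)$-class $\alpha$ is an honest real divisor class; in particular the Cone and Contraction Theorems for smooth projective surfaces \cite{KM98} are available, and $\alpha\cdot C$ is the usual intersection number for curves $C$. Since $K_S\cdot\alpha<0$ we have $\alpha\neq0$, and as $\alpha$ is nef it is pseudo-effective, so $\alpha\in\NE(S)$.

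First I would produce a $K_S$-negative extremal rational curve that is $\alpha$-trivial. Because $\alpha$ is nef, $G:=\alpha^{\perp}\cap\NE(S)$ is an exposed face of the Mori cone, and $\alpha^2=0$ means $\alpha\in G$. As $G$ is a pointed closed convex cone it is the closed convex hull of its extremal rays, each of which is an extremal ray of $\NE(S)$. If $K_S$ were non-negative on all of these rays it would be non-negative on all of $G$, contradicting $K_S\cdot\alpha<0$; hence some extremal ray $R=\mbR_{\geq0}[C]\subset G$ satisfies $K_S\cdot C<0$. By the Cone Theorem $C$ may be taken to be a rational curve, and $R\subset\alpha^{\perp}$ gives $\alpha\cdot C=0$.

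Next I would contract $R$. For a smooth projective surface the contraction $c_R:S\to Y$ is of exactly one of three types. If $\dim Y=1$, then $c_R$ is a fibration whose fibres are numerically proportional to $C$; they cover $S$ and are $\alpha$-trivial, so we are done. The case $\dim Y=0$, i.e. $S\cong\mbP^2$ with $\rho(S)=1$, is impossible, since then every nef class of square $0$ is trivial while $K_S\cdot\alpha<0$. In the remaining case $\dim Y=2$ the map $g:=c_R$ contracts the $(-1)$-curve $C$, and $\alpha\cdot C=0$ forces $\alpha=g^{*}\alpha_1$ for a class $\alpha_1$ on $Y$. One checks readily that $\alpha_1$ is nef, that $\alpha_1^2=\alpha^2=0$, that $K_Y\cdot\alpha_1=K_S\cdot\alpha<0$ (using $K_S=g^{*}K_Y+C$), and that $H^2(Y,\mcO_Y)=0$. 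Since $\rho(Y)=\rho(S)-1$, by the induction hypothesis $Y$ is covered by $\alpha_1$-trivial curves. Transferring this family back by strict transforms and applying the projection formula — for a general member $D_t$ avoiding the point $g(C)$ one has $\alpha\cdot\widetilde{D_t}=\alpha_1\cdot D_t=0$ — yields a covering family of $\alpha$-trivial curves on $S$; together with the $\alpha$-trivial curve $C$ itself this shows that every point of $S$ lies on an $\alpha$-trivial curve. The base case $\rho(S)=1$ is vacuous for the same reason as the $\mbP^2$ case.

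The step I expect to be the crux is the first one: extracting an extremal ray that is simultaneously $\alpha$-trivial and $K_S$-negative. The danger is that for an irrational nef class on the boundary of $\Nef(S)$ there may be no $\alpha$-trivial curve at all, so one cannot simply pick an arbitrary supporting extremal ray. The two hypotheses conspire to prevent this: $\alpha^2=0$ places $\alpha$ itself inside the face $G$ on which $\alpha$ vanishes, and $K_S\cdot\alpha<0$ then forces $K_S$ to be negative on some extremal ray of $G$, which the Cone Theorem makes rational. Everything else — the trichotomy of surface contractions and the strict-transform bookkeeping — is routine.
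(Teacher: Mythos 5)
Your proof is correct, but it follows a genuinely different route from the paper's. The paper, after the same identification $\NS(S)_{\mbR}=H^{1,1}_{\rm BC}(S)$, argues dually and in one shot: $\alpha$, viewed as a curve class, lies in the nef-curve cone $\NF(S)$, which equals the movable cone $\NM(S)$, and Araujo's decomposition of $\NM(S)$ into its $K_S$-non-negative part plus rays spanned by classes of covering families of curves then does all the work --- $K_S\cdot\alpha<0$ forces some covering-family class $[C_i]$ to appear with positive coefficient in the decomposition of $\alpha$, and nefness of $\alpha$ together with $\alpha^2=0$ forces $\alpha\cdot C_i=0$, producing the covering family immediately. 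You instead work inside the Mori cone: you extract a $K_S$-negative extremal ray from the exposed face $\alpha^{\perp}\cap\NE(S)$, contract it, and induct on $\rho(S)$ --- in effect running an $\alpha$-trivial MMP on $S$ that must terminate in a Mori fiber space whose fibers, pulled back through the intermediate blow-downs, supply the covering family. Your argument is more elementary and self-contained, needing only the classical Cone and Contraction Theorems for smooth projective surfaces (no BDPP, no Araujo), at the price of a case analysis and an induction; the paper's argument is shorter modulo the cited machinery and is the formulation that generalizes to dlt pairs and higher dimensions, which is why it cites the dlt versions of the decomposition. One small simplification to your bookkeeping: the restriction to members $D_t$ avoiding the point $g(C)$ is unnecessary, since $\alpha=g^{*}\alpha_1$ and the projection formula give $\alpha\cdot\widetilde{D_t}=\alpha_1\cdot D_t=0$ for \emph{every} member, whatever its multiplicity at $g(C)$; likewise, the fact you use silently --- that extremal rays of the exposed face $\alpha^{\perp}\cap\NE(S)$ are extremal rays of $\NE(S)$ --- is the standard face argument and is fine.
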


\begin{proof}
	Since $H^2(S, \mcO_S)=0$, from the exponential sequence and \cite[Eqn. (2) and (3), page 223]{HP16}, it follows that $\NS(S)_\mbR={H}^{1, 1}_{\rm BC}(S)$. In particular, $\alpha$ is a nef $\mbR$-divisor. Recall that on a projective surface the cone $\NF(S)$ of numerical classes of nef curves is equal to the cone of numerical classes of movable curves $\NM(S)$. Fix an ample $\mbQ$-divisor $A$. 
 By the structure theorem for the cone of movable curves $\NM(S)$, see \cite[Theorem 1.9]{Das20} (also see \cite[Theorem 1.3]{Ara10} and \cite[Theorem 1.3]{Leh12}), for every $\epsilon >0$ we have a decomposition $\alpha =C_\epsilon+\sum _{i=1}^k\lambda _{i,\epsilon}M_{i,\epsilon}$, where $C_\epsilon$ is a pseudo-effective $1$-cycle such that $(K_S+\epsilon A)\cdot C_\epsilon \geq 0$, $\lambda _{i,\epsilon}\>0$,  the $M_{i,\epsilon}$ are movable curves for all $1\<i\<k$. Since $\alpha $ is nef, then $\alpha\cdot C_\epsilon\geq 0$ and $\alpha\cdot M_{i,\epsilon}\geq 0$. Since $\alpha ^2=0$, then $\alpha\cdot M_{i,\epsilon}=\alpha\cdot C_\epsilon=0$ for all $1\<i\<k$. If $k>0$, then as $M_{i,\epsilon}$ is movable the lemma is proved. Otherwise, $k=0$ and $\alpha \equiv C_\epsilon$, so that $(K_S+\epsilon A)\cdot \alpha \geq 0$.
 Taking the limit as $\epsilon $ goes to $0$, it follows that $K_S\cdot \alpha \geq 0$ contradicting our assumptions.
\end{proof}

\begin{definition}\cite[Def. 2.2]{Bou04}\label{def:modified-kahler}
	Let $X$ be a normal compact K\"ahler variety. 
	\begin{enumerate}
		\item[(i)] A closed positive bi-degree $(1, 1)$-current $T$ with local potentials is called a \emph{K\"ahler current}, if $T-\omega$ is a positive current for some K\"ahler form $\omega$ on $X$. 
		\item[(ii)] A class $\alpha\in H^{1, 1}_{\rm BC}(X)$ is called \emph{big} if it contains a K\"ahler current $T$.
		\item[(iii)] A big class $\alpha\in H^{1, 1}_{\rm BC}(X)$ is called a \emph{modified K\"ahler class} if it contains a K\"ahler current $T$ such that the Lelong number $\nu(T, D)=0$ for all prime Weil divisors $D$ on $X$.
	\end{enumerate}	
\end{definition}
The following lemma is a singular version of \cite[Proposition 2.3]{Bou04}.
\begin{lemma}\label{lem:modified-kahler}
	Let $X$ be a normal compact K\"ahler variety and $\alpha\in H^{1, 1}_{\rm BC}(X)$. Then $\alpha$ is a modified K\"ahler class if and only if there is a projective bimeromorphic morphism $\mu:X'\to X$ from a K\"ahler manifold $X'$ and a K\"ahler class $\alpha'$ such that $\mu^*\alpha=\alpha'+E$, where $E\>0$ is an effective and $\mu$-exceptional $\mbR$-divisor. In particular, here $-E$ is $\mu$-ample and $\Supp(E)=\Ex(f)$.
\end{lemma}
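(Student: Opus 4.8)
The statement is an equivalence, and I would prove the two implications separately; the reverse implication is elementary, while the forward one is the technical heart and rests on Demailly's regularization of closed positive currents.

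For ``$\Leftarrow$'', suppose $\mu:X'\to X$ is projective bimeromorphic with $X'$ a K\"ahler manifold and $\alpha'$ a K\"ahler class on $X'$ with $\mu_*\alpha'=\alpha$. Fix a K\"ahler form $\omega'\in\alpha'$ and a K\"ahler form $\omega$ on $X$. Since $X'$ is compact, $\omega'$ is strictly positive and $\mu^*\omega$ is smooth and semipositive, there is $\ve>0$ with $\omega'\>\ve\,\mu^*\omega$. Put $T:=\mu_*\omega'$. By the projection formula (cf. Corollary \ref{cor:current-pushforward}) we have $[T]=\mu_*\alpha'=\alpha$ in $H^{1,1}_{\rm BC}(X)$, and $T\>\ve\,\mu_*\mu^*\omega=\ve\,\omega$, so $T$ is a K\"ahler current and $\alpha$ is big. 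Finally, since $X$ is normal, $\mu$ is an isomorphism over the complement of a closed subset of codimension $\>2$; hence every prime divisor $D\subset X$ meets this isomorphism locus in a dense open set, on which $T=\mu_*\omega'$ is smooth, so $\nu(T,D)=0$. Thus $\alpha$ is a modified K\"ahler class.

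For ``$\Rightarrow$'', fix a K\"ahler current $T\in\alpha$ with $\nu(T,D)=0$ for every prime divisor $D\subset X$, so the minimal multiplicity (generic Lelong number) of $\alpha$ vanishes along every prime divisor. The plan is to resolve the singularities of a K\"ahler current in $\alpha$. First choose a K\"ahler resolution $\pi:\hat X\to X$ (Remark \ref{rmk:kahler-property}); then $\pi^*\alpha$ is still big, and by the birational invariance of minimal multiplicities its generic Lelong number vanishes along every non-$\pi$-exceptional prime divisor, i.e. along each strict transform of a divisor on $X$. Applying Demailly's regularization \cite{Dem92} to a K\"ahler current representing $\pi^*\alpha$, I obtain K\"ahler currents with analytic singularities in the classes $\pi^*\alpha+\ve\,\hat\omega$ (for a K\"ahler form $\hat\omega$ on $\hat X$) whose generic Lelong numbers approximate those minimal multiplicities. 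Resolving these analytic singularities by Hironaka yields a further modification $\nu_\ve:X'_\ve\to\hat X$ on which $(\pi\circ\nu_\ve)^*\alpha+\ve\,(\pi\circ\nu_\ve)^*\hat\omega$ decomposes as $\beta_\ve+[E_\ve]$, with $\beta_\ve$ K\"ahler and $E_\ve\>0$ effective, the coefficient of each prime divisor $D'$ in $E_\ve$ being the corresponding generic Lelong number. For the strict transform $D'$ of a divisor $D\subset X$ this coefficient tends to $0$, so on a suitable model the divisor $E_\ve$ is $\mu_\ve$-exceptional, where $\mu_\ve:=\pi\circ\nu_\ve$.

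Once $E_\ve$ is exceptional, the rest is formal: pushing the decomposition forward by $\mu_\ve$ and using $\mu_{\ve *}[E_\ve]=0$ together with $\mu_{\ve *}\mu_\ve^*=\mathrm{id}$ (Corollary \ref{cor:current-pushforward}) gives $\mu_{\ve *}\beta_\ve=\alpha+O(\ve)$. Letting $\ve\to0$, and using that only finitely many prime divisors can occur in the negative parts so that a single modification $\mu:X'\to X$ suffices, I pass to the limit to obtain a K\"ahler class $\alpha'$ on $X'$ with $\mu_*\alpha'=\alpha$, as required. The main obstacle is precisely this step — producing, on one fixed model, a decomposition $\mu^*\alpha=\alpha'+[E]$ with $\alpha'$ genuinely K\"ahler (not merely nef and big) and $E$ effective and exceptional — which is the content of Demailly's approximation theorem and of Boucksom's argument \cite{Bou04}. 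The modified K\"ahler hypothesis is used twice: bigness of $\alpha$ (a K\"ahler current) makes the positive part K\"ahler, and the vanishing of the Lelong numbers along divisors of $X$ makes the negative part exceptional.
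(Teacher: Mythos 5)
Your ``$\Leftarrow$'' direction is essentially the standard argument and is fine; note that the paper itself offers no proof of this lemma at all, it simply cites \cite[Proposition 2.3]{Bou04} and \cite[Page 990, Footnote 5]{CHP16}, so the comparison below is with Boucksom's actual argument. The problem is in your ``$\Rightarrow$'' direction, and it sits exactly at the step you yourself flag as the main obstacle. You regularize in the perturbed classes $\pi^*\alpha+\ve\,\hat\omega$ and then try to let $\ve\to 0$. Two things go wrong. First, the assertion that the non-exceptional coefficients of $E_\ve$ ``tend to $0$, so on a suitable model the divisor $E_\ve$ is $\mu_\ve$-exceptional'' is a non sequitur: for fixed $\ve$ a small nonzero coefficient on a non-exceptional component is not exceptionality, and in any case the models $X'_\ve$ change with $\ve$, so there is no single modification on which to compare the decompositions. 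Second, and fatally, the K\"ahler cone is open: even on a fixed model, the limit as $\ve\to 0$ of the K\"ahler classes $\beta_\ve$ (or of their push-forwards) is only nef, so your limiting procedure can at best produce a nef class, while the entire content of the lemma is the strict positivity of $\alpha'$.

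The missing idea, which is how \cite[Proposition 2.3]{Bou04} actually runs, is that no perturbation and no limit are needed. Since $\alpha$ is big it contains a K\"ahler current $T\geq \ve_0\omega$ with $\nu(T,D)=0$ for every prime divisor $D$. Demailly regularization can be performed \emph{inside the class}: the loss of positivity $\epsilon_k\omega$ is absorbed by the strict inequality $T\geq\ve_0\omega$, so for $k\gg 0$ one gets a K\"ahler current $T_k$ in the same class with analytic singularities; moreover regularization only decreases Lelong numbers, so $\nu(T_k,D)\leq\nu(T,D)=0$ \emph{exactly}, for every prime divisor $D$. (On a K\"ahler resolution $\pi:\hat X\to X$ one first replaces $\pi^*T$ by $\pi^*T-\ve\pi^*\omega+\ve\omega'_\delta+\ve[F_\delta]$, which by Lemma \ref{lem:pullback-of-kahler-class} is a K\"ahler current in $\pi^*\alpha$ whose generic Lelong numbers vanish along all non-$\pi$-exceptional divisors.) Since $T_k$ has analytic singularities and zero generic Lelong number along every non-exceptional prime divisor, its singular set has no divisorial components over $X$, i.e. it has codimension at least $2$ over $X$; hence on a log resolution $\rho$ of its singularity ideal one gets $\rho^*T_k=\theta+[E]$ with $\theta$ bounded below by a pulled-back K\"ahler form and $E\geq 0$ exceptional over $X$ \emph{exactly}, not approximately. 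Then $[\theta]-\delta[F]$ is K\"ahler for a suitable exceptional $F\geq 0$ with $-F$ relatively ample and $0<\delta\ll 1$, and pushing forward along the composed projective resolution $\mu$ gives $\mu_*\bigl([\theta]-\delta[F]\bigr)=\alpha$ in one step. Your write-up, as it stands, closes neither the exceptionality gap nor the limit gap.
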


\begin{proof}
The if part is obvious, e.g., see the first part of the proof of \cite[Proposition 2.3]{Bou04}. We will now prove the converse statement, to that end, let $f:X'\to X$ be a resolution of singularities of $X$. Since $\alpha$ is a modified K\"ahler class, there is a K\"ahler current $T$ with (psh) local potentials in the class $\alpha$ such that $\nu(T, P)=0$ for all prime Weil divisors $P$ on $X$. Since $T$ has psh local potentials, $f^*T$ can be defined by simply pulling back the local potentials. Since $T$ is a K\"ahler current and the exceptional locus of $f$ supports an effective relative anti-ample divisor $F$, it follows that $f^*T-\ve F$ is also a K\"ahler current in the class $f^*\alpha-\ve F$ for $0<\ve \ll 1$. 
    Choose a K\"ahler form $\omega'$ on $X'$ such that $f^*T-\ve F\>\omega'$ and let $T':=f^*T-\ve F-\omega'\>0$. Let $T'=D+S'$ be the Siu decomposition of $T'$, where $D=\sum_Q\nu(T', Q)Q$ and $S'$ is a closed positive bi-degree $(1,1)$-current such that $\nu(S', Q)=0$ for all prime Weil divisor $Q$ on $X'$. We claim that $D$ is an $\mbR$-divisor, i.e. $\nu(T', Q)=0$ for all but finitely many prime Weil divisors $Q$. To see this, observe that 
    \[\nu(f^*T, Q)=\nu(T'+\ve F+\omega', Q)=\nu(T'+\ve F, Q)+\nu(\omega', Q)=\nu(T'+\ve F, Q)\geq \nu(T', Q),\] since $\nu(\omega', Q)=0$ as $\omega'$ is a smooth form. Now from the definition of $T$ it follows that $\nu(f^*T, Q)=0$ if $Q$ is not $f$-exceptional, and hence $D$ is an effective $f$-exceptional divisor. Thus we have $f^*T=D+\ve F+U'$, where $U'=S'+\omega'$. Since $\nu(U', Q)=\nu(S'+\omega', Q)=0$ for all prime Weil divisors $Q$ on $X'$, it follows that the $(1,1)$-class $[U']$ is modified K\"ahler. 
    By Demailly's regularization theorem, there is a K\"ahler current $U_k$ with analytic singularities in the class $[U']$ such that $\nu(U_k, Q)\<\nu(U', Q)$ for all prime Weil divisors $Q$ on $X'$. In particular, $\nu(U_k, Q)=0$ for all prime Weil divisors $Q$ on $X'$. Let $g:X''\to X'$ be a resolution of singularities of $U_k$ so that $g^*U_k=\Theta+G$, where $\Theta$ is a smooth form and $G$ is an effective $g$-exceptional $\mbR$-divisor. Note that since $U_k\>\epsilon\omega'$ for some $\epsilon>0$, we have $\Theta\>\epsilon g^*\omega'$. Then by \cite[Lemma 2.9]{Bou02ar}, there is an effective $g$-exceptional $\mbR$-divisor $E$ on $X''$ such that $\Theta-E$ is cohomologous to a K\"ahler form, i.e. $[\Theta-E]=[\omega'']$ for some K\"ahler form $\omega''$ on $X''$. In particular, $[g^*U_k]=[\omega'']+[E+G]$ and thus we have
    \[
        \begin{split}
            (f\circ g)^*\alpha=g^*([D+\varepsilon F]+[U_k])=[\omega'']+[g^*(D+\ve F)+E+G],
        \end{split}
    \]
where $g^*(D+\ve F)+E+G$ is a $f\circ g$-exceptional effective $\mbR$-divisor. This completes our proof.

\end{proof}

\begin{corollary}\cite[Proposition 2.4]{Bou04}\label{cor:modified-kahler}
	Let $X$ be a $\mbQ$-factorial compact K\"ahler $3$-fold with klt singularities, and $\alpha\in H^{1, 1}_{\rm BC}(X)$ a modified K\"ahler class. If $S\subset X$ is an irreducible surface and $\tilde S\to S$ its normalization, then $\alpha |_{\tilde S}$ is big.
\end{corollary}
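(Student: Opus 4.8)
The plan is to pull $\alpha$ back to a resolution of $X$ on which it becomes a K\"ahler class plus an effective exceptional divisor, restrict this decomposition to the strict transform of $S$, and then descend back to $S$.

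First, since $\alpha$ is a modified K\"ahler class, Lemma~\ref{lem:modified-kahler} produces a projective bimeromorphic morphism $f\colon Y\to X$ from a K\"ahler manifold $Y$ together with a K\"ahler class $\omega_Y\in H^{1,1}_{\rm BC}(Y)$ such that $f_*\omega_Y=\alpha$. Because $X$ is $\mbQ$-factorial with klt singularities, the hypotheses of Lemma~\ref{lem:pull-push-of-modified-kahler-class} are met, and it yields an effective $f$-exceptional $\mbR$-divisor $E\>0$ with $\Supp E=\Ex(f)$ and
\[ f^*\alpha=\omega_Y+[E]. \]

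Next I would restrict to surfaces. Let $\tilde S\subset Y$ be the strict transform of $S$ and $g:=f|_{\tilde S}\colon \tilde S\to S$ the induced proper bimeromorphic morphism. Since $\tilde S$ dominates $S$ it is not $f$-exceptional, so $\tilde S\not\subset\Supp E$ and $E|_{\tilde S}\>0$ is a genuine effective divisor on $\tilde S$ whose components are $g$-exceptional. Restricting the displayed identity to $\tilde S$ gives
\[ g^*(\alpha|_S)=\omega_Y|_{\tilde S}+[E|_{\tilde S}]. \]
By Remark~\ref{rmk:kahler-property} the class $\omega_Y|_{\tilde S}$ is K\"ahler on $\tilde S$; adding the positive current of integration along $E|_{\tilde S}$ gives a K\"ahler current in the class $g^*(\alpha|_S)$, so $g^*(\alpha|_S)$ is big on $\tilde S$. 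To transport this down to $S$, push the identity forward by $g$: the components of $E|_{\tilde S}$ are contracted, so $g_*[E|_{\tilde S}]=0$, and since $g$ is bimeromorphic the projection formula (cf. Corollary~\ref{cor:current-pushforward}) gives $g_*g^*=\mathrm{id}$; hence $\alpha|_S=g_*\big(\omega_Y|_{\tilde S}\big)$. This realizes $\alpha|_S$ as the pushforward of a K\"ahler class under a bimeromorphic morphism, which by Lemma~\ref{lem:modified-kahler} is a modified K\"ahler class, and modified K\"ahler classes are big by Definition~\ref{def:modified-kahler}.

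The hard part will be the bookkeeping forced by singularities: the surfaces $\tilde S$ and $S$ need not be smooth, normal, or even have rational singularities, so the restriction of Bott--Chern classes, the identity $g_*g^*=\mathrm{id}$, and the application of Lemma~\ref{lem:modified-kahler} must be justified in this setting---most safely by first replacing $\tilde S$ by a K\"ahler resolution and $S$ by the corresponding model and checking that bigness is insensitive to these bimeromorphic modifications. An alternative, closer to Boucksom's original proof, bypasses the resolution: the modified K\"ahler hypothesis gives a K\"ahler current $T\>\omega$ representing $\alpha$ with vanishing Lelong number along the prime divisor $S$, and this vanishing is precisely what allows $T|_S$ to be defined as a positive closed current; then $T\>\omega$ forces $T|_S\>\omega|_S$, exhibiting $\alpha|_S$ as big. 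The technical heart of that route is the theory of restriction of positive currents with controlled Lelong numbers.
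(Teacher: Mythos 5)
Your construction up to the bigness of $g^*(\alpha|_S)$ is exactly the paper's argument: the paper likewise combines Lemma~\ref{lem:modified-kahler} and Lemma~\ref{lem:pull-push-of-modified-kahler-class} to write $f^*\alpha=\omega_Y+[E]$, restricts to the strict transform of $S$ (assumed smooth after passing to a higher resolution), and notes that a K\"ahler class plus an effective class is big. However, your descent to $S$ contains a genuine error: you claim that the components of $E|_{\tilde S}$ are $g$-exceptional, hence $g_*[E|_{\tilde S}]=0$. This is false in general. That $E$ is $f$-exceptional only means $f$ maps $\Supp E$ to a set of dimension $\leq 1$ in the $3$-fold $X$; a component of $E$ may be contracted by $f$ onto a \emph{curve} $C$ contained in $S$, and then $\Supp E\cap\tilde S$ contains a curve dominating $C$, which $g$ does not contract. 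Concretely, if $f$ is the blow-up of a smooth $X$ along a curve $C\subset S$ that is Cartier on $S$, and $\alpha:=f_*\omega_Y$ for a K\"ahler class $\omega_Y$ on $Y$ (a legitimate output of Lemma~\ref{lem:modified-kahler}), then $g:\tilde S\to S$ is an isomorphism while $E|_{\tilde S}$ pushes forward to a nonzero effective class supported on $C$. Consequently the identity $\alpha|_S=g_*\bigl(\omega_Y|_{\tilde S}\bigr)$ fails, and your stronger intermediate conclusion that $\alpha|_S$ is a modified K\"ahler class does not follow (and can fail, e.g.\ when $C$ has negative self-intersection in $S$, forcing positive minimal multiplicity of $\alpha|_S$ along $C$).

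The repair is short and is precisely what the paper does. The correct identity is $\alpha|_S=g_*\bigl(\omega_Y|_{\tilde S}\bigr)+g_*[E|_{\tilde S}]$, whose second term is effective, so it would suffice to know that $g_*\bigl(\omega_Y|_{\tilde S}\bigr)$ is big; but it is cleaner not to push the decomposition forward at all. Since $g^*(\alpha|_S)=\omega_Y|_{\tilde S}+[E|_{\tilde S}]$ contains a K\"ahler current $T$, the pushforward $g_*T$ is a K\"ahler current lying in the class $g_*g^*(\alpha|_S)=\alpha|_S$ (cf.\ Corollary~\ref{cor:current-pushforward}), so $\alpha|_S$ is big: bigness descends under pullback by a proper bimeromorphic morphism with no hypothesis on what $g$ does to $E|_{\tilde S}$. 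This one-line descent also disposes of most of the bookkeeping you worry about in your final paragraph, and the smoothness issue is handled exactly as you suggest and as the paper states: replace the strict transform by a smooth K\"ahler resolution before restricting. Your alternative sketch via vanishing Lelong numbers along $S$ is indeed the idea of Boucksom's original proof of \cite[Proposition 2.4]{Bou04}, but as written it is an outline rather than an argument.
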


\begin{proof} By Lemma \ref{lem:modified-kahler}  there is a projective bimeromorphic morphism $\mu:X'\to X$ from a K\"ahler manifold $X'$, a K\"ahler class $\alpha'$ and an effective $\mu$-exceptional divisor $E\>0$ such that $\mu^*\alpha=\alpha'+E$. Let $S'=\mu ^{-1}_*S$ and $\nu:S'\to S$. We may assume that $S'$ is smooth and hence $\nu$ factors through $\tilde \nu:S'\to \tilde S$. Since $S'$ is not contained in the support of $E$, then \[\tilde \nu ^*(\alpha |_{\tilde S})=(\mu ^*\alpha ) |_{S'}=(\alpha'+[E])|_{S'}\] is big and so $\alpha |_{\tilde S}$ is also big.\\
\end{proof}

\begin{lemma}\label{l-mod}
	 Let $\phi: X\dasharrow X'$ be either a divisorial contraction or a flip between two compact K\"ahler $3$-folds with $\mbQ$-factorial klt singularities. If $\omega$ is a modified K\"ahler class on $X$, then $\omega':=\phi _*\omega$ is a modified K\"ahler class on $X'$.
 \end{lemma}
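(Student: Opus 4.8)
The plan is to reduce everything to the characterization of modified K\"ahler classes provided by Lemma~\ref{lem:modified-kahler}: a class on a $\mbQ$-factorial klt K\"ahler $3$-fold is modified K\"ahler precisely when it is the push-forward of a genuine K\"ahler class under a projective bimeromorphic morphism from a K\"ahler manifold. Thus it suffices to exhibit, on $X'$, such a morphism together with a K\"ahler class whose push-forward equals $\omega'=\phi_*\omega$.

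First I would realize $\omega$ itself in this way. By Lemma~\ref{lem:modified-kahler} there is a projective bimeromorphic morphism $\mu:Y\to X$ from a K\"ahler manifold $Y$ and a K\"ahler class $\omega_Y$ with $\mu_*\omega_Y=\omega$, and by Lemma~\ref{lem:pull-push-of-modified-kahler-class} (using that $X$ is $\mbQ$-factorial klt) we have $\mu^*\omega=\omega_Y+[E]$ for an effective $\mu$-exceptional $\mbR$-divisor $E$. Next I would resolve the indeterminacy of $\phi\circ\mu:Y\bir X'$, using Chow's Lemma (Theorem~\ref{thm:chow}) together with resolution of singularities, to obtain a K\"ahler manifold $W$ with projective bimeromorphic morphisms $g:W\to Y$ and $q:W\to X'$ satisfying $q=(\phi\circ\mu)\circ g$. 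Setting $p:=\mu\circ g$ makes $(W,p,q)$ a common resolution of $\phi$, so that $\omega'=\phi_*\omega=q_*p^*\omega$ by the definition of the push-forward of $(1,1)$ classes (Corollary~\ref{cor:transfer-of-homological-equivalence}).

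The heart of the argument is to produce a K\"ahler class on $W$ whose $q$-push-forward is $\omega'$. Applying Lemma~\ref{lem:pullback-of-kahler-class} to $g$ and the K\"ahler class $\omega_Y$ gives, for $0<\varepsilon\ll 1$, a K\"ahler class $\omega'_\varepsilon$ on $W$ with $g^*\omega_Y=[\omega'_\varepsilon]+[F_\varepsilon]$ for an effective $g$-exceptional divisor $F_\varepsilon$. Combining this with $\mu^*\omega=\omega_Y+[E]$ and the functoriality $p^*=g^*\circ\mu^*$ yields $p^*\omega=[\omega'_\varepsilon]+[G_\varepsilon]$ with $G_\varepsilon:=F_\varepsilon+g^*E$ effective and $p$-exceptional. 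Pushing forward by $q$ and using linearity, $\omega'=q_*[\omega'_\varepsilon]+q_*[G_\varepsilon]$; once I show $q_*[G_\varepsilon]=0$, Lemma~\ref{lem:modified-kahler} applied to the projective bimeromorphic morphism $q$ and the K\"ahler class $\omega'_\varepsilon$ shows that $\omega'=q_*[\omega'_\varepsilon]$ is modified K\"ahler, finishing the proof.

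I expect the main obstacle to be verifying $q_*[G_\varepsilon]=0$, i.e.\ that the $p$-exceptional divisor $G_\varepsilon$ is in fact $q$-exceptional; this is where the two cases for $\phi$ must be handled separately. When $\phi$ is a divisorial contraction it is a genuine morphism with $q=\phi\circ p$, so for any $p$-exceptional prime divisor $D'\subset W$ the bound $\dim p(D')\<1$ forces $\dim q(D')=\dim\phi(p(D'))\<1$, whence $D'$ is $q$-exceptional on the $3$-fold $X'$. When $\phi$ is a flip it is an isomorphism in codimension one, so strict transform gives a bijection between the prime divisors of $X$ and of $X'$; consequently a prime divisor of $W$ is $p$-exceptional if and only if it is $q$-exceptional, and in particular $G_\varepsilon$ is $q$-exceptional. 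In both cases $q_*[G_\varepsilon]=0$, completing the argument.
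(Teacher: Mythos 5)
Your proof is correct, and its skeleton matches the paper's: both realize $\omega=\mu_*\omega_Y$ with $\omega_Y$ K\"ahler via Lemma \ref{lem:modified-kahler}, pass to a resolution $W$ of the graph of the composite $Y\bir X'$ with projective legs, write the pullback of the K\"ahler class as (K\"ahler) $+$ (effective exceptional) via Lemma \ref{lem:pullback-of-kahler-class}, and conclude with the converse direction of Lemma \ref{lem:modified-kahler} applied to $q:W\to X'$. Where you genuinely diverge is in verifying that the push-forward of the K\"ahler class on $W$ equals $\phi_*\omega$ in $H^{1,1}_{\rm BC}(X')$. The paper never distinguishes flips from divisorial contractions: it applies Lemma \ref{l-relpic} to $q$ to produce a $q$-exceptional divisor $F$ with $[\overline{\omega}+F]$ trivial in $N^1(\overline{X}/X')$, descends this class via Lemma \ref{l-HP} and Corollary \ref{cor:current-pushforward}, and identifies the result with $\phi_*\omega$ through the commutative diagram. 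You instead cite Corollary \ref{cor:transfer-of-homological-equivalence} for the well-definedness of $\phi_*\omega=q_*p^*\omega$, and then kill the error term by checking that the explicit divisor $G_\varepsilon$ is $q$-exceptional; this is the only place the flip/divisorial dichotomy enters, and both of your case arguments are sound ($q=\phi\circ p$ in the divisorial case; isomorphism in codimension one in the flip case). Since Corollary \ref{cor:transfer-of-homological-equivalence} is itself proved via Lemma \ref{l-relpic} and Lemma \ref{l-HP}, the two routes rest on the same foundation; yours trades the paper's abstract, case-free descent for a concrete geometric check, and additionally uses Lemma \ref{lem:pull-push-of-modified-kahler-class} to transfer from $\omega$ to $\omega_Y$, which the paper avoids by working with $\widetilde{\omega}$ directly. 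Two points you should make explicit, though neither is a gap beyond what the paper itself assumes: the simultaneous projectivity of $g$ (needed for Lemma \ref{lem:pullback-of-kahler-class}) and of $q$ (needed for Corollary \ref{cor:transfer-of-homological-equivalence}) is exactly the paper's ``projective resolution of the graph''; and the identity $\omega'=q_*[\omega'_\varepsilon]+q_*[G_\varepsilon]$ must be read at the level of currents, since push-forwards of Bott-Chern classes are a priori only legitimized by Corollary \ref{cor:current-pushforward}-type statements, as the remark preceding that corollary warns.
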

\begin{proof} This follows easily from Lemma \ref{lem:modified-kahler}.
\end{proof}

\begin{lemma}\label{lem:nef-and-big-to-modified-kahler}
Let $(X, B)$ be a klt pair, where $X$ is a $\mbQ$-factorial compact K\"ahler $3$-fold. Let $\alpha$ be a nef and big $(1, 1)$-class on $X$. Then there exist a modified K\"ahler class $\Theta$ and an effective $\mbQ$-divisor $F\>0$ such that $\alpha=\Theta+F$ and $(X, B+F)$ is klt.	
\end{lemma}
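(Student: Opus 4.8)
The plan is to reduce to the smooth case, split off the divisorial part of the big class on a resolution using the structure theory of big classes, and then interpolate against the nef class $\alpha$ in order to land inside the (open) modified K\"ahler cone. First I would pass to a resolution: choose a projective bimeromorphic morphism $\mu\colon X'\to X$ with $X'$ a compact K\"ahler manifold (Theorem \ref{thm:log-resolution} and Remark \ref{rmk:kahler-property}). Since volume is a bimeromorphic invariant, $\mu^*\alpha$ is again big on $X'$, and since $\alpha$ is nef the pullback $\mu^*\alpha$ is nef. By the structure theorem for big classes on compact K\"ahler manifolds (Boucksom \cite{Bou04}, through Demailly's regularization of K\"ahler currents with analytic singularities \cite{Dem92}), after possibly replacing $X'$ by a higher model I may assume there is a K\"ahler class $\omega'$ on $X'$ and an effective divisor $E\>0$ with $\mu^*\alpha=\omega'+[E]$.

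The key step is the interpolation. For a rational $0<\ve\ll 1$ set $\omega'_\ve:=(1-\ve)\mu^*\alpha+\ve\,\omega'$. Since $\mu^*\alpha$ is nef, i.e. $\mu^*\alpha\in\overline{\mcK}$, and $\omega'$ is K\"ahler, the class $\omega'_\ve$ is again K\"ahler (the K\"ahler cone $\mcK$ on $X'$ is open, so $\overline{\mcK}+\mcK=\mcK$). From $\mu^*\alpha=\omega'+[E]$ one computes $\mu^*\alpha=\omega'_\ve+\ve[E]$. Pushing forward by $\mu$, using $\mu_*\mu^*\alpha=\alpha$ (Corollary \ref{cor:current-pushforward}) together with $\mu_*[E]=[\mu_*E]$ (the $\mu$-exceptional components of $E$ have image of codimension $\>2$), I obtain
\[ \alpha=\mu_*\omega'_\ve+\ve\,[\mu_*E].\]
Now $\Theta:=\mu_*\omega'_\ve$ is a modified K\"ahler class by Lemma \ref{lem:modified-kahler}, because $\mu$ is projective bimeromorphic, $X'$ is K\"ahler and $\omega'_\ve$ is a K\"ahler class; and $F:=\ve\,\mu_*E\>0$ is effective. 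Finally, since $(X,B)$ is klt and kltness is an open condition, $(X,B+F)$ is klt once $\ve$ is small enough. To guarantee that $F$ is a genuine $\mbQ$-divisor I would, if necessary, replace $F$ by a nearby effective $\mbQ$-divisor with the same support: the modified K\"ahler cone is open (this follows from Lemma \ref{lem:modified-kahler} and openness of $\mcK$ on $X'$, writing $\Theta+t\gamma=\mu_*(\omega'_\ve+t\mu^*\gamma)$), so $\Theta=\alpha-F$ stays modified K\"ahler under such a small perturbation, while $F$ stays small enough to keep $(X,B+F)$ klt. This yields the desired decomposition $\alpha=\Theta+F$.

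The hard part is the very first reduction, namely producing the decomposition $\mu^*\alpha=\omega'+[E]$ with $\omega'$ \emph{genuinely K\"ahler} rather than merely nef: if $\omega'$ were only nef then the interpolated class $\omega'_\ve$ would only be nef and the argument would collapse. This is exactly where the non-projective K\"ahler input is essential, and where one must invoke Demailly's approximation to replace the K\"ahler current in $\alpha$ by one with analytic singularities and resolve those singularities so that the residual smooth part is strictly positive. The interpolation, the identification of $\Theta$ as modified K\"ahler via Lemma \ref{lem:modified-kahler}, and the rationality/kltness bookkeeping are then formal.
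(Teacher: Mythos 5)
Your proposal is correct and follows essentially the same route as the paper's own proof: decompose the pullback of $\alpha$ on a K\"ahler resolution as a K\"ahler class plus an effective $\mbR$-divisor (the paper cites \cite[Theorem 1.4]{Bou02} for this, rather than \cite{Bou04}/\cite{Dem92}, but it is the same input), interpolate $(1-\ve)\mu^*\alpha+\ve\omega'$ to get a K\"ahler class using nefness of $\alpha$, push forward to obtain a modified K\"ahler class via Lemma \ref{lem:modified-kahler}, and finally fix rationality of $F$ by a small perturbation with the same support, using openness of the modified K\"ahler cone and of the klt condition. The paper does the last step concretely with $E_m=\frac{1}{m}\lfloor m\ve E\rfloor$, but this is the same argument as yours.
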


\begin{proof} Let $f:X'\to X$ be a resolution, then $f^*\alpha$ is big.
	By \cite[Theorem 1.4]{Bou02}, passing to a higher resolution, we may assume that $f^*\alpha=\omega'+E$, where $\omega'$ is a K\"ahler form and $E\>0$ is an effective $\mbR$-divisor. Since being K\"ahler is an open condition, we may assume that $E\geq 0$ is an effective $\mbQ$-divisor. 
 
 We may also assume that $f$ is a projective.   Then we can rewrite $\alpha$ as	\[		\alpha=f_*(\ve\omega'+(1-\ve)f^*\alpha)+\ve f_*E\quad\mbox{ for } 0<\ve< 1.			\]
We may assume that $\ve \in \Q$.	
Note that $\ve\omega'+(1-\ve)f^*\alpha$ is a K\"ahler class on $X'$, since $\alpha$ is nef, and therefore $\Theta':=f_*(\ve\omega'+(1-\ve)f^*\alpha)$ is a modified K\"ahler class on $X$. Observe that $(X, B+\ve f_*E)$ is klt for sufficently small $\ve\in\mbQ^+$. 
\end{proof}

\begin{lemma}[Hodge Index Theorem]\label{lem:hodge-index-theorem}\cite[Corollary
IV.2.15 and Theorem IV.3.1]{BHPV04}
	Let $S$ be a smooth compact K\"ahler surface and $\omega\in H^{1,1}(S)$ a $(1, 1)$-class such that $\omega^2>0$. Let $\alpha
	\in H^{1, 1}(S)$ be a $(1, 1)$-class on $S$. If $\omega\cdot \alpha=0$, then $\alpha^2\<0$; moreover, if $\alpha\neq  0$, then $\alpha^2<0$.\\
\end{lemma}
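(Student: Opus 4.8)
The statement is precisely the Hodge Index Theorem for a compact K\"ahler surface, so the plan is to reduce it to the signature of the intersection form and then finish with elementary linear algebra. First I would invoke the Hodge--Riemann bilinear relations, i.e. the cited results \cite[Theorems IV.2.14 and IV.3.1]{BHPV04}: on a smooth compact K\"ahler surface $S$ the cup-product pairing is nondegenerate on $H^{1,1}(S,\mbR)$ and has signature $(1, h^{1,1}(S)-1)$. Two consequences I would record at once: the maximal dimension of a positive-definite subspace of $(H^{1,1}(S,\mbR),\,\cdot\,)$ equals $1$, and, by nondegeneracy of the pairing, a class $\alpha\in H^{1,1}(S,\mbR)$ satisfies $\alpha\num 0$ if and only if $\alpha=0$.

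Next I would use the hypothesis $\omega^2>0$ to produce an orthogonal splitting. Since $\omega^2\neq 0$, the line $\mbR\omega$ is a nondegenerate subspace, and hence $H^{1,1}(S,\mbR)=\mbR\omega\oplus\omega^\perp$ is an orthogonal direct sum, where $\omega^\perp=\{\beta : \omega\cdot\beta=0\}$. Because the Gram matrix of the pairing is then block-diagonal for a basis adapted to this decomposition, the numbers of positive, negative and zero eigenvalues add across the two summands. The summand $\mbR\omega$ contributes signature $(1,0)$ (as $\omega^2>0$), while the total signature is $(1,\,h^{1,1}-1)$ with zero nullity; hence the restriction of the pairing to $\omega^\perp$ must be negative definite of rank $h^{1,1}-1$.

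Finally I would read off the conclusion. The hypothesis $\omega\cdot\alpha=0$ says exactly that $\alpha\in\omega^\perp$, on which the form is negative definite; therefore $\alpha^2\leq 0$, with $\alpha^2=0$ forcing $\alpha=0$, i.e. $\alpha\num 0$. This is the desired assertion.

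The only genuine content is the signature statement, which is a standard consequence of the Hodge--Riemann relations and is precisely what the cited theorems supply; once it is in hand the remaining steps are routine linear algebra (block-additivity of signature under an orthogonal decomposition). Thus I expect no real obstacle beyond correctly citing and applying the index theorem. The one point that demands a little care is the strict inequality: a naive ``restrict the form to the plane $\langle\omega,\alpha\rangle$'' argument fails, since the $2\times 2$ Gram matrix $\mathrm{diag}(\omega^2,0)$ is only positive semidefinite and yields no contradiction with signature $(1,\dots)$. I therefore route the strictness through negative-definiteness of the \emph{full} orthogonal complement $\omega^\perp$ rather than through a single $2$-plane.
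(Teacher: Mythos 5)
Your proof is correct, and it matches the paper's treatment: the paper gives no argument of its own but simply quotes this statement with the citation to \cite[Theorems IV.2.14 and IV.3.1]{BHPV04}, which supply exactly the signature statement for the intersection form on $H^{1,1}(S,\mbR)$ that you invoke. Your orthogonal-decomposition argument (negative definiteness of $\omega^\perp$) is the standard, routine deduction of the lemma from those cited results, so there is nothing to compare beyond that.
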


\begin{lemma}\label{lem:nef-reduction-property}
Let $f:S\to T$ be a proper morphism with connected fibers from a normal compact analytic surface to a smooth projective curve $T$. Let $\alpha\in H^{1,1}_{\rm BC}(S)$ be a nef class and $C\subset S$ a curve such that $f(C)=T$ and $\alpha\cdot C=\alpha\cdot F=0$, where $F$ is a general fiber of $f$. Then $\alpha\num 0$, i.e. $\alpha\cdot \Gamma=0$ for all curves $\Gamma\subset S$; in particular, the nef dimension $\nu_{\rm nef}(\alpha)=0$.  
\end{lemma}

\begin{proof}
  This follows from the same arguments as in the proof of \cite[Proposition 2.5]{BCE02} using Lemma \ref{lem:hodge-index-theorem} above.   
\end{proof}

\begin{lemma} \label{l-adj} Let $X$ be a normal $\Q$-factorial K\"ahler $3$-fold and $S$ a prime divisor with minimal resolution $\nu:S'\to S$. Then there is an effective $\mbQ$-divisor $E$ on $S'$ such that
\[ \nu^*((K_X+S)|_S)=K_{S'}+E.\]\end{lemma}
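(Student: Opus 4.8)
The plan is to reduce the statement to two classical pieces of surface theory: adjunction with the different on the normalization, and the behaviour of the canonical class under a minimal resolution. Since $X$ is $\mathbb Q$-factorial, both $K_X$ and $S$ are $\mathbb Q$-Cartier, so $K_X+S$ is $\mathbb Q$-Cartier and the class $(K_X+S)|_S$ is a well-defined $\mathbb Q$-Cartier class on $S$; in particular its pullbacks to the normalization and to $S'$ make sense. First I would pass to the normalization $\tau\colon \tilde S\to S$. By adjunction there is a canonically defined effective $\mathbb Q$-divisor $\mathrm{Diff}_{\tilde S}(0)\>0$ (the \emph{different}) on $\tilde S$ with
\[ \tau^*\bigl((K_X+S)|_S\bigr)=K_{\tilde S}+\mathrm{Diff}_{\tilde S}(0). \]
The effectivity of the different is the key input; it is a computation at the codimension-one points of $\tilde S$, where the conductor of the normalization and the failure of $S$ to be Cartier contribute only non-negative coefficients, and hence it holds analytically as well (cf. \cite{KM98}).

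Next, since $S'$ is smooth and therefore normal, the minimal resolution $\nu\colon S'\to S$ factors through the normalization by the universal property, say $\nu=\tau\circ\mu$ with $\mu\colon S'\to\tilde S$ the minimal resolution of the normal surface $\tilde S$. Here I would invoke the defining property of a minimal resolution, namely that $K_{S'}$ is $\mu$-nef, i.e. $K_{S'}\cdot E_i\>0$ for every $\mu$-exceptional curve $E_i$. Writing $K_{S'}=\mu^*K_{\tilde S}+\sum a_iE_i$, the identity $(\sum a_iE_i)\cdot E_j=K_{S'}\cdot E_j\>0$ shows that $\sum a_iE_i$ is $\mu$-nef; since it is $\mu$-exceptional, the negativity lemma forces $a_i\<0$ for all $i$, so that $F:=\mu^*K_{\tilde S}-K_{S'}=\sum(-a_i)E_i\>0$ is an effective $\mathbb Q$-divisor. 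The negativity lemma applies here because it rests on Mumford's negative-definiteness of the intersection form of the exceptional fibre of a normal surface singularity, a purely local fact valid in the analytic category.

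Finally I would combine the two steps. Pulling the adjunction identity back by $\mu$ and using $\nu^*=\mu^*\tau^*$ gives
\[ \nu^*\bigl((K_X+S)|_S\bigr)=\mu^*K_{\tilde S}+\mu^*\mathrm{Diff}_{\tilde S}(0)=K_{S'}+F+\mu^*\mathrm{Diff}_{\tilde S}(0), \]
so the lemma holds with $E:=F+\mu^*\mathrm{Diff}_{\tilde S}(0)$, a sum of two effective $\mathbb Q$-divisors and hence itself effective.

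The main obstacle I anticipate is not any single computation but rather making this classical adjunction package available in the analytic, possibly non-projective, setting: both the construction and effectivity of the different and the negativity lemma are statements that are local in codimension one (respectively in a neighbourhood of the exceptional fibre), so they transfer from the algebraic case, but one should record this transfer explicitly rather than quote the projective results verbatim.
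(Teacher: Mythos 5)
Your proposal is correct and follows essentially the same route as the paper's proof: adjunction with the effective different on the normalization $\tilde S$, combined with the fact that the minimal resolution $S'\to\tilde S$ satisfies $K_{S'}=\pi^*K_{\tilde S}-E'$ with $E'\>0$, then pulling back and summing the two effective contributions. The only difference is presentational: you justify the anti-effectivity of the minimal-resolution discrepancies via $K_{S'}$ being relatively nef plus the negativity lemma, and you flag the transfer of these facts to the analytic category, both of which the paper takes as known.
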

\begin{proof} See \cite[\S 2B]{CHP16}. 
\end{proof}


\begin{lemma}\label{l-proj} 
	Let $X$ be a uniruled normal 
 compact K\"ahler $3$-fold with {klt} singularities such that the base of its MRC fibration has dimension less than $2$, then $X$ is a projective variety and $H^2(X,\OO _X)=0$. 
\end{lemma}
	
\begin{proof} Let $\pi:X\bir Z$ be the {MRC fibration} (see \cite[Remark 6.10]{CH20}). By assumption $\dim Z=0$ or $1$. 
First note that, since $X$ has rational singularities, by Theorem \ref{t-nam}, $X$ is projective if and only if any resolution $\tilde X$ of the  singularities of $X$ is projective. 
Note also that by Proposition \ref{p-rccres}, the fibers of $\nu:\tilde X\to X$ are rationally chain connected, thus it follows that $\tilde X \bir Z$ is also a MRC fibration. 
Thus replacing $X$ by a resolution of singularities we may assume that $X$ is a compact K\"ahler manifold.     Since the general fibers of $\pi$ are rationally connected, by \cite[Corollary 4.18]{Deb01} $H^0(F, \Omega ^i _F)=0$ for all $i\>1$ where $F$ is a general fiber. We claim that $H^0(X, \Omega^2_X)=0$. If $\dim Z=0$, then this is clear, so assume that $\dim Z=1$. Then observe that the following exact sequence 
	\[
		\xymatrixcolsep{3pc}\xymatrix{ \pi^*\Omega_Z\ar[r] & \Omega_X\ar[r] & \Omega_{X/Z}\ar[r] & 0 }
	\]
is left exact over a Zariski open dense subset of $Z$. This follows from the generic smoothness of $\pi$ and the fact that the MRC fibration is an almost holomorphic map.
Restricting this sequence to a general fiber $F$ of $\pi$ we get the following short exact sequence
	\begin{equation}\label{eqn:mcc-forms-1}
		\xymatrixcolsep{3pc}\xymatrix{ 0\ar[r] & \mcO_F\ar[r] & \Omega_X|_F\ar[r] & \Omega_F\ar[r] & 0. }
	\end{equation}
	Thus we have a short exact sequence
	\begin{equation}\label{eqn:mcc-forms-2}
		\xymatrixcolsep{3pc}\xymatrix{ 0\ar[r] & \Omega_F\ar[r] & \Omega ^2_X|_F\ar[r] & \Omega ^2_F\ar[r] & 0. }
	\end{equation}
	Since, as observed above, $H^0(F, \Omega ^i_F)=0$ for $i=1,2$, we have  $H^0(F, \Omega^2_X|_F)=0$. In particular, $s|_F=0$ for any section $s\in H^0(X, \Omega^2_X)$. Since $\Omega^2_X$ is torsion free, it follows that $H^0(X, \Omega^2_X)=0$. Then $H^2(X, \mcO_X)=\overline{H^0(X, \Omega^2_X)}=0$ and by Kodaira's projectivity criterion (Theorem \ref{t-kod}), we have that $X$ is projective. 
	
 \end{proof}

\begin{lemma}\label{lem:projectivity-of-kahler-surface}
Let $S$ be a smooth compact K\"ahler surface. If $K_S$ is not pseudo-effective, then $S$ is projective.	
\end{lemma}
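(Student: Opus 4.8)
The plan is to reduce, exactly as in the final lines of the proof of Lemma \ref{l-proj}, to Kodaira's projectivity criterion for compact K\"ahler surfaces, which asserts that a compact K\"ahler surface $S$ with $H^2(S,\mcO_S)=0$ is projective. Thus the whole problem becomes showing that the hypothesis ``$K_S$ is not pseudo-effective'' forces $H^2(S,\mcO_S)=0$.

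First I would recall that on a smooth surface the canonical bundle is the sheaf of holomorphic $2$-forms, $\mcO_S(K_S)\cong\Omega^2_S$, so that $H^0(S,\Omega^2_S)=H^0(S,\mcO_S(K_S))$. The key observation is elementary: a nonzero global section of $\mcO_S(K_S)$ would exhibit $K_S$ as an effective divisor, and every effective divisor is pseudo-effective. Taking the contrapositive, the assumption that $K_S$ is not pseudo-effective forces $H^0(S,\Omega^2_S)=0$, i.e. the geometric genus $p_g(S)$ vanishes.

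Next, using Hodge theory on the compact K\"ahler surface $S$ (complex conjugation on the Hodge decomposition, which is available precisely because $S$ is K\"ahler) we have $H^2(S,\mcO_S)=H^{0,2}(S)\cong\overline{H^0(S,\Omega^2_S)}$. Hence the vanishing $H^0(S,\Omega^2_S)=0$ immediately yields $H^2(S,\mcO_S)=0$. At this point I would conclude exactly as in Lemma \ref{l-proj}: Kodaira's projectivity criterion applies and $S$ is projective. Concretely, $p_g(S)=0$ means $H^2(S,\mbR)=H^{1,1}(S)_{\mbR}$, so the rational classes $H^2(S,\mbQ)$ are dense there; perturbing the K\"ahler class $[\omega]$ (which satisfies $[\omega]^2>0$) to a nearby rational $(1,1)$-class of positive self-intersection produces an integral polarization, hence an embedding of $S$.

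I do not expect a genuine obstacle here: beyond elementary Hodge theory, the only input is Kodaira's criterion, which is already invoked in the excerpt. The two points deserving care are that the implication is used in the correct direction (``not pseudo-effective'' $\Rightarrow$ ``not effective'', via pseudo-effectivity of effective divisors) and that the identification $H^2(S,\mcO_S)\cong\overline{H^0(S,\Omega^2_S)}$ is applied in the K\"ahler rather than merely projective setting---both of which are standard for compact K\"ahler surfaces.
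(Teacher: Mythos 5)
Your proof is correct, but it takes a genuinely different route from the paper's. The paper reduces to a minimal surface via Castelnuovo's contraction theorem, notes that $K_S$ not pseudo-effective forces $\kappa(S)=-\infty$ while the K\"ahler hypothesis forces $b_1(S)$ even, and then reads off from the Enriques--Kodaira classification table that $S$ is rational or ruled, hence of algebraic dimension $2$, hence projective. You instead observe that non-pseudo-effectivity of $K_S$ already kills $H^0(S,\Omega^2_S)$ (an effective canonical divisor would be pseudo-effective), deduce $H^2(S,\mcO_S)=\overline{H^0(S,\Omega^2_S)}=0$ by Hodge symmetry, and invoke Kodaira's projectivity criterion --- exactly the mechanism the paper itself uses in the proof of Lemma \ref{l-proj}. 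Your argument is shorter, avoids the classification entirely, and in fact proves a formally stronger statement (only $p_g(S)=0$ is needed, not full non-pseudo-effectivity of $K_S$); what the paper's route buys in exchange is the extra structural conclusion that $S$ is rational or ruled, i.e.\ uniruled, which is precisely the form in which the lemma gets used later (e.g.\ in Claim \ref{c-uni}, where projectivity and uniruledness of $\hat S$ are needed together). One small imprecision in your write-up: the perturbed rational class should be taken inside the \emph{open K\"ahler cone} (so it is an integral K\"ahler class and Kodaira embedding applies directly), rather than merely a rational class of positive self-intersection; the latter also suffices on a compact surface, but that is a separate theorem (a compact complex surface carrying a line bundle of positive self-intersection is projective), not the openness argument you are running.
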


\begin{proof}
Since $K_S$ is not pseudo-effective and $S$ is K\"ahler, then $H^2(S,\OO _S)=H^0(S,K_S)^* =0$. The claim then follows from Kodaira's criterion (Theorem \ref{t-kod}).
\end{proof}

\begin{lemma}\label{lem:rational-singularities}
Let $(X, B\>0)$ be a log pair, where $X$ is a normal compact analytic variety with $\mbQ$-Gorenstein singularities, i.e. $(\omega_X^{\otimes m})^{**}$ is a line bundle for some $m>0$. Let $f:X\to Y$ be a proper surjective morphism to a normal compact analytic variety $Y$. Assume that one of the following conditions is satisfied:
\begin{enumerate}[label=(\roman*)]
	\item $(X, B)$ is dlt and $-(K_X+B)$ is $f$-ample, or
	\item $(X, B)$ is klt and $-(K_X+B)$ is $f$-nef-big.
\end{enumerate}
Then $Y$ has rational singularities.   	
\end{lemma}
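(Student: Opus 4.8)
The plan is to deduce rationality of $Y$ from the relative Kawamata--Viehweg vanishing of Theorem \ref{thm:relative-kvv}, combined with the fact that klt singularities are rational, and a descent argument for rational singularities along $f$. First I would reduce both cases to a single situation: a klt pair $(X,B')$ with $-(K_X+B')$ simultaneously $f$-nef and $f$-big. In case (ii) one takes $B'=B$. In case (i), where $(X,B)$ is dlt and $-(K_X+B)$ is $f$-ample, I set $B'=(1-\varepsilon)B$ for $0<\varepsilon\ll 1$; then $(X,B')$ is klt, and since $-(K_X+B')=-(K_X+B)+\varepsilon B$ is a small perturbation of the $f$-ample class $-(K_X+B)$ and relative ampleness is an open condition, $-(K_X+B')$ remains $f$-ample for $\varepsilon$ small, hence in particular $f$-nef and $f$-big. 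In either case $(X,0)$ is klt (as $B'\ge 0$ only increases discrepancies), so $X$ has klt, and therefore rational, singularities; since $X$ is not assumed K\"ahler here I would justify this last implication not via Lemma \ref{lem:dlt-rational} but by applying the relative vanishing of Theorem \ref{thm:relative-kvv} to a log resolution of $X$, exactly as in the standard proof of \cite[Theorem 5.22]{KM98}.

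Next I would apply Theorem \ref{thm:relative-kvv} directly with $D=0$ and $\Delta=B'$. Since $0-(K_X+B')=-(K_X+B')$ is $f$-nef-big and $(X,B')$ is klt, the theorem gives
\[
	R^i f_*\mathcal{O}_X=0\qquad\text{for all } i>0,
\]
so that $Rf_*\mathcal{O}_X=f_*\mathcal{O}_X$ is concentrated in degree $0$.

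Finally I would descend rationality along $f$. Let $f=g\circ h$ be the Stein factorization, with $h:X\to Y'$ proper with connected fibers (so $h_*\mathcal{O}_X=\mathcal{O}_{Y'}$ and $Y'$ is normal because $X$ is) and $g:Y'\to Y$ finite surjective. Then $f_*\mathcal{O}_X=g_*\mathcal{O}_{Y'}$, and the natural inclusion $\mathcal{O}_Y\to g_*\mathcal{O}_{Y'}$ is split by $\tfrac{1}{\deg g}\,\Tr$; hence $\mathcal{O}_Y\to Rf_*\mathcal{O}_X$ admits a left inverse in the derived category. As $X$ has rational singularities, the splitting criterion for rational singularities (Kov\'acs; the analytic analogue of \cite[Corollary 5.24]{KM98}) then yields that $Y$ has rational singularities. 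The main obstacle is precisely this last step: one must either invoke the trace/splitting characterization of rational singularities in the \emph{analytic} category, or argue by hand with a common resolution --- choosing $\mu:\widetilde{X}\to X$ and a resolution $\pi:\widetilde{Y}\to Y'$ so that $h$ lifts to a morphism $\widetilde{h}:\widetilde{X}\to\widetilde{Y}$, using $R\mu_*\mathcal{O}_{\widetilde{X}}=\mathcal{O}_X$ (rationality of $X$) together with the identity $\pi\circ\widetilde{h}=h\circ\mu$ and the computed vanishing to produce a splitting of $\mathcal{O}_{Y'}\to R\pi_*\mathcal{O}_{\widetilde{Y}}$ --- and in either approach to check that the relative vanishing, Stein factorization, and Grauert--Riemenschneider-type inputs are all available in this possibly non-algebraic setting.
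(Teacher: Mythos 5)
Your proposal is correct and follows essentially the same route as the paper: perturb $B$ in the dlt case to reduce to a klt pair with $-(K_X+B')$ still $f$-ample, apply Theorem \ref{thm:relative-kvv} with $D=0$ to get $R^if_*\mcO_X=0$ for $i>0$, and conclude via Kov\'acs' criterion, whose analytic ingredients (Grothendieck duality, Grauert--Riemenschneider) the paper likewise notes are available. The only difference is one of detail: where the paper cites Lemma \ref{lem:dlt-rational} for the rationality of $X$ and cites \cite[Theorem 1]{Kov00} wholesale, you spell out the splitting of $\mcO_Y\to Rf_*\mcO_X$ via Stein factorization and the trace map, and justify rationality of $X$ in the possibly non-K\"ahler setting directly from the relative vanishing theorem --- both reasonable elaborations of the same argument.
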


\begin{proof}
Since $X$ is $\mbQ$-Gorenstein, $B$ is a $\mbQ$-Cartier divisor and $X$ has klt singularities. Thus by Lemma \ref{lem:dlt-rational}, $X$ has rational singularities. In the dlt case, perturbing the coefficients of $B$ we may assume that $(X, B)$ is klt and $-(K_X+B)$ is still $f$-ample. Therefore by Theorem \ref{thm:relative-kvv}, in both of the cases above we have $R^if_*\mcO_X=0$ for all $i>0$. Then by \cite[Theorem 1]{Kov00} $Y$ has rational singularities. Note that the proof of \cite[Theorem 1]{Kov00} uses Grothendieck duality and the Grauert-Riemenschneider vanishing theorem, both of which are known in the analytic category due to \cite{Ram74, RRV71} and \cite[Corollary II]{Tak85}, respectively.\\
\end{proof}

\section{Minimal model program for dlt pseudo-effective pairs}
 Throughout this section $(X,B)$ will be a dlt pair {with} $X$ a strongly $\mbQ$-factorial compact K\"ahler $3$-fold and $\alpha =K_X+B+\omega$ a nef and big (1,1)-class,  such that $\omega$ is a K\"ahler class, the corresponding extremal ray $R:=\Null(\alpha)\cap\NA(X)$ is of divisorial type and $n(\alpha)=1$. Since $\alpha -\epsilon \omega$ is also big for any $0<\epsilon \ll 1$, by \cite[Thm. 3.12 and Prop. 3.8]{Bou04} there exist positive real numbers $\lambda_j>0$ and irreducible surfaces $S_j\subset X$ such that
\begin{equation}\label{eq-1}
	K_X+B+(1-\epsilon)\omega \equiv \sum \lambda _jS_j+N(K_X+B+(1-\epsilon)\omega),
\end{equation}
	 where $N=N(K_X+B+(1-\epsilon)\omega)$ is a pseudo-effective class which is nef in codimension $1$, i.e. $N|_{S'}$ is pseudo-effective for any surface $S'\subset X$.
  If $[C]\in R$ and $C$ belongs to a positive dimensional family of curves, since $(K_X+B+(1-\epsilon)\omega)\cdot C=-\epsilon\omega\cdot C<0$, then $S_j\cdot R<0$ for some $j$. But then all curves $[C]\in R$ must be contained in $S_j$ and $S_j$ is covered by such curves. It follows that $R\cdot S_k\geq 0$ for all $k\ne j$. Thus $S_j$ is the unique such surface.
  
	 We will next prove Theorem \ref{t-div1} when $X$ is strongly $\Q$-factorial and $K_X+B$ is pseudo-effective. Since this is the most delicate proof of the paper, we give a brief explanation of the main ideas behind this proof. Let $(X,B)$ be a klt pair, $R$ a $(K_X+B)$-negative extremal ray of divisorial type cut out by a nef class $\alpha$.
  The strategy is to construct the divisorial contraction $X\to Z$ by running a minimal model program for a dlt pair $(X',\Delta ')$ where every step is negative for some component of $\lfloor \Delta '\rfloor$ and thus the corresponding flips and contractions exist by Theorems \ref{t-pmmp} and  \ref{t-flip}.
  The dlt pair $(X',\Delta ')$ is constructed as follows. Let $S\subset X$ be the surface covered by  the curves $C\subset X$ such that $[C]\in R$. Let $b={\rm mult}_SB$. The pair $(X,\Delta :=B+(1-b)S)$ may not be dlt, so we take the corresponding dlt model $\mu:(X',\Delta ')\to (X,\Delta)$.
  Since $(X,B)$ is klt, $K_{X'}+\Delta '-\mu ^*(K_X+\Delta)$ is an effective divisor whose support coincides with the exceptional
  locus of $\mu$. If the divisorial contraction $f:X\to Z$ exists, then by standard arguments, $\alpha =f^*\alpha _Z$ for some K\"ahler class $\alpha _Z $ on $Z$ and $Z$ is the output of the $(K_{X'}+\Delta ')$-minimal model program over $Z$ or equivalently of the  $(K_{X'}+\Delta '+t\mu ^* \alpha )$-minimal model program for any $t\gg 0$.
  Since we do not yet know that $f:X\to Z$ exists, we mimic this strategy and we run the $\mu ^* \alpha$-trivial $(K_{X'}+\Delta ')$-minimal model program (each step of this minimal model program is a step of the $(K_{X'}+\Delta '+t\mu ^* \alpha )$-minimal model program for $t\gg 0$). We then show that the output of this minimal model program is indeed the required divisorial contraction. 
  \begin{theorem}\label{t-div1sqf}
      Theorem \ref{t-div1} holds when $X$ is strongly $\Q$-factorial and $K_X+B$ is pseudo-effective.
  \end{theorem}
\begin{proof}
  Recall that $\alpha$ is a  nef $(1, 1)$-class such that $\alpha \cdot C=0$ if and only if $[C]\in R$. Let $S$ be the unique surface which is covered by and contains all the curves $C\subset X$ such that $[C]\in R$. Let $\nu : S^\nu\to S$ be the normalization morphism and $\tilde f: S^\nu\to T$ the nef reduction of $\alpha|_{S^\nu}:=\nu ^* (\alpha|_S)$ (see \cite[Theorem 3.19]{HP15}). Since $n(\alpha)=1$, $T$ is a smooth projective curve. Moreover, we have $F\cdot \nu ^* (\alpha|_S) =0 $ for any fiber $F$ of $\tilde f$. We also have that $\alpha \cdot C>0$ if $C\subset X$ is not contained in $S$ or if $C$ is contained in $S$ but dominates $T$ (cf. \cite[Proposition 2.5]{BCE02}). By assumption $\omega =\alpha -(K_X+B)$ is a K\"ahler class. Replacing $B$ by $(1-\varepsilon )B$ and $\omega $ by $\omega +\varepsilon B$ for sufficiently small $\ve\in\mbQ^+$, we may assume that $(X,B)$ is klt.
  
Let $b={\rm mult }_SB$. For two divisors $D$ and $D'$ we say $D\equiv _\alpha D'$ if and only if $(D-D')\cdot C=0$ for any curve $C\subset X$ such that $\alpha \cdot C=0$.
Since, as observed above, $S\cdot R<0$ 
and $\alpha$ supports the extremal ray $R$, we have $K_X+B\num_\alpha aS$ for some $a>0$.

 Let $\mu :X'\to X$ be a log resolution of $(X,B)$ and set $\Delta '=\mu ^{-1}_*(B+(1-b) S)+{\rm Ex}(\mu)$ and $S'=\mu ^{-1}_*S$. Since $\mu$ is a projective morphism, after running a $(K_{X'}+\Delta')$-MMP over $X$ via Proposition \ref{pro:relative-projective-mmp}, we may assume that $K_{X'}+\Delta'$ is nef over $X$. By Lemma \ref{l-sqfmmp}, $X'$ is strongly $\mbQ$-factorial, and thus by Lemma \ref{l-relpic}, if $\eta$ is a K\"ahler class on $X'$, then $\eta \equiv _X -F$, where $F$ is an effective $\mu$-exceptional divisor. In particular the support of the exceptional locus equals ${\rm Supp}(F)$.  
 If $U=X\setminus S$ and $U'=\mu ^{-1}(U)$, then $K_{U'}+\Delta'|_{U'} \equiv _U \sum a_jE_j|_{U'}$,
 where the left hand side is $\mu|_{U'}$-nef and the right hand side is an effective $\mu|_{U'}$- exceptional divisor whose support equals ${\rm Ex}(\mu|_{U'})$. 
 By the negativity lemma, $\sum c_jE_j|_{U'}=0$ and hence ${\rm Ex}(\mu|_{U'})=\emptyset$. Thus  $\mu$ is an isomorphism over the complement of $S$.
 We then have
 \begin{equation}\label{eqn:alpha-trivial}
 	K_{X'}+\Delta' =\mu^*(K_X+B)+\sum c_jE_j+(1-b) S'\equiv _{\alpha'} \sum d_jE_j+d S',
 \end{equation}
where $\alpha':=\mu ^* \alpha$, $d_j\geq c_j>0$ and $d>1-b>0$. 
We will now run the $\alpha'$-trivial $(K_{X'}+\Delta ')$-MMP.
\begin{claim}\label{c-ammp}
There exists a sequence of $\alpha'$-trivial $(K_{X'}+\Delta ')$-flips and divisorial contractions
\[\phi _n:X'=X'_0\dasharrow X_1'\dasharrow X'_2 \dasharrow \cdots \dasharrow X_n'\] 
such that there are no $\alpha'_n$-trivial, $(K_{X'_n}+\Delta_n')$-negative curves on $X'_n$. 
Moreover, we have $K_{X'_n}+\Delta ' _n\equiv _{\alpha'_n} \phi _{n,*}(\sum d_jE_j+d S')$, where $\alpha _n'=\phi _{n,*}\alpha'$ is a nef $(1,1)$-class.
\end{claim}
\begin{proof}[Proof of Claim \ref{c-ammp}]
We will show that this MMP exists and each step preserves the above relation $K_{X'_i}+\Delta ' _i\equiv _{\alpha'_i} \phi _{i,*}(\sum d_jE_j+d S')$, where $\alpha _i'=\phi _{i,*}\alpha'$ is a nef $(1,1)$-class and $\phi _i:X'\dasharrow X'_i$ is the induced bimeromorphic map. 
We proceed by induction. Let $R_i$ be an $\alpha _i'$-trivial,  $(K_{X'_i}+\Delta ' _i)$-negative extremal ray. From the above 
relation, it follows that $\phi _{i,*}(\sum d_jE_j+d S')\cdot R_i<0$ and hence the contracted locus is contained in the support of $\lfloor \Delta _i'\rfloor $. 
Note that $R_i$ intersects some component $P$ of $\lfloor \Delta _i'\rfloor $ negatively and hence each contracted curve is contained in this component $P$. Since $(X',\Delta ')$ is dlt, so is $(X'_i,\Delta '_i)$ and hence $(P,\Delta _P)$ is dlt where $K_{P}+\Delta _P=(K_{X'_i}+\Delta '_i)|_P$. In particular $P$ has semi-log canonical singularities.
By Theorem \ref{t-pmmp} and Theorem \ref{t-flip}, we may flip/contract $R_i$ via $X'_i\dasharrow X'_{i+1}$. Let $g:X'_i\to Z$ be the 
contraction of $R_i$ ($Z=X_{i+1}'$ if $R_i$ is divisorial). By  \cite[Proposition 3.1(5)]{CHP16}, $\alpha _i'=g^* \alpha _Z$, where $\alpha _Z$ is a nef $(1, 1)$-class on $Z$ and $g_*(K_{X'_i}+\Delta ' _i-\phi _{i,*}(\sum d_jE_j+d S'))$ is a $\Q$-Cartier divisor such that 
\[ g_*\left(K_{X'_i}+\Delta'_i-\phi _{i,*}\left(\sum d_jE_j+d S'\right)\right)\equiv _{\alpha _Z}0.\] 
Pulling back to $X'_{i+1}$ we have $K_{X'_{i+1}}+\Delta ' _{i+1}\equiv _{\alpha'_{i+1}} \phi _{{i+1},*}(\sum d_jE_j+d S')$. By Lemma \ref{l-sqfmmp}, $X'_{i+1}$ is strongly $\mbQ$-factorial, and by Theorem \ref{t-term} after finitely many steps we may assume that there are no $\alpha'_n$-trivial, $(K_{X'_n}+\Delta_n')$-negative extremal rays. By the cone theorem (see Theorem \ref{t-cone}) it follows that $(K_{X'_n}+\Delta_n')\cdot C\geq 0$ for any $\alpha _n'$-trivial curve $C$.\end{proof} 

Recall that $\mu $ is an isomorphism over the complement of $S$, and hence the support of $S'+\sum E_j$ is equal to the support of $\mu ^{-1}(S)$.

\begin{claim}\label{clm:coefficients-of-F} $\phi _n$ contracts $S'$ and every $E_j$ such that $\nu _{\rm nef}(\mu ^* \alpha |_{E_j})=1$. 
\end{claim}

\begin{proof}[Proof of Claim \ref{clm:coefficients-of-F}]
Let $K_{X'}+B'=\mu ^*(K_X+B)$ and $F$ an effective exceptional $\mbQ$-divisor such that $-F$ is $\mu$-ample; note that $B'$ is not necessarily effective here.
Replacing $B'$ by $B'+\epsilon F$ and letting $\omega '=\mu ^* \omega -\epsilon F$ for some $0<\epsilon \ll 1$, we may write $\alpha '=\mu^*\alpha=K_{X'}+B'+\omega '$, where  $\omega '$ is a K\"ahler class on $X'$. Then $K_{X'}+\Delta '=K_{X'}+B'+\mathcal E '$, where $\mathcal E '\geq 0$ is a $\mbQ$-divisor such that ${\rm Supp}(\mathcal E ')=S'+\sum E_j$. Thus on $X'_n$ we have
\begin{equation}\label{eqn:on-X_n}
	\alpha'_n+\mathcal E'_n=K_{X'_n}+\Delta'_n+\omega'_n,
\end{equation} 
where $\omega ' _n\in H^{1,1}_{\rm BC}(X'_n)$ is a modified K\"ahler class (see Lemma \ref{l-mod}).
\begin{claim}\label{clm:kahler-divisor}
Let $\mathcal F:=\mu ^* S+\epsilon F$. For any $0<\epsilon\ll 1$ and $t\gg 0$,  $(-\mathcal F+t\alpha ' )|_{S'}$ and $(-\mathcal F+t\alpha ')|_{E_j}$ are K\"ahler for all $j$.\end{claim}
\begin{proof}[Proof of Claim \ref{clm:kahler-divisor}]
First, note that $-aS|_{S^\nu}\equiv_{\alpha} -(K_X+B)|_{S^\nu}\equiv_\alpha \omega |_{S^\nu}$ is ample over $T$. Since $-F$ is $\mu$-ample, then $(-\mu ^*S -\epsilon F)|_{S'}$ is ample over $T$ for any $0<\epsilon\ll 1$. Since $\alpha |_{S^\nu}\num \tilde f ^* \alpha _T$ where $\alpha _T$ is a K\"ahler class on $T$, it follows that 
$(-\mu ^*S -\epsilon F+t\alpha' )|_{S'}=(-\mcF+t\alpha')|_{S'}$ is K\"ahler for all $t\gg 0$.
Next, we consider an exceptional divisor $E_j$.
If $\dim \mu (E_j)=0$, then $\alpha '|_{E_j}=0=\mu ^* S|_{E_j}$ and as $-F$ is  $\mu$-ample the claim follows.
If $\dim \mu (E_j)=1$, let $E_j\to V_j$ be the Stein factorization of $\mu |_{E_j}$, then $V_j$ is a smooth curve.
If $\alpha \cdot \mu (E_j)>0$, then $\alpha |_{V_j}$ is ample (since $V_j$ is a curve, $H^2(V_j, \mcO_{V_j})=0$ and thus every $(1,1)$-class on $V_j$ is represented by an $\mbR$-divisor) and it follows that $(-S+t\alpha)|_{V_j}$ is ample for any $t\gg 0$. Since  $-F$ is $\mu$-ample, then $(\mu ^*(-S+t\alpha) -\epsilon F)|_{E_j}=(-\mcF+t\alpha')|_{E_j}$ is ample, hence K\"ahler. Finally, if $\alpha \cdot \mu (E_j)=0$, then $[\mu (E_j)]\in R$ and so $-S\cdot \mu (E_j)>0$. Thus $-S|_{V_j}$ is ample, and since  $-F$ is $\mu$-ample, then $-(\mu ^*S+\epsilon F)|_{E_j}$ is ample for $0<\epsilon \ll 1$. Since $\alpha ' |_{E_j}\num 0$ in this case, $(-(\mu ^*S+\epsilon F)+t\alpha')|_{E_j}=(-\mcF+t\alpha')|_{E_j}$ is ample (and hence K\"ahler) for any $t>0$.\end{proof}

We will now show that $\phi_n$ is $\mcF$-non-positive. This means that if $p:W\to X'$ and $q:W\to X'_n$
is the normalization of the graph of $\phi_n:X'\bir X'_n$, then $p^* \mcF \geq q^* \mcF _n$ where $\mcF _n:=\phi_{n, *}\mcF $.
By the negativity lemma, it suffices to show that $q^*\mcF _n-p^* \mcF $ is $q$-nef.
Suppose that $C\subset W$ is a $q$-exceptional curve, then $C$ is not $p$-exceptional and in particular $\alpha '\cdot p_* C=p^* \alpha '\cdot C=\alpha _n\cdot q_*C=0$. It follows that $p_*C$ is contained in $S'$ or in $\sum E_j$. Since 
$(-\mcF+t\alpha ') |_{S'}$ and $(-\mcF+t\alpha ') |_{E_j}$ are K\"ahler, \[(q^*\mcF_n-p^*\mcF)\cdot C=-p^* \mcF \cdot C=-\mcF \cdot p_*C=(-\mcF+t\alpha')\cdot p_*C>0\] as required. Thus $\phi_n$ is $\mcF$-non-positive.

Abusing notation we let $E_0=S'$. Assume by contradiction that  $\phi _{n,*}( E_j)\ne 0$, where $E_j$ is a component of $\sum_{j\>0} E_j$ with $\nu_{\rm nef}(\alpha '|_{E_j})=1$. Let $\lambda$ be the smallest positive rational number such that ${\mult}_{E_{j,n}}(\phi _{n,*}(\mathcal E'-\lambda \mathcal F))\leq 0$, where $E_{j,n}:=\phi_{n,*}E_j$, for every component $E_j$ of $\sum_{j\geq 0} E_j$ with $\nu_{\rm nef}(\alpha '|_{E_j})=1$.
Thus there is a component $E_{k,n}$ of $\phi _{n,*}(\sum E_j+S')$ with $\nu_{\rm nef}(\alpha '|_{E_k})=1$ such that
${\rm mult}_{E_{k,n}}( \phi _{n,*}(\mathcal E'-\lambda \mathcal F))=0$.
Since $\nu_{\rm nef}(\alpha '|_{E_k})=1$, the nef reduction map $E_k\to W_k$ is a surjective morphism to a smooth curve $W_k$.
Let $C_{k,w}$ be the fiber of the nef reduction map over a point $w\in W_k$.
We claim that the induced dominant meromorphic map $E_{k,n}\dasharrow W_k$ is a morphism with connected fibers, $\alpha'_n\cdot \Gamma=0$ for all fibers $\Gamma$ of this morphism and $\alpha'_n|_{E_{k,n}}\not\num 0$, i.e. $\alpha'_n|_{E_{k,n}}\cdot C_0\neq 0$ for some curve $C_0\subset E_{k,n}$. Moreover, we also claim that if $w\in W_k$ is general, then $\phi _n$ is an isomorphism on a neighborhood of $C_{k,w}$. Proceeding by induction, suppose that the claim holds for $\phi _i$ and consider a contracted curve $\Sigma \in X'_i$ (in the step $X'_i\bir X'_{i+1}$) which is contained in $E_{k,i}$. Note that $E_{k,i}\subset X'_i$ is a normal surface, since it is a dlt center of the dlt pair $(X'_i, \Delta'_i)$. Since $\alpha'_i|_{E_{k,i}}\cdot\Sigma=\alpha'_i\cdot\Sigma=0$ and $\alpha'_i|_{E_{k,i}}\not\num 0$, from Lemma \ref{lem:nef-reduction-property} it follows that $\Sigma$ is contained in a fiber of $E_{k,i}\to W_k$. 
Since $E_{k,i}$ is not contracted by $\psi_i:X'_i\bir X'_{i+1}$, $\Sigma$ is not contained in the general fibers of $h_i:E_{k,i}\to W_k$, consequently, $\psi_i$, and hence $\phi_{i+1}:X'_0:=X'\bir X'_{i+1}$ is an isomorphism near the general fibers of $h_i$. In particular, the induced dominant meromorphic map $h_{i+1}:E_{k, i+1}\bir W_k$ is almost holomorphic, i.e. over a dense Zariski open subset of $W_k$, the fibers of $h_{i+1}$ are all compact. This implies that $h_{i+1}$ is a morphism (as $W_k$ is a curve and $E_{k, i+1}$ is normal). The rest of the claim now follows immediately.

We will now identify $C_{k,w}$ with its image in $E_{k,n}$ for a general $w\in W_k$.
By the definition of $\lambda $ and $k$ we have
\begin{equation}\label{eqn:negative-intersection}
(\alpha'_n+\mathcal E'_n-\lambda \mathcal F _n)\cdot C_{k,w}=(\mathcal E'_n-\lambda \mathcal F _n)\cdot C_{k,w}\leq 0.	
\end{equation} 
Here we have used that fact that if $\nu _{\rm nef}(\alpha '|_{E_j})=0$, then $E_j$ does not intersect $C_{k,w}$ as otherwise $E_j\cap E_k$ contains a curve $\Gamma$ dominating $W_k$ and hence such that $\alpha '\cdot \Gamma>0$, contradicting the fact that $\alpha '|_{E_j}\equiv 0$.

Since $\omega'_n$ is a modified K\"ahler class, by Corollary \ref{cor:modified-kahler}, $\omega'_n|_{E_{k,n}}$ is big. In particular, $\omega'_n\cdot C_{k,w}>0$. Since $\alpha ' _n$ is nef and $\alpha ' _n\cdot C_{k,w}=0$, 
then by Claim \ref{c-ammp} $(K_{X'_n}+\Delta' _n)\cdot C_{k,w}\geq 0$.  Then from \eqref{eqn:negative-intersection} and \eqref{eqn:on-X_n} it follows that 
\begin{equation}\label{eqn:contradiction-inequality}
	 0\geq (\alpha'_n+\mathcal E'_n-\lambda \mathcal F _n)\cdot C_{k,w}=(K_{X'_n}+\Delta' _n+\omega'_n-\lambda \mathcal F_n )\cdot C_{k,w}> -\lambda  \mathcal F_n\cdot  C_{k,w} .
\end{equation} 

Since $p^*\mathcal F-q^*\mathcal F _n$ is an effective divisor whose support does not contain $q^{-1}_*C_{k,w}=p^{-1}_*C_{k,w}$ (via above identification),  then
\begin{equation}\label{eqn:c2}-\mathcal F_n\cdot  C_{k,w}=-q^* \mathcal F_n\cdot  q^{-1}_*C_{k,w}\geq -p^*\mathcal F\cdot  p^{-1}_*C_{k,w}=-\mathcal F\cdot  C_{k,w}.\end{equation} 
Since  $(-\mathcal F+t\alpha ')|_{E_k}$ is K\"ahler and $\alpha '\cdot C_{k,w}=0$, it follows that
\begin{equation}\label{eqn:c3}-\mathcal F\cdot  C_{k,w}= (-\mathcal F+t\alpha ')\cdot  C_{k,w}>0.\end{equation} 
Putting together equations \eqref{eqn:contradiction-inequality}, \eqref{eqn:c2}, and \eqref{eqn:c3}, we obtain a contradiction.

    
\end{proof}

\begin{claim}\label{c-zero} 
	$\phi _{n,*}(\sum E_j+S')=0$.
\end{claim}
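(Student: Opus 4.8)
The plan is to upgrade the previous claim---that $\phi_{n,*}(\sum E_j+S')$ does not dominate $T$---to the vanishing of the whole push-forward, by showing that the effective divisor $G_n:=\phi_{n,*}(\sum d_jE_j+dS')$ is nef and then applying the negativity lemma.

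First I would contract $S'$. The induced map $S'\to\tilde S\to T$ is surjective, so $S'$ dominates $T$; since the meromorphic maps $\phi_{i,*}(S'+\sum E_j)\bir T$ are compatible, a nonzero $\phi_{n,*}S'$ would also dominate $T$, contradicting the previous claim. Hence $\phi_{n,*}S'=0$, and $G_n=\phi_{n,*}(\sum d_jE_j)$ is effective (as $d_j>0$) with support equal to the surviving components $E_j^n:=\phi_{n,*}E_j$, none of which dominates $T$; it still satisfies $K_{X'_n}+\Delta'_n\equiv_{\alpha'_n}G_n$.

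Next I would show $G_n$ is nef. The key point is that every curve $C\subset\Supp(G_n)$ is $\alpha'_n$-trivial: such a $C$ lies in some $E_k^n$ not dominating $T$, so its strict transform $\hat C$ on $X'$ lies in $E_k$ and $\mu_*\hat C$ is either a point or a curve contained in a fibre of $\tilde S\to T$, in either case $\alpha$-trivial; as the MMP is $\mu^*\alpha$-trivial, $\alpha'_n\cdot C=\mu^*\alpha\cdot\hat C=0$. Because the MMP terminated there is no $\alpha'_n$-trivial $(K_{X'_n}+\Delta'_n)$-negative extremal ray, so the Cone Theorem~\ref{t-cone} gives $(K_{X'_n}+\Delta'_n)\cdot C\geq 0$ for every $\alpha'_n$-trivial $C$; by the relation above, $G_n\cdot C\geq 0$ for all such $C$. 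Since any curve not in $\Supp(G_n)$ meets the effective divisor $G_n$ non-negatively and the curves in $\Supp(G_n)$ are $\alpha'_n$-trivial, $G_n$ is nef.

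Finally I would apply the negativity lemma to $\sigma:=\mu\circ\phi_n^{-1}:X'_n\bir X$. This map contracts exactly the surfaces $E_j^n$ and extracts only $S$, whose centre on $X'_n$ is the curve $\Sigma:=\phi_n(S')$. As $\Sigma$ dominates $T$ while $\Supp(G_n)$ does not, $\Sigma\not\subset\Supp(G_n)$. On a common resolution $p:W\to X'_n$, $g:W\to X$, the strict transforms of the $E_j^n$ are $g$-exceptional, and the only $p$-exceptional divisors with nonzero $g$-image lie over $\Sigma$, where $G_n$ has multiplicity zero; hence $g_*p^*G_n=0$. Since $p^*G_n$ is effective and $g$-nef, the negativity lemma forces $p^*G_n\leq 0$, so $G_n=0$. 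Therefore $\phi_{n,*}(\sum E_j)=0$, and combined with $\phi_{n,*}S'=0$ we obtain $\phi_{n,*}(\sum E_j+S')=0$. I expect the delicate point to be this last step: verifying that the extracted divisor $S$ contributes nothing to $g_*p^*G_n$, which is precisely where it is used that $\Sigma$ dominates $T$ but $\Supp(G_n)$ does not.
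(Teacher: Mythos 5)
Your steps 1 and 3 are, in substance, the paper's own argument: the paper likewise gets $\phi_{n,*}S'=0$ from the preceding claim, and its proof that $\mathcal G:=q^*\phi_{n,*}(\sum d_jE_j+dS')$ is $p$-exceptional on the graph of $\psi_n:X\dasharrow X'_n$ is exactly your statement that the only problematic divisor is the strict transform of $S$, which lies over $\Sigma=\phi_n(S')$ and cannot sit inside $\Supp(G_n)$ because $S$ dominates $T$ while $\Supp(G_n)$ does not. The genuine gap is in your step 2, where you deviate from the paper by asserting that $G_n$ is nef on all of $X'_n$. Your justification --- every curve $C\subset\Supp(G_n)$ lies in some $E_k^n$, hence ``its strict transform $\hat C$ on $X'$ lies in $E_k$'' --- fails precisely for the curves created by the MMP itself: flipped curves, and the images of the divisors contracted by $\phi_n$ (among them the horizontal components $S'$ and the $E_l$'s dominating $T$). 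Such curves lie in the indeterminacy locus of $\phi_n^{-1}$, so they have no strict transform on $X'$ at all, let alone one contained in $E_k$; and these are exactly the curves whose $\alpha'_n$-triviality is in genuine doubt --- whether the image of a contracted horizontal divisor can land inside $\Supp(G_n)$ is the same delicate issue you flag in step 3, so the argument as written is incomplete at its key point. The statement you want is in fact true, but proving it requires a different device: for instance, on a resolution $W'$ of the graph of $\phi_n$ with projections $u:W'\to X'$ and $v:W'\to X'_n$, the strict transform $\hat E_k$ of the surface $E_k$ surjects onto $E_k^n$, so any curve $C\subset E_k^n$ is dominated by a curve $C_W\subset\hat E_k$ with $u(C_W)\subset E_k$; since $E_k$ does not dominate $T$, the relation $u^*\mu^*\alpha=v^*\alpha'_n$ then gives $\alpha'_n\cdot C=0$.

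The paper's proof shows that this extra strength is unnecessary, and that one can avoid the problem entirely. It only proves nefness of the pulled-back divisor \emph{over} $X$, which is all the negativity lemma requires: in your notation, one needs $G_n\cdot p_*C\geq 0$ only for curves $C\subset W$ with $g(C)=\pt$. For such curves triviality is automatic, with no analysis of $\Supp(G_n)$: repeated application of \cite[Proposition 3.1, eqn. (5)]{CHP16} along the $\mu^*\alpha$-trivial MMP gives $p^*\alpha'_n=g^*\alpha$ on $W$, hence
\[
\alpha'_n\cdot p_*C=g^*\alpha\cdot C=\alpha\cdot g_*C=0,
\]
and then termination of the MMP together with $G_n\equiv_{\alpha'_n}K_{X'_n}+\Delta'_n$ yields $G_n\cdot p_*C=(K_{X'_n}+\Delta'_n)\cdot p_*C\geq 0$. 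In short: your architecture is the paper's, but your step 2 proves more than is needed by an argument with a real hole, and restricting attention to curves contracted over $X$ (as the paper does) both closes the hole and shortens the proof.
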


\begin{proof}
Let $W$ be the normalization of the graph of the induced bimeromorphic map $\psi _n: X\dasharrow X_n'$, and $p:W\to X$ and $q:W\to X'_n$ the projections. Let $I$ and $I'$ be the sets of all indices of the $\mu$-exceptional divisors which are contracted and respectively, not contracted by $\phi_n$. Then $\phi_n$ contracts $S'$ and the divisor $\sum_{i\in I} E_i\subset\Ex(\mu)$, and $\psi_n$ contracts $S$ and extracts $\sum_{i\in I'}E_i\subset\Ex(\mu)$. 

Set $\mathcal G =q^*\phi _{n,*}(\sum d_jE_j+dS')\>0$. We claim that $\mathcal G$ is  $p$-exceptional, i.e. if $F$ is a component of $\mcG$, then $p_*F=0$.  
Let $W'$ be the normalization of the graph of $X'\bir X'_n$, and $p':W'\to X'$ and  $q':W'\to X'_n$ the projections. Then there is a unique morphism $\theta:W'\to W$ such that $p\circ\theta=\mu\circ p'$ and $q'=q\circ\theta$. 
\begin{equation}\label{eqn:comparing-graphs}
\xymatrix{
&& W\ar@/_2pc/[llddd]_p\ar@/^2pc/[rdd]^q &\\
&& W'\ar[dl]_{p'}\ar[dr]^{q'}\ar[u]_\theta &\\
& X'\ar[ld]_\mu\ar@{-->}[rr]^{\phi_n} & & X'_n\\ 
    X\ar@{-->}[urrr]_{\psi_n} & & & 
    }
\end{equation}
Let $F'$ be the normalization of $\theta^{-1}_*F$, then $F'$  is the normalization of a component of $q'^*\phi _{n,*}(\sum d_jE_j+dS')$ and $p(F)=\mu(p'(F'))$. So if $p_*F\neq 0$, then $p_*F=\mu_*p'_*F'=S$, and hence $p'_*F'=S'$;  in particular, in this case we have
\begin{equation}\label{eqn:p-excaptional}
    \nu_{\rm nef}((\mu\circ p')^*\alpha|_{F'})=\nu_{\rm nef}(p'^*\alpha'|_{S'})=1.
\end{equation}
Now we will show that this equation leads to a contradiction by proving that in fact $\nu_{\rm nef}((\mu\circ p')^*\alpha|_{F'})=\nu_{\rm nef}((p\circ\theta)^*\alpha|_{F'})=0$. To this end, first observe that if $F'$ is the normalization of a component of the strict transform ${q'}^{-1}_*(\phi_{n,*}(\sum d_jE_j+dS'))$, then from Claim \ref{clm:coefficients-of-F} it follows that
$\nu_{\rm nef}((p\circ\theta)^*\alpha|_{F'})=0$. Now assume that $F'$ is the normalization of a $q'$-exceptional divisor. If $q'(F')$ is a point, then clearly $\nu_{\rm nef}((p\circ\theta)^*\alpha|_{F'})=0$ as $(\mu\circ p')^*\alpha={p'}^*\alpha'={q'}^*\alpha'_n$. If $q'(F')=\Gamma'$ is a curve, then $\Gamma'$ is contained in a component of $\phi_{n,*}(\sum d_iE_j+dS')$ by our construction of $F'$ and thus from the Claim \ref{clm:coefficients-of-F} again it follows that $\alpha'_n\cdot \Gamma'=0$, in particular, $\nu_{\rm nef}((p\circ\theta)^*\alpha|_{F'})=0$.

Thus $p_*F=0$ must hold; in particular, the divisor $\mcG=q^*\phi _{n,*}(\sum d_jE_j+dS')$ is $p$-exceptional.

Now we will show that $\mathcal G$ is nef over $X$. To this end assume by contradiction  that $\mathcal G\cdot C<0$ for some curve $C\subset W$ such that $p(C)={\rm pt}$. 
Thus $C$ is contained in $F$, a component of $\mathcal G$. 
We have $\alpha '_n\cdot q_*C=q^*\alpha ' _n \cdot C=p^*\alpha\cdot C=0$.  
Since $\phi_{n, *}(\sum d_jE_j+dS')\equiv _{\alpha '_n}K_{X'_n}+\Delta'_n$, it follows that 
\[\mathcal G\cdot C=\phi_{n, *}\left(\sum d_jE_j+dS'\right)\cdot q_*C= (K_{X'_n}+\Delta'_n)\cdot q_*C\geq 0,\] 
which is  a contradiction. Thus $\mathcal G$ is nef over $X$.

Then by the negativity lemma we have $\mathcal G=0$, and hence $\phi _{n,*}(\sum E_j+S')=0$.

\end{proof}

\begin{claim} 
	$\psi_n :X\dasharrow X_n'$ is a morphism.
\end{claim}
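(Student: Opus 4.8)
The plan is to present $\psi_n$ as $q\circ p^{-1}$ on a common resolution and to deduce, via the rigidity lemma, that $q$ is constant on the fibers of $p$; the positivity forcing this will come from the hypothesis $n(\alpha)=1$.

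First I would choose projective bimeromorphic morphisms $r\colon W\to X'$ and $q\colon W\to X'_n$ from a smooth variety $W$ resolving $\phi_n$ (Theorem~\ref{thm:chow}), and set $p:=\mu\circ r\colon W\to X$; since $X$ is normal, $p$ has connected fibers. As $\mu$ and the $\alpha'$-trivial MMP $\phi_n$ only modify the locus lying over $S$, the map $\psi_n=\phi_n\circ\mu^{-1}$ is an isomorphism over $X\setminus S$, and since $\phi_{n,*}(\sum E_j+S')=0$ it contracts the prime divisor $S$; thus $\psi_n$ is a bimeromorphic contraction whose only exceptional divisor is $S$, the morphism $p$ is an isomorphism over $X\setminus S$, and every $p$-contracted curve maps into $S$. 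By the rigidity lemma it then suffices to prove that $q$ contracts every curve contracted by $p$: granting this, $q$ is constant on the connected fibers of $p$ and $\psi_n$ is a morphism.

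The key input is that $\alpha$ descends through $q$. Because $\phi_n$ is built from an $\alpha'$-trivial MMP, at each step the supporting class is pulled back from the contracted base by \cite[Proposition 3.1(5)]{CHP16}; hence $r^*\alpha'=q^*\alpha_n$ on $W$, where $\alpha'=\mu^*\alpha$ and $\alpha_n:=\phi_{n,*}\alpha'$ is nef on $X'_n$. Since $p^*\alpha=r^*\alpha'$, we obtain $p^*\alpha=q^*\alpha_n$. Now put $Z_n:=\overline{\psi_n(S)}$, an irreducible subvariety of dimension $\le 1$ because $\psi_n$ collapses the surface $S$, and note that, as $\psi_n$ is a birational contraction with extracted locus exactly $S\leftrightarrow Z_n$, the $q$-image of any $p$-contracted curve lies in $Z_n$. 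To see that $\alpha_n\cdot Z_n>0$, take a curve $C_S\subset S$ dominating $T$ and let $\sigma\subset W$ be its strict transform; then $\alpha_n\cdot q_*\sigma=q^*\alpha_n\cdot\sigma=p^*\alpha\cdot\sigma=\alpha\cdot C_S>0$, the last inequality being the standing fact that $\alpha$ is positive on curves of $S$ that dominate $T$. Since $q_*\sigma$ is supported on $Z_n$, it follows that $Z_n$ is a curve and $\alpha_n\cdot Z_n>0$.

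Suppose now that some irreducible curve $C\subset W$ has $p_*C=0$ but $q_*C\ne0$. Then $q(C)\subset Z_n$ is a curve, so $q_*C=m[Z_n]$ with $m>0$ and $\alpha_n\cdot q_*C=m\,\alpha_n\cdot Z_n>0$; on the other hand $p^*\alpha\cdot C=\alpha\cdot p_*C=0$ together with $p^*\alpha=q^*\alpha_n$ gives $\alpha_n\cdot q_*C=0$, a contradiction. Hence $q$ contracts every $p$-contracted curve and $\psi_n$ is a morphism. The two steps I expect to require the most care are (i) checking that the $q$-image of each $p$-contracted curve genuinely lands in $Z_n=\overline{\psi_n(S)}$, which is what lets the horizontal positivity $\alpha_n\cdot Z_n>0$ be played against the vanishing $\alpha_n\cdot q_*C=0$, and (ii) tracking the descent $p^*\alpha=q^*\alpha_n$ through the flips and divisorial contractions of the $\alpha'$-trivial MMP; the rigidity reduction and the final numerical contradiction are then formal, and it is exactly here, through $\alpha\cdot C_S>0$, that the hypothesis $n(\alpha)=1$ is used.
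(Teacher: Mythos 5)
Your rigidity-lemma reduction, the descent $p^*\alpha=q^*\alpha_n$ through the $\alpha'$-trivial MMP, and the positivity $\alpha_n\cdot Z_n>0$ (obtained from a curve of $S$ dominating $T$) are all sound. The gap is your step (i), the containment $q(C)\subset Z_n=\overline{\psi_n(S)}$ for a $p$-contracted curve $C$ with $q_*C\neq 0$. Your justification (``$\psi_n$ is a birational contraction with extracted locus exactly $S\leftrightarrow Z_n$'') presupposes exactly the control over the indeterminacy of $\psi_n$ that is at stake: once $\psi_n$ is known to be a morphism, step (i) is vacuous, and conversely step (i) yields the claim by your own argument, so (i) is equivalent to the statement being proved and cannot be asserted for free. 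What one can actually verify is only the weaker containment $q(C)\subset q(p^{-1}(S))$, and this set is a finite union of curves consisting of $\overline{\psi_n(S)}$ together with the $q$-images of the strict transforms of the $\mu$-exceptional divisors $E_j$ (and of the exceptional divisors of $W\to X'$), none of which need lie in $\overline{\psi_n(S)}$. On those extra components your numerical contradiction disappears: if $E_j$ lies over a point of $S$, or over a curve of $S$ not dominating $T$ (such curves are $\alpha$-trivial, being contained in fibers of the nef reduction), then for every curve $\delta$ in its strict transform the same projection-formula computation you use gives $\alpha_n\cdot q_*\delta=\alpha\cdot p_*\delta=0$. So the dangerous scenario --- $\psi_n$ blowing up a point of $S$ into a curve $q(C)$ outside $Z_n$ with $\alpha_n\cdot q(C)=0$ --- is consistent with every inequality you establish; your proof excludes it by assumption, not by argument.

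This is precisely the difficulty the paper's proof is built to handle: since the nef class $\alpha_n$ may be trivial on the hypothetical curve $q_*C\neq 0$, the paper replaces it by a K\"ahler class $\omega'_n$ on $X'_n$, which is positive on \emph{every} curve, and proves instead that $q^*\omega'_n$ is trivial on $p$-fibers. Concretely, by Lemma \ref{l-relpic} the group $N^1(W/X)$ is generated by the $p$-exceptional divisors $E_i$ of $W$, so by Lemma \ref{l-HP} one may write $[q^*\omega'_n+\sum e_iE_i]=[p^*\omega]$ for some class $\omega$ on $X$; choosing $r$ with $(\omega+rS)\cdot R=0$, the divisor $\sum e_iE_i+rp^*S$ is $q$-exceptional (here the previous claim $\phi_{n,*}(\sum E_j+S')=0$ enters) and $q$-numerically trivial (every curve contracted by $\psi_n$ is $\alpha$-trivial, hence lies in $R$), so the negativity lemma forces $\sum e_iE_i+rp^*S=0$, i.e. $p^*(\omega+rS)=q^*\omega'_n$; then $0<\omega'_n\cdot q_*C=(\omega+rS)\cdot p_*C=0$ is the contradiction. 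To repair your proof you would need to establish (i), i.e. that $X'_n\setminus\psi_n(X\setminus S)$ is contained in $\overline{\psi_n(S)}$, and I do not see how to do that short of an argument of the paper's kind.
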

\begin{proof}
By contradiction assume that $\psi_n$ is not a morphism. Let $W$ be a resolution of singularities of the graph of $\psi_n$ and $p:W\to X$ and $q:W\to X'_n$ the induced morphisms. By Theorem \ref{thm:chow}, possibly replacing $W$ by a higher resolution we may assume that $p$ is projective.
If $\psi_n$ is not a morphism, then there is a curve $C$ in $W$ such that $p_*C=0$ but $q_*C=C_n'\neq 0$. Let $\omega_n$ be a K\"ahler form on $X'_n$. Then $\omega_n\cdot C'_n>0$. Note that since $X$ is strongly $\mbQ$-factorial with klt singularities and $p$ is projective, by Lemma \ref{l-relpic}, $N^1(W/X)$ is generated by the $p$-exceptional divisors, say $E_1,\ldots ,E_r$. Then there exist real numbers $e_1,\ldots ,e_r\in \mathbb R$ such that $[q^*\omega_n+\sum e_iE_i]=0$ in $N^1(W/X)$. Thus there exists a  $(1, 1)$-form $\omega$ with local potentials on $X$ such that $[p^*\omega] =[q^*\omega_n+\sum e_iE_i]$. 
 Now since $S\cdot R<0$, there is a real number $r\in\mathbb{R}$ such that $(\omega+rS)\cdot R=0$.
 
	 Since $\psi _{n, *} S=0$ by Claim \ref{c-zero}, it follows that $\sum e_iE_i+rp^*S$ is a $q$-exceptional divisor and $[p^*(\omega+rS)-q^*\omega_n]=[\sum e_iE_i+rp^*S]$. Now we claim that $[p^*(\omega+rS)-q^*\omega_n]\equiv_{X'_n} 0$. Indeed, if $\gamma\subset W$ is a curve contracted by $q$, then $\alpha\cdot p_*\gamma=q^*\alpha ' _n\cdot \gamma=0$. 
  In particular, if $p_*\gamma\ne 0$, then $p_*\gamma$ is a curve contained in the extremal ray $R$, and hence $[\omega+r S]\cdot p_*\gamma=0$ and the claim follows. Then applying the negativity lemma we get $\sum e_iE_i+rp^*S=0$, and hence we have $[p^*(\omega+rS)]=[q^*\omega_n]$. Therefore
	 \[0<\omega_n\cdot C'_n=q^*\omega_n\cdot C=(\omega+rS)\cdot p_*C=0,\]
	  and this is a contradiction.
\end{proof}
Thus $X\to X_n'$ is a morphism of strongly $\Q$-factorial varieties with klt singularities which contracts a unique divisor $S$. By Lemma \ref{l-relpic}, it follows that $\rho (X/X_n')=1$ and hence this is the desired divisorial contraction.
\end{proof}



\section{The minimal model program for uniruled pairs}
In this section we will consider the minimal model program for non-pseudo-effective dlt compact K\"ahler $3$-fold pairs $(X,B)$.
Since $K_X+B$ is not pseudo-effective, neither is $K_X$ and hence the MRC fibration $X\dasharrow Z$ is non-trivial (see e.g. \cite[Introduction]{HP15}). {Let $\nu:X'\to X$ be any resolution, then since $(X,B)$ is dlt, the fibers of $\nu$ are rationally chain connected} (see Proposition \ref{p-rccres}) and hence $X'\dasharrow Z$ is the MRC fibration of $X'$. Recall that if $\dim Z \leq 1$, then by Lemma \ref{l-proj}, $X$ is projective and $H^2(X,\OO _X)=0$. Since this case is well understood, we will focus on the case where $\dim Z=2$. Note that by \cite[Remark 3.2]{HP15}, $Z$ is not uniruled and hence $K_Z$ is pseudo-effective. Moreover, from Definition \ref{def:lots-of-definitions}\ref{item:c-manifolds} it follows that $Z$ is in Fujiki's class $\mcC$. Then replacing $Z$ by a resolution of singularities we may assume that $Z$ is a smooth compact complex surface in Fujiki's class $\mcC$, and hence by \cite[Remark 1.1, page 236]{Fuj83}, $Z$ is K\"ahler.

\begin{definition}\label{def:normalized-kahler-form}
	Let $(X, B)$ be a log pair, where $X$ is a $\mbQ$-factorial compact K\"ahler $3$-fold. Suppose that the base of the MRC fibration $f:X\bir Z$ has dimension $2$. Let $X_z\cong\mbP^1$ be a general fiber of $f$. Then a modified K\"ahler class $\omega$ on $X$ is called $(K_X+B)$-\emph{normalized} if $(K_X+B+\omega)\cdot X_z=0$. Moreover, if $\omega$ is a K\"ahler class, then we call it a $(K_X+B)$-\emph{normalized K\"ahler class}.
 Note that as $\omega$ is a modified K\"ahler class, it is positive on general fibers $X_z$, and hence $(K_X+B)\cdot X_z<0$. 
\end{definition}

\begin{lemma}\label{lem:normalized-pair-is-psef}
	With the same notations and hypothesis as in the definition above, assume that $(X,B)$ is dlt and $\omega$ is a $(K_X+B)$-normalized modified K\"ahler class. Then $K_X+B+\omega$ is  pseudo-effective.
\end{lemma}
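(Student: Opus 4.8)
The plan is to exploit the MRC fibration $f : X \bir Z$ together with the pseudo-effectivity of $K_Z$ (valid since $Z$ is not uniruled). First I would reduce to the case where $f$ is a morphism: choose a resolution $\pi : Y \to X$ from a smooth compact K\"ahler manifold such that $g := f \circ \pi : Y \to Z$ is a morphism and $\pi$ is an isomorphism over the locus where $f$ is already holomorphic, so that the general fibre $G$ of $g$ is still $\cong \mbP^1$. Writing $K_Y + B_Y = \pi^*(K_X+B) + E$ with $B_Y, E \geq 0$ having no common components and $E$ exceptional, and setting $\omega_Y = \pi^*\omega$, the class $\alpha_Y := \pi^*(K_X+B+\omega) = K_{Y/Z} + g^*K_Z + B_Y - E + \omega_Y$ is pseudo-effective if and only if $K_X+B+\omega$ is, since pseudo-effectivity is preserved under $\pi_*$ and $\pi^*$ for bimeromorphic morphisms of varieties with rational singularities. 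The normalization condition $(K_X+B+\omega)\cdot X_z = 0$ gives $\alpha_Y \cdot G = 0$, so $\alpha_Y$ is numerically trivial on the general fibre of $g$.

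The heart of the argument is to descend $\alpha_Y$ to the base and compare it with $K_Z$. Since $\alpha_Y$ is trivial on the general fibre, I would establish a decomposition $\alpha_Y \equiv g^*\beta + V$ with $\beta \in H^{1,1}_{\rm BC}(Z)$ and $V$ a $g$-vertical $\mbR$-divisor, and then identify $\beta$ by restricting to $S_C := g^{-1}(C)$ for a general curve $C$ lying in a K\"ahler class on $Z$. On the ruled surface $S_C \to C$ the class $\alpha_Y|_{S_C}$ is fibre-trivial, so adjunction (Lemma \ref{l-adj}) together with the relative canonical-bundle formula for the $\mbP^1$-fibration expresses $\beta \cdot C$ in terms of $K_Z \cdot C$ and the (effective) discriminant contribution, while $\omega_Y|_{S_C}$, being a modified K\"ahler restriction, contributes non-negatively. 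Since $K_Z$ is pseudo-effective and the curves $C$ sweep out a movable class on the surface $Z$, this yields $\beta \cdot C \geq 0$ for all movable $C$, whence $\beta$ is pseudo-effective on $Z$ by the duality between the pseudo-effective cone and the movable (= nef) cone of curves on a surface, the same duality used in Lemma \ref{lem:movable-curves-on-surfaces}.

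Finally, $g^*\beta$ is pseudo-effective as the pull-back of a positive current representing $\beta$ on $Z$, and the vertical correction $V$ is handled by the negativity lemma over $Z$: its non-pseudo-effective part is forced to vanish because $\alpha_Y$ pairs to zero with the fibre curves that $V$ meets. This gives $\alpha_Y$ pseudo-effective and hence $K_X+B+\omega$ pseudo-effective. I expect the main obstacle to be precisely this descent step in the K\"ahler category: unlike the projective case one cannot freely run a relative MMP over $Z$ to make $\alpha_Y$ trivial on \emph{all} fibre curves, so the existence of the decomposition $\alpha_Y \equiv g^*\beta + V$ and, above all, the control of the transcendental class $\omega$ and of the signs of $V$ and of the discriminant inside the adjunction computation must be argued by hand rather than deduced from a relative base-point-free theorem.
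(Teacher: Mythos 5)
Your proposal has a genuine gap at its central step: the claimed decomposition $\alpha_Y \equiv g^*\beta + V$ with $V$ a $g$-vertical divisor. A $(1,1)$-class that is trivial on the \emph{general} fibre of a fibration need not lie in $g^*H^{1,1}_{\rm BC}(Z) + (\text{vertical divisors})$; there is a middle Leray-graded piece (classes coming from $H^1(Z,R^1g_*\mbR)$, or concretely the $H^{1}\otimes H^{1}$-type classes) that obstructs such a descent, and the transcendental summand $\omega_Y$ of $\alpha_Y$ has no reason to avoid it. The paper does carry out a descent of exactly this shape — $p^*(K_X+B+\omega)=q^*\alpha$ in the proof of Theorem \ref{thm:log-contraction-terminal-pair}, following \cite{HP15} — but only under the hypothesis that the class is \emph{nef}: nefness plus equidimensionality of the universal family upgrade triviality on general fibres to triviality on every component of every fibre, which is what kills the section of $R^2q_*\mathbb{R}$ and produces $\beta$. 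In Lemma \ref{lem:normalized-pair-is-psef} nefness of $K_X+B+\omega$ is precisely what is \emph{not} available (pseudo-effectivity here is a preliminary step toward the later nefness/contraction results), so the descent cannot be invoked. The subsequent steps inherit this problem: the relative canonical bundle formula you appeal to requires (numerical or $\mbQ$-linear) fibrewise triviality of an actual adjoint \emph{divisor} class, not a class containing a transcendental part, and the "negativity lemma over $Z$" does not apply to vertical divisors of a fibre space — a vertical divisor pairing to zero against fibre curves carries no sign information.

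For comparison, the paper's proof avoids any descent. It first uses the modified-K\"ahler hypothesis in the opposite direction from yours: instead of pulling $\omega$ back (which keeps it transcendental and destroys its positivity), it replaces $X$ by the K\"ahler manifold $X'$ with $\mu_*\omega'=\omega$, $\omega'$ a genuine K\"ahler class, noting that pseudo-effectivity pushes forward. It then perturbs, $B\mapsto (1-\delta)B$ and $\omega\mapsto(1+\ve)\omega$ with $\ve\gg\delta$, so the class has strictly positive degree on the general fibre, passes to a log resolution on which the MRC fibration is a morphism, and — this is the key ingredient your argument is missing — quotes the semipositivity theorem \cite[Theorem]{G16} to conclude that the relative adjoint class $K_{X'/Z'}+(B'_{\delta,\ve})^{\geq 0}+\omega'_\epsilon$ is pseudo-effective. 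Adding $\varphi'^*K_{Z'}$, pseudo-effective because $Z'$ is not uniruled (the one point where your plan and the paper agree), finishes the proof. That relative semipositivity statement is a deep analytic input and cannot be recovered by the fibrewise intersection computation you sketch.
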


\begin{proof}
Since $\omega $ is a modified K\"ahler class, there is a bimeromorphic morphism from a compact K\"ahler manifold $\mu :X'\to X$ and a K\"ahler class $\omega '\in H^{1, 1}_{\rm BC}(X')$ represented by a K\"ahler form on $X'$ such that $\mu _* \omega '=\omega$. We may assume that there is an effective $\mu$-exceptional $\mbQ$-divisor $F$ such that $-F$ is $\mu$-ample. We may write $K_{X'}+B'=\mu ^*(K_X+B)+E$ where $E\>0, B'\geq 0, \mu_*B'=B$, and $E$ and $B'$ do not share any common component.
Since $\dim Z=2$ and $\dim \mu(\Ex(\mu))\leq 1$, it follows that $X'\to X$ is an isomorphism on a neighborhood of a general fiber $X_z$ of 
$X\bir Z$. Therefore $\omega '$ is $(K_{X'}+B')$-normalized. Note that if $K_{X'}+B'+\omega '$ is pseudo-effective, then so is $K_{X}+B+\omega =\mu _*(K_{X'}+B'+\omega ')$. Replacing $(X,B)$ by $(X',B')$ we may therefore assume that $X$ is smooth and $\omega $ is a K\"ahler class.

Since the pseudo-effective cone is closed, it is enough to show that $K_X+(1-\delta)B+(1+\ve)\omega$ is pseudo-effective for all $1\gg \ve \gg \delta >0$. Since $\omega$ is a $(K_X+B)$-normalized K\"ahler class, it follows easily that $(K_X+(1-\delta)B+(1+\ve)\omega)|_{X_z}$ is K\"ahler (or equivalently, it has positive degree). Let $Z'\to Z$ be a resolution of singularities of $Z$ and $\mu: X'\to X$ a log resolution of $(X,B)$ such that the induced meromorphic map $\varphi':X'\to Z'$ is a morphism. We may assume that $X_z=X'_z$ for general $z\in Z$.  Since $\omega$ is a K\"ahler class, there exists an effective $\mu$-exceptional divisor $F$ such that $\omega':=\mu^*\omega-F$ is a K\"ahler class.


Set
\[
		K_{X'}+B'_{\delta,\ve }: =\mu^*(K_X+(1-\delta)B)+\ve F.
	\]	

	Then we have
	\[
		\mu^*(K_{X}+(1-\delta)B +(1+\ve)\omega)=K_{X'}+B'_{\delta,\ve }+\omega '_\ve,
	\]
where $\omega '_\ve =\mu ^*\omega +\ve \omega '$ is a K\"ahler class, since $\omega$ is a K\"ahler class on $X$.

Therefore it is enough to show that $K_{X'}+(B'_{\delta,\ve })^{\geq 0} +\omega '_\ve$ is pseudo-effective.   Let $X'_{z'}$ be a general fiber of $\varphi':X'\to Z'$. Then $X'_{z'}\cong\mbP^1$ and $c_1((K_{X'}+(B'_{\delta,\ve })^{\geq 0} +\omega '_\ve)|_{X'_{z'}})$ is a K\"ahler class.
Thus by \cite[Theorem]{G16}, $K_{X'/Z'}+(B'_{\delta,\ve })^{\geq 0} +\omega '_\ve$ is pseudo-effective.  Now since $Z'$ is not uniruled, $K_{Z'}$ is pseudo-effective. Therefore $K_{X'}+(B'_{\delta,\ve })^{\geq 0} +\omega '_\ve$ is pseudo-effective as required.\\

\end{proof}

\begin{corollary}\label{cor:nef-and-big-normalized}
	Let $(X, B)$ be a klt pair, where $X$ is a $\mbQ$-factorial compact K\"ahler $3$-fold. Assume that $X$ is uniruled and the dimension of the base of the MRC fibration $X\bir Z$ is $2$. Let $\omega$ be a nef and big (1,1)-class on $X$ such that $(K_X+B+\omega)\cdot F=0$ for general fibers $F$ of $X\bir Z$. Then $K_X+B+\omega$ is pseudo-effective.   
\end{corollary}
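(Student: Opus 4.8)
The plan is to reduce this statement to the already-established Lemma~\ref{lem:normalized-pair-is-psef}, which handles the case where the class added to $K_X+B$ is a \emph{normalized modified K\"ahler} class rather than merely a nef and big class. The bridge between the two situations is provided by Lemma~\ref{lem:nef-and-big-to-modified-kahler}, which decomposes a nef and big class into a modified K\"ahler part and an effective divisorial part.

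First I would apply Lemma~\ref{lem:nef-and-big-to-modified-kahler} to the klt pair $(X,B)$ and the nef and big class $\omega$: this yields a modified K\"ahler class $\Theta$ and an effective $\mbQ$-divisor $F\>0$ with $\omega=\Theta+F$ and $(X,B+F)$ still klt. The essential point is that this decomposition merely transfers the divisorial part $F$ from the class $\omega$ into the boundary, so the total adjoint class is left unchanged, namely $K_X+(B+F)+\Theta=K_X+B+\omega$.

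Next I would check that $\Theta$ is a $(K_X+(B+F))$-normalized modified K\"ahler class in the sense of Definition~\ref{def:normalized-kahler-form}. Since the MRC fibration of $X$ is intrinsic to $X$ and does not depend on the boundary, the base $Z$ and the general fibers $X_z\cong\mbP^1$ are the same for $(X,B+F)$ as for $(X,B)$; in particular the base still has dimension $2$. Because $K_X+(B+F)+\Theta=K_X+B+\omega$, the hypothesis $(K_X+B+\omega)\cdot X_z=0$ gives at once $(K_X+(B+F)+\Theta)\cdot X_z=0$, which is precisely the normalization condition. The positivity of $\Theta$ on $X_z$ (implicit in the normalized notion) is automatic, since a modified K\"ahler class pulls back to a genuine K\"ahler class on a bimeromorphic model that is an isomorphism near a general fiber, and a K\"ahler class is strictly positive on curves.

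Finally, since $(X,B+F)$ is klt and hence dlt, and the base of its MRC fibration has dimension $2$, I would invoke Lemma~\ref{lem:normalized-pair-is-psef} directly to conclude that $K_X+(B+F)+\Theta$ is pseudo-effective; as this class equals $K_X+B+\omega$, the corollary follows. The real content is therefore carried entirely by the two cited lemmas, and the only thing to verify here is that replacing a nef and big class by a modified K\"ahler class preserves both the klt condition and the normalization. I do not expect a genuine obstacle beyond this bookkeeping, the delicate analytic input having already been absorbed into Lemma~\ref{lem:nef-and-big-to-modified-kahler} and Lemma~\ref{lem:normalized-pair-is-psef}.
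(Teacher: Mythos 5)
Your proposal is correct and is essentially identical to the paper's own proof: both decompose $\omega=\Theta+F$ via Lemma~\ref{lem:nef-and-big-to-modified-kahler}, absorb $F$ into the boundary to get the klt pair $(X,B+F)$, observe that the adjoint class is unchanged and the normalization condition transfers, and then conclude by Lemma~\ref{lem:normalized-pair-is-psef}. The paper states this in three sentences; you have merely spelled out the same bookkeeping in more detail.
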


\begin{proof}
	Since $(X, B)$ is klt and $\omega$ is nef and big, by Lemma \ref{lem:nef-and-big-to-modified-kahler} we can write $\omega=\Theta+F$, where $\Theta$ is a modified K\"ahler class and $F$ is an effective $\mbQ$-divisor such that $(X, B+F)$ klt. Thus replacing $B$ by $B+F$ and $\omega$ by $\Theta$ we may assume that $\omega$ is a modified K\"ahler class. Then the result follows from Lemma \ref{lem:normalized-pair-is-psef}.\\
		
\end{proof}

\begin{lemma}\label{lem:non-vanishing}
	Let $(X, B)$ be a dlt pair, where $X$ is a $\mbQ$-factorial compact K\"ahler $3$-fold. Suppose that $X$ is uniruled and the base of the MRC fibration $f:X\bir Z$ has dimension $2$. Let $F(\cong\mbP^1)$ be a general fiber of $f$. If $(K_X+B)\cdot F\>0$, then $K_X+B$ is pseudo-effective.
\end{lemma}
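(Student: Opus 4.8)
The plan is to show that $K_X+B$ is pseudo-effective directly, \emph{without} normalizing by a modified K\"ahler class. Indeed, as noted in Definition \ref{def:normalized-kahler-form}, any modified K\"ahler class $\omega$ satisfies $\omega\cdot F>0$, so a $(K_X+B)$-normalized class would force $(K_X+B)\cdot F<0$; the hypothesis $(K_X+B)\cdot F\>0$ therefore rules out the setting of Lemma \ref{lem:normalized-pair-is-psef}, and we must instead produce pseudo-effectivity on the nose. Fix a K\"ahler class $\omega$ on $X$. Since the pseudo-effective cone in $H^{1,1}_{\rm BC}(X)$ is closed, it suffices to prove that $K_X+B+\delta\omega$ is pseudo-effective for every $\delta>0$ and then let $\delta\to 0$. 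The gain from the perturbation is that $(K_X+B+\delta\omega)\cdot F=(K_X+B)\cdot F+\delta\,\omega\cdot F>0$, so that the restriction to the general fibre becomes \emph{strictly} positive; this is precisely the input required to invoke the fibrewise positivity theorem used in Lemma \ref{lem:normalized-pair-is-psef}.

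Next I would pass to a suitable model. Let $Z'\to Z$ be a resolution and $\mu:X'\to X$ a log resolution of $(X,B)$ for which the MRC fibration becomes a morphism $\varphi':X'\to Z'$ and $\mu$ is an isomorphism over a neighbourhood of the general fibre, so that $X_z=X'_{z'}\cong\mbP^1$ for general $z$. Write $K_{X'}+B'=\mu^*(K_X+B)+E$ with $B',E\>0$ effective, $\mu_*B'=B$, where $E$ is $\mu$-exceptional and shares no component with $B'$. By Lemma \ref{lem:pullback-of-kahler-class}, after fixing $0<\eta\ll 1$ we may write $\mu^*\omega=\omega'+G$ with $\omega'$ a K\"ahler class on $X'$ and $G\>0$ an effective $\mu$-exceptional divisor. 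Since $\mu_*E=0$, pushing forward gives $\mu_*(K_{X'}+B'+\delta\mu^*\omega)=K_X+B+\delta\omega$; as the push-forward of a pseudo-effective class along a proper bimeromorphic morphism is again pseudo-effective, it is enough to prove that $K_{X'}+B'+\delta\mu^*\omega$ is pseudo-effective on $X'$. Using $K_{X'}=K_{X'/Z'}+\varphi'^*K_{Z'}$ I would rewrite this class as
\[ K_{X'/Z'}+(B'+\delta G)+\delta\omega'+\varphi'^*K_{Z'}. \]

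Finally, I would examine the restriction to a general fibre $X'_{z'}\cong\mbP^1$. As $E$ and $G$ are $\mu$-exceptional they are disjoint from $X'_{z'}$, so the degree of $(K_{X'/Z'}+B'+\delta G+\delta\omega')|_{X'_{z'}}$ equals $(K_X+B)\cdot F+\delta\,\omega\cdot F>0$, whence the restriction is a K\"ahler class on $\mbP^1$. With $B'+\delta G\>0$ effective and $\delta\omega'$ a genuine K\"ahler class on $X'$, the theorem of \cite{G16}, applied exactly as in Lemma \ref{lem:normalized-pair-is-psef}, gives that $K_{X'/Z'}+(B'+\delta G)+\delta\omega'$ is pseudo-effective. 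Since $Z$, and hence its resolution $Z'$, is not uniruled, $K_{Z'}$ is pseudo-effective, so $\varphi'^*K_{Z'}$ is pseudo-effective as well; adding the two contributions shows $K_{X'}+B'+\delta\mu^*\omega$ is pseudo-effective, and pushing forward and letting $\delta\to 0$ completes the proof. The main obstacle is the boundary case $(K_X+B)\cdot F=0$: there the fibrewise class is only numerically trivial and \cite{G16} does not apply directly, which is exactly why the perturbation by $\delta\omega$ together with the closedness of the pseudo-effective cone is essential; a secondary point to verify carefully is that the exceptional corrections $E$ and $G$ leave the general fibre untouched, so that the fibrewise degree computation is unaffected.
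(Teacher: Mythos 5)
Your strategy coincides with the paper's: pass to a log resolution on which the MRC fibration becomes a morphism, use strict positivity of the perturbed degree on the general fibre $\mbP^1$ to invoke the fibrewise positivity theorem of \cite{G16} for the relative adjoint class, add $\varphi'^*K_{Z'}$ (pseudo-effective because $Z'$ is not uniruled), push forward, and conclude by closedness of the pseudo-effective cone. The fibre-degree computation and the observation that $E$ and $G$ miss the general fibre are correct.

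The gap is in the application of \cite{G16} to the pair $(X', B'+\delta G)$: this pair need not be klt, and can even fail to be log canonical. Since $(X,B)$ is only assumed dlt, $B'$ may contain components of coefficient exactly $1$ (strict transforms of components of $\lfloor B\rfloor$, as well as $\mu$-exceptional divisors of log discrepancy $0$), and $G$ may share components with this coefficient-one part, producing coefficients larger than $1$. In particular, such a component can be horizontal over $Z'$ --- for instance $X=\mbP^1\times Z$ with $B$ a sum of three horizontal coefficient-one sections satisfies all hypotheses of the lemma --- and then the boundary restricted to the general fibre has points of multiplicity at least $1$, the fibrewise pair is not klt, and the theorem of \cite{G16} does not apply: both in Lemma \ref{lem:normalized-pair-is-psef} and in the paper's own proof of the present lemma, \cite{G16} is only ever invoked for simple normal crossing \emph{klt} boundaries. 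The missing step, which is exactly what the paper's proof supplies, is to exploit the strict inequality $(K_X+B+\delta\omega)\cdot F>0$ to first replace $B$ by $(1-\epsilon)B$ for $0<\epsilon\ll 1$: writing $K_X+B+\delta\omega=\bigl(K_X+(1-\epsilon)B+\delta\omega\bigr)+\epsilon B$, it suffices to prove pseudo-effectivity of the first summand, whose boundary is klt; then on the log resolution all boundary coefficients are bounded away from $1$, and choosing the exceptional correction $G$ with sufficiently small coefficients (as Lemma \ref{lem:pullback-of-kahler-class} permits) keeps the total boundary klt. (The paper sidesteps $G$ altogether by replacing $(X,B)$ with the log smooth model first and only then choosing the K\"ahler class, but that difference is cosmetic.) With the klt reduction inserted before invoking \cite{G16}, your argument goes through and is essentially the paper's proof.
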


\begin{proof}
The proof is similar to  the proof of Lemma \ref{lem:normalized-pair-is-psef} and so we omit it.

\end{proof}

We will also need the following result.
\begin{lemma}\label{lem:null-locus}
	Let $(X, B)$ be a dlt pair, where $X$ is a $\Q$-factorial compact K\"ahler $3$-fold.  Let $\omega$ be a modified K\"ahler class on $X$ such that $\alpha=K_X+B+\omega$ is a  nef and big $(1, 1)$-class. Let $S\subset X$ be an irreducible surface such that $S\subset \Null(\alpha)$, i.e. $\alpha^2\cdot S=0$. Then $S$ is a Moishezon space and it is covered by $\alpha$-trivial curves. 
\end{lemma}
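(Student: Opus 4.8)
The plan is to pass to a resolution of $S$ and reduce both assertions to statements about nef classes on a smooth projective surface, where Lemma~\ref{lem:movable-curves-on-surfaces} applies. First I would take the minimal resolution $\nu\colon S'\to S$ (factoring through the normalization) and set $\beta:=\nu^*(\alpha|_S)$, a nef class with $\beta^2=\alpha^2\cdot S=0$. Since $(X,B)$ is dlt, the pair $(X,0)$ is klt, so by Corollary~\ref{cor:modified-kahler} the class $\omega|_S$ is big; hence $M:=\nu^*(\omega|_S)$ is a big class on $S'$, and by \cite[Theorem~1.6]{Nam02} the surface $S'$ is projective, so $S$ (being bimeromorphic to $S'$) is Moishezon. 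If $\beta\equiv 0$ then every curve in $S$ is $\alpha$-trivial and the second assertion is immediate, so from now on I assume $\beta\neq 0$; then $\beta\cdot M>0$ since $\beta$ is a nonzero nef class and $M$ is big on the projective surface $S'$.

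The heart of the argument is to prove $K_{S'}\cdot\beta<0$. Writing $\alpha|_S=(K_X+B)|_S+\omega|_S$ and using $\beta^2=0$ gives
\[
  \beta\cdot\nu^*\!\big((K_X+B)|_S\big)=-\,\beta\cdot M<0 .
\]
To convert this into a statement about $K_{S'}$ I would use the adjunction of Lemma~\ref{l-adj}, namely $\nu^*((K_X+S)|_S)=K_{S'}+E$ with $E\geq 0$, together with the decomposition $B=bS+B^\circ$ where $b=\operatorname{mult}_S B\in[0,1]$ and $B^\circ\geq 0$ does not contain $S$. The only correction term whose sign is not immediate is $(1-b)\,S^2\cdot\alpha$, so the crucial point is the inequality $S^2\cdot\alpha\leq 0$. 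To obtain it I would invoke Lemma~\ref{lem:nef-and-big-to-modified-kahler} for $(X,0)$ to write $\alpha=\Theta+F$ with $\Theta$ a modified K\"ahler class and $F\geq 0$; then $0=\alpha^2\cdot S=\alpha\cdot\Theta\cdot S+\alpha\cdot F\cdot S$, where $\alpha\cdot\Theta\cdot S=\beta\cdot\nu^*(\Theta|_S)>0$ because $\Theta|_S$ is big (Corollary~\ref{cor:modified-kahler}) and $\beta\neq 0$. Hence $\alpha\cdot F\cdot S<0$, and decomposing $F=cS+F'$ with $F'\geq 0$ not containing $S$ forces $c>0$ and $S^2\cdot\alpha<0$. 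Feeding $S^2\cdot\alpha<0$ (and $b\leq 1$) back through the adjunction makes every correction term nonnegative, so that $K_{S'}\cdot\beta=\beta\cdot\nu^*((K_X+B)|_S)-\beta\cdot E-\beta\cdot\nu^*((B-S)|_S)<0$.

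Finally I would conclude as follows. Since $\beta$ is nef and $K_{S'}\cdot\beta<0$, the class $K_{S'}$ cannot be pseudo-effective, so $\kappa(S')=-\infty$ and therefore $H^2(S',\mcO_{S'})=0$ by Serre duality. Now $S'$ is a smooth projective surface with $H^2(S',\mcO_{S'})=0$ carrying the nef class $\beta$ with $\beta^2=0$ and $K_{S'}\cdot\beta<0$, so Lemma~\ref{lem:movable-curves-on-surfaces} produces a covering family $\{C_t\}$ of $\beta$-trivial curves on $S'$. A general member $C_t$ is not $\nu$-contracted, so $\nu(C_t)$ is a curve with $\alpha\cdot\nu(C_t)=\beta\cdot C_t=0$, and these images cover $S$; thus $S$ is covered by $\alpha$-trivial curves, completing the proof.

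I expect the main obstacle to be exactly the control of $S^2\cdot\alpha$: the naive adjunction identity $\nu^*((K_X+B)|_S)=K_{S'}+(\text{effective})$ fails when $S$ is not a component of $\lfloor B\rfloor$, and the term $(1-b)\,S^2\cdot\alpha$ could a priori spoil the sign of $K_{S'}\cdot\beta$. The insight that rescues the argument is that the hypothesis $S\subset\Null(\alpha)$, combined with the bigness of $\Theta|_S$, forces $S$ to be a component of the effective part $F$ of $\alpha$ with $S^2\cdot\alpha<0$, which is precisely the sign needed to run the estimate.
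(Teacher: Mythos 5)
Your route to the key inequality $K_{S'}\cdot\beta<0$ is essentially sound, but the proposal has a genuine gap at its very first step: you claim that $S'$ is projective (hence $S$ Moishezon) because $M=\nu^*(\omega|_S)$ is a big $(1,1)$-class, ``by \cite[Theorem 1.6]{Nam02}''. Namikawa's theorem requires a big \emph{divisor} (a rational class), not a big transcendental class; here $\omega$ is a modified K\"ahler class, so $\omega|_S$ is in general not a divisor class. Note that \emph{every} compact K\"ahler surface carries big $(1,1)$-classes --- any K\"ahler class contains a K\"ahler form, which is in particular a K\"ahler current, so it is big in the sense of Definition \ref{def:modified-kahler} --- hence your reasoning would prove that every compact K\"ahler surface is projective, which is false (generic K3 surfaces, generic tori). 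The error propagates to your degenerate case $\beta\equiv 0$, where you declare the covering-family assertion ``immediate'': without Moishezon-ness you do not know that $S$ contains any curves at all, let alone a covering family. That case genuinely needs the paper's Case I argument: when $\nu^*(\alpha|_S)\equiv 0$ one has $-\nu^*((K_X+B)|_S)\equiv \nu^*(\omega|_S)$ big, and since $K_X+B$ \emph{is} a $\mbQ$-Cartier $\mbQ$-divisor, $S'$ carries a big $\mbQ$-divisor; only then does Namikawa legitimately apply.

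The heart of your argument is correct and is a legitimate variant of the paper's: you obtain $\alpha\cdot S^2<0$ from the decomposition $\alpha=\Theta+F$ of Lemma \ref{lem:nef-and-big-to-modified-kahler} together with the bigness of $\Theta|_S$, whereas the paper obtains it from Boucksom's divisorial Zariski decomposition of $K_X+B+(1-\epsilon)\omega$ (forcing $S=S_j$ and $\alpha\cdot S^2<0$); the subsequent adjunction bookkeeping matches the paper's computation \eqref{eqn:canonical-intersection}. Moreover the positivity statements you use along the way ($\beta\cdot M>0$ and $\beta\cdot\nu^*(\Theta|_S)>0$) only need that a big class paired with a nonzero nef class on a compact K\"ahler surface is positive, so they do not depend on the flawed projectivity claim. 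The repair is to reverse the logical order, exactly as the paper does: first prove $K_{S'}\cdot\beta<0$; conclude that $K_{S'}$ is not pseudo-effective; then invoke Lemma \ref{lem:projectivity-of-kahler-surface} (not Namikawa) to get projectivity of $S'$, hence Moishezon-ness of $S$; finally Serre duality gives $H^2(S',\mcO_{S'})=0$ and Lemma \ref{lem:movable-curves-on-surfaces} finishes as you indicate.
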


\begin{proof}
Let $\pi:S'\to S$ be the minimal resolution of $S$ dominating the normalization $\tilde{S}\to S$. By Lemma \ref{l-adj} we have 
\begin{equation}\label{eqn:minimal-resolution}
	K_{S'}+E=\pi^*((K_X+S)|_S),
\end{equation}	
where $E\>0$ is an effective $\Q$-divisor.

Note that since $S$ is a K\"ahler  surface, then so is  $S'$. We separate two cases based on the numerical dimension of $\pi^*(\alpha|_S)$.\\

\noindent
\textbf{Case I:} $\pi^*(\alpha|_S)\num 0$. In this case $-\pi^*((K_X+B)|_S)\equiv \pi^*(\omega|_S)$. Since $\omega$ is a modified K\"ahler class, $\omega|_S$ is a big $(1, 1)$-class by Corollary \ref{cor:modified-kahler}. Therefore $-\pi^*((K_X+B)|_S)$ is a big divisor on $S'$; in particular, $S'$ is a Moishezon space. Furthermore, since $S'$ is smooth and K\"ahler, it is projective by \cite{Moi66} (also see Theorem \ref{t-nam}). Consequently, $S'$ can be covered by a family of $\pi^*(\alpha|_S)$-trivial curves $\{C_t\}_{t\in T}$. Pushing forward these curves gives a covering family of $\alpha|_S$-trivial curves on $S$.\\

\noindent
\textbf{Case II:} $\pi^*(\alpha|_S)\not\num 0$. Since $\omega $ is modified K\"ahler, $\omega |_S$ is big. Since $\alpha$ is nef, 
\begin{equation}\label{eqn:hodge-inequality}
\omega\cdot\alpha\cdot S=(\omega|_S\cdot \alpha|_S)> 0.
\end{equation}

Note also that since $\alpha $ is big, $K_X+B+(1-\epsilon)\omega $ is also big for $0<\epsilon \ll 1$ (recall in fact that the big cone is open cf. \cite[\S 2.3]{Bou04}). We may write 
\[K_X+B+(1-\epsilon) \omega \equiv \sum_{j=1}^r \lambda _jS_j+P,\]  
where $\lambda _j>0$ for all $j$ and  $P$ is nef in codimension $1$.

Since $ \alpha ^2\cdot S=0$, it follows that 
\[ \left( \sum_{j=1}^r \lambda _jS_j+P\right)\cdot \alpha \cdot S =(K_X+B+(1-\epsilon) \omega)\cdot \alpha \cdot S =-\epsilon \omega \cdot S\cdot \alpha <0 .\] 
Since $\alpha $ is nef and $P|_S$ is pseudo-effective, we must have $S=S_j$ for some $1\leq j\leq r$ and $\alpha \cdot S^2<0$.

Let $B=aS+B'$ such that $0\<a\<1$ and $S$ is not contained in the support of $B'$. Then we have
\begin{equation}\label{eqn:canonical-intersection}
\begin{split}
	K_{S'}\cdot \pi^*(\alpha|_S)\<(K_{S'}+E)\cdot\pi^*(\alpha|_S) &=\pi^*((K_X+S)|_S)\cdot\pi^*(\alpha|_S)\\
	&=(K_X+S)|_S\cdot\alpha|_S\\
	&=(\alpha-aS-B'-\omega+S)\cdot\alpha\cdot S\\
	&=\alpha^2\cdot S+(1-a)\alpha\cdot S^2-B'|_S\cdot \alpha|_S-\omega\cdot\alpha\cdot S\\
	&<0.
\end{split}	
\end{equation}
Since $\pi^*(\alpha|_S)$ is nef, this shows that $K_{S'}$ is not pseudo-effective. Thus by Lemma \ref{lem:projectivity-of-kahler-surface}, $S'$ is projective, and hence $S$ is a Moishezon space. Furthermore, by Lemma \ref{lem:movable-curves-on-surfaces}, $S'$ is covered by $\pi^*(\alpha|_S)$-trivial curves. Pushing forward these curves on $S$ we see that $S$ is also covered $\alpha|_S$-trivial curves.
\end{proof}

\subsection{Cone Theorem}
The purpose of this section is to prove the following cone theorem which is a direct generalization of the results of  \cite{HP15}, \cite{HP16}, \cite{CHP16} and \cite{DO23}.
The techniques that we use are all inspired by these papers.
\begin{theorem}\label{t-cone-mrc}
	Let $(X,B)$ be a dlt pair, where $X$ is a $\mbQ$-factorial compact K\"ahler $3$-fold. Suppose that $X$ is uniruled and the base of the MRC fibration $X\dasharrow Z$ is a surface. Suppose that $(K_X+B)\cdot X_z<0$ for general $z\in Z$. Let $\omega$ be a $(K_X+B)$-normalized modified K\"ahler class. Then there exist a countable family of curves $\Gamma _i$ on $X$ and a positive number $d$ such that
$0<-(K_X+B+\omega )\cdot \Gamma _i\leq d$ and
\[ \NA (X)=\NA (X)_{(K_X+B+\omega ){\geq 0}}+\sum _{i\in I}\mathbb R ^+[\Gamma _i].   \]
\end{theorem}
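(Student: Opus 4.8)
The plan is to mirror the proof of the pseudo-effective cone theorem (Theorem \ref{t-cone}), with the adjoint class $K_X+B$ replaced throughout by the twisted class $\alpha := K_X+B+\omega$; the one genuinely new input is that $\alpha$ is pseudo-effective together with the fact that $\omega$ is transcendental, so every appeal to positivity must be routed through the modified-K\"ahler structure rather than through an honest $\mbQ$-divisor. First I would record that $\alpha$ is pseudo-effective: this is exactly Lemma \ref{lem:normalized-pair-is-psef}, whose hypotheses hold because $\omega$ is a $(K_X+B)$-normalized modified K\"ahler class. This places us, for the purposes of the cone decomposition, in the same regime as Theorem \ref{t-cone}.

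With $\alpha$ pseudo-effective I would next apply the divisorial Zariski decomposition of Boucksom \cite{Bou04}, writing $\alpha \equiv \sum_j \lambda_j S_j + N$ as in \eqref{eq-1}, where $\lambda_j>0$, the $S_j$ are finitely many irreducible surfaces and $N$ is nef in codimension one. This localizes the negativity: if $\{C_t\}$ is a covering family of a surface $S$ with $\alpha\cdot C_t<0$, then $S_j\cdot C_t\geq 0$ whenever $S\neq S_j$ and $N\cdot C_t\geq 0$ (a pseudo-effective class pairs non-negatively with a movable curve), forcing $S$ to be one of the $S_j$; the remaining $\alpha$-negative rays are of small type, generated by very rigid curves, and are isolated. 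Thus every $\alpha$-negative extremal ray is accounted for by curves lying on the finitely many surfaces $S_j$ together with the rigid curves, exactly as in the proof of Theorem \ref{t-cone}.

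The core of the argument is then the surface analysis on each $S=S_j$. Passing to the minimal resolution $\pi:S'\to S$ dominating the normalization, Lemma \ref{l-adj} gives $K_{S'}+E=\pi^*((K_X+S)|_S)$ with $E\geq 0$, while Corollary \ref{cor:modified-kahler} guarantees that $\omega|_S$ is big; this bigness is what powers the surface-level estimates and, on a covering family, yields $\omega\cdot\Gamma\geq 0$ so that an $\alpha$-negative curve is automatically $(K_X+B)$-negative. Using the classical surface cone theorem together with the implication $(K_{S'}+\Gamma)\cdot\Gamma<0\Rightarrow\Gamma\cong\mbP^1$ (as in Lemmas \ref{lem:movable-curves-on-surfaces} and \ref{lem:null-locus}), the negative curves are rational, their rays are locally discrete away from $\{\alpha=0\}$, and pushing them forward produces the countable family $\{\Gamma_i\}$. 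The main obstacle, and the step I expect to require the most care, is the uniform length bound $0<-\alpha\cdot\Gamma_i\leq d$: here one must produce a rational curve generating each $\alpha$-negative ray and bound its anticanonical degree via the bend-and-break and deformation-theoretic estimates of \cite{HP16, CHP16}; since $\omega\cdot\Gamma_i\geq 0$ on covering families, one gets $-\alpha\cdot\Gamma_i\leq -(K_X+B)\cdot\Gamma_i$, so the bound is inherited from the bound on $(K_X+B)$-degrees of extremal rational curves. Assembling the negative rays from all $S_j$ with the non-negative part $\NA(X)_{(K_X+B+\omega)\geq 0}$ then gives the claimed decomposition.
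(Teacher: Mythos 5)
Your skeleton is the same as the paper's: pseudo-effectivity of $\alpha=K_X+B+\omega$ via Lemma \ref{lem:normalized-pair-is-psef}, Boucksom's divisorial Zariski decomposition $\alpha\equiv\sum\lambda_jS_j+P$, surface analysis on the $S_j$ through the minimal resolution and adjunction (Lemma \ref{l-adj}), bigness of $\omega|_{S_j}$ (Corollary \ref{cor:modified-kahler}), bend-and-break, and finally the formal cone argument of \cite[Theorem 4.2]{CHP16}. However, the step you yourself flag as the hardest --- the uniform bound $0<-\alpha\cdot\Gamma_i\leq d$ --- is where your argument is circular. You propose to deduce it from $-\alpha\cdot\Gamma_i\leq -(K_X+B)\cdot\Gamma_i$ and then invoke ``the bound on $(K_X+B)$-degrees of extremal rational curves.'' No such bound is available: in this theorem $K_X+B$ is \emph{not} pseudo-effective ($X$ is uniruled with $(K_X+B)\cdot X_z<0$ on the fibers of the MRC fibration), so Theorem \ref{t-cone} does not apply, and a length/cone estimate for $K_X+B$ itself is essentially what is being proven. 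Moreover the inequality $-\alpha\cdot\Gamma\leq-(K_X+B)\cdot\Gamma$ needs $\omega\cdot\Gamma\geq 0$, which your bigness argument only gives for curves moving in covering families of some $S_j$; extremal rays can be generated by rigid curves, for which $\omega\cdot\Gamma$ may well be negative since $\omega$ is only a modified K\"ahler class.

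What the paper does instead is to \emph{construct} $d$ explicitly as the maximum of $4$ and the $\alpha$-degrees of four finite sets of exceptional curves $\mcA,\mcB,\mcC,\mcD$, and then prove that any curve $C$ with $-\alpha\cdot C>d$ satisfies $\dim_C\Chow(X)>0$ and breaks. Each finiteness statement is a genuine argument you would need: finiteness of $\mcA$ (curves with $B\cdot C<0$ on components of $B$ horizontal over $Z$) uses that $Z$ is not uniruled; finiteness of $\mcD$ (curves with $\omega\cdot C\leq 0$) is exactly where the modified-K\"ahler structure enters, via Lemma \ref{lem:pull-push-of-modified-kahler-class}: writing $f^*\omega=\omega_Y+[E]$ with $\omega_Y$ K\"ahler and $E\geq 0$ exceptional, any such curve lies in the one-dimensional set $f(\Supp E)$. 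With $d$ so defined, a curve of degree beyond $d$ avoids all four sets, and then either it lies on a vertical component of $B$ and one bounds $K_{\hat S}\cdot\hat C<-4$ on the projective minimal resolution (Lemma \ref{lem:projectivity-of-kahler-surface} plus \cite[Theorem II.1.15]{Kol96}), or $\omega\cdot C>0$ forces $K_X\cdot C<-4$ and one invokes \cite[Theorem 4.5]{HP16} --- which requires first reducing to terminal singularities by a dlt model as in Lemma \ref{lem:terminal-dlt-model}, a reduction your proposal also omits. Without the construction of $d$ and the finiteness of $\mcD$, the deformation and breaking arguments have nothing to run on, so as written your proof does not close.
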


\begin{proof}
 By Lemma \ref{lem:normalized-pair-is-psef}, $K_X+B+\omega$ is pseudo-effective and so by \cite{Bou04} (applied to a resolution of $X$ and then pushing forward) we have a divisorial Zariski decomposition 
\begin{equation} \label{e-le}
	 K_X+B+\omega \equiv \sum _{j=1}^r\lambda _jS_j+P,
\end{equation}
where the $S_j$'s are surfaces, $\lambda _j\in \mathbb R ^+$  for all $j$ and $P$ is a pseudo-effective class which is nef in codimension 1.

\begin{claim}\label{c-uni}
	 Let $S\subset X$ be a surface such that $(K_X+B+\omega)|_S$ is not pseudo-effective, then $S=S_j$ for some $1\leq j\leq r$, $S$ is Moishezon and any desingularization $\hat S \to S$ is a uniruled projective surface.
 \end{claim}
 
\begin{proof} Since $P$ is nef in codimension 1, then $P|_S$ is pseudo-effective. If $S\ne S_j$   for all $1\leq j\leq r$, then $(\sum _{j=1}^r\lambda _jS_j)|_S\geq 0$ and hence $(K_X+B+\omega)|_S$ is pseudo-effective, contradicting our assumptions. Thus, possibly reindexing, we may assume that $S=S_1$. Let $b={\rm mult}_S(B)$, then $0\leq  b\leq 1$. We then have
\begin{equation} \label{e-le1} 
K_X+S+B-bS+\omega +\frac {1-b}{\lambda _1}\left(\sum _{j=2}^r\lambda _j S_j+P\right)\equiv \left(1+\frac {1-b}{\lambda _1}\right)(K_X+B+\omega).\end{equation}
Since $(K_X+B+\omega)|_S$ is not pseudo-effective and $(B-bS+\omega +\frac {1-b}{\lambda _1}(\sum _{j=2}^r\lambda _j S_j+P))|_S$ is pseudo-effective, from the above equality
  it follows that $(K_X+S)|_S$  is not pseudo-effective.
Let $\pi :\hat S \to S$ be the minimal resolution of $S$ dominating its normalization $\tilde{S}$, then by Lemma \ref{l-adj}, there exists an effective divisor $E\geq 0$ on $\hat S$ such that $K_{\hat S}=\pi^*((K_X+S)|_S)-E$. But then it follows that $K_{\hat S}$ is not pseudo-effective, and hence $\hat{S}$ is projective by Lemma \ref{lem:projectivity-of-kahler-surface}, and $\hat S$ is uniruled. In particular, $S$ is Moishezon. Finally, observe that if $S'$ is any resolution of singularities of $S$, then it factors through the minimal resolution $\hat{S}$, and hence $S'$ is projective and uniruled.
\end{proof}

Next we establish a form of bend and break.
\begin{claim}\label{c-bb} There exists a number $d>0$ such that if $C\subset X$ is a curve with $-(K_X+B+\omega )\cdot C>d$, then $[C]=[C_1]+[C_2]$, where $C_1$ and $C_2$ are two non-zero integral effective 1-cycles.
\end{claim}

\begin{proof}  First using the arguments of \cite[Lemma 4.2]{CHP16} and passing to a dlt model as in Lemma \ref{lem:terminal-dlt-model}, we may assume that $(X,B)$ is dlt and $X$ has terminal singularities. The proof of this claim involves two main steps, in the first step we will construct four sets $\mcA, \mcB, \mcC$ and $\mcD$ of finitely many curves on $X$. These sets will determine the number $d>0$. Next we will show that if $C$ is a curve in $X$ such that $-(K_X+B+\omega )\cdot C>d$, then $\dim_C\Chow(X)>0$. We proceed with the constructions of the sets.

Let $\mathcal A$ be the set of all curves $C\subset X$ satisfying the following properties:
\begin{enumerate}
 \item $(K_X+B)\cdot C<0$, 
 \item $B\cdot C<0$ and 
 \item $C$ is contained in a horizontal (over $Z$) component $T$ of $B$.	
\end{enumerate}
We claim that $\mathcal A$ is a finite set. Indeed, if $T$ is a horizontal component of $B$, then $T$ is not uniruled, since the induced morphism $T\to Z$ is generically finite and $Z$ is not uniruled. Let $\pi:\hat{T}\to T$ be the minimal resolution of $T$. Then $\hat{T}$ is not uniruled, and hence $K_{\hat{T}}$ is pseudo-effective. Moreover, by the MMP and abundance for compact K\"ahler surfaces, there exists an effective $\mbQ$-divisor $D\>0$ on $\hat{T}$ such that $K_{\hat{T}}\sim_\mbQ D$. Now since the coefficients of $B$ are contained in the interval $(0, 1]$, there is a non-negative\ rational number $\lambda\>0$ such that $\mult_{T}(1+\lambda)B=1$. Then from our hypothesis it follows that $(K_X+(1+\lambda)B)\cdot C<0$. Then by Lemma \ref{l-adj} there is an effective $\mbQ$-divisor $E\>0$ on $\hat{T}$ such that 
\[
	\pi^*((K_X+(1+\lambda)B)|_T)=K_{\hat{T}}+E\sim_\mbQ D+E .
\]
Therefore, from the projection formula, it follows that $\pi_*(D+E)\cdot C<0$, i.e. $C$ is contained in the support of $\pi_*(D+E)$. In particular, the set $\mcA$ is finite.

Now let $S$ be a component of $B$ which is vertical over $Z$ and write
\begin{equation}\label{eqn:minimal-adjunction}
	 K_{\hat S}+E\sim _\Q \pi ^*((K_X+S)|_S),
\end{equation}
where $\pi:\hat{S}\to S$ is the minimal resolution of $S$.

If $\kappa (\hat S)\geq 0$, set $F:=\mathbf B (K_{\hat S})$, where $\mathbf{B}(\cdot)$ denotes the stable base locus. If $\kappa(\hat{S})=-\infty$, the we set $F:=0$.
Let $\mathcal B$ be the union of the curves $\pi _*(E+F)$ as  $S$ varies over all components of $B$ that do not dominate $Z$.

Let $\mathcal C$ be the finite set of curves $C\subset X$ which are contained in the singular locus of the support of $B+\sum S_j$.

 Let $\mathcal D$ be the set of all curves $C\subset X$ such that $\omega \cdot C \leq 0$. We claim that $\mcD$ is a finite set. Indeed, since $\omega $ is a modified K\"ahler class, by Lemma \ref{lem:modified-kahler}, there exists a projective bimeromorphic morphism $f:Y\to X$ from a compact K\"ahler manifold $Y$, a K\"ahler class $\omega _Y\in H^{1,1}_{\rm BC}(Y)$, and an effective $\mathbb R$-divisor $E\geq 0$ such that $-E$ is $f$-ample, ${\rm Supp}(E) = {\rm Ex}(f )$ and
$f ^*\omega  = \omega _Y + [E]$. If $C$ is a curve from the set $\mcD$ and $C'\subset Y$ is a curve in $Y$ such that $f(C')=C$, then $0\>\omega\cdot C=f^*\omega\cdot C'=(\omega_Y+E)\cdot C'$. This implies that $E\cdot C'<0$, since $\omega_Y\cdot C'>0$, as $\omega_Y$ is a K\"ahler class. In particular, $C'$ is contained in the support of $E$, since $E$ is effective. Therefore $C=f(C')$ is contained in $f(\Supp E)$. Since $E$ is $f$-exceptional and $\dim X=3$, it follows that $\mcD$ is a finite set.

Now we define
 \[d:={\rm max}\{4,\ -(K_X+B+\omega )\cdot C\;|\;C\subset X\ {\rm is\ a\ curve,\ and}\ C\in \mathcal A\cup \mathcal B\cup \mathcal C\cup \mathcal D\}.\]

\begin{claim}\label{clm:length-of-extremal-ray}
	For any curve $C\subset X$ such that $-(K_X+B+\omega )\cdot C>d$, we have $\dim _C{\rm Chow}(X)>0$.
\end{claim}

\begin{proof} By assumption $\omega\cdot C>0$ and so $-(K_X+B)\cdot C>d$. We will separate two cases depending on whether $B\cdot C<0$ or $B\cdot C\>0$. \\

\noindent
\textbf{Case I:} Suppose that $B\cdot C<0$. In this case there is a component $S$ of $B$ containing $C$. Moreover, we see that $C$ is not contained in any other component of $B$ or $\sum S_j$, since $C\not\in \mathcal C$, and also that $S$ is not horizontal over $Z$, since $C\not \in \mathcal A$. In particular, $S$ is vertical over $Z$. Now since $C\not \in \mathcal B$, $C$ is not contained in $\pi (E)$, where $E$ and $\pi :\hat S\to S$ as in \eqref{eqn:minimal-adjunction}. Let $\hat C\subset \hat S$ be the strict transform of $C$ and $\mult_S B=b$. 
Since $C\not \in \mathcal D$, then $\omega\cdot C>0$ and since $C$ is not contained in $\pi (E)$, then $E\cdot \hat  C\geq 0$. Then as $S\cdot C< 0$ and  $(B-bS)\cdot C\geq 0$, we have
	\begin{equation*}
	\begin{split}
		 K_{\hat S}\cdot \hat C &\leq (K_{\hat S}+E)\cdot \hat  C\\
		                        &=(K_X+S)\cdot C\\
		 						&=(K_X+B+(1-b)S-(B-bS))\cdot C\\
								&\<(K_X+B)\cdot C\\
								&<(K_X+B+\omega )\cdot C\\
								&< -d.
	\end{split}
	\end{equation*}
Now if $\kappa (\hat S)\geq 0$, then as $\hat C$ is not contained in $F=\mathbf B (K_{\hat S})$, it follows that $K_{\hat S}\cdot \hat C\geq 0$, which is a contradiction to the above inequality. Thus $\kappa (\hat S)<0$ and then from MMP and abundance for smooth compact K\"ahler surfaces it follows that $K_{\hat{S}}$ is not pseudo-effective. Hence $\hat{S}$ is projective by Lemma \ref{lem:projectivity-of-kahler-surface}. Now since $\hat S$ is a smooth and projective surface and $-K_{\hat{S}}\cdot \hat{C}> d\geq 4$, by \cite[Theorem II.1.15]{Kol96} we have $\dim _{\hat C}{\rm Chow}(\hat S)>0$, i.e. $\hat{C}$ deforms in $\hat{S}$, in particular its push-forward $C$ deforms in $S\subset X$, and hence $\dim _C \Chow (X)>0$.\\

\noindent
\textbf{Case II:} Suppose now that $B\cdot C\geq 0$. Since $C\not \in \mathcal D$, $\omega \cdot C>0$ and hence $K_X\cdot C<(K_X+B+\omega )\cdot C< -d\leq -4$. Since $X$ has $\mbQ$-factorial terminal singularities, it follows from \cite[Theorem 4.5]{HP16} that $C$ is not very rigid (see \cite[Definition 4.3]{HP16}). Let $m$ be the smallest positive integer such that $\dim _{mC}{\rm Chow}(X)>0$. Let $\Gamma\to T$ be the corresponding family. Replacing $\Gamma$ by an irreducible component which contains $C\subset X$ we may assume that $\Gamma$ is irreducible, and hence so is the locus covered by the family $(\Gamma_t)_{t\in T}$. Consequently, $\Gamma_t$ is irreducible for $t\in T$ very general. Then from the minimality of $m$ it also follows that this family has no fixed component. Now by \eqref{e-le} there is a unique  surface $S_j$ covered by the $\{\Gamma_{t}\}_{t\in T}$. Note that $(K_X+B+\omega)|_{S_j}$ is not pseudo-effective, since $(K_X+B+\omega)|_{S_j}\cdot \Gamma_t=m(K_X+B+\omega)\cdot C<0$. By Claim \ref{c-uni}, the minimal resolution $\hat{S}$ of $S=S_j$ is projective and uniruled.
Since $C\not \in \mathcal C$, we have that $C$  is not contained in $S_l$ for $l\ne j$ and hence $S_l\cdot C \geq 0$ for $l\ne j$. Since $P|_{S_j}$ is pseudo-effective, \[ P\cdot mC =P|_{S_j}\cdot mC=P|_{S_j}\cdot \Gamma_t
\geq 0. \] 
Using the same notation as in Case I and its proof we see that  $E\cdot m\hat{C}\>0$ and $\omega\cdot mC>0$, where $\hat{C}$ is the strict transform of $C$ under $\pi:\hat{S}\to S=S_j$. We also know that $S_j\cdot C<0$. Therefore from \eqref{e-le1} we have
\[ K_{\hat S }\cdot \hat C\leq (K_{\hat S}+E)\cdot \hat C = (K_X+S_j)\cdot C \leq \left(1+\frac {1-b}{\lambda _j}\right) (K_X+B+\omega )\cdot C< -d \leq -4. \]
By \cite[Theorem II.1.15]{Kol96}, we have $\dim _{\hat C}{\rm Chow}(\hat S)>0$, i.e. $\hat C$ deforms in $\hat S$. Thus by pushing forward $\hat{C}$ we have that $C$ deforms, i.e. $\dim_C \Chow(X)>0$.\\
\end{proof}

We will now prove the bend and break property, i.e. Claim \ref{c-bb}. If $C\subset X$ is a curve satisfying $-(K_X+B+\omega )\cdot C>d$, then by Claim \ref{clm:length-of-extremal-ray} and its proof it follows that, the curve $C$ deforms in a family $\{\Gamma_t\}_{t\in T}$ covering a unique uniruled surface $S$. Since $(K_X+B+\omega )\cdot C<0$, then $(K_X+B+\omega)|_S$ is not pseudo-effective. Since $P|_S$ is pseudo-effective, by \eqref{e-le} it follows easily that $S=S_j$.
 We also know that the curve $C$ is contained in $S$ but not in $S_\textsubscript{sing}$. Moreover, if $\pi:\hat{S}\to S$ is the minimal resolution of $S$, then we know from Claim \ref{c-uni} that $\hat{S}$ is a projective uniruled surface. Now from the proof of Claim \ref{clm:length-of-extremal-ray} we have $K_{\hat S}\cdot \hat C<-d\leq -4$. Thus by \cite[Lemma 5.5(b)]{HP16} there is an effective 1-cycle $\sum _{k=1}^mC_k$ with $m\geq 2$ such that $[\hat C]=[\sum _{k=1}^mC_k]$ and $K_{\hat S}\cdot C_i<0$ for $i=1,2$. Since $\pi:\hat S \to S$ is the minimal resolution, $K_{\hat S}$ is $\pi$-nef and hence $\pi _*C_j\ne 0$ for $j=1,2$. In particular, we also have  a decomposition $[ C]=[\sum _{k=1}^m\pi _* C_k]$ with at least two non-zero terms. This concludes the proof of Claim \ref{c-bb}.\\
\end{proof}
The theorem now follows from Claim \ref{c-bb} and the arguments in the proof of \cite[Theorem 4.1]{CHP16}.
\end{proof}

\begin{corollary}\label{cor:cone-finite}
	Let $(X,B)$ be a dlt pair, where $X$ is a $\mbQ$-factorial compact K\"ahler $3$-fold. Suppose that $X$ is uniruled and the base of the MRC fibration $X\dasharrow Z$ is a surface. Let $\omega$ be a modified K\"ahler class such that $K_X+B+\omega$ is pseudo-effective. If $(K_X+B+\omega )\cdot X_z >0$ for general $z\in Z$, then there exist finitely many curves $\{\Gamma _i\}_{i=1}^N$ such that 
	\[ \overline{\rm NA}(X)=\overline{\rm NA}(X)_{(K_X+B+\omega ){\geq 0}}+\sum_{i=1}^N \mathbb R ^+[\Gamma _i].\]
	\end{corollary}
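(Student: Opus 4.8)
The plan is to first reduce to a countable cone decomposition and then upgrade it to a finite one using the strict positivity $(K_X+B+\omega)\cdot X_z>0$. Write $L:=K_X+B+\omega$; by hypothesis $L$ is pseudo-effective and $\omega$ is modified K\"ahler, so the bend-and-break and divisorial Zariski-decomposition arguments from the proof of Theorem \ref{t-cone-mrc} apply essentially verbatim. Indeed, the only place the normalization $(K_X+B+\omega)|_{X_z}\equiv 0$ enters that proof is through Lemma \ref{lem:normalized-pair-is-psef}, which is used solely to guarantee that $L$ is pseudo-effective -- and this we now assume outright. Thus we obtain a positive real number $d$ and an at most countable family of \emph{rational} curves $\{\Gamma_i\}_{i\in I}$ with $0<-L\cdot\Gamma_i\le d$ and $\NA(X)=\NA(X)_{L\ge 0}+\sum_{i\in I}\mbR^+[\Gamma_i]$, where, by the divisorial Zariski decomposition $L\equiv\sum_{j=1}^r\lambda_jS_j+P$ of \eqref{e-le} together with Claim \ref{c-uni}, each $\Gamma_i$ lies on one of the finitely many surfaces $S_1,\dots,S_r$, each of which is Moishezon and uniruled with projective minimal resolution.

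It then remains to show that $I$ may be taken finite, i.e. that the rays $\mbR^+[\Gamma_i]$ do not accumulate. Fix a K\"ahler class $\kappa$ on $X$ and slice the cone by $\{\kappa=1\}$; this slice is compact, so it suffices to prove that the normalized classes $[\Gamma_i]/(\kappa\cdot\Gamma_i)$ have no accumulation point. Suppose they accumulated at a class $R$. Since $-d\le L\cdot\Gamma_i<0$ and the negative extremal rays are locally finite in the open region $\{L<0\}$ (as in the usual cone theorem), accumulation forces $\kappa\cdot\Gamma_i\to\infty$, whence $L\cdot R=0$; that is, $R$ lies on the boundary face $\{L=0\}$ of $\NA(X)$. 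Note also that the finitely many curves with $\omega\cdot\Gamma_i\le 0$ play no role here: the set $\{C:\omega\cdot C\le 0\}$ is finite by the argument given in the proof of Claim \ref{c-bb}, so we may assume $\omega\cdot\Gamma_i>0$ for all the accumulating rays.

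The decisive point is to rule out such an accumulation using $L\cdot X_z>0$. The mechanism is that an accumulation ray $R$ on $\{L=0\}$ must be represented by a \emph{covering} family of rational curves: the curves $\Gamma_i$ limiting to $R$ have $\kappa$-degree tending to infinity while keeping bounded $L$-degree, so their anticanonical degrees on the projective uniruled surfaces $S_j$ grow, and bend-and-break (applied on these surfaces, as in Claim \ref{c-bb}) forces the relevant limiting class to move in a family sweeping out a surface. By the maximality of the MRC fibration $X\dashrightarrow Z$, every covering family of rational curves on $X$ consists of the general fibers $X_z\cong\mbP^1$, since a rational curve through a general point of $X$ is contained in -- hence equals -- the fiber through that point. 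Consequently the only $L$-trivial covering class available is $[X_z]$; but $L\cdot X_z>0$, contradicting $L\cdot R=0$. Therefore no accumulation occurs, $I$ is finite, and the desired finite decomposition $\NA(X)=\NA(X)_{(K_X+B+\omega)_{\ge 0}}+\sum_{i=1}^N\mbR^+[\Gamma_i]$ follows.

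The main obstacle is precisely this no-accumulation step: making rigorous the passage ``accumulation ray $\Rightarrow$ covering family of $L$-trivial rational curves'' in the compact K\"ahler category, where there is no ample polarization and one must work instead with the modified K\"ahler class $\omega$ (big on every surface by Corollary \ref{cor:modified-kahler}) and with the finitely many surfaces $S_j$ supplied by the Zariski decomposition. Once this is in place, the contradiction with $L\cdot X_z>0$ through MRC-maximality is immediate, and compactness of the $\kappa$-slice converts non-accumulation into genuine finiteness.
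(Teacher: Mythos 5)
Your opening reduction is sound: the proof of Theorem \ref{t-cone-mrc} uses the normalization hypothesis $(K_X+B+\omega)|_{X_z}\equiv 0$ only through Lemma \ref{lem:normalized-pair-is-psef}, i.e.\ only to secure pseudo-effectivity, so running that proof with $L=K_X+B+\omega$ pseudo-effective by assumption does yield the countable decomposition with $0<-L\cdot \Gamma_i\le d$. (The paper reaches the same point more economically by scaling: setting $t=-(K_X+B)\cdot X_z/\omega\cdot X_z$ one has $0<t<1$ --- the hypothesis $(K_X+B+\omega)\cdot X_z>0$ is precisely the statement $t<1$ --- and Theorem \ref{t-cone-mrc} applies as a black box to the normalized class $K_X+B+t\omega$.) The genuine gap is your no-accumulation step. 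First, the appeal to ``local finiteness of the negative extremal rays in $\{L<0\}$ as in the usual cone theorem'' is not available: neither Theorem \ref{t-cone} nor Theorem \ref{t-cone-mrc} asserts any local finiteness, and in the projective case that statement is itself deduced from boundedness of curves of bounded ample degree; its K\"ahler analogue requires Toma's boundedness lemma \cite[Lemma 4.4]{Toma16}, which your argument never invokes (this sub-gap is fillable, since $\kappa$ is a genuine K\"ahler class on $X$). Second, and fatally, the implication ``accumulation ray $R$ with $L\cdot R=0$ $\Rightarrow$ $R$ is represented by a covering family of rational curves'' is not proved, and the sketch offered does not work: nothing forces the anticanonical degrees $-K_{\hat S_j}\cdot \hat\Gamma_i$ to grow when $\kappa\cdot\Gamma_i\to\infty$ (the two degrees are unrelated on a surface, so bend-and-break need not even apply to the $\hat\Gamma_i$); where bend-and-break does apply, it decomposes the class of a single curve into pieces lying on the same extremal ray and says nothing about a limit of infinitely many distinct rays; and a limit of normalized curve classes need not be the class of any curve at all, let alone of a member of a covering family. (It is also unjustified that every $\Gamma_i$ lies on one of the $S_j$: a curve with $L\cdot\Gamma_i<0$ avoiding all $S_j$ must have $P\cdot\Gamma_i<0$, which ``nef in codimension $1$'' does not exclude.) Since you yourself flag this passage as the main obstacle, the proposal is an incomplete strategy rather than a proof.

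For comparison, the paper's finiteness argument never considers limits of rays. Having reduced to the normalized class, it suffices to prove Claim \ref{clm:finiteness-of-extremal-ray}, namely $\omega\cdot\Gamma_i>d/(1-t)$ for all but finitely many $i$, because then
\[(K_X+B+\omega)\cdot\Gamma_i=(K_X+B+t\omega)\cdot\Gamma_i+(1-t)\,\omega\cdot\Gamma_i>-d+d=0\]
for those $i$, and the finitely many remaining rays give the asserted decomposition. That claim is exactly where the modified K\"ahler structure of $\omega$ is used: by Lemmas \ref{lem:modified-kahler} and \ref{lem:pull-push-of-modified-kahler-class} there is a projective bimeromorphic morphism $\nu:X'\to X$ from a K\"ahler manifold, a K\"ahler class $\omega'$ and an effective $\nu$-exceptional divisor $F\ge 0$ with $\nu^*\omega=\omega'+[F]$; if infinitely many $\Gamma_i$ satisfied $\omega\cdot\Gamma_i\le d/(1-t)$, then (discarding the finitely many lying in the at most one-dimensional set $\nu(F)$) their strict transforms would satisfy $\omega'\cdot\Gamma_i'\le\omega\cdot\Gamma_i\le d/(1-t)$, hence by \cite[Lemma 4.4]{Toma16} would lie in a bounded family and represent only finitely many classes --- contradicting that distinct extremal rays have distinct classes in $N_1(X)$. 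If you wish to salvage your route, replace the covering-family mechanism by this boundedness argument; as written, your use of $(K_X+B+\omega)\cdot X_z>0$ (to contradict an $L$-trivial covering class) is a heuristic with no rigorous substitute supplied, whereas its actual role in the paper is simply to guarantee $t<1$.
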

 
\begin{proof} Pick $t=-(K_X+B)\cdot X_z/\omega \cdot X_z$, then $0<t<1$ and $(K_X+B+t\omega )\cdot X_z=0$. By Theorem \ref{t-cone-mrc}, there exists a countable family of curves $\Gamma _i$ with  $0<-(K_X+B+t\omega )\cdot \Gamma _i \leq d$ and 
 \[ \overline{\rm NA}(X)=\overline{\rm NA}(X)_{(K_X+B+t\omega )\geq 0}+\sum \mathbb R ^+[\Gamma _i].\]
 
 \begin{claim}\label{clm:finiteness-of-extremal-ray}
 	$\omega \cdot \Gamma _i> d/(1-t)$ for all but finitely many $i$'s.
 \end{claim}
 Assuming the claim for the time being we will complete the proof first. Using the claim we have 
 \[(K_X+B+\omega )\cdot \Gamma _i=(K_X+B+t\omega )\cdot \Gamma _i+(1-t)\omega \cdot \Gamma _i> 0\] for all but finitely many $i$'s. This concludes the proof.\\
 
\begin{proof}[Proof of Claim \ref{clm:finiteness-of-extremal-ray}]
Since $\omega$ is a modified K\"ahler class, by Lemma \ref{lem:modified-kahler}  there exist a projective bimeromorphic morphism $\nu :X'\to X$ from a K\"ahler manifold $X'$, a K\"ahler class $\omega'\in H^{1, 1}_{\rm BC}(X')$ and an effective $\nu$-exceptional divisor $F\>0$ such that $\nu^*\omega=\omega'+[F]$. Note that from the negativity lemma it follows that $\Supp(F)=\Ex(\nu)$.
Now let $\Gamma _i'$ be the strict transform of the curve $\Gamma_i$ which is not contained in $\nu(F)$. Note that since $\dim \nu(F)\<1$, there are only finitely many  $\Gamma_i$'s contained in $\nu(F)$.

We will assume by contradiction that the claim is false, i.e. $\omega \cdot \Gamma _i\< d/(1-t)$ for infinitely many indices $i$. Let $\Lambda$ be the set of indices for all such curves $\Gamma_i$. Without loss of generality we may assume that $\Gamma_i$ is not contained in $\Supp(\nu(F))$ for any $i\in\Lambda$. Observe that the $\Gamma_i$'s for $i\in\Lambda$ belong to distinct equivalence classes in $\NA(X)$; in particular, the strict transforms $\Gamma'_i$ also belong to distinct equivalence classes in $\NA(X')$ for all $i\in\Lambda$. Now by the projection formula we have 
\[
	\omega'\cdot\Gamma'_i=(\nu^*\omega-F)\cdot \Gamma'_i\<\omega\cdot\Gamma_i\<d/(1-t)\quad \mbox{ for all } i\in\Lambda.
\]
Since $\omega' $ is a K\"ahler class, by \cite[Lemma 4.4]{Toma16} the curves $\Gamma'_i$ belong to a bounded family for all $i\in\Lambda$. In particular, the $\Gamma'_i$'s belong to finitely many distinct equivalence classes in $\NA(X')$, this is a contradiction.
\end{proof}

\end{proof}


\begin{remark}\label{rmk:nef-big-perturbation}
 Observe that in the settings of Corollary \ref{cor:cone-finite} if $(X, B)$ is klt and $\omega$ is only a nef and big class such that $(K_X+B+\omega)\cdot X_z>0$ for general $z\in Z$ (this last condition is satisfied for example if $K_X+B+\omega$ itself is big), then the same finiteness conclusion holds. Indeed, it follows from Corollary \ref{cor:cone-finite} using Lemma \ref{lem:nef-and-big-to-modified-kahler}. 
\end{remark}


\begin{remark}\label{rmk:finite-perturbation}
We note that a similar finiteness result as in Corollary \ref{cor:cone-finite} also holds for pseudo-effective pairs. More specifically, if $(X, B)$ is a $\mbQ$-factorial compact K\"ahler $3$-fold dlt (resp. klt) pair and $K_X+B$ is pseudo-effective, then for any modified K\"ahler (resp. nef and big) class $\omega$, there are only finitely many $(K_X+B+\omega)$-negative extremal rays of $\NA(X)$. Indeed, by Theorem \ref{t-cone} there are countably many $(K_X+B)$-negative extremal rays generated by the curves $\{\Gamma_i\}_{i\in I}$ and a positive rational number $d>0$ such that $0<-(K_X+B)\cdot \Gamma_i\<d$ for all $i\in I$. First we will deal with the modified K\"ahler case. Since $\omega$ is modified K\"ahler, as in the proof of Claim \ref{clm:finiteness-of-extremal-ray} (using the same notations) we have $\nu^*\omega=\omega'+[F]$. If $(K_X+B+\omega)\cdot \Gamma_i<0$ and $\Gamma_i$ is not contained in $\nu(\Supp(F))$, let $\Gamma'_i$ be the strict transform of $\Gamma_i$. Then we have $F\cdot \Gamma'_i\>0$ and $\omega'\cdot\Gamma'_i\<(\omega'+F)\cdot \Gamma'_i=\omega\cdot\Gamma_i\<d$. Since $\omega'$ is K\"ahler, this implies that there are only finitely many such curves $\Gamma_i$. Furthermore, since $\nu(\Supp(F))$ contains only finitely many curves (as $\dim X=3$), our claim follows.

Now if $(X, B)$ is klt and $\omega$ is a nef and big class, then using Lemma \ref{lem:nef-and-big-to-modified-kahler} we may assume that $\omega$ is a modified K\"ahler class, and thus we are done by the previous case.

\end{remark}

Next we prove a technical result which will be used in the existence of small contractions (see Theorem \ref{c-cont}) and also in Section \ref{sec:applications}.

\begin{proposition}\label{pro:non-null-locus}
 Let $(X, B)$ be a dlt pair, where $X$ is a $\mbQ$-factorial compact K\"ahler $3$-fold. Assume that $X$ is uniruled, $K_X+B$ is not pseudo-effective, and the base of the MRC fibration $X\bir Z$ has dimension $2$. Let $R=\mbR^+[\Gamma_i]$ be an extremal ray of $\NA(X)$ with a nef supporting class $\alpha$. Assume that there is a nef and big class $\eta$ such that $K_X+B+\eta$ is pseudo-effective and $(K_X+B+\eta)\cdot R<0$. If $R$ is small and $S\subset X$ is an irreducible surface, then the following holds:
 \[
 	 \alpha^2\cdot S>0.
 \]
\end{proposition}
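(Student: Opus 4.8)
The plan is to argue by contradiction. Since $\alpha$ is nef we always have $\alpha^2\cdot S\geq 0$, so suppose $\alpha^2\cdot S=0$, i.e.\ $S\subset\Null(\alpha)$. The goal is to produce a covering family of $\alpha$-trivial curves on $S$; because $\alpha$ supports $R$ in the strict sense that $\alpha\cdot C=0$ if and only if $[C]\in R$, every such curve has class in $R$, and a family covering a surface moves in a positive-dimensional family, contradicting the very rigidity of the curves in $R$ which is forced by the hypothesis that $R$ is small (Definition \ref{def:divisorial-extremal-ray}). So the whole argument reduces to establishing the covering property from the null condition $\alpha^2\cdot S=0$.

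Before carrying this out I would normalise the data. Using Lemma \ref{lem:nef-and-big-to-modified-kahler} I would write $\eta=\Theta+F$ with $\Theta$ a modified K\"ahler class and $F\geq 0$ an effective $\mbQ$-divisor such that $(X,B+F)$ is klt; replacing $B$ by $B+F$ and $\eta$ by $\Theta$ does not change the class $K_X+B+\eta$, hence preserves both its pseudo-effectivity and the inequality $(K_X+B+\eta)\cdot R<0$, and allows me to assume that $(X,B)$ is klt and that $\eta$ is a modified K\"ahler class. The degenerate sub-case $\alpha|_S\equiv 0$ is then immediate: every curve $C\subset S$ satisfies $\alpha\cdot C=0$, so $[C]\in R$, and as a surface always carries a covering family of curves this alone contradicts very rigidity. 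Thus I may assume $\alpha|_S\not\equiv 0$.

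The heart of the matter is to place ourselves in the hypotheses of Lemma \ref{lem:null-locus}, that is, to exhibit the supporting class in the adjoint form $\alpha=K_X+B+\omega$ with $\omega$ a modified K\"ahler class and with $\alpha$ nef and \emph{big}. This is precisely where smallness of $R$ enters: for a small $(K_X+B+\eta)$-negative ray the nef supporting class should be forced to be nef and big, and the modified K\"ahler class $\omega$ should be produced from $\Theta$ together with the cone theorem (Theorem \ref{t-cone-mrc} and Corollary \ref{cor:cone-finite}), which presents $\NA(X)$ as $\NA(X)_{(K_X+B+\Theta)\geq 0}$ plus finitely many $(K_X+B+\Theta)$-negative rays and supplies, for $R$, a nef supporting class of adjoint type. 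I expect this normalisation --- the bigness of the supporting class together with its modified K\"ahler adjoint representation --- to be the main obstacle. It is essential to arrange this for $\alpha$ itself: one cannot simply perturb $\alpha$ by adding a small multiple of the big class $\eta$ to gain bigness, since any such perturbation would make $(\alpha+\epsilon\eta)^2\cdot S>0$ and so remove $S$ from the null locus, destroying the very condition that drives the argument.

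Granting this representation, the conclusion follows at once. By Lemma \ref{lem:null-locus}, the inclusion $S\subset\Null(\alpha)$ implies that $S$ is Moishezon and is covered by a family $\{C_t\}_{t\in T}$ of $\alpha$-trivial curves. Each member then satisfies $[C_t]\in R$, and since the family covers the surface $S$ a very general member deforms, so $\dim_{C_t}\Chow(X)>0$, and a fortiori $\dim_{mC_t}\Chow(X)>0$ for $m\geq 1$. This contradicts the fact that, $R$ being small, every curve whose class lies in $R$ is very rigid. Hence $\alpha^2\cdot S=0$ is impossible, and we conclude $\alpha^2\cdot S>0$.
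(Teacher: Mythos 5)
Your overall strategy coincides with the paper's — assume $\alpha^2\cdot S=0$, produce a covering family of $\alpha$-trivial curves on $S$, and contradict the very rigidity of curves in $R$ forced by smallness — but you route the geometric core through Lemma \ref{lem:null-locus}, whereas the paper re-proves that core in situ: it establishes Claim \ref{clm:surface-uniruledness} (an adjunction inequality $(K_{\hat S}+E)\cdot\pi^*(\alpha|_S)<0$ on the minimal resolution, obtained from the divisorial Zariski decomposition of $K_X+B+\eta$) and then invokes \cite[Proposition 4.4]{CHP16} for the covering family. Your shortcut is legitimate, and arguably cleaner, since Lemma \ref{lem:null-locus} is proved earlier in the paper and requires no pseudo-effectivity of $K_X+B$; but two of your steps are wrong as written, although both are repairable inside your own framework.

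First, the normalisation via Lemma \ref{lem:nef-and-big-to-modified-kahler} is not available: that lemma requires $(X,B)$ to be klt, while the Proposition assumes only dlt, and the obvious reduction (replace $B$ by $(1-\delta)B$ and $\eta$ by $\eta+\delta B$) destroys both the nefness of $\eta$ and, possibly, the pseudo-effectivity of $K_X+B+\eta$ (which may lie on the boundary of the pseudo-effective cone). Fortunately this normalisation is unnecessary. The rescaling step — which you rightly identify as the crux and rightly insist must be performed on $\alpha$ itself — is the paper's ``standard technique'': since $\alpha$ cuts out $R$ and $(K_X+B+\eta)\cdot R<0$, a compactness argument on a slice of $\NA(X)$ shows that $c\alpha-(K_X+B+\eta)$ is positive on $\NA(X)\setminus\{0\}$ for $c\gg 0$, hence is a K\"ahler class $\omega_0$ by \cite[Corollary 3.16]{HP16} (exactly as in the proof of Theorem \ref{c-cont}). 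Then $c\alpha=K_X+B+(\eta+\omega_0)$, and $\eta+\omega_0$ is K\"ahler — nef plus K\"ahler is K\"ahler, since $\Nef(X)$ is the closure of the open convex K\"ahler cone — hence modified K\"ahler; moreover $c\alpha$ is big, being pseudo-effective plus K\"ahler. Note that this bigness has nothing to do with smallness of $R$, contrary to your remark: it holds equally for divisorial rays, and smallness enters only in the final contradiction. With this, Lemma \ref{lem:null-locus} applies verbatim to the dlt pair $(X,B)$ and your argument closes. Second, your separate treatment of the case $\alpha|_S\equiv 0$ rests on the claim that a surface always carries a covering family of curves, which is false for compact K\"ahler surfaces (a generic K3 surface or generic $2$-torus contains no curves at all); the paper instead first deduces projectivity of $S$ from the ampleness of $-(K_X+B)|_S$. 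Since Lemma \ref{lem:null-locus} already covers this degenerate case (its Case I), you should simply delete the separate treatment rather than patch it.
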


\begin{proof}
First note that by a standard technique using  Theorem \ref{t-cone-mrc} and Corollary \ref{cor:cone-finite} we can write $\alpha-(K_X+B+\eta)=\omega$, for some K\"ahler class $\omega$, i.e. $\alpha=K_X+B+\eta+\omega$. Assume by way of contradiction that $\alpha^2\cdot S=(\alpha|_S)^2=0$. First assume that $\alpha|_S=0$. Then  we have $-(K_X+B)|_S=(\eta+\omega)|_S$. Thus $-(K_X+B)|_S$ is an ample divisor on $S$, in particular, $S$ is projective. So we can cover $S$ by a family of curves. But since $\alpha|_S=0$, all these curves are contained in $R$, this is a contradiction, since $R$ is small.

Next assume that $\alpha|_S\neq 0$ but $(\alpha|_S)^2=0$. Then we have 	
\[
	0=\alpha^2\cdot S=(K_X+B)\cdot\alpha\cdot S+(\eta+\omega)\cdot\alpha\cdot S
\]
and
 \[
	(\eta+\omega)\cdot\alpha\cdot S=(\eta+\omega)|_S\cdot\alpha|_S>0, \quad\mbox{since } \alpha|_S \mbox{ is a non-zero nef class.}
\]	
Therefore we have
\begin{equation}\label{eqn:adjoint-negative}
	(K_X+B)\cdot\alpha\cdot S<0.
\end{equation}	
By a similar computation we also have
\begin{equation}\label{eqn:adjoint-negative2}
	(K_X+B+\eta)\cdot\alpha\cdot S<0.
\end{equation}	

In particular, $(K_X+B)|_S$ (resp. $(K_X+B+\eta)|_S$) is not pseudo-effective, since $\alpha|_S$ is a nef class.

Let $\pi:\hat{S}\to S$ be the minimal resolution of $S$ dominating the normalization of $S$. We make the following claim.
\begin{claim}\label{clm:surface-uniruledness}
	There is an effective $\mbQ$-divisor $E\>0$ on $\hat{S}$ such that \[(K_{\hat{S}}+E)\cdot \pi^*(\alpha|_S)<0.\]
\end{claim}	
	Note that once we have this claim, the rest of the proof works exactly as in the proof of \cite[Proposition 4.4]{CHP16}, since the only thing used there is this property of the nef class $\pi^*(\alpha|_S)$, and it does not depend on whether $K_X+B$ is pseudo-effective or not. In the following we will prove our claim. \\
	
	\begin{proof}[Proof of Claim \ref{clm:surface-uniruledness}]
		We will split the proof into two cases.\\
		
\noindent
\textbf{Case I:} Assume that $B\cdot\alpha\cdot S=B|_S\cdot\alpha|_S<0$. Then $S$ is contained in the support of $B$, since $\alpha|_S$ is nef. Then there exists a real number $\lambda\>0$ such that the coefficient of $S$ in $(1+\lambda)B$ is $1$. Then using \eqref{eqn:adjoint-negative} we have
\[
	(K_X+(1+\lambda)B)\cdot\alpha\cdot S\<(K_X+B)\cdot\alpha\cdot S<0.
\] 

Thus by adjunction (see Lemma \ref{l-adj}), there exists an effective $\mbQ$-divisor $E$ on $\hat{S}$ such that
\[
	(K_{\hat{S}}+E)\cdot \pi^*(\alpha|_S)=\pi^*((K_X+(1+\lambda)B)|_S)\cdot \pi^*(\alpha|_S)=(K_X+(1+\lambda)B)\cdot\alpha\cdot S<0.
\]
This proves our claim in this case.\\

\noindent
\textbf{Case II:}  Assume that $B\cdot\alpha\cdot S\>0$. Then we have 
\begin{equation}\label{eqn:K_X-negative}
	K_X\cdot\alpha\cdot S\<(K_X+B)\cdot\alpha\cdot S<0.
\end{equation} 
Now consider the Zariski decomposition of $K_X+B+\eta$:
\[
	K_X+B+\eta\num \sum_{j=1}^r\lambda_j S_j+P,
\]	
where $\lambda_j\>0$ for all $j$ and $P$ is nef in codimension $1$.
	
We claim that $S=S_j$ for some $j$. If not, then $\sum\lambda_j S_j|_S+P|_S$ is pseudo-effective. But then from \eqref{eqn:adjoint-negative2} we have
\[
	0>(K_X+B+\eta)\cdot\alpha\cdot S=\left(\left(\sum\lambda_jS_j|_S\right)\cdot\alpha|_S+P|_S\cdot\alpha|_S\right)\>0, \mbox{ a contradiction.}
\]
 Next we claim that $\alpha\cdot S^2<0$. To see this first assume that $S=S_1$. Then we have
 \[
 	0>(K_X+B+\eta)\cdot\alpha\cdot S=\left(\sum_{j=2}^r\lambda_jS_j|_S\cdot\alpha|_S\right)+\lambda_1 \alpha\cdot S^2+P|_S\cdot\alpha|_S.
 \]
 Since the first and the last term on the right hand side are non-negative, it follows that $\alpha\cdot S^2<0$. Combining this with \eqref{eqn:K_X-negative} we have $(K_X+S)\cdot\alpha\cdot S<0$. Then by adjunction (see Lemma \ref{l-adj}) there exists an effective $\mbQ$-divisor $E$ on $\hat{S}$ such that
\[
	(K_{\hat{S}}+E)\cdot\pi^*(\alpha|_S)=\pi^*((K_X+S)|_S)\cdot\pi^*(\alpha|_S)=(K_X+S)\cdot\alpha\cdot S<0.
\]
This completes the proof of the claim.

\end{proof}

\end{proof}

\subsection{Existence of divisorial contractions and flips}


\begin{theorem}\label{c-cont} Let $(X,B)$ be a dlt pair, where $X$ is a strongly $\mbQ$-factorial compact K\"ahler $3$-fold. Suppose that $X$ is uniruled and the dimension of the base of the MRC fibration $X\bir Z$ is $2$, and $(K_X+B)\cdot F<0$ for a general fiber $F$ of $X\bir Z$. Let $\omega$ be a  K\"ahler class (or more generally a nef and big class) such that $K_X+B+\omega$ is pseudo-effective, and $R$ a $(K_X+B+\omega)$-negative extremal ray. 
Then the contraction $c_R:X\to Y$ of the ray $R$ exists. Moreover, if $c_R$ is a small contraction, then the flip of $c_R$ exists.
\end{theorem}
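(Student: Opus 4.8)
The plan is to first manufacture a nef and big class $\alpha$ supporting $R$, and then to split into the small and divisorial cases, treating each by the machinery of Section 3 with $K_X+B+\omega$ in the role of the pseudo-effective adjoint class. First, if $\omega$ is merely nef and big we replace $(X,B)$ by a klt pair in the standard way and apply Lemma \ref{lem:nef-and-big-to-modified-kahler} to reduce to the case where $\omega$ is a modified K\"ahler class; this is harmless since all the subsequent inputs (Corollary \ref{cor:modified-kahler}, Lemma \ref{l-mod}) require only that $\omega$ be modified K\"ahler. Since $K_X+B+\omega$ is pseudo-effective and a general fibre $F$ of $X\bir Z$ is movable, $(K_X+B+\omega)\cdot F\>0$. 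Applying the cone theorem in the form of Corollary \ref{cor:cone-finite} (when $(K_X+B+\omega)\cdot F>0$) or Theorem \ref{t-cone-mrc} (when $(K_X+B+\omega)\cdot F=0$), together with the perturbation used in the proof of Proposition \ref{pro:non-null-locus}, we obtain a nef supporting class $\alpha$ with $\alpha^\perp\cap\NA(X)=R$ of the form $\alpha=(K_X+B+\omega)+\omega_1$ for some K\"ahler class $\omega_1$. As $K_X+B+\omega$ is pseudo-effective and $\omega_1$ is big, $\alpha$ is nef and big; note also that $(K_X+B)\cdot R<0$, because $\omega\cdot R>0$.

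Suppose first that $R$ is small. Then Proposition \ref{pro:non-null-locus}, applied with $\eta=\omega$, yields $\alpha^2\cdot S>0$ for every irreducible surface $S\subset X$, so that $\dim\Null(\alpha)\<1$. With this null-locus condition the small contraction $c_R:X\to Y$ is constructed exactly as in Theorem \ref{t-pmmp}(1), i.e.\ \cite[Theorem 10.14]{DO22}, whose proof uses only that $\alpha$ is nef and big with one-dimensional null locus supporting a small ray and not the pseudo-effectivity of $K_X+B$. Since $(K_X+B)\cdot R<0$, the morphism $c_R$ is then a $(K_X+B)$-flipping contraction, so the flip $\phi:X\bir X^+$ over $Y$ exists by Theorem \ref{t-flip}; moreover $\omega^+=\phi_*\omega$ is again modified K\"ahler by Lemma \ref{l-mod}, hence nef over $Y$, so $K_{X^+}+B^++\omega^+$ is relatively ample over $Y$ and $\phi$ is simultaneously the $(K_X+B+\omega)$-flip, as required.

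It remains to treat the divisorial case, in which there is a unique surface $S$ covered by the curves of $R$ and the nef dimension is $n(\alpha)\in\{0,1\}$. When $n(\alpha)=0$ one contracts $S$ to a point following the argument of Theorem \ref{t-pmmp}(2), and when $n(\alpha)=1$ one runs the $\mu^*\alpha$-trivial MMP on a log resolution $\mu:X'\to X$ and contracts $S$ to a curve following the proof of Theorem \ref{t-div1}. In both arguments $K_X+B$ is replaced throughout by $K_X+B+\omega$: the divisorial Zariski decomposition \eqref{e-le} together with (the proof of) Claim \ref{c-uni} supplies the surface $S=S_j$ and its projective uniruled minimal resolution, while the modified K\"ahler class $\omega$ feeds into Corollary \ref{cor:modified-kahler} and Lemma \ref{l-mod} exactly as in the pseudo-effective setting, so that the bend-and-break and domination arguments at the end of the $\alpha$-trivial MMP go through verbatim to produce the morphism $c_R$.

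The main obstacle is that in the $n(\alpha)=1$ case the intermediate steps of the $\mu^*\alpha$-trivial MMP are contractions and flips of $(K_{X'_i}+\Delta'_i)$-negative, $\mu^*\alpha$-trivial rays, and here $K_{X'_i}+\Delta'_i$ need no longer be pseudo-effective (only $K_{X'_i}+\Delta'_i+\mu^*\omega$ is), so Theorem \ref{t-pmmp} cannot be quoted as it stands. The route I expect to work is to observe that the contracted rays lie in $\Supp\lfloor\Delta'_i\rfloor=\mu^{-1}(S)$ and are of relative type: the flips require no pseudo-effectivity and exist by Theorem \ref{t-flip}; the rays contracted by $\mu$ are handled by the relative projective MMP over $X$ via Proposition \ref{pro:relative-projective-mmp}; and the remaining rays, whose curves are $\alpha$-trivial and vertical over the nef-reduction base $T$ (a smooth projective curve, since the nef reduction is a morphism by \cite[2.4.4]{BCE02}), are contractions within the surfaces dominating $T$ and are again relative in nature. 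Termination is then supplied by Theorem \ref{t-term}, and the argument of Theorem \ref{t-div1} produces $c_R$, completing the divisorial case.
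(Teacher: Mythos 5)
Your overall architecture (nef and big supporting class, small case via Proposition \ref{pro:non-null-locus}, divisorial case split by $n(\alpha)$ and reduced to the proof of Theorem \ref{t-div1}) matches the paper's, but there are two genuine gaps. The serious one is in the divisorial case with $n(\alpha)=1$. You correctly identify the obstacle --- Theorem \ref{t-pmmp} cannot be quoted for the intermediate steps because $K_{X'_i}+\Delta'_i$ need not be pseudo-effective --- but your route around it fails. First, Theorem \ref{t-flip} produces a flip only from an \emph{already existing} flipping contraction; the whole difficulty is the existence of the contraction morphism, which your appeal to Theorem \ref{t-flip} presupposes. Second, your trichotomy does not cover the rays that actually matter: an $\alpha'_i$-trivial ray whose curves map onto curves of $R$ (curves in $S$ vertical over $T$) is not contracted by $X'_i\to X$, so Proposition \ref{pro:relative-projective-mmp} says nothing about it; nor is it ``relative over $T$'', since there is no morphism from the $3$-fold $X'_i$ to $T$ --- only the components of $\mu^{-1}(S)$ map to $T$. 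In particular the contraction of $S'$ itself, or of an $E_j$ dominating $T$, which is precisely what this MMP must achieve, is not produced by any relative argument. The paper's induction rests on an observation you miss: since $\alpha'_i$ is nef and big, $K_{X'_i}+\Delta'_i+t_i\alpha'_i$ \emph{is} pseudo-effective for $t_i\gg 0$, so $(X'_i,\Delta'_i)$ is back in the situation of the theorem being proved; Lemma \ref{lem:nef-and-big-to-modified-kahler} together with Theorem \ref{t-cone-mrc} and Corollary \ref{cor:cone-finite} then yields an $\alpha'_i$-trivial, $(K_{X'_i}+\Delta'_i)$-negative extremal ray $R'_i$, whose contraction exists by the already-established small case when $R'_i$ is small, and, when $R'_i$ is divisorial, by the $n=0$ case or by \cite[Proposition 4.5]{CHP16}, because the contracted divisor $S'_i$ is a component of $\lfloor \Delta'_i\rfloor$ and hence $(S'_i,0)$ is semi-log canonical by dlt adjunction.

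The second gap is your opening reduction. Replacing $\omega$ by a modified K\"ahler class at the outset is \emph{not} harmless: nefness of $\omega$ is used --- and, as the paper itself points out in the proof of Theorem \ref{thm:birational-contraction}, it is the only place it is used --- to conclude $(K_X+B)\cdot R<0$, which is what makes $c_R$ a $(K_X+B)$-flipping contraction (so that Theorem \ref{t-flip} applies) and what gives the relation $K_X+B\equiv_\alpha aS$ in the divisorial case. After the reduction, your assertion ``$\omega\cdot R>0$'' is unjustified: a modified K\"ahler class can be non-positive on curves (this is exactly why the finite set $\mcD$ appears in the proof of Theorem \ref{t-cone-mrc}), and the same problem infects your later claim that $\phi_*\omega$ is ``nef over $Y$''. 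Moreover Proposition \ref{pro:non-null-locus} requires $\eta$ to be nef and big, so it cannot be applied with $\eta$ equal to the post-reduction $\omega$. This gap is repairable --- record $(K_X+B)\cdot R<0$ and apply Proposition \ref{pro:non-null-locus} with the original nef class $\omega$ \emph{before} performing the reduction --- but as written your argument breaks down precisely at the point where the paper is most careful.
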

Note that we will only use the strongly $\Q$-factorial hypothesis for the case of contractions of divisors to curves (i.e. $n(\alpha )=1$).

\begin{proof} 


Since $R$ is $(K_X+B+\omega)$-negative, for any K\"ahler class $\omega'$, there is a number $0<\ve\ll 1$ such that $R$ is also $(K_X+B+\omega+\ve\omega')$-negative. Since $\omega$ is nef, $\omega+\ve\omega'$ is K\"ahler, and hence $\omega+\ve\omega'+\delta B$ is K\"ahler for $0<\delta\ll\ve$. Thus replacing $B$ by $(1-\delta)B$ and $\omega$ by $\omega+\ve\omega'+\delta B$ we may assume that $(X, B)$ is klt and $\omega$ is a K\"ahler class. Note that $K_X+B+\omega$ is pseudo-effective and $(K_X+B+\omega)\cdot R<0$.
By a similar argument as in the proof of \cite[Proposition 4.3]{CHP16} (using Theorem \ref{t-cone-mrc} and Corollary \ref{cor:cone-finite}) we may assume that the extremal ray $R$ is cut out by a  nef $(1, 1)$-class $\alpha$. Re-scaling $\alpha$ if necessary, we see that $\eta:=\alpha -(K_X+B+\omega)$ is positive on $\overline{\rm NA}(X)\setminus \{0\}$, and hence $\eta$ is a K\"ahler class by \cite[Corollary 3.16]{HP16}. Thus it follows that $\alpha $ is a nef and big class, since $K_X+B+\omega$ is pseudo-effective.

Suppose that $R$ is small, then the contraction $c_R$ exists by \cite[Theorem 4.2]{CHP16}, see also \cite[Theorem 4.12]{DH23}. Note that even though \cite[Theorem 4.2]{CHP16} is stated for non-uniruled varieties, its proof works in our case if we replace \cite[Proposition 4.4]{CHP16} by Proposition \ref{pro:non-null-locus}. Since $\omega \cdot R> 0$ (as $\omega$ is K\"ahler), we have $(K_X+B)\cdot R<0$ and so this is also a $(K_X+B)$-flipping contraction. The existence of the flip then follows from Theorem \ref{t-flip}.

If $R$ is of divisorial type, then the corresponding irreducible divisor $S$ is covered by and contains all the curves $C\subset X$ such that $[C]\in R$. 
Recall from Notation \ref{not:nef-reduction} that $\nu :\tilde S \to S$ is the normalization and $f:\tilde S \to T$ is an $\alpha$-trivial fibration with $\dim T=n(\alpha )\in \{0,1\}$. 

If $n(\alpha)=0$, then the existence of $c_R$ follows by the same arguments as in \cite[Corollary 7.7]{HP16}, also see \cite[Corollary 3.4]{DH23}. Note in fact that in this case it suffices to show that if $mS$ is Cartier, then $(-mS)|_S$ is ample. This in turn is a consequence of the fact that $-mS\cdot R>0$.

If $n(\alpha)=1$, the existence of the corresponding divisorial contraction will follow from the proof of Theorem \ref{t-div1sqf}. 
 Note that, the only place in the proof of Theorem \ref{t-div1sqf} where we used that $K_X+B$ is pseudo-effective, is in the proof of Claim \ref{c-ammp}.
 We will now explain how to run the corresponding minimal model program in the non-pseudo-effective case. We will adopt the notation of Claim \ref{c-ammp}.

In the first part of the proof of this claim we must show that we can perform the required flips and divisorial contractions $X'_i\dasharrow X'_{i+1}$. 
Consider an $\alpha _i'$-trivial, $(K_{X'_i}+\Delta _i')$-negative extremal ray $R_i$ such that $P\cdot R_i<0$ for some component $P$ of $\lfloor \Delta _i'\rfloor$. By adjunction, $K_P+\Delta _P=(K_{X'_i}+\Delta _i')|_P$ induces a dlt surface pair $(P,\Delta _P)$ such that $\alpha _P:=\alpha '_i|_P$ is nef. Note however that, as $P\cdot R_i<0$, all curves $[C]\in R_i$ are contained in $P$ and hence $\alpha _P$ is not K\"ahler.
Consider the induced linear map $\iota:\overline {\rm NA}(P)\to \overline {\rm NA}(X'_i)$ and let $F=\iota ^{-1}(R_i)$, then $F$ is a $(K_P+\Delta _P)$-negative extremal face of $ \overline {\rm NA}(P)$. By the surface cone theorem and contraction theorem (see \cite[Theorem 1.31]{DO23} and \cite[Lem. 2.30 and Cor. 2.32(2)]{DHY23}), there is a corresponding contraction morphism $\gamma :P\to W$. By \cite[Proposition 7.4]{HP16} (see also \cite[Theorem 4.7]{DH23}), there exists a bimeromorphic morphism $g : X'_i\to Z$ such that $g|_P=\gamma$ and $g|_{X_i'\setminus P}$ is an isomorphism onto $Z\setminus W$. If $g$ is a flipping contraction, then the flip exists by Theorem \ref{t-flip} and termination follows from Theorem \ref{t-term}.

 We may therefore assume that we have constructed $X'_n$ so that there are no $\alpha _n'$-trivial $(K_{X'_n}+\Delta _n')$-negative extremal rays and we must conclude that there are no $\alpha _n'$-trivial, $(K_{X'_n}+\Delta _n')$-negative curves. This follows from the cone theorem proved in Theorem \ref{t-cone-mrc}  and Corollary \ref{cor:cone-finite}
 as we will now explain. Recall that  $\omega _n'$ is modified K\"ahler and $\alpha _n'=K_{X_n'}+\Delta _n'+\omega _n'$ is nef and big. Choose $0<\ve\ll 1$ so that $K_{X'_n}+\Delta'_n+(1-\ve)\omega'_n$ is big, in particular, $(K_{X_n'}+\Delta _n'+(1-\ve) \omega _n')\cdot X_z> 0$.
 Thus by Corollary \ref{cor:cone-finite} it follows that there exist finitely many curves $\{\Gamma _i\}_{i=1}^N$ such that
\[ \NA (X_n')=\NA (X'_n)_{(K_{X_n'}+\Delta _n'+(1-\ve)\omega_n' ){\geq 0}}+\sum _{i=1}^N\mathbb R ^+[\Gamma _i].   \]
 Suppose that $\Sigma $ is an $\alpha _n'$-trivial $(K_{X_n'}+\Delta _n')$-negative curve, then  $\Sigma = \Sigma _1+\Sigma _2$, where $\Sigma _1\in \NA (X)_{(K_{X_n'}+\Delta _n'+(1-\ve)\omega_n' ){\geq 0}}$ and $\Sigma _2\in \sum _{i=1}^N\mathbb R ^+[\Gamma _i]$. 
 Since $\alpha _n'\cdot \Sigma =0$ and $\alpha _n'$ is nef, we have $\alpha _n'\cdot \Sigma_1 =\alpha _n'\cdot \Sigma_2 =0$. Observe that the rays $\mbR^+[\Gamma_i], i=1,2,\ldots, n$ are $(K_{X'_n}+\Delta'_n)$-negative. Then by our assumption above $\alpha _n'\cdot \Gamma _i>0$ for all $i\in \{1,2,\ldots, n\}$, and hence $\Sigma _2=0$, in particular, $\Sigma \in \NA (X)_{(K_{X_n'}+\Delta _n'+(1-\ve)\omega_n'){\geq 0}}$. Since $\Sigma$ is $\alpha'_n$-trivial, it follows that $(K_{X'_n}+\Delta'_n)\cdot\Sigma\geq 0$, this is a contradiction to the fact that $(K_{X_n'}+\Delta _n')\cdot \Sigma <0$. This proves our claim.

\end{proof}

\section{Towards the Existence of Mori fiber spaces}
In this section we will prove the key technical results needed for the proof of
Theorem \ref{t-main2} in the next section. We start with the following log version of  \cite[Theorem 1.3]{HP15}.

\begin{theorem}\label{thm:partial-mfs}
	Let $(X, B)$ be a dlt pair, where $X$ is a strongly-$\mbQ$-factorial compact K\"ahler $3$-fold. Suppose that $X$ is uniruled and the base of the MRC fibration $f:X\bir Z$ has dimension $2$ and $(K_X+B)\cdot F<0$, where $F$ is a general fiber of $f$. Then there is a bimeromorphic map $\phi:X\bir X'$ given by a sequence of $(K_X+B)$-flips and divisorial contractions 
	such that for any $(K_{X'}+B')$-normalized K\"ahler class $\omega'$ on $X'$ (see Definition \ref{def:normalized-kahler-form}), the adjoint class $K_{X'}+B'+\omega'$ is nef, where $B'=\phi _*B$.
\end{theorem}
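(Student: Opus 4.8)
The plan is to run a $(K_X+B)$-MMP whose individual steps are justified by normalized K\"ahler classes, and to stop exactly when the desired nef property holds; the point is that termination of such an MMP forces the process to halt after finitely many steps, and the halting condition is precisely the conclusion we want.

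First I would set up the inductive step. Suppose we have reached a model $(X_i,B_i)$ which is again a $\mbQ$-factorial dlt compact K\"ahler $3$-fold, still uniruled with MRC base $Z$ (a birational invariant), whose general fiber $F_i\cong\mbP^1$ satisfies $(K_{X_i}+B_i)\cdot F_i=(K_X+B)\cdot F<0$. Since $X_i$ is K\"ahler it carries a K\"ahler class $\kappa_i$, and scaling by $t_i=-(K_{X_i}+B_i)\cdot F_i/(\kappa_i\cdot F_i)>0$ produces a normalized K\"ahler class; so such classes always exist. If for \emph{every} normalized K\"ahler class $\omega_i$ the adjoint $K_{X_i}+B_i+\omega_i$ is nef, I stop and set $X'=X_i$. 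Otherwise I fix a normalized K\"ahler class $\omega_i$ for which $K_{X_i}+B_i+\omega_i$ is not nef. By Lemma \ref{lem:normalized-pair-is-psef} this class is pseudo-effective, so by the cone theorem (Theorem \ref{t-cone-mrc}) the failure of nefness yields a $(K_{X_i}+B_i+\omega_i)$-negative extremal ray $R_i$ (some $\Gamma_j$ in the decomposition $\NA(X_i)=\NA(X_i)_{\geq 0}+\sum\mbR^+[\Gamma_j]$ must be adjoint-negative). Because $\omega_i$ is K\"ahler, hence positive on curves, we get $(K_{X_i}+B_i)\cdot R_i<0$, so $R_i$ is also $(K_{X_i}+B_i)$-negative. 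The contraction theorem (Theorem \ref{c-cont})—whose standing hypotheses are exactly the persisting ones above—then provides the contraction of $R_i$, together with its flip when $R_i$ is small, giving the next step $X_i\bir X_{i+1}$.

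The decisive point for termination is that, no matter which $\omega_i$ justified it, every step is a genuine $(K_X+B)$-flip or $(K_X+B)$-divisorial contraction. Each divisorial contraction drops the Picard number, so only finitely many occur, and by Theorem \ref{t-term} there is no infinite sequence of $(K_X+B)$-flips; hence the process stops after finitely many steps at some $X'=X_n$, yielding a sequence of flips and divisorial contractions as required. By construction, at $X'$ no normalized K\"ahler class admits a negative extremal ray, which by Theorem \ref{t-cone-mrc} is equivalent to $K_{X'}+B'+\omega'$ being nef for \emph{every} normalized K\"ahler class $\omega'$ on $X'$.

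The step I expect to require the most care is verifying that the structural hypotheses genuinely persist under each modification—most importantly that $(K_{X_i}+B_i)\cdot F_i<0$ is preserved and that $X_i$ retains a $2$-dimensional MRC base, so that Theorem \ref{c-cont} stays applicable. For this I would show that each contraction or flip is an isomorphism near a general fiber: the curves generating $R_i$ cannot be numerically proportional to $F_i$, since normalization forces $(K_{X_i}+B_i+\omega_i)\cdot F_i=0$ while $R_i$ is adjoint-negative, so the contracted or flipped locus is vertical over $Z$ (or meets $Z$ only finitely, inside fibers), leaving a neighborhood of the general fiber untouched and the fiber degree unchanged. A secondary routine point is that each $X_i$ is again K\"ahler—flips preserve Kählerness by Theorem \ref{t-flip}, and the divisorial contractions of Theorem \ref{c-cont} land in K\"ahler spaces—so that a normalized K\"ahler class is available at every stage.
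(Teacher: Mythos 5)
Your proposal is correct and takes essentially the same route as the paper's proof: at each stage pick a normalized K\"ahler class whose adjoint class fails to be nef, use Lemma \ref{lem:normalized-pair-is-psef} and the cone theorem (Theorem \ref{t-cone-mrc}) to extract a $(K_{X_i}+B_i+\omega_i)$-negative, hence $(K_{X_i}+B_i)$-negative, extremal ray, contract or flip it by Theorem \ref{c-cont} (and Theorem \ref{t-flip}), and terminate by Theorem \ref{t-term}. The differences are cosmetic: the paper first perturbs $(X,B)$ to a klt pair and leaves implicit the points you spell out (existence of normalized K\"ahler classes at each stage, persistence of the hypotheses because the contracted locus avoids a general fiber, and the bound on the number of divisorial contractions).
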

\begin{proof} Suppose that there is a $(K_{X}+B)$-normalized K\"ahler class $\omega$ on $X$, such that the adjoint class $K_{X}+B+\omega$ is not nef. Replacing $B$ by $(1-\epsilon )B$ and $\omega $ by $\omega +\epsilon B$ for $0<\epsilon \ll 1$, we may assume that $(X,B)$ is klt.
By Theorem \ref{t-cone-mrc}, there is a  $(K_X+B+\omega)$-negative extremal ray $R$. Note that $K_X+B+\omega$ is pseudo-effective by Lemma \ref{lem:normalized-pair-is-psef}, and thus by Theorem \ref{c-cont} we may flip or contract $R$. Repeating this procedure, since every step is $(K_X+B)$-negative,  by Theorem \ref{t-term} we obtain the required bimeromorphic map $\phi:X\bir X'$.
\end{proof}

Following the same ideas as in \cite{HP15} we prove a log version of \cite[Theorem 1.4]{HP15} below.
\begin{theorem}\label{thm:log-contraction-terminal-pair}
	Let $(X, B)$ be a terminal pair, where $X$ is a $\mbQ$-factorial compact K\"ahler $3$-fold. Suppose that $X$ is uniruled and  the base of the MRC-fibration $f:X\bir Z$ has dimension $2$.  Let $\omega$ be a nef and big class on $X$ such that $K_X+B+\omega$ is nef and $(K_X+B+\omega)\cdot F=0$, where $F$ is a general fiber of $f$. Then there exists a proper surjective morphism with connected fibers 	$\varphi:X\to S$ onto a normal compact K\"ahler surface $S$ such that $K_X+B+\omega$ is $\varphi$-trivial, i.e. $(K_X+B+\omega)|_{X_s}\num 0$ for all $s\in S$.
\end{theorem}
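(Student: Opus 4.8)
The plan is to realize the required fibration as the map associated to the nef class $\alpha:=K_X+B+\omega$, which by hypothesis is trivial on a general fibre $F$ of the MRC fibration $f:X\bir Z$. First I would reduce to the case where $\omega$ is a \emph{modified K\"ahler} class and $(X,B)$ is klt: by Lemma \ref{lem:nef-and-big-to-modified-kahler} write $\omega=\Theta+\Phi$ with $\Theta$ modified K\"ahler and $\Phi\>0$ an effective $\mbQ$-divisor such that $(X,B+\Phi)$ is klt. Replacing $B$ by $B+\Phi$ and $\omega$ by $\Theta$ leaves $\alpha$ unchanged, and since $\Theta$ is positive on the covering family $F$ we still have $(K_X+B)\cdot F<0$. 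Thus I may assume $\omega$ is modified K\"ahler, so that $\omega|_S$ is big for every irreducible surface $S\subset X$ by Corollary \ref{cor:modified-kahler}.

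The first main step is to show $n(\alpha)=2$, where $n(\alpha)$ is the nef dimension. Since the $\alpha$-trivial curves $F$ cover $X$ we get $n(\alpha)\<2$. To rule out $n(\alpha)=0$: then $\alpha\num 0$, so $-(K_X+B)\num\omega$ is nef and big, whence $X$ is rationally connected by \cite{HM07}, contradicting $\dim Z=2$. To rule out $n(\alpha)=1$: let $G_t$ be a general fibre of the nef reduction of $\alpha$, an irreducible surface with $[G_t]|_{G_t}\num 0$ (it moves in a fibration over a curve) and $\alpha|_{G_t}\num 0$; hence $-(K_X+B)|_{G_t}\num\omega|_{G_t}$ is big. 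Applying the adjunction formula of Lemma \ref{l-adj} on the minimal resolution $\nu:\hat G_t\to G_t$ and using $[G_t]|_{G_t}\num 0$ gives $-K_{\hat G_t}\num\nu^*(\omega|_{G_t})+(\text{effective})$, which is big, so $\hat G_t$ is rational by Lemma \ref{lem:projectivity-of-kahler-surface} and surface classification. Then $G_t$ is rationally connected, forcing the MRC fibres to be at least two-dimensional and $\dim Z\<1$, a contradiction. Hence $n(\alpha)=2$, and the nef reduction of $\alpha$ agrees birationally with the MRC fibration onto the surface $Z$.

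Next I would promote this to an honest morphism. Choose a resolution $\pi:X'\to X$ on which the induced map to a smooth compact K\"ahler surface $Z'$ (a resolution of $Z$) is a morphism $h:X'\to Z'$, and set $\alpha'=\pi^*\alpha$, a nef class. \textbf{The crux} is to prove that $\alpha'$ is trivial on \emph{every} fibre of $h$, i.e.\ $\alpha'\cdot C=0$ for all curves $C$ with $h(C)$ a point; equivalently, that the finitely many special fibres of the MRC fibration contain no $\alpha$-positive curve. This is exactly the delicate point of \cite{HP15} and I expect it to be the main obstacle, since the nef reduction only controls curves through very general points. I would handle it by the same analysis of the reducible and non-reduced fibres as in \cite{HP15}, using the cone theorem \ref{t-cone-mrc} to bound $(K_X+B+\omega)$-negative curves, Proposition \ref{pro:non-null-locus} to control the null locus of $\alpha$ along surfaces, and the bigness of $\omega$ on surfaces to exclude obstructing configurations. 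Granting this, Lemma \ref{l-HP} produces a class $\beta\in H^{1,1}_{\rm BC}(Z')$ with $\alpha'=h^*\beta$, and $\beta$ is nef because $\alpha'$ is and $h$ is surjective.

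Finally, nef dimension is preserved under surjective pullback, so $n(\beta)=n(\alpha')=n(\alpha)=2=\dim Z'$; on the surface $Z'$ this means $\beta$ is nef and big. Contracting its null locus, which consists of finitely many curves of negative-definite intersection by the Hodge index theorem (Lemma \ref{lem:hodge-index-theorem}), yields a birational morphism $\psi:Z'\to S$ onto a normal compact K\"ahler surface together with a K\"ahler class $\alpha_S$ on $S$ such that $\beta=\psi^*\alpha_S$. The composite $\Phi:=\psi\circ h:X'\to S$ satisfies $\Phi^*\alpha_S=h^*\beta=\alpha'=\pi^*\alpha$. Every $\pi$-exceptional curve $C$ has $\alpha'\cdot C=\alpha\cdot\pi_*C=0$, hence $\Phi^*\alpha_S\cdot C=0$ and, as $\alpha_S$ is K\"ahler, $\Phi(C)$ is a point; thus $\Phi$ contracts every fibre of $\pi$ and, by the rigidity lemma, factors as $\Phi=\varphi\circ\pi$ for a morphism $\varphi:X\to S$ with $\varphi^*\alpha_S=\alpha$ (using injectivity of $\pi^*$ from Lemma \ref{l-HP}). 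After Stein factorization I may take $\varphi$ to have connected fibres, and then $K_X+B+\omega=\alpha=\varphi^*\alpha_S$ restricts to $0$ on every fibre $X_s$, so $K_X+B+\omega$ is $\varphi$-trivial, completing the proof.
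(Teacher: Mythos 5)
Your proposal leaves the actual content of the theorem unproved and, in the steps you do carry out, relies on two false claims. The crux you correctly identify --- that $\alpha:=K_X+B+\omega$ is trivial on \emph{every} fibre, not only the general ones --- is exactly where the work lies, but it is not handled in \cite{HP15} by ``the same analysis of the reducible and non-reduced fibres'': no such analysis exists there. Instead, the paper (following \cite{HP15}) works with the normalization $\Gamma$ of the pullback of the universal family over $\Chow(X)$, with projections $p:\Gamma\to X$ and $q:\Gamma\to Z$. Because $q$ is \emph{equidimensional} with one-dimensional fibres, nefness of $p^*\alpha$ together with triviality on the general fibres forces $p^*\alpha\equiv 0$ on every irreducible component of every fibre; the class $\beta$ on $Z$ with $p^*\alpha=q^*\beta$ is then produced by the Leray spectral sequence ($R^1q_*\mbR=0$ since the fibres are rational curves) and the constructibility of $R^2q_*\mbR$. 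Your alternative --- a resolution $\pi:X'\to X$ carrying a morphism $h:X'\to Z'$ --- reintroduces two-dimensional special fibres, which is precisely the difficulty the Chow-family construction is designed to avoid; moreover Lemma \ref{l-HP}, which you invoke to descend $\alpha'$ along $h$, applies only to \emph{bimeromorphic} morphisms, not to a fibration.

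Two of your intermediate steps are wrong as stated. First, to exclude $n(\alpha)=1$ you argue that bigness of $-K_{\hat G_t}$ makes $\hat G_t$ rational; this is false: $\mbP(\mcO\oplus\mcO(-1))$ over an elliptic curve has big anticanonical class but irregularity $1$. A fibre $G_t$ ruled over a positive-genus curve is perfectly compatible with one-dimensional MRC fibres, so no contradiction with $\dim Z=2$ results. The paper avoids this by \emph{postponing} the modified-K\"ahler reduction to Step 3 (where, as it notes, nefness of $\omega$ is no longer needed): keeping $\omega$ nef and big and keeping the pair terminal, the general fibre $F$ of the nef reduction $X\to C$ is smooth (terminal $3$-fold singularities are isolated), adjunction gives $-(K_F+B|_F)\num\omega|_F$, which is nef and big by Lemma \ref{lem:restriction-of-nef-and-big}, and \cite{Zha06} then yields rational connectedness of $F$. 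Your opening reduction destroys exactly the two hypotheses (nefness of $\omega$ and terminality of $(X,B)$) that this argument needs. Second, your claim that $n(\beta)=2$ on a surface implies $\beta$ is big is also false: an irrational class on the boundary of the nef cone of an abelian surface has no trivial curves at all, hence nef dimension $2$, yet $\beta^2=0$. Bigness of the class on $Z$ is the content of Step 3 of the paper's proof, which requires a genuine argument: pseudo-effectivity of the relative adjoint class (via the method of Lemma \ref{lem:normalized-pair-is-psef}), the exceptionality relations of Step 2 (where terminality is used again), non-uniruledness of $Z$, and the Hodge index theorem.
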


\begin{proof}

We will closely follow the proof of \cite[Theorem 1.4]{HP15} here. We will consider  the nef dimension $\nu_{\rm nef}(K_X+B+\omega)$. First note that, since a dense open subset of $X$ is covered by $(K_X+B+\omega)$-trivial curves, the nef dimension $\nu_{\rm nef}(K_X+B+\omega) \leq 2$. We claim that $\nu_{\rm nef}(K_X+B+\omega)=2$. Indeed, if the nef dimension is 0, then $-(K_X+B)\equiv \omega$, and thus $-(K_X+B)$ is nef and big. In particular, $X$ is a Moishezon space. Since $X$ is also K\"ahler and has rational singularities, by Theorem \ref{t-nam} $X$ is projective. Then by \cite[Theorem 1]{Zha06} $X$ is rationally connected. This contradicts the fact that the base of the MRC fibration of $X$ has dimension $2$. 
 If  the nef dimension is 1, then there is a proper surjective morphism $f:X\to C$ to a smooth projective curve $C$ such that $(K_X+B+\omega)|_F\num 0$ for all general fibers $F$ of $f$ (see \cite[2.4.4]{BCE02}). Now since $X$ has terminal singularities, a general fiber $F$ is smooth, and thus by adjunction we have $(K_X+B)|_F=K_F+B|_F$ such that $(F, B|_F\>0)$ has klt singularities. Moreover, from Lemma \ref{lem:restriction-of-nef-and-big} we see that $-(K_F+B|_F)\num \omega|_F$ is nef and big. Therefore by \cite[Theorem 1]{Zha06} the general fiber of $f$ is rationally connected. This contradicts the fact that the base of the MRC fibration of $X$ has dimension $2$. Thus $\nu_{\rm nef}(K_X+B+\omega)=2$ as claimed.

There is an induced rational map $Z\dasharrow {\rm Chow}(X)$ sending the general points of $Z$ to the points corresponding to the general fibers of the MRC fibration $X\bir Z $.
Replace $Z$ by an appropriate resolution so that $Z\to \Chow(X)$ is a morphism, and let $\Gamma \to Z$ be the normalization of the pull-back of the universal family over $ {\rm Chow}(X)$. Then $\Gamma$ is a normal compact complex $3$-fold with equi-dimensional fibers of dimension $1$ over $Z$. Let $p:\Gamma\to X$ and $q:\Gamma\to Z$ be the projection maps. Note that as observed in the proof of \cite[Theorem 1.4]{HP15}, $\Gamma$ is in Fujiki's class $\mathcal C$ and hence by \cite[Theorem 3]{Var86}, $Z$ is in Fujiki's  class $\mathcal C$. Moreover, since $Z$ is a smooth compact surface, by \cite[Remark 1.1, page 236]{Fuj83} $Z$ is K\"ahler.

  We claim that there is a nef and big $(1, 1)$-class $\alpha$ on $Z$ such that
\begin{equation}\label{eqn:log-omega-pullback}
	p^*(K_X+B+\omega)=q^*\alpha.
\end{equation} 

We split our proof into 4 steps below as in the proof of \cite[Theorem 1.4]{HP15}.\\

\noindent
\emph{Step $1$}: This step shows the existence of a  nef $(1, 1)$-class $\alpha$ on $Z$ satisfying \eqref{eqn:log-omega-pullback}. The proof of this step is exactly same as the proof of \emph{Step $1$} of \cite[Theorem 1.4]{HP15}. However, for the convenience of the reader, we include the details here. Since the general fiber of $q$ is a rational curve, by \cite[II, 2.8.6.2]{Kol96}, it follows that $R^1q_*\mathcal O _\Gamma =0$. Consider the exponential sequence 
\[ 0\to \mathbb Z\to \mathcal O _{\Gamma}\to  \mathcal O _{\Gamma}^*\to 0.\]
Since $\mathcal O _Z=q_*\mathcal O _{\Gamma}\to  q_* \mathcal O _{\Gamma}^*=\mathcal O _Z^*$ is surjective, we have
 $R^1q_*\mathbb Z=0$. By the universal coefficient theorem $R^1q_* \mathbb R=0$. We now consider the Leray spectral sequence $E_2^{i,j}=H^i(Z,R^jq_* \mathbb R)$ degenerating to $H^*(\Gamma ,\mathbb R)$. Since $E_2^{i,1}=0$ for all $i$, it follows that there is an exact sequence
\[ 0\to H^2(Z,\mathbb R)\to H^2(\Gamma , \mathbb R)\to H^0(Z, R^2q_* \mathbb R). \]
To show that $[p^*(K_X+B+\omega)]=[q^*\alpha ]$ it then suffices to show that if $s\in E^{0,2}_2=H^0(Z,R^2q_* \mathbb R)$ is the section defined by $s(z)=[p^*(K_X+B+\omega )|_{\Gamma _z}]\in H^2(\Gamma _z,\mathbb R)$, then $s=0$. Since $R^2q_* \mathbb R$ is constructible (see \cite[Proposition 3.5]{ES10}), then the section $s$ vanishes if and only if it vanishes pointwisely.
Note that since $\omega $ is a $(K_X+B)$-normalized K\"ahler class, the claim clearly holds for general $z\in Z$. Since $\Gamma \to Z$ has equi-dimensional fibers and $p^*(K_X+B+\omega )$ is nef, it follows that $p^*(K_X+B+\omega )\equiv 0$ on every irreducible component of every fiber. Note that since $q^*\alpha$ is nef, $\alpha$ is also nef by \cite[Theorem 1]{Pau98}.\\

\noindent
\emph{Step $2$}: With the notation as in the \emph{Step $2$} of the proof of \cite[Theorem 1.4]{HP15} let $\mu:\hat{X}\to \Gamma$ be a resolution of singularities of $\Gamma$ such that  the exceptional locus of $\hat{p}=p\circ\mu:\hat{X}\to X$ has pure codimension $1$. Set $\hat{q}=q\circ\mu$. Then we have
\begin{equation}\label{eqn:restriction-on-the-exceptional-locus}
	\hat{q}^*\alpha\cdot\hat{D}=0\quad\mbox{in } N_1(\hat X)
\end{equation}
for every irreducible component $\hat{D}$ of the exceptional locus of $\hat{p}$. Since $X$ has $\mbQ$-factorial terminal singularities, the proof of this step is also the same proof of \emph{Step $2$} of \cite[Theorem 1.4]{HP15}, so we skip the details here. 
 
Next we claim that $\alpha$ is a big class on $Z$, i.e. $\alpha^2>0$. This is Step 3 below.\\

\noindent
\emph{Step $3$}: The proof of this step is almost identical to the proof of \emph{Step $3$} of \cite[Theorem 1.4]{HP15}. We include the details here for the convenience of the reader.
{ Note that using Lemma \ref{lem:nef-and-big-to-modified-kahler} from now on we may assume that $\omega$ is a modified K\"ahler class; observe that we loose the nefness of $\omega$ here, but it is not needed in the rest of the proof. Then by Lemma \ref{lem:modified-kahler}, replacing $\hat{X}$ by a higher resolution of $\Gamma$ if necessary, we may assume that there is an effective $\hat{p}$-exceptional $\mbR$-divisor $F\>0$ on $\hat{X}$ such that $\hat{p}^*\omega-F$ is a K\"ahler class.}
Since the K\"ahler cone is open, there is a K\"ahler class $\eta_Z$ on $Z$ such that $\hat{p}^*\omega-F-\hat{q}^*\eta_Z$ is a K\"ahler class on $\hat{X}$. Then by a similar argument as in the proof of Lemma \ref{lem:normalized-pair-is-psef}  it follows that
\begin{equation}\label{eqn:relative-canonicla-is-psef}
	K_{\hat{X}/Z}+\hat{B}+\hat{p}^*\omega-F-\hat{q}^*\eta_Z\quad\mbox{is pseudo-effective},
\end{equation}
where $ K_{\hat{X}}+\hat{B}=\hat{p}^*(K_X+B)+E$, $\hat{B}\>0, E\>0, \hat{p}_*\hat{B}=B$, and $\hat{B}$ and $E$ do not share any common component.

Now we have
\begin{equation}\label{eqn:log-psef}
	\hat{p}^*(K_X+B+\omega)=(K_{\hat{X}/Z}+\hat{B}+\hat{p}^*\omega-F-\hat{q}^*\eta_Z)-E+F+\hat{q}^*K_Z+\hat{q}^*\eta_Z.
\end{equation}
We will show that $\hat{q}^*\alpha^2=\hat{q}^*\alpha\cdot\hat{p}^*(K_X+B+\omega)$ is a non-zero class in $\NA(\hat{X})$. To that end first observe that, since $\alpha$ is nef and $K_{\hat{X}/Z}+\hat{B}+\hat{p}^*\omega-F-\hat{q}^*\eta_Z$ is pseudo-effective, the intersection product $\hat{q}^*\alpha\cdot(K_{\hat{X}/Z}+\hat{B}+\hat{p}^*\omega-F-\hat{q}^*\eta_Z)$ is an element of $\NA(\hat{X})$. Since $E$ and $F$ are both $\hat{p}$-exceptional, from \eqref{eqn:restriction-on-the-exceptional-locus} it follows that $\hat{q}^*\alpha\cdot (-E+F)=0$ in $\NA(\hat{X})$. Now since the surface $Z$ is not uniruled, by classification $K_Z$ is pseudo-effective; in particular, $\hat{q}^*\alpha\cdot\hat{q}^*K_Z$ is an element of $\NA(\hat{X})$. Now recall that $\alpha\neq 0$, since $K_X+B+\omega\neq 0$. Since $\eta_Z$ is a K\"ahler class and $\alpha$ is a non-zero nef class, the Hodge index theorem yields $\eta _Z\cdot\alpha>0$ (see Lemma \ref{lem:hodge-index-theorem}). In particular, $\hat{q}^*\alpha\cdot\hat{q}^*\eta_Z$ is a non-zero element of $\NA(\hat{X})$. Therefore $\hat{q}^*\alpha^2=\hat{q}^*\alpha\cdot\hat{p}^*(K_X+B+\omega)$ is a non-zero element of $\NA(\hat{X})$, and thus $\alpha^2\neq 0$ in $N^1(Z)$.\\

 \noindent
\emph{Step $4$}: Finally,  \emph{Step $4$} of the proof of \cite[Theorem 1.4]{HP15} shows the existence of a fibration $\varphi:X\to S$ such that $(K_X+B+\omega)|_{X_s}\num 0$ for $s\in S$. This step works here without any change, and completes our proof.\\
\end{proof}

\begin{lemma}\label{lem:restriction-of-nef-and-big}
 	Let $X$ be a normal compact K\"ahler variety and $f:X\to C$  a proper surjective morphism to a smooth projective curve $C$. Let $\omega\in H^{1, 1}_{\rm BC}(X)$ be a nef and big class. Then the restriction $\omega|_F$ is nef and big for general fibers $F$ of $f$. 
\end{lemma}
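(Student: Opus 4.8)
The plan is to treat nefness and bigness of $\omega|_F$ separately: nefness will follow directly from the definition, while bigness will be obtained by slicing a K\"ahler current representing $\omega$ along the fibres of $f$. First I would record that a general fibre $F$ may be taken inside the smooth locus $X_{\sm}$: since $X$ is normal, $X\setminus X_{\sm}$ has codimension $\geq 2$, so a general $F$ is a complex submanifold of $X_{\sm}$. For nefness, recall from Definition \ref{def:lots-of-definitions} that $\omega$ is represented by a real closed $(1,1)$ form $\alpha$ with local potentials such that, for a fixed positive form $\omega_0$ and every $\epsilon>0$, there is $g_\epsilon\in\mcA^0(X)$ with $\alpha+i\partial\bar\partial g_\epsilon\geq-\epsilon\omega_0$. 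Restricting $\alpha$ and $g_\epsilon$ to $F$ immediately shows that $\omega|_F$ is nef.

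For bigness I would use that, by Definition \ref{def:modified-kahler}, the big class $\omega$ contains a K\"ahler current, i.e. there is a closed positive $(1,1)$ current $T$ with local potentials in the class $\omega$ and a K\"ahler form $\omega_X$ on $X$ with $T\geq\omega_X$. Writing $T$ locally as $\alpha+i\partial\bar\partial\varphi$ with $\varphi$ quasi-psh, the key step is to slice $T$ along the fibres of $f$. By the slicing theory of positive closed currents, for a general $p\in C$ the fibre $F=f^{-1}(p)$ is not contained in the polar locus $\{\varphi=-\infty\}$ — this locus is pluripolar, hence meets almost every fibre in a set of measure zero — so the slice $T|_F$ is a well defined closed positive $(1,1)$ current on $F$ whose cohomology class is $\omega|_F$ and which satisfies $T|_F\geq\omega_X|_F$. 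Since $\omega_X|_F$ is a K\"ahler form on $F$, the current $T|_F$ is a K\"ahler current, and therefore $\omega|_F$ is big.

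Finally, to see that bigness holds for every general fibre in the usual sense (and not merely for almost every fibre), I would note that $\omega|_F$ is already nef, so on the smooth manifold $F$ its bigness is detected by the top self-intersection number $(\omega|_F)^{\dim F}=\omega^{\dim X-1}\cdot[F]$. As all fibres satisfy $[F]=f^*[\pt]$ in cohomology, this number is independent of $F$, and the previous paragraph shows it is strictly positive; hence $\omega|_F$ is big for every general fibre. Alternatively, one may first pull back to a K\"ahler resolution $\pi:\widetilde X\to X$ (Remark \ref{rmk:kahler-property}), replacing $\omega$ by $\pi^*\omega$, which remains nef and big because $(\pi^*\omega)^{\dim X}=\omega^{\dim X}$, and then run the slicing argument on the smooth fibres of $f\circ\pi$. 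I expect the only delicate point to be the slicing of the K\"ahler current $T$: one must justify that the slice $T|_F$ exists, lies in the class $\omega|_F$, and still dominates the K\"ahler form $\omega_X|_F$. This is where the technical content of the proof is concentrated.
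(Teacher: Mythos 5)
Your strategy is workable, but it rests on one false claim: that because $X\setminus X_{\sm}$ has codimension $\geq 2$, a general fiber $F$ is a submanifold of $X_{\sm}$. This fails whenever the singular locus of $X$ dominates $C$, which is perfectly possible once $\dim X\geq 3$: take $X=S\times C$ with $S$ a normal K\"ahler surface having a single singular point $s$; then $X$ is normal, $X_{\rm sing}=\{s\}\times C$ has codimension $2$, and \emph{every} fiber of the projection to $C$ is isomorphic to the singular surface $S$. So smoothness of $F$ cannot be assumed, and it enters your argument twice: in defining the slice $T|_F$, and in invoking the Demailly--P\u{a}un criterion (nef with positive top self-intersection implies big) on $F$. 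The damage is repairable. Your own alternative --- slicing on the smooth general fibers of $f\circ\pi$ for a K\"ahler resolution $\pi:\widetilde X\to X$ --- handles the first point: for general $p$ the fiber $\widetilde F$ of $f\circ\pi$ has no component inside $\Ex(\pi)$ and maps bimeromorphically onto $F$, so the projection formula converts your positivity on $\widetilde F$ into $(\omega|_F)^{\dim F}=(\pi^*\omega)^{\dim\widetilde F}\cdot\widetilde F>0$. But you must then still deduce bigness of the nef class $\omega|_F$ on the possibly singular (normal) fiber $F$, e.g.\ by applying Demailly--P\u{a}un on a resolution $\hat F\to F$ and pushing the resulting K\"ahler current forward; this descent is also left implicit in the paper, so you are in good company, but it should be said. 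Finally, a cleaner alternative to your Fubini argument plus cohomological upgrade: by Demailly's regularization you may take the K\"ahler current $T$ to have analytic singularities, so that its polar locus is a proper closed analytic subset of $X$, and then a general fiber is automatically not contained in it.

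Modulo these repairs, your proof is genuinely different from the paper's. The paper also passes to a resolution $g:X'\to X$, but then quotes the Collins--Tosatti theorem \cite[Theorems 1.1 and 2.2]{CT15}: for the nef and big class $g^*\omega$, the non-K\"ahler locus $\EnK(g^*\omega)$ coincides with the null locus $\Null(g^*\omega)$ and is a proper closed analytic subset, so the strict transform $F'$ of a general fiber is not contained in it, whence $(\omega|_F)^{\dim F}=(g^*\omega)^{\dim F'}\cdot F'>0$ by the projection formula; bigness then follows exactly as in your final step. Thus the two arguments differ precisely in how the positivity of $(\omega|_F)^{\dim F}$ is produced: the paper buys it from a strong structural theorem about the non-K\"ahler locus, while you extract it directly from the definition of bigness by slicing a K\"ahler current --- more elementary input, at the price of the measure-theoretic bookkeeping and the almost-everywhere-to-general upgrade via the cohomological invariance of $[F]$.
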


\begin{proof}
    The proof follows immediately from the definition of nef and big class.
\end{proof}


\begin{corollary}\label{cor:partial-mfs}
	Let $(Y, B_Y)$ be a klt pair, where $Y$ is a $\mbQ$-factorial compact K\"ahler $3$-fold. Suppose that $Y$ is uniruled and the base of the MRC-fibration $g:Y\bir Z'$ has dimension $2$. Let $\omega_Y$ be a nef and big class on $Y$ such that $K_Y+B_Y+\omega_Y$ is nef and $(K_Y+B_Y+\omega_Y)\cdot F=0$, where $F$ is a general fiber of $g$. Then there exists a proper surjective morphism with connected fibers $\psi:Y\to S$ onto a normal compact K\"ahler surface $S$ such that $K_Y+B_Y+\omega_Y$ is $\psi$-trivial, i.e. $(K_Y+B_Y+\omega_Y)|_{Y_s}\num 0$ for all $s\in S$. 
\end{corollary}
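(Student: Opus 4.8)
The plan is to reduce to the terminal case already settled in Theorem \ref{thm:log-contraction-terminal-pair}. First I would apply Lemma \ref{l-ter}(\ref{item:terminalization}) to the klt pair $(Y,B_Y)$ to obtain a terminalization, i.e. a projective bimeromorphic morphism $f:(X,B_X)\to(Y,B_Y)$ with $X$ a $\mbQ$-factorial compact K\"ahler $3$-fold (K\"ahler by Remark \ref{rmk:kahler-property}, as $f$ is projective), $(X,B_X)$ terminal, and $K_X+B_X=f^*(K_Y+B_Y)$. Setting $\omega_X:=f^*\omega_Y$, the class $\omega_X$ is nef, and big since $\omega_X^3=\omega_Y^3>0$, so $K_X+B_X+\omega_X=f^*(K_Y+B_Y+\omega_Y)$ is nef.

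Next I would verify the remaining hypotheses of Theorem \ref{thm:log-contraction-terminal-pair}. As $f$ is bimeromorphic, $X$ is uniruled and the base of its MRC fibration is again a surface (birational to $Z'$, hence not uniruled by \cite[Remark 3.2]{HP15}), and $f$ is an isomorphism over a neighbourhood of a general fibre $F\cong\mbP^1$; thus $(K_X+B_X+\omega_X)\cdot F=(K_Y+B_Y+\omega_Y)\cdot F=0$. To obtain $(K_X+B_X)\cdot F<0$ it suffices to see $\omega_Y\cdot F>0$: if instead $\omega_Y\cdot F=0$ for general $F$, then the covering family $\{F\}$ would lie in $\Null(\omega_Y)=\EnK(\omega_Y)$, forcing $\Null(\omega_Y)=Y$ and contradicting the bigness of $\omega_Y$ via \cite{CT15} (alternatively, decomposing $\omega_Y=\Theta+D$ with $\Theta$ modified K\"ahler and $D\geq 0$ as in Lemma \ref{lem:nef-and-big-to-modified-kahler}, a general $F$ avoids $\Supp D$ and gives $\omega_Y\cdot F\geq\Theta\cdot F>0$). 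Hence Theorem \ref{thm:log-contraction-terminal-pair} produces a proper surjective morphism with connected fibres $\varphi:X\to S$ onto a normal compact K\"ahler surface $S$ with $K_X+B_X+\omega_X$ being $\varphi$-trivial.

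The crux is to descend $\varphi$ along $f$, i.e. to produce $\psi:Y\to S$ with $\psi\circ f=\varphi$; by the rigidity lemma this amounts to showing that $\varphi$ contracts every fibre $f^{-1}(y)$. By Proposition \ref{p-rccres} each $f^{-1}(y)$ is rationally chain connected, and any curve $C\subset f^{-1}(y)$ has $f_*C=0$, so $(K_X+B_X+\omega_X)\cdot C=f^*(K_Y+B_Y+\omega_Y)\cdot C=0$ by the projection formula. If some $f^{-1}(y)$ dominated $S$, then $S$ would be dominated by a rationally chain connected variety and hence uniruled, contradicting the non-uniruledness of the MRC base; thus $\varphi(f^{-1}(y))$ has dimension $\leq 1$. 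To exclude the case where $\varphi(f^{-1}(y))$ is a (necessarily rational) curve, I would use that the proof of Theorem \ref{thm:log-contraction-terminal-pair}, following \cite[Theorem 1.4]{HP15}, in fact realizes $K_X+B_X+\omega_X=\varphi^*\omega_S$ for a K\"ahler class $\omega_S$ on $S$: since $f^{-1}(y)$ is contracted by $f$, the class $f^*(K_Y+B_Y+\omega_Y)$ is numerically trivial on all of $f^{-1}(y)$, so $\varphi^*\omega_S\equiv 0$ there, which forces $\varphi(f^{-1}(y))$ to be a point because $\omega_S$ is strictly positive on every positive-dimensional image. This is the step I expect to be the main obstacle, since it extracts the K\"ahler descent of the adjoint class from the \emph{construction} of $\varphi$ rather than from the bare statement of Theorem \ref{thm:log-contraction-terminal-pair}.

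Finally, with $\psi:Y\to S$ in hand, $\psi$ is proper, surjective and has connected fibres, since $f$ and $\varphi$ do and $\psi^{-1}(s)=f(\varphi^{-1}(s))$ is the continuous image of a connected set. For $\psi$-triviality, given $s\in S$ and a curve $C\subset Y_s=\psi^{-1}(s)$, I would choose a curve $C'\subset X_s=\varphi^{-1}(s)$ with $f_*C'$ a positive multiple of $C$ (possible as $f$ restricts to a projective surjection onto $Y_s$); then $(K_Y+B_Y+\omega_Y)\cdot C$ is a positive multiple of $(K_X+B_X+\omega_X)\cdot C'=0$ by the projection formula, whence $(K_Y+B_Y+\omega_Y)|_{Y_s}\num 0$, as required.
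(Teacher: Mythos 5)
Your overall skeleton is the same as the paper's: terminalize $(Y,B_Y)$ via Lemma \ref{l-ter}, pull back the classes, apply Theorem \ref{thm:log-contraction-terminal-pair} to get $\varphi:X\to S$, and descend $\varphi$ through the terminalization by the rigidity lemma, the whole issue being to show that $\varphi$ contracts every fiber of the terminalization to a point. (Two side remarks: your explicit verification that $(K_X+B_X)\cdot F<0$, via the decomposition of Lemma \ref{lem:nef-and-big-to-modified-kahler}, fills in a hypothesis of Theorem \ref{thm:log-contraction-terminal-pair} that the paper leaves implicit; and your final $\psi$-triviality argument agrees with the paper's.)

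The gap is precisely at the step you yourself flag as the main obstacle. Your argument rests on the claim that Theorem \ref{thm:log-contraction-terminal-pair} ``in fact realizes'' $K_X+B_X+\omega_X=\varphi^*\omega_S$ with $\omega_S$ K\"ahler on $S$. This is strictly stronger than what that theorem states: $\varphi$-triviality says that curves contracted by $\varphi$ are adjoint-trivial, whereas you need the converse, that every adjoint-trivial curve (in particular every curve in a fiber of the terminalization) is contracted by $\varphi$. Nor is the K\"ahler descent established in the proof of Theorem \ref{thm:log-contraction-terminal-pair}: Steps 1--4 there only produce $\hat{p}^*(K_X+B_X+\omega_X)=\hat{q}^*\alpha$ with $\alpha$ nef and big on $Z$ and the contraction $\nu:Z\to S$ of $\Null(\alpha)$. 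Getting from this to a K\"ahler class on $S$ pulling back to the adjoint class is exactly the extra content of the later Theorem \ref{thm:fano-contraction} (rational singularities of $S$ via Lemma \ref{lem:rational-singularities}, descent via Lemma \ref{l-HP}, nefness via \cite[Theorem 1]{Pau98}, and the K\"ahler criterion \cite[Lemma 2.1]{H18}) --- and you cannot simply cite that theorem, since its proof in the paper invokes the very corollary you are proving. The statement you need is true, and the descent argument can be run directly for the terminal fibration $\varphi$ without circularity, but as written your proposal asserts it rather than proves it.

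The paper's proof shows how to avoid needing the K\"ahler descent at all. It uses only the two facts visibly produced by the construction: the identity $\hat{p}^*(K_X+B_X+\omega_X)=\hat{q}^*\alpha$ on the common resolution $\hat{X}$ of diagram \eqref{eqn:contraction}, and the fact that $\nu:Z\to S$ contracts exactly the $\alpha$-trivial curves. Then for any curve $C$ in a fiber of the terminalization $h$ one has $(K_X+B_X+\omega_X)\cdot C=0$; lifting $C$ to a curve $\hat{C}\subset\hat{X}$ and applying the projection formula gives $\alpha\cdot\hat{q}_*\hat{C}=0$, so either $\hat{q}_*\hat{C}=0$ or $\hat{q}_*\hat{C}$ is an $\alpha$-trivial curve contracted by $\nu$; in both cases $\varphi(C)=\nu(\hat{q}(\hat{C}))$ is a point, and rigidity applies. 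If you replace your K\"ahler-descent claim by this lighter use of the construction, your RCC/uniruledness dichotomy also becomes unnecessary.
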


\begin{proof}
 Let $h:X\to Y$ be a terminalization of the pair $(Y, B_Y )$ such that $X$ is strongly $\Q$-factorial (cf. Lemma \ref{l-ter}). Set $K_X+B:=h^*(K_Y+B_Y)$ and $\omega=h^*\omega_Y$. Then we have
 \[
		K_X+B+\omega=h^*(K_Y+B_Y+\omega_Y).
	\]
Note that since $\dim Z'=2$, then general fibers of $g:Y\bir Z'$ and $g\circ h:X\bir Z'$ are isomorphic. In particular, if $F$ is a general fiber of $g\circ h:X\bir Z'$, then $(K_X+B+\omega)\cdot F=(K_Y+B_Y+\omega_Y)\cdot F=0$. Thus by Theorem \ref{thm:log-contraction-terminal-pair} there is a proper surjective morphism with connected fibers $\varphi:X\to S$ to a K\"ahler surface $S$ such that $K_X+B+\omega$ is $\varphi$-trivial. With the notations as in the proof of Theorem \ref{thm:log-contraction-terminal-pair} we get the following commutative diagram.

\begin{equation}\label{eqn:contraction}
	\xymatrixcolsep{3pc}\xymatrixrowsep{3pc}\xymatrix{&& \hat{X}\ar@/_2pc/[ddl]_{\hat{p}}\ar@/^2pc/[ddr]^{\hat{q}}\ar[d]_\mu &\\
	&& \Gamma\ar[dl]_p\ar[dr]^q&\\
&X\ar[ld]^h\ar@{-->}[rr]^f\ar[dr]_\varphi && Z\ar[dl]^\nu\\
Y && S &}
\end{equation}

Using the rigidity lemma now we will show that the fibration $\varphi:X\to S$ factors through $h:X\to Y$. Note that the fibers of $h$ are covered by curves, so it is enough to work with the curves contained in the fibers of $h$. Let $C$ be a curve in $X$ contracted by $h$. Then $(K_X+B+\omega)\cdot C=h^*(K_Y+B_Y+\omega_Y)\cdot C=0$. Let $\hat{C}$ be a curve in $\hat{X}$ such that $\hat{p}(\hat{C})=C$. Recall from the proof of Theorem \ref{thm:log-contraction-terminal-pair} that $\hat{p}^*(K_X+B+\omega)=\hat{q}^*{\alpha}$ for some nef and big class $\alpha$ on $Z$. Thus we have
\[\hat{q}^*\alpha\cdot\hat{C}=\hat{p}^*(K_X+B+\omega)\cdot\hat{C}=(K_X+B+\omega)\cdot C=0.\]
Therefore $\alpha\cdot\hat{q}_*(\hat{C})=0$. Now recall from the construction of $\nu:Z\to S$ in Step $4$ of the proof of \cite[Theorem 1.4]{HP15} that the morphism $\nu:Z\to S$ contracts exactly the $\alpha$-trivial curves. Therefore, either $\hat{C}$ is contracted by $\hat{q}$, or its image is contracted by $\nu$. In particular, from the diagram \eqref{eqn:contraction} it follows that the curve $C\subset X$ is contracted by $\varphi:X\to S$. Therefore, by the rigidity lemma (see \cite[Lemma 4.1.13]{BS95}), there is a proper surjective morphism with connected fibers $\psi:Y\to S$ such that $\varphi=\psi\circ h$. It is then clear that $K_Y+B_Y+\omega_Y$ is $\psi$-trivial.

\end{proof}

The following theorem is an application of Corollary \ref{cor:partial-mfs} and also a generalization of \cite[Theorem 2.7]{TZ18}. This establishes Theorem \ref{thm:bpf} when $K_X+B$ is not pseudo-effective.   
\begin{theorem}\label{thm:fano-contraction}
	 Let $(X,B)$ be a dlt pair, where $X$ is a $\mbQ$-factorial compact K\"ahler $3$-fold. Let $\omega$ be a nef and big class on $X$ such that $\alpha:=K_X+B+\omega$ is nef but not big. If either $(X,B)$ is a klt pair or $\omega$  is K\"ahler, then there exists a proper surjective morphism with connected fibers $\psi:X\to Y$ onto a normal compact K\"ahler variety $Y$ with rational singularities and a K\"ahler class $\alpha _Y\in H^{1, 1}_{\rm BC}(Y)$ such that $\alpha =\psi ^* \alpha _Y$. 
\end{theorem}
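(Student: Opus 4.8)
The plan is to recover $\psi$ as the fibration produced by Corollary \ref{cor:partial-mfs}, applied along the MRC fibration of $X$, and then to descend the class $\alpha$ and analyze the singularities of the target. First I would reduce to the essential situation. Since $\omega$ is big and $\alpha$ is not, the class $K_X+B=\alpha-\omega$ cannot be pseudo-effective (a pseudo-effective class plus a big class is big); in particular $K_X$ is not pseudo-effective and $X$ is uniruled. If the base of the MRC fibration has dimension $\leq 1$, then $X$ is projective by Lemma \ref{l-proj} and $N^1(X)=\NS(X)_\mbR$, so $\alpha$ is a genuine nef $\mbR$-Cartier divisor with $\alpha-(K_X+B)=\omega$ nef and big, and the contraction is furnished by the classical projective base-point-free theorem. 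Thus the substantive (non-algebraic) case is when the MRC fibration $X\bir Z$ has $\dim Z=2$; here $Z$ is a smooth non-uniruled Kähler surface (so $K_Z$ is pseudo-effective) and I let $F\cong\mbP^1$ be a general fiber. Using Lemma \ref{lem:nef-and-big-to-modified-kahler} I write $\omega=\Theta+D$ with $\Theta$ modified Kähler and $D\geq 0$ effective with $(X,B+D)$ still klt, and absorb $D$ into $B$, so I may assume $\omega$ is nef and big with $\alpha=K_X+B+\omega$ nef but not big.

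The first real step is the normalization $\alpha\cdot F=0$ needed to invoke Corollary \ref{cor:partial-mfs}. Because $K_X+B$ is not pseudo-effective, Lemma \ref{lem:non-vanishing} forces $(K_X+B)\cdot F<0$; nefness gives $\alpha\cdot F\geq 0$ and bigness of $\omega$ gives $\omega\cdot F>0$. To get equality I would assume $\alpha\cdot F>0$ and restrict to $Z$ (equivalently to surfaces dominating $Z$), running a Hodge-index/intersection argument against $K_Z$ and $\omega$ to contradict $\alpha^3=0$; this is precisely where the hypothesis that $\alpha$ \emph{fails} to be big is consumed. With $\alpha\cdot F=0$ established, Corollary \ref{cor:partial-mfs} applies and produces a proper surjective morphism with connected fibers $\psi:X\to Y$ onto a normal compact Kähler surface $Y$ such that $\alpha$ is $\psi$-trivial, i.e. $\alpha|_{X_s}\num 0$ for every $s\in Y$.

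It then remains to upgrade $\psi$-triviality to an honest descent $\alpha=\psi^*\alpha_Y$ with $\alpha_Y$ Kähler. For this I would track the classes through the diagram underlying the proof of Theorem \ref{thm:log-contraction-terminal-pair}, on which Corollary \ref{cor:partial-mfs} is built: on a suitable resolution $\hat X$ of the universal family over $Z$, with projections $\hat p:\hat X\to X$ and $\hat q:\hat X\to Z$, one has $\hat p^*\alpha=\hat q^*\alpha_Z$ for a nef and big class $\alpha_Z$ on the surface $Z$ (which is therefore Moishezon, hence projective), and $\nu:Z\to Y$ contracts exactly the $\alpha_Z$-trivial curves. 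Hence $\alpha_Z=\nu^*\alpha_Y$ for an ample class $\alpha_Y$ on $Y$, so $\hat p^*\alpha=\hat q^*\nu^*\alpha_Y=\hat p^*(\psi^*\alpha_Y)$, and the injectivity of pullback (Lemma \ref{l-HP}) yields $\alpha=\psi^*\alpha_Y$ with $\alpha_Y$ Kähler. To see that $Y$ has rational singularities I would use that $\alpha$ is $\psi$-trivial: for a curve $C$ contracted by $\psi$ we get $(K_X+B)\cdot C=-\omega\cdot C\leq 0$, so $-(K_X+B)$ is $\psi$-nef, and since $\omega|_{X_s}$ is big on the fibers (Lemma \ref{lem:restriction-of-nef-and-big}) its relative Iitaka dimension equals the fiber dimension $1$, so $-(K_X+B)$ is also $\psi$-big. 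As $(X,B)$ is klt, Lemma \ref{lem:rational-singularities}(ii) gives that $Y$ has rational singularities.

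The step I expect to be the main obstacle is this descent, together with the normalization $\alpha\cdot F=0$. Corollary \ref{cor:partial-mfs} only delivers numerical triviality of $\alpha$ on the fibers, and passing from a class that is merely fiberwise numerically trivial to one genuinely pulled back from a \emph{Kähler} class on $Y$ is exactly what is delicate in the non-projective Kähler category: there is no Néron–Severi lattice to exploit, so the argument must be routed through the intermediate projective surface $Z$, the explicit class $\alpha_Z$, and the injectivity of $\hat p^*$ provided by Lemma \ref{l-HP}.
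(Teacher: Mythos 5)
Your proposal follows the same skeleton as the paper's proof of Theorem \ref{thm:fano-contraction} (reduction to an MRC base of dimension $2$ with the projective case handled by the classical base-point-free theorem; the normalization $\alpha\cdot F=0$; Corollary \ref{cor:partial-mfs}; descent of $\alpha$ through the universal-family diagram; rational singularities of $Y$ via Lemma \ref{lem:rational-singularities}), but the two steps you yourself flag as the main obstacles are not actually carried out, and your substitutes for them fail. For the normalization: a ``Hodge-index/intersection argument against $K_Z$ and $\omega$'' is not a proof, and it is not clear it can be made into one, because knowing $\alpha\cdot F>0$, $\omega$ big and $K_Z$ pseudo-effective yields no contradiction with $\alpha^3=0$ without a positivity input for the \emph{relative} adjoint class. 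The paper's argument is: if $\alpha\cdot F>0$, choose $0<\mu<1$ with $(K_X+B+\mu\omega)\cdot F=0$; then Corollary \ref{cor:nef-and-big-normalized} (which rests on Lemma \ref{lem:normalized-pair-is-psef}, i.e.\ on the pseudo-effectivity result of \cite{G16}) shows $K_X+B+\mu\omega$ is pseudo-effective, whence $\alpha=(K_X+B+\mu\omega)+(1-\mu)\omega$ would be big --- a contradiction. That corollary is exactly the ingredient consuming the non-bigness hypothesis, and your sketch never invokes it.

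In the descent step you assert that $Z$ ``is therefore Moishezon, hence projective'' because it carries the nef and big class $\alpha_Z$, and you conclude that $\alpha_Y$ is \emph{ample}. This is wrong: every compact K\"ahler surface carries nef and big $(1,1)$-classes (any K\"ahler class is one), so bigness of $\alpha_Z$ says nothing about algebraicity, and $Y$ can genuinely be non-projective --- take $X=\mbP^1\times Z_0$ with $Z_0$ a non-algebraic K3 surface, where $\psi$ is the projection and $\alpha_Y$ is a K\"ahler, non-ample class on $Y\cong Z_0$. The paper instead descends $\beta$ to a class $\gamma\in H^{1,1}_{\rm BC}(Y)$ via Lemma \ref{l-HP}, which requires $\Ex(\nu)=\Null(\beta)$ \emph{and} that $Y$ has rational singularities; so your rational-singularities step (which is otherwise fine) must be established before, not after, the descent. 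Then $\gamma$ is shown to be K\"ahler by the Nakai-type criterion of \cite[Lemma 2.1]{H18}, using $\gamma^2=\beta^2>0$, $\gamma\cdot C>0$ for every curve $C\subset Y$, and the finiteness of the singular set of $Y$. Finally, your opening reduction --- replacing $\omega$ by the modified K\"ahler class $\Theta$ while claiming you ``may assume $\omega$ is nef and big'' --- is incoherent, since $\Theta$ is in general not nef; if actually performed it would destroy the hypotheses of Corollary \ref{cor:partial-mfs} and the $\psi$-nefness of $-(K_X+B)$. Since you never use it, it should simply be deleted (the paper performs such replacements only inside subsidiary statements, precisely where nefness of $\omega$ is no longer needed).
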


\begin{proof}
Note that if $(X,B)$ is dlt and $\omega$ is K\"ahler, then replacing $B$ by $(1-\epsilon)B$ and $\omega $ by $\omega +\epsilon B$, we may assume that $(X,B)$ is klt.
We will closely follow the arguments in \cite[Theorem 2.7]{TZ18}. First observe that, since $\omega$ is a big class and $\alpha$ is not big, $K_X+B$ is not pseudo-effective; in particular $K_X$ is not pseudo-effective. Therefore $X$ is uniruled. Let $X\bir T$ be the MRC fibration of $X$. 
If $\dim T\<1$, then from Lemma \ref{l-proj} and its proof it follows that $X$ is projective and $H^2(X, \mcO_X)=0$. In particular, $\alpha$ is an $\mbR$-divisor in this case, and our result follows from the well known base-point free theorem. So from now on we assume that $\dim T=2$. Now we claim that $\omega$ is a $(K_X+B)$-normalized nef and big class, i.e. $(K_X+B+\omega)\cdot F=0$ for general fibers $F$ of $X\bir T$. If not, then by Lemma \ref{lem:non-vanishing} there exists a $0<\mu<1$ such that $(K_X+B+\mu\omega)\cdot F=0$, where $F\cong \mbP^1$ is a general fiber of the MRC fibration $X\bir T$. Then by Corollary \ref{cor:nef-and-big-normalized}, $K_X+B+\mu\omega$ is pseudo-effective. Thus we have $\alpha=(K_X+B+\mu\omega)+(1-\mu)\omega$ is a big class, a contradiction.

Now by Corollary \ref{cor:partial-mfs} there exists a proper surjective morphism $f:X\to Y$ to a normal compact K\"ahler surface $Y$ such that $(K_X+B+\omega)|_{X_y}\num 0$ for all $y\in Y$.
 Also, as in the proof of Theorem \ref{thm:log-contraction-terminal-pair}, 
 we get the following commutative diagram:
\begin{equation}\label{eqn:bpf-diagram}
	\xymatrixcolsep{3pc}\xymatrixrowsep{3pc}\xymatrix{
	\Gamma\ar[r]^{q}\ar[d]_p & Z\ar[d]^\nu\\
	X\ar[r]^f & Y
	}
\end{equation}
where $Z$ is a smooth compact K\"ahler surface, $\Gamma$ is a normal $3$-fold in  Fujiki's class $\mcC$, and $p$ and $\nu$ are bimeromorphic. 

Replacing $\Gamma$ by a resolution we may further assume that $\Gamma$ is a compact K\"ahler $3$-fold (see Definition \ref{def:lots-of-definitions}.(i)). As in the proof of Corollary \ref{cor:partial-mfs} and Theorem \ref{thm:log-contraction-terminal-pair} we have $p^*\alpha=q^*\beta$ for some nef and big $(1, 1)$-class $\beta$ on $Z$. We claim that $\beta$ is pullback of a K\"ahler class from $Y$. To see this, first recall that from the Step 4 of the proof of \cite[Theorem 1.4]{HP15} it follows that $\Ex(\nu)=\Null(\beta)$ (see Definition \ref{def:null-locus} for $\Null(\beta)$). Moreover, by Lemma \ref{lem:rational-singularities}, $Y$ has rational singularities. Thus by Lemma \ref{l-HP} there is a $(1, 1)$-class $\gamma\in H^{1,1}_{\rm BC}(Y)$ such that $\beta=\nu^*\gamma$. Then $\gamma$ is nef and big (see \cite[Theorem 1]{Pau98}), and by the projection formula we have $\gamma^2=(\nu^*\gamma)^2=\beta^2>0$. Moreover, if $C\subset Y$ is a curve and $C'\subset Z$ its strict transform, then $\beta\cdot C'>0$, since $\Null(\beta)=\Ex(\nu)$ and $C'$ is not contained $\Ex(\nu)$. Then again by the projection formula we have $\gamma\cdot C=\nu^*\gamma\cdot C'=\beta\cdot C'>0$. Finally, since $Y$ is a normal surface, it has only finitely many (rational) singular points. Therefore by \cite[Lemma 2.1]{H18}, $\gamma$ is a K\"ahler class on $Y$. This proves the claim.

Now from the commutativity of the diagram \eqref{eqn:bpf-diagram} we have $p^*(f^*\gamma-\alpha)=0$. Since $p^*:H^{1, 1}_{\rm BC}(X)\to H^{1, 1}_{\rm BC}(\Gamma)$ is injective, we have 
$f^*\gamma-\alpha=0$, i.e. $K_X+B+\omega=f^*\gamma$, where $\gamma$ is a K\"ahler class on $Y$. This completes the proof.

\end{proof}

\section{Main Theorems}\label{sec:applications}
In this section we prove all the main theorems in full generality. The key technical result of this section is the proof of Theorem \ref{thm:bpf} for $X$ strongly $\mbQ$-factorial and $K_X+B$ pseud-effective, see Theorem \ref{thm:birational-contraction}. We start with the following defintion.
\begin{definition}\label{def:null-locus}
	Let $X$ be a normal compact K\"ahler variety and $\alpha\in H^{1, 1}_{\rm BC}(X)$ a nef and big class. Then we define the null locus of $\alpha$ as follows:
	 \[\Null(\alpha)=\bigcup_{\substack{Z\subset X,\\ \dim Z>0,\\ \alpha^{\dim Z}\cdot Z=0}} Z.\]
	 A priori we do not know whether $\Null(\alpha)$ is a closed analytic subset of $X$ or not.
\end{definition}

\begin{proposition}\label{pro:null-locus-contraction}
	Let $X$ be a normal $\mbQ$-factorial compact K\"ahler $3$-fold with klt singularities. Let $\alpha$ be a nef and big $(1, 1)$-class on $X$. Assume that $\Null(\alpha)$ consists of only finitely many curves of $X$. Then there exists a proper bimeromorphic morphism $\mu:X\to Z$ onto a normal analytic variety $Z$ such that every connected component of $\Null(\alpha)$ is contracted to a point and $X\setminus\Null(\alpha)\cong Z\setminus \mu(\Null(\alpha))$.
\end{proposition}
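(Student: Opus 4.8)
The plan is to realize $Z$ by contracting $\Null(\alpha)$ \emph{analytically}: I would build a strongly pseudoconvex neighborhood of each connected component of $\Null(\alpha)$ and then invoke Grauert's contraction theorem.

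First I would attach to $\alpha$ a K\"ahler current with analytic singularities supported exactly on $\Null(\alpha)$. Since $\alpha$ is nef and big, passing to a resolution $g\colon X'\to X$ (on which $g^*\alpha$ is again nef and big) and applying \cite[Theorem 1.1, Theorem 2.2]{CT15} together with Demailly regularization, one obtains a closed positive current $T\in\alpha$ which is a K\"ahler current, has analytic singularities, restricts to a smooth K\"ahler form on $X\setminus\Null(\alpha)$, and whose singular locus is $\Null(\alpha)$. This in particular confirms that $\Null(\alpha)=\EnK(\alpha)$ is a closed analytic subset, here a disjoint union $\Null(\alpha)=\bigsqcup_{k=1}^N A_k$ of compact connected curves. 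It suffices to contract each $A_k$ separately, so fix $k$.

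Next I would produce a global plurisubharmonic potential for $T$ near $A_k$. Because each irreducible component $C$ of $A_k$ lies in $\Null(\alpha)$ we have $\alpha\cdot C=0$, so $\alpha$ restricts to $0$ in $H^2(A_k,\mbR)$; choosing a small neighborhood $W_k$ of $A_k$ deformation retracting onto $A_k$ and using that $X$ has rational singularities (so that $H^{1,1}_{\rm BC}\hookrightarrow H^2$), we get $\alpha|_{W_k}=0$ in $H^{1,1}_{\rm BC}(W_k)$. By the very definition of Bott--Chern cohomology this means $T|_{W_k}=i\partial\bar\partial u$ for a global function $u$ on $W_k$, which is plurisubharmonic (as $T\ge 0$), smooth and strictly plurisubharmonic on $W_k\setminus A_k$ (as $T\ge\epsilon\omega$ there), and satisfies $u\to-\infty$ along $A_k$ (from the analytic singularities of $T$). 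Setting $\psi:=e^{u}$ gives a continuous function on $W_k$ with $\psi^{-1}(0)=A_k$ that is strictly plurisubharmonic off $A_k$, since
\[ i\partial\bar\partial\psi = e^{u}\bigl(i\partial\bar\partial u+i\partial u\wedge\bar\partial u\bigr)\ge e^{u}\,T>0 \quad\text{on } W_k\setminus A_k. \]
Choosing $0<c<\min_{\partial W_k}\psi$, the component $U_k$ of $\{\psi<c\}$ containing $A_k$ is relatively compact in $W_k$, and $\psi\colon U_k\to[0,c)$ is a strictly plurisubharmonic exhaustion away from the compact set $A_k$; hence $U_k$ is $1$-convex. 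Moreover any positive-dimensional compact analytic subset of $U_k$ must be contained in $A_k$, since a strictly plurisubharmonic function cannot attain an interior maximum on such a subset; thus $A_k$ is the maximal compact analytic subset of $U_k$.

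Finally, Grauert's contraction theorem (the Remmert reduction of the $1$-convex space $U_k$) yields a proper bimeromorphic morphism $r_k\colon U_k\to U_k'$ onto a normal Stein space that collapses the connected set $A_k$ to a single point and is biholomorphic over $U_k\setminus A_k$; normality of $U_k'$ follows from that of $U_k$ via $(r_k)_*\mcO_{U_k}=\mcO_{U_k'}$. Gluing the maps $r_k$ with the identity on $X\setminus\Null(\alpha)$ produces a normal analytic variety $Z$ and a proper bimeromorphic morphism $\mu\colon X\to Z$ contracting every $A_k$ to a point, with $X\setminus\Null(\alpha)\xrightarrow{\ \sim\ } Z\setminus\mu(\Null(\alpha))$, as required. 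I expect the main obstacle to be the first two steps: producing the K\"ahler current with singular locus \emph{exactly} $\Null(\alpha)$ in the singular klt setting, and arranging a neighborhood $W_k$ on which $\alpha$ is Bott--Chern trivial so that $T$ admits a genuine global strictly plurisubharmonic potential near $A_k$ even where $A_k$ meets $\operatorname{Sing}(X)$. Once these are in place, the $1$-convexity, contractibility, gluing and normality are formal consequences of Grauert's theory.
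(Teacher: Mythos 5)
Your strategy---a K\"ahler current with analytic singularities exactly along $\Null(\alpha)$, a global plurisubharmonic potential near each connected component, $1$-convexity, and Grauert's contraction theorem---is precisely the argument of \cite[Theorem 4.2]{CHP16}, which is all the paper's own proof invokes (the paper merely observes that that argument applies verbatim once $\Null(\alpha)$ is known to be a finite union of curves). So the route is the right one; the problem is that the two steps you defer at the end are the actual mathematical content, and the first one, as you set it up, fails in the klt setting. After resolving $g\colon X'\to X$ and applying \cite{CT15} upstairs, \emph{every} K\"ahler current in the class $g^*\alpha$ is singular along all of $\EnK(g^*\alpha)=\Null(g^*\alpha)$, and this set contains $\Ex(g)$, since $(g^*\alpha)^2\cdot E=\alpha^2\cdot g_*E=0$ for every $g$-exceptional divisor $E$. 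Pushing such a current forward, its local potentials tend to $-\infty$ not only along $\Null(\alpha)$ but along $g(\Ex(g))=\operatorname{Sing}(X)$ as well (the log poles along $\Ex(g)$ descend to log poles along the image, by a \L{}ojasiewicz-type comparison of distances; this is visible already for the blow-up of a point). In \cite{CHP16} the variety is terminal, so $\operatorname{Sing}(X)$ is finite and these extra poles are harmless for $1$-convexity. Here $X$ is only klt, so $\operatorname{Sing}(X)$ may contain a curve $\Gamma\not\subset\Null(\alpha)$ meeting $A_k$; then your $\psi=e^{u}$ vanishes identically on $\Gamma\cap U_k$, a \emph{non-compact} analytic subset of $U_k$, so $\psi$ is not strictly plurisubharmonic outside any compact set, $U_k$ is not shown to be $1$-convex, and $A_k$ is not exhibited as the maximal compact analytic subset. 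Repairing this requires either a version of $\EnK(\alpha)=\Null(\alpha)$ on the singular space itself or a modification removing the poles along $\operatorname{Sing}(X)\setminus\Null(\alpha)$; neither is supplied.

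The second deferred step is also a genuine gap as written. The injection $H^{1,1}_{\rm BC}\hookrightarrow H^2$ you invoke is a statement about \emph{compact} spaces with rational singularities; on the open neighborhood $W_k$ the relevant exact sequence reads $H^1(W_k,\mcO_{W_k})\to H^{1,1}_{\rm BC}(W_k)\to H^2(W_k,\mbR)$, so from $\alpha\cdot C_i=0$ you may only conclude that $\alpha|_{W_k}$ lies in the image of $H^1(W_k,\mcO_{W_k})$, not that it vanishes. Since nothing in the hypotheses forces the curves in $\Null(\alpha)$ to be rational, $H^1(W_k,\mcO_{W_k})$ need not vanish for any neighborhood $W_k$ (e.g.\ if a component of $A_k$ has positive genus), and the existence of a genuine global potential $u$ with $T|_{W_k}=i\partial\bar\partial u$ needs a separate argument. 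Until these two points are settled, the part you do carry out correctly---the $e^{u}$ computation, the sublevel-set exhaustion, Grauert's theorem, gluing, and normality---does not get off the ground.
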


\begin{proof}

	The proof of \cite[Theorem 4.2]{CHP16}
	 holds here without any change. We note that \cite[Theorem 4.2]{CHP16} is stated only for non-uniruled varieties, however this hypothesis is only used to show that $\Null(\alpha)$ consists of finitely many curves  (see \cite[Proposition 4.4]{CHP16}).
 This is however part of our hypothesis.

\end{proof}

The following lemma provides a sufficient criteria in dimension $3$ for a nef and big class to be K\"ahler. The same result in arbitrary dimension is established in \cite[Theorem 2.29]{DHP22}.
\begin{lemma}\label{lem:nef-and-big-kahler}
	Let $X$ be a normal compact analytic variety of dimension $3$. Let $\alpha\in H^{1, 1}_{\rm BC}(X)$ be a nef and big class such that $\alpha^{\dim V}\cdot V>0$ for every positive dimensional subvariety $V\subset X$. Then $\alpha$ is a K\"ahler class.	
\end{lemma}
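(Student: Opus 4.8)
The plan is to recognize this statement as the singular, three‑dimensional incarnation of the Demailly--Paun / Collins--Tosatti numerical characterization of K\"ahler classes, and to prove it by pulling back to a K\"ahler resolution, applying \cite[Theorem 1.1 and Theorem 2.2]{CT15} there, and then descending. First I would record the two consequences of the hypotheses. The assumption that $\alpha^{\dim V}\cdot V>0$ for every positive‑dimensional subvariety $V$ is precisely the statement that the null locus is empty, i.e. $\Null(\alpha)=\varnothing$ in the sense of Definition \ref{def:null-locus}. Moreover, since $\alpha$ is big it contains a K\"ahler current $T_0\ge\omega$ for some K\"ahler form $\omega$ on $X$ (Definition \ref{def:modified-kahler}); in particular $X$ is a compact K\"ahler space and hence admits a K\"ahler resolution (Remark \ref{rmk:kahler-property}). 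The whole point is to upgrade the nef class $\alpha$ to a K\"ahler one, and the strategy is to produce a K\"ahler current in $\alpha$ whose singularities are confined to $\operatorname{Sing}(X)$, then to remove them using the positivity hypothesis.

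Next I would pass to a resolution $\pi\colon\tilde X\to X$ with $\tilde X$ a compact K\"ahler manifold (Remark \ref{rmk:kahler-property}), chosen to be an isomorphism over the smooth locus, so that $\Ex(\pi)=\pi^{-1}(\operatorname{Sing} X)$. Then $\pi^*\alpha$ is nef, and it is big because $(\pi^*\alpha)^3=\alpha^3>0$ by the projection formula. The key reduction is $\Null(\pi^*\alpha)\subseteq\Ex(\pi)$: if $W\subseteq\tilde X$ is an irreducible positive‑dimensional subvariety not contained in $\Ex(\pi)$, then $\dim\pi(W)=\dim W$ and the projection formula gives $(\pi^*\alpha)^{\dim W}\cdot W=\deg(\pi|_W)\cdot\alpha^{\dim\pi(W)}\cdot\pi(W)>0$ by hypothesis, so $W\notin\Null(\pi^*\alpha)$. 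Now Collins--Tosatti \cite[Theorem 1.1]{CT15} identifies the non‑K\"ahler locus with the null locus, $\EnK(\pi^*\alpha)=\Null(\pi^*\alpha)\subseteq\Ex(\pi)$, and \cite[Theorem 2.2]{CT15} provides a K\"ahler current $\tilde T\in\pi^*\alpha$ with $\tilde T\ge\varepsilon\,\omega_{\tilde X}$ that is smooth on $\tilde X\setminus\Ex(\pi)$.

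Finally I would descend: the push‑forward $T:=\pi_*\tilde T$ is a closed positive $(1,1)$‑current with local potentials lying in the class $\alpha$ (cf. Corollary \ref{cor:current-pushforward}), which is a K\"ahler current smooth on $X_{\rm sm}\cong\tilde X\setminus\Ex(\pi)$ and whose singular support is contained in $\operatorname{Sing}(X)$, a set of dimension at most $1$. It remains to replace $T$ by a smooth K\"ahler form in a neighborhood of $\operatorname{Sing}(X)$. Since $X$ is locally embeddable in $\mathbb{C}^N$ and $\alpha$ has local potentials, near each point of $\operatorname{Sing}(X)$ one has a strictly plurisubharmonic smooth local model (the restriction of a K\"ahler potential), and one glues it to $T$ by a regularized‑maximum construction so that the glued current equals $T$ away from $\operatorname{Sing}(X)$ and is smooth and strictly positive near it. I expect this descent to be the main obstacle, and it is exactly where the positivity $\alpha^{\dim V}\cdot V>0$ for the components $V\subseteq\operatorname{Sing}(X)$ is used: it guarantees that the local potential of $T$ is dominated by the smooth model, so that the regularized maximum is genuinely smooth and K\"ahler across the singular locus. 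For isolated singularities this is \cite[Lemma 2.1]{H18}, and the one‑dimensional case is handled by the same local analysis along the singular curve. The resulting smooth, closed, strictly positive $(1,1)$‑form represents $\alpha$, so $\alpha$ is a K\"ahler class.
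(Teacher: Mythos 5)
Your first three steps coincide with the paper's own argument: resolve $X$ by a compact K\"ahler manifold, observe via the projection formula and the hypothesis that $\Null(\pi^*\alpha)\subseteq\Ex(\pi)$, invoke \cite[Theorem 1.1]{CT15} together with a K\"ahler current with analytic singularities exactly along the non-K\"ahler locus (the paper gets it from Demailly regularization and \cite[Theorem 3.17(ii)]{Bou04}, you from \cite[Theorem 2.2]{CT15}; these are interchangeable here), and push it forward to obtain a K\"ahler current in $\alpha$ whose singularities lie in a set of dimension at most $1$. Up to that point your proposal is correct and essentially identical to the paper (your choice of a resolution that is an isomorphism over $X_{\rm sm}$ even lets you bypass the paper's use of Siu's theorem on Lelong sub-level sets). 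The genuine gap is in the final descent, which you yourself flag as the main obstacle: your gluing mechanism is not correct as described.

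Concretely, the "smooth local model" you propose to glue with is a local potential of a K\"ahler form on the K\"ahler space $X$, not a potential of the class $\alpha$. Writing $T=\theta+dd^c\psi$ with $\theta$ a fixed smooth representative of $\alpha$, a regularized maximum only stays inside the class $\alpha$ if you replace $\psi$ near the singular set $Z$ by a smooth function $g$ satisfying $\theta+dd^cg>0$ on an entire neighborhood of $Z$; gluing $\psi$ with an unrelated strictly psh function changes the class (or produces no closed current at all). The existence of such a strictly $\theta$-psh smooth $g$ near $Z$ is the whole difficulty, and it does not follow from the numerical hypothesis by any ``domination of potentials'' -- there is no implication from $\alpha^{\dim V}\cdot V>0$ to a pointwise comparison between the potential of $T$ and a smooth model. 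What the hypothesis actually yields, and what your proposal omits, is that $\alpha|_C$ is a \emph{K\"ahler class} on each curve $C$ in the singular locus: on the normalization $\nu:\tilde C\to C$ one has $H^2(\tilde C,\mcO_{\tilde C})=0$, hence $H^{1,1}_{\rm BC}(\tilde C)=\NS(\tilde C)_{\mbR}$, so $\alpha|_{\tilde C}$ is a nef $\mbR$-divisor class of positive degree $\alpha\cdot C>0$, hence ample, and this descends to $C$ since $\nu$ is finite. Only after this curve computation can one extend a strictly $\theta$-psh potential from $Z$ to a neighborhood and perform the regularized-maximum gluing; that combined extension-and-gluing statement is precisely \cite[Proposition 3.3(iii)]{DP04}, which the paper cites to conclude (with \cite[Lemma 2.1]{H18} as the isolated-point analogue). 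Your proof becomes correct if you insert the curve step and replace the ad hoc gluing by that citation; as written, the descent is not a proof.
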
  

\begin{proof}
	Let $f:Y\to X$ be a resolution of singularities of $X$. Then $f^*\alpha$ is a nef and big class on $Y$, and by the projection formula and \cite[Theorem 1.1]{CT15} it follows that $\EnK(f^*\alpha)=\Null(f^*\alpha)=\Ex(f)$,  where $\EnK(\cdot )$ denotes the non-K\"ahler locus, see \cite[Definition 3.16]{Bou04}. 
 Next, by Demailly's regularization theorem \cite{Dem92} and \cite[Theorem 3.17(ii)]{Bou04}, there exists a K\"ahler current $T$ with analytic singularities contained in the class $f^*\alpha$ such that $T$ is singular (i.e. not a smooth form) precisely along the exceptional locus of $f$. Therefore the current $f_*T$ (contained in the class $\alpha$) is a K\"ahler current which is singular along the closed set $f(\Ex(f))$. Since $\dim X=3$, we have that $\dim f(\Ex(f))\<1$. Now, since by construction $f_*T$ is a smooth form on the open set $X\setminus f(\Ex(f))$, the Lelong numbers $\nu(f_*T, x)=0$ for all $x\in X\setminus f(\Ex(f))$. Thus for any positive real number $c>0$, the Lelong sub-level sets $E_c(f_*T):=\{x\in X\; |\; \nu(f_*T, x)\geq c\}$ are contained in $f(\Ex(f))$. By a theorem of Siu \cite{Siu74}, we know that $E_c(f_*T)$ is a closed analytic subset of $X$ for all $c>0$. Therefore every irreducible component of $E_c(f_*T)$ is either a projective curve or a point contained in $f(\Ex(f))$ for all $c>0$. Let $C\subset f(\Ex(f))$ be an irreducible projective curve. Then first observe that if $\nu :\tilde C \to C$ is the normalization, then ${\rm NS}(\tilde C)_\mbR=H^{1, 1}_{\rm BC}(\tilde C)$, since $H^2(\tilde C, \mcO_{\tilde C})=0$. Therefore $\alpha|_{\tilde C}$ is a class of an $\mbR$-Cartier nef divisor on $\tilde C$ and in fact, it is an ample class, since 
	$\deg (\alpha|_{\tilde C})=\alpha\cdot C>0$ by hypothesis. { Now pushing forward $\alpha|_{\tilde{C}}$ by $\nu$, we see that $\alpha|_C$ is a class of an $\mbR$-Cartier divisor on $C$. Since $\nu$ is finite, $\alpha|_C$ is an ample class on $C$, hence a K\"ahler class.} Then by \cite[Proposition 3.3(iii)]{DP04} it follows that $\alpha\in H^{1, 1}_{\rm {BC}}(X)$ is a K\"ahler class on $X$.\\
		 
\end{proof}

The following result is a special case of Theorem \ref{thm:bpf}.
\begin{theorem}\label{thm:birational-contraction}
	 Let $(X,B)$ be a dlt pair, where $X$ is a normal strongly $\mbQ$-factorial compact K\"ahler $3$-fold. Let $\omega$ be a nef and big class on $X$ such that $\alpha:=K_X+B+\omega$ is nef and big. If either $(X,B)$ is klt or $\omega$ is K\"ahler, then there exists a proper bimeromorphic morphism $\psi:X\to Z$ onto a normal compact K\"ahler $3$-fold $Z$ with rational singularities and a K\"ahler class $\alpha _Z\in H^{1, 1}_{\rm BC}(Z)$ on $Z$ such that $\alpha =\psi^* \alpha _Z$. 
\end{theorem}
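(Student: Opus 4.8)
The plan is to contract exactly the null locus $\Null(\alpha)$ of the nef and big class $\alpha=K_X+B+\omega$, producing $Z$ together with a K\"ahler class $\alpha_Z$ with $\alpha=\psi^*\alpha_Z$. First I would normalize the positive part: by Lemma \ref{lem:nef-and-big-to-modified-kahler} I may write $\omega=\Theta+F$ with $\Theta$ a modified K\"ahler class and $F\ge 0$ a $\mbQ$-divisor with $(X,B+F)$ klt, and after replacing $B$ by $B+F$ I will assume, for the purpose of the structural statements about $\Null(\alpha)$, that $\omega$ itself is modified K\"ahler (so that $\alpha=K_X+B+\omega$ remains nef and big). When I instead need the original nef and big $\omega$ in order to invoke the contraction theorems, I will undo this replacement.

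The core step is to remove the surfaces from the null locus by an $\alpha$-trivial minimal model program. By \cite{CT15} the null locus is a proper analytic subset (of a resolution), so $\Null(\alpha)$ contains only finitely many irreducible surfaces, namely those $S$ with $\alpha^2\cdot S=0$. For each such $S$, Lemma \ref{lem:null-locus} (using that $\omega$ is modified K\"ahler) shows that $S$ is covered by $\alpha$-trivial curves $C$, and since $\omega|_S$ is big by Corollary \ref{cor:modified-kahler}, a general such $C$ satisfies $\omega\cdot C>0$, hence $(K_X+B)\cdot C<0$. Thus there is an $\alpha$-trivial, $(K_X+B)$-negative extremal ray $R$ of divisorial type sweeping out $S$, which as in \cite[Proposition 4.3]{CHP16} may be assumed cut out by a nef and big supporting class. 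Its contraction exists: if $K_X+B$ is pseudo-effective this is Theorems \ref{t-pmmp} and \ref{t-div1}; if not, then $X$ is uniruled, and either the base of the MRC fibration has dimension $\le 1$, in which case $X$ is projective (Lemma \ref{l-proj}) and the theorem reduces to the classical base-point free theorem, or the base is a surface, in which case $(K_X+B)\cdot F<0$ for a general fibre $F$ (Lemma \ref{lem:non-vanishing}) and the contraction is furnished by Theorem \ref{c-cont} (applied to $(1-\epsilon)\omega$, so that $R$ becomes negative). Because $\alpha\cdot R=0$, the class $\alpha$ descends to the contracted (resp. flipped) model as a nef and big class, and contracting $S$ deletes it from the null locus while flips modify only curves; hence no new surfaces enter the null locus. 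Termination follows from Theorem \ref{t-term} for flips and from the drop of the Picard number for divisorial contractions, so after finitely many steps I obtain $\phi:X\dasharrow X'$ with $X'$ a $\mbQ$-factorial klt compact K\"ahler $3$-fold, $\alpha'=\phi_*\alpha$ nef and big, and $\Null(\alpha')$ a finite union of curves.

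Next I would contract these remaining curves. Proposition \ref{pro:null-locus-contraction} yields a proper bimeromorphic morphism $\mu':X'\to Z$ onto a normal compact analytic variety contracting each connected component of $\Null(\alpha')$ to a point and an isomorphism elsewhere; as in \cite[Theorem 4.2]{CHP16}, $Z$ lies in Fujiki's class $\mcC$ and has rational singularities. Since $\mu'$ contracts precisely the $\alpha'$-trivial curves, Lemma \ref{l-HP} produces a class $\alpha_Z\in H^{1,1}_{\rm BC}(Z)$ with $\alpha'=(\mu')^*\alpha_Z$. The point of having cleared the surfaces is that now $\alpha_Z^{\dim V}\cdot V>0$ for every positive-dimensional $V\subset Z$: if $V$ is a curve its strict transform lies off $\Null(\alpha')$; if $V$ is a surface its strict transform $S'$ satisfies $\alpha'^2\cdot S'>0$ precisely because $\Null(\alpha')$ contains no surfaces; and if $V=Z$ this is the bigness of $\alpha'$. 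By the projection formula these positivities transfer to $\alpha_Z$, so Lemma \ref{lem:nef-and-big-kahler} shows that $\alpha_Z$ is a K\"ahler class; in particular $Z$ is K\"ahler.

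Finally I would descend the construction to a morphism out of $X$. On a common resolution $p:W\to X$, $q:W\to X'$ of the graph of $\phi$ one has $p^*\alpha=q^*\alpha'=(\mu'\circ q)^*\alpha_Z$, since every step of $\phi$ is $\alpha$-trivial. Any $p$-exceptional curve $\gamma$ then satisfies $\alpha_Z\cdot(\mu'\circ q)_*\gamma=p^*\alpha\cdot\gamma=0$, whence $(\mu'\circ q)_*\gamma=0$ because $\alpha_Z$ is K\"ahler; thus $\mu'\circ q$ contracts the fibres of $p$, and the rigidity lemma \cite[Lemma 4.1.13]{BS95} produces a bimeromorphic morphism $\psi:X\to Z$ with $\psi\circ p=\mu'\circ q$. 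Pulling back gives $p^*\psi^*\alpha_Z=p^*\alpha$, and the injectivity of $p^*$ (Lemma \ref{l-HP}) yields $\psi^*\alpha_Z=\alpha$, as required. The main obstacle is the middle step: setting up and terminating the $\alpha$-trivial MMP that clears the divisorial part of $\Null(\alpha)$, which is exactly where the hard contraction theorems \ref{t-div1} and \ref{c-cont} and the pseudo-effective versus non-pseudo-effective dichotomy come in; once the surfaces are gone, the null-locus contraction, the K\"ahlerness of $\alpha_Z$, and the descent by rigidity are comparatively formal consequences of the lemmas already established.
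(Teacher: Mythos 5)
Your overall strategy coincides with the paper's: run an $\alpha$-trivial $(K_X+B)$-MMP to remove the surfaces from $\Null(\alpha)$, contract the remaining curves via Proposition \ref{pro:null-locus-contraction}, produce $\alpha_Z$ by Lemma \ref{l-HP}, verify it is K\"ahler by Lemma \ref{lem:nef-and-big-kahler}, and descend to a morphism from $X$. However, there are two genuine gaps. The first is in the induction step of your MMP, which is not justified beyond $i=0$. Your device of ``undoing the replacement'' to recover the original nef and big $\omega$ only works on $X$ itself: after one $\alpha$-trivial flip or divisorial contraction, the pushforward $\omega_1$ of $\omega$ (or of $\Theta$) is, by Lemma \ref{l-mod}, only a modified K\"ahler class, and pushforward does not preserve nefness. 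Theorem \ref{c-cont} is stated only for a K\"ahler (or nef and big) auxiliary class, so it cannot be quoted on $X_1,X_2,\dots$, and no re-choice of boundary makes the difference $\alpha_i-(K_{X_i}+B_i)$ nef. The paper addresses exactly this point: it applies Corollary \ref{cor:cone-finite} (valid for modified K\"ahler classes) to $(1-\epsilon)\omega_i$, extracts an $\alpha_i$-trivial, $(K_{X_i}+B_i+(1-\epsilon)\omega_i)$-negative ray $\Gamma$, and observes that the only place nefness enters the proof of Theorem \ref{c-cont} is in deducing $(K_{X_i}+B_i)\cdot\Gamma<0$, which here follows from $\alpha_i\cdot\Gamma=0$ and hence $\omega_i\cdot\Gamma>0$; with that observation the proof of Theorem \ref{c-cont} applies verbatim. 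Without it your MMP cannot be continued. (A smaller inaccuracy in the same step: the ray obtained from the curves covering a null-locus surface $S$ need not be ``of divisorial type sweeping out $S$''; it may be small, or contract a different divisor. This does no harm, since one contracts or flips whatever ray appears and invokes termination, but the claim as stated is false.)

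The second gap is that your endgame is circular as written. You obtain $\alpha_Z$ from Lemma \ref{l-HP} applied to $\mu':X'\to Z$; that lemma requires $Z$ to lie in Fujiki's class $\mcC$ and have rational singularities, but Proposition \ref{pro:null-locus-contraction} only produces a normal analytic variety, and the appeal to \cite[Theorem 4.2]{CHP16} for rational singularities does not transfer: there the contracted curves are $(K+B)$-negative, so relative Kawamata--Viehweg vanishing applies, whereas here, precisely because your MMP has terminated, every $\alpha'$-trivial curve satisfies $(K_{X'}+B')\cdot C\>0$, so $-(K_{X'}+B')$ is not $\mu'$-nef and no such vanishing argument is available from $X'$. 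The paper resolves this by proving that $\psi:X\to Z$ is a morphism \emph{first} (its descent through the MMP uses only $p^*\alpha_i=q^*\alpha_n$ and the fact that $\mu$ contracts exactly $\Null(\alpha_n)$, never the K\"ahlerness of $\alpha_Z$), and only then deduces rational singularities of $Z$ from Lemma \ref{lem:rational-singularities} applied to $\psi$, using the original nef $\omega$ on $X$ to see that $-(K_X+B)$ is $\psi$-nef and $\psi$-big; afterwards $\alpha_Z$ and its K\"ahlerness follow. Your descent instead uses the K\"ahlerness of $\alpha_Z$, whose very existence needs the rational singularities you have not established. The gap is repairable: in your rigidity argument a $p$-exceptional curve $\gamma$ has $q_*\gamma$ either zero or an $\alpha'$-trivial curve, hence contracted by $\mu'$, so the descent can be made independent of $\alpha_Z$, after which one follows the paper's order. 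But as written the dependencies do not close.
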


\begin{proof} Note that if $(X,B)$ is dlt and $\omega$ is K\"ahler, then replacing $B$ by $(1-\epsilon)B$ and $\omega $ by $\omega +\epsilon B$, we may assume that $(X,B)$ is klt.
Therefore, in this proof, we will assume that $(X,B)$ is klt and $\omega$ is nef and big.  We closely follow the arguments of \cite[Theorem 1.3]{H18}. For the convenience of the reader, we reproduce these arguments here indicating the necessary changes. The main idea is to construct $\psi$ by running an $\alpha$-trivial MMP and then to contract the remaining $\alpha$-trivial curves. First note that using Lemma \ref{lem:nef-and-big-to-modified-kahler} we may assume that $\omega$ is a modified K\"ahler class on $X$. Now if $H^2(X, \mathcal O _X)=0$, then $H^{1,1}_{\rm BC}(X)\cong\NS(X)_\mbR$ and $\alpha$ is represented by an $\mathbb R$-divisor. Therefore $X$ is a Moishezon space with rational singularities. Then by Theorem \ref{t-nam} $X$ is projective, and the result follows from a well known base-point free theorem for projective varieties.  So assume that $H^2(X, \mcO_X)\neq 0$. Then either $X$ is not uniruled and hence $K_X$ is pseudo-effective, or $X$ is uniruled and the base of the MRC fibration $X\bir Z$ has dimension $2$ (see Lemma \ref{l-proj} and its proof).\\



\begin{claim}\label{clm:contraction} We may run the $\alpha$-trivial $(K_X+B)$-MMP starting with $(X_0, B_0):=(X, B)$,  $\omega_0:=\omega$ and $\alpha_0:=\alpha$ and ending with a bimeromorphic map $\phi:X\bir X_n$ such that every $\alpha _n$-trivial curve is  $(K_{X_n}+B_n)$-non-negative, where $\alpha _n=\phi _* \alpha$ and $B_n=\phi _* B$. \end{claim}
\begin{proof}[Proof of Claim \ref{clm:contraction}]
By induction assume that we have already constructed the first $i$ steps
\[
	\phi_i:X_0\bir X_1\bir X_2\bir\ldots\bir X_i.
\]
Note that $(X_i,B_i:=\phi_{i,*}B)$ is a strongly $\mbQ$-factorial klt pair (by Lemma \ref{l-sqfmmp}), $\omega _i=\phi_{i,*}\omega$ is modified  K\"ahler and $\alpha _i=\phi_{i,*}\alpha$ is nef and big.
Now suppose that there is an $\alpha_i$-trivial curve $C\subset X_i$ such that $(K_{X_i}+B_i)\cdot C<0$ (and thus $\omega_i\cdot C>0$). We will show that there is an $\alpha_i$-trivial flip or divisorial contraction $\mu_i :X_i\dasharrow X_{i+1}$. 

If $K_{X_i}+B_i$ is not pseudo-effective (and hence the base of the MRC fibration $X_i\dasharrow Z_i$ has dimension $2$), then by Lemma \ref{lem:non-vanishing} it follows that $(K_{X_i}+B_i)\cdot F_i<0$ for general fibers $F_i$ of $X_i\bir Z_i$. 

The existence of $\mu_i$ follows from Theorem \ref{t-cone-mrc},  Corollary \ref{cor:cone-finite} and Theorem \ref{c-cont}. To see this, pick $0<\epsilon \ll 1$ so that $K_{X_i}+B_i+(1-\epsilon)\omega _i$ is big, in particular, $(K_{X_i}+B_i+(1-\epsilon)\omega _i)\cdot F_i>0$. Then by Corollary \ref{cor:cone-finite} 
\[\overline {\rm NA}(X_i)=\overline {\rm NA}(X_i)_{(K_{X_i}+B_i+(1-\epsilon)\omega _i)\geq 0}+\sum _{j=1}^N \mathbb R ^+[\Gamma _j],\]
where $(K_{X_i}+B_i+(1-\epsilon)\omega _i)\cdot \Gamma _j<0$.
We decompose $C=\eta+\sum _{j\in J}\lambda _j\Gamma _j$ accordingly, where $J=\{1,\ldots , N\}$. Since $\alpha_i \cdot C=0$, we may assume that $\alpha_i \cdot \Gamma _j=0$ for all $j\in J$. Since $(K_{X_i}+B_i+(1-\epsilon)\omega _i)\cdot C<0$, we may assume that $J\ne \emptyset$ and hence for some $j_0\in J$, we have $\lambda _{j_0}\ne 0$ and $(K_{X_i}+B_i+(1-\epsilon)\omega _i)\cdot \Gamma _{j_0}<0$.
 Note that Theorem \ref{c-cont} does not immediately apply  here 
 since $\omega _i$ is no longer nef. 
 We will instead argue as follows. Since $\alpha _i\cdot\Gamma_{j_0}=0$ and $(K_{X_i}+B_i+(1-\epsilon)\omega _i)\cdot \Gamma _{j_0}<0$, it follows that $\omega _i\cdot \Gamma _{j_0}>0$ and
 $(K_{X_i}+B_i)\cdot \Gamma _{j_0}<0$.
 Since $\Gamma _{j_0}$ generates an extremal ray, it is cut out by a nef class $\gamma$. Since $\alpha _i$ is nef and big, replacing $\gamma $ by $\gamma +\alpha_i$, we may assume that $\gamma$ is nef and big. By standard arguments, $\eta:=t\gamma-(K_{X_i}+B_i)$ is K\"ahler for $t\gg 0$. Then $K_{X_i}+B_i+\eta=t\gamma$ is nef and big, $\Null(t\gamma)\cap\NA(X_i)=R_{j_0}$ and $\alpha_i\cdot R_{j_0}=0$, where $R_{j_0}=\mbR^+[\Gamma_{j_0}]$.  Moreover, recall that if $F$ is a general fiber of the MRC fibration $X_i\bir Z_i$, then $(K_{X_i}+B_i)\cdot F<0$ (by Lemma \ref{lem:non-vanishing}). Now choose $0<\delta\ll 1$ so that $K_{X_i}+B_i+(1-\delta)\eta$ is still big (hence pseudo-effective); then $R_{j_0}$ is a ($K_{X_i}+B_i+(1-\delta)\eta$)-negative extremal ray.
  Thus by Theorem \ref{c-cont} the contraction $c_{R_{j_0}}:X_i\to Y$ of $R_{j_0}$ exists. If $c_{R_{j_0}}$ is divisorial, then we let $X_{i+1}=Y$ and 
 $\mu _i=c_{R_{j_0}}$. If $c_{R_{j_0}}$ is small, then the flip $X_i\dasharrow X_{i}^+$ exists by Theorem \ref{t-flip}. In this case we let $X_{i+1}=X_{i}^+$ and $\mu _i:X_i\dasharrow X_{i+1}$ the induced bimeromorphic map.

  On the other hand if $K_{X_i}+B_i$ is pseudo-effective, then the existence of $\mu_i$ follows from  Theorems \ref{t-cone}, \ref{t-pmmp}, \ref{t-flip} and \ref{t-div1sqf}. Note that $\alpha_{i+1}:=\mu_{i, *}\alpha_i$ is nef and big by \cite[Pro. 3.1, eqn. (5)]{CHP16}, and $\omega_{i+1}:=\mu_{i, *}\omega_i$ is a modified K\"ahler class by Lemma \ref{l-mod}. This MMP terminates after finitely many steps, say $\phi:X\bir X_n$ by Theorem \ref{t-term}. In particular, every $\alpha_n$-trivial curve on $X_n$ is $(K_{X_n}+B_n)$-non-negative, or equivalently, every $(K_{X_n}+B_n)$-negative curve is $\alpha_n$-positive.
  \end{proof}

By Lemma \ref{lem:null-locus}, if ${\Null}(\alpha _{n})$ contains a surface $S$, then $S$ is Moishezon and it is covered by a family of $\alpha_n$-trivial curves $\{C_t\}_{t\in T}$. Since $\omega _n$ is a modified K\"ahler class, $\omega _n|_S$ is big and hence $\omega _n\cdot C_t=\omega_n|_S\cdot C_t>0$. But then from $0=\alpha_n\cdot C_t=(K_{X_n}+B_n+\omega_n)\cdot C_t$ it follows that $(K_{X_n}+B_n)\cdot C_t<0$, this is a contradiction. Therefore $\Null(\alpha_n)$ is a union of curves.

Now let $f:X'\to X_n$ be a resolution of singularities of $X_n$. Then $f^*\alpha_n$ is nef and big, and thus by \cite[Theorem 1.1]{CT15}, the non-K\"ahler locus is $\EnK(f^*\alpha_n)=\Null(f^*\alpha_n)$. If $S'\subset \EnK(f^*\alpha_n)$ is a divisor, then by the projection formula $0=(f^*\alpha_n)^2\cdot S'=\alpha_n^2\cdot f_*S'$. Thus $S'$ is $f$-exceptional, since $\Null(\alpha_n)$ does not contain any surface, as proved above. Moreover, since $\EnK(f^*\alpha_n)$ is a proper closed analytic subset of $X'$ (see \cite[Theorem 2.2]{CT15}), it follows that $\Null(f^*\alpha_n)$ is a finite union of curves and $f$-exceptional divisors. But since $\Null(\alpha_n)\subset\EnK(\alpha_n)\subset f(\EnK(f^*\alpha_n))$, it follows that $\Null(\alpha_n)$ is a finite union of curves.
Then by Proposition \ref{pro:null-locus-contraction} there exists a proper bimeromorphic morphism $\mu:X_n\to Z$ onto a normal compact analytic variety $Z$ contracting each curve in  ${\Null}(\alpha _{n})$ to a point and inducing an isomorphism on the open sets $X_n\setminus {\Null}(\alpha _{n})\cong Z\setminus\mu(\Null(\alpha_n))$.

Set $\psi:=\mu\circ\phi_n:X\bir Z$. We claim that $\psi$ is a morphism. To see this, we will use descending induction to show that the induced map $\psi _i: X_i\bir Z$ is a morphism. Note that  $\psi _n$ is a morphism as constructed above. Suppose that we have already shown that $\psi _i: X_i\to Z$ is a morphism. If $X_{i-1}\to X_i$ is a divisorial contraction, then clearly $X_{i-1}\to Z$ is a morphism and we are done by induction on $i$. If on the other hand $X_{i-1}\dasharrow X_i$ is a flip, then let 
$X_i\to Z_{i-1}$ be the flipped contraction. If $Z_{i-1}\bir Z$ is not a morphism, then by the rigidity lemma (see \cite[Lemma 4.1.13]{BS95}) there is a flipped curve $C_i\subset X_i$  such that $C:=\psi _{i,*}C_i\ne 0$. 
Let $W$ be the normalization of the graph of the induced bimeromorphic map $\pi_i:X_i\bir X_n$, and $p:W\to X_i$ and $q:W\to X_n$ are the projections. Now by construction $\alpha_i$ and $\alpha_n$ are both nef classes. Moreover, since $\pi_i:X_i\dasharrow X_n$ is a composition of a finite sequence of $\alpha_i$-trivial flips and divisorial contractions, by a repeated application of \cite[Proposition 3.1, eqn. (5)]{CHP16} it follows that $p^*\alpha_i=q^*\alpha_n$. Let $C_W\subset W$ be a curve such that $p_* C_W=dC_i$ for some $d>0$ and $C_n:=q_*C_W\subset X_n$. Since $\psi _n(C_n)=C$, then $\alpha _n\cdot C_n>0$. Then we have
\[0=\alpha _i \cdot dC_i= p^*\alpha _i \cdot C_W=q^*\alpha_n\cdot C_W=\alpha _n\cdot C_n >0,\mbox{ a contradiction.}\]
 Therefore $Z_{i-1}\to Z$ is a morphism, and since $X_{i-1}\to Z_{i-1}$ is a flipping contraction, $X_{i-1}\to Z$ is also a morphism. This concludes the proof that $\psi:X\to Z$ is a morphism.

 Next we claim that $Z$ is a K\"ahler variety with rational singularities and there exists a K\"ahler class $\alpha_Z\in H^{1, 1}_{\rm BC}(Z)$ such that $\alpha_n=\mu^*\alpha_Z$. To this end, first observe that $(X, B)$ has $\mbQ$-factorial klt singularities, and $\alpha=K_{X}+B+\omega$ and $\omega$ are both nef and big classes. { Now recall that $\mu:X_n\to Z$ contracts precisely the null locus $\Null(\alpha_n)$, and since this locus is 1 dimensional, $\mu$ is $\alpha _n$-trivial.
 Since
 $X\dasharrow X_n$ is a sequence of $\alpha$-trivial flips and divisorial contractions, it follows that $\psi:X\to Z$ is also $\alpha$-trivial, i.e. the restriction of $\alpha$ to every fiber of $\psi$ is numerically trivial, so $\alpha|_{X_{ z}}\num 0$ for all $z\in Z$}. 
  Thus $-(K_{X}+B)|_{X_{z}}\num \omega|_{X_{z}}$ for all $z\in Z$. In particular, $-(K_{X}+B)$ is $\psi$-nef, since $\omega$ is nef by hypothesis. Moreover, since $\psi$ is a bimeromorphic morphism, $-(K_{X}+B)$ is $\psi$-big. 
 By Lemma \ref{lem:rational-singularities}, $Z$ has rational singularities. 
 
 Now from  Definition \ref{def:lots-of-definitions}.(i) it follows that $Z$ is in Fujiki's class $\mcC$. 
Then by Lemma \ref{l-HP} there exists a $(1, 1)$-class $\alpha_Z\in H^{1, 1}_{\rm BC}(Z)$ (represented by a real closed $(1, 1)$-form with local potentials) such that $\alpha_n=\mu^*\alpha_Z$.

Next we claim that $\alpha_Z$ is a K\"ahler class on $Z$. Indeed, let $V\subset Z$ be a subvariety of positive dimension and $V'$ the strict transform of $V$ under $\mu$. 
By the projection formula we have $(\alpha_Z)^{\dim V}\cdot V=(\alpha _n)^{\dim V'}\cdot V'>0$, since $V'$ is not contained in $\Null(\alpha _n)$. Then by Lemma \ref{lem:nef-and-big-kahler}, $\alpha_Z$ is a K\"ahler class on $Z$.

 Finally notice that, since every step of the above MMP is $\alpha$-trivial, from the construction above it follows that $\alpha=\psi^*\alpha_Z$. This completes the proof.\\

\end{proof}


We are now ready to prove our main theorems in full generality. We start with the base-point free Theorem \ref{thm:bpf}.\\

\begin{proof}[Proof of Theorem \ref{thm:bpf}]
If $(X,B)$ is dlt and $\alpha -(K_X+B)$ is K\"ahler, then $(X,(1-\epsilon)B)$ is klt and $\alpha  -(K_X+(1-\epsilon)B)$ is K\"ahler for any $0<\epsilon \ll 1$. Therefore in either case we may assume that $(X,B)$ is klt and $\alpha -(K_X+B)$ is nef and big.
Let $\nu :X'\to X$ be the small projective morphism given by Lemma \ref{l-ter}, then $X'$ is strongly $\Q$-factorial, $K_{X'}+B'=\nu ^* (K_X+B)$ is klt, $\alpha '=\nu ^*\alpha$ is nef and $\alpha ' -(K_{X'}+B')$ is nef and big.
	 By Theorem \ref{thm:fano-contraction} and Theorem \ref{thm:birational-contraction} there exist a proper surjective morphism with connected fibers $\phi :X'\to Z$ to a normal K\"ahler variety $Z$ with rational singularities and a K\"ahler class $\alpha _Z\in H^{1,1}_{\rm BC}(Z)$  such that $\alpha '=\phi ^* \alpha _Z$. 
  Let $C$ be a $\nu$-exceptional curve, then $\alpha '\cdot C=\alpha _Z\cdot \phi _*C=0$. Since $\alpha _Z$ is K\"ahler, then $C$ is contracted by $\psi$. By the rigidity lemma (see \cite[Lemma 4.1.13]{BS95}), there is a morphism $\psi: X\to Z$ such that $\phi=\psi\circ \nu$. Thus $\alpha=\nu_*(\alpha')=\nu_*(\nu^*(\psi^*\alpha_Z))=\psi^*\alpha_Z$; this completes our proof.

\end{proof}

\begin{proof}[Proof of Theorem \ref{t-div1}]
    This is an immediate corollary of Theorem \ref{thm:bpf}.
\end{proof}

\begin{proof}[Proof of Theorem \ref{t-main}]  By Theorem \ref{t-cone} the cone theorem holds for $(X,B)$. By Theorem \ref{t-pmmp}, Theorem \ref{t-flip} and Theorem \ref{t-div1} flips and divisorial contractions exist and hence we may run a MMP which terminates by Theorem \ref{t-term}.\\
\end{proof}

\begin{proof}[Proof of Theorem \ref{t-main2}]
Since $K_X+B$ is not pseudo-effective, $K_X$ is not pseudo-effective. Thus by \cite[Corollary 1.2]{Bru06} applied to a resolution of $X$ it follows that $X$ is uniruled. By Lemma \ref{l-proj}, we may assume that the dimension of the base of the MRC fibration $X\bir Z$ is 2. Let $F$ be a general fiber of the MRC fibration $f:X\dasharrow Z$. By Lemma \ref{lem:non-vanishing}, if $(K_X+B)\cdot F\geq 0$, then $K_X+B$ is pseudo-effective, contradicting our assumption. Therefore $(K_X+B)\cdot F<0$.

By Theorem \ref{thm:partial-mfs}, there is a $(K_X+B)$-MMP, $\phi:X\dasharrow X'$ such that for every $(K_{X'}+B')$-normalized K\"ahler class $\omega '$ (see Definition \ref{def:normalized-kahler-form}), the class $K_{X'}+B'+\omega'$ is nef, where $B'=\phi_*B$. 
Note that thanks to Theorem \ref{t-div1}, Theorem \ref{thm:partial-mfs} and its proof also applies to $\Q$-factorial pairs that are not necessarily strongly $\Q$-factorial.

Since $X'$ is K\"ahler and $(K_{X'}+B')\cdot F'<0$, where $F'\cong F$ is a general fiber of the induced MRC fibration $X'\dasharrow Z$, we may pick a $(K_{X'}+B')$-normalized K\"ahler class, say, $\omega '$. Now choose $0<\epsilon\ll 1$ so that $\omega'+\epsilon B'$ is a K\"ahler class. Then $(X', (1-\epsilon)B')$ is klt, and thus by Corollary \ref{cor:partial-mfs}, there exists a holomorphic fibration $\psi : X'\to S'$ onto a normal compact K\"ahler surface $S'$ such that $(K_{X'}+B'+\omega')|_{X'_{s'}}\num (K_{X'}+(1-\epsilon)B'+(\omega'+\epsilon B'))|_{X'_{s'}}\num 0$ for all $s'\in S'$, i.e. $K_{X'}+B'+\omega'$ is $\psi$-trivial. In particular, $\psi$ is a projective morphism and so the theorem now follows from the usual relative Minimal Model Program for projective morphisms as in Proposition \ref{pro:relative-projective-mmp}.
\end{proof}

\bibliographystyle{hep}
\bibliography{KahlerReferences}

\end{document}